\newtheorem{thm}{Theorem}[section]
\newtheorem{conj}{Conjecture}[section]
\newtheorem{lem}[thm]{Lemma}
\newtheorem{definition}[thm]{Definition}
\newtheorem{prop}[thm]{Proposition}
\theoremstyle{definition}
\newtheorem{cor}[thm]{Corollary}
\newtheorem{rmk}[thm]{Remark}
\newcommand{\be}{\begin{eqnarray}}
\newcommand{\ee}{\end{eqnarray}}
\newcommand{\beal}{\begin{aligned}}
\newcommand{\enal}{\end{aligned}}
\newcommand{\eps}{\varepsilon}
\newcommand{\tet}{\varphi}
\newcommand{\al}{\alpha}
\newcommand{\gm}{\gamma}
\newcommand{\sg}{\sigma}
\newcommand{\lb}{\lambda}
\newcommand{\E}{\mathbb{E}}
\newcommand{\Ev}{\mathbb{E}v}
\newcommand{\T}{\mathbb{T}}
\newcommand{\R}{\mathbb{R}}
\newcommand{\A}{\mathbb{A}}
\newcommand{\Z}{\mathbb{Z}}
\newcommand{\Lb}{\Lambda}
\newcommand{\om}{\omega}
\newcommand{\dt}{\delta}
\newcommand{\bt}{\beta}
\newcommand{\SM}{\mathcal{SM}}
\newcommand{\DD}{\mathcal{D}}
\newcommand{\cS}{\mathcal{S}}
\newcommand{\Eu}{\mathbb{E}u}
\newcommand{\cO}{\mathcal{O}}
\newcommand{\cM}{\mathcal{M}}
\newcommand{\cH}{\mathcal{H}}
\newcommand{\cN}{\mathcal{N}}
\newcommand{\cF}{\mathcal{F}}
\newcommand{\wh}{\widehat }
\title{Normally Hyperbolic Invariant Laminations and 
diffusive behaviour for the generalized Arnold example
away from resonances}
\author{
V. Kaloshin\footnote{University of Maryland at College Park,
vadim.kaloshin@gmail.com},\ \ \ 
J. Zhang\footnote{University of Toronto,
jianlu.zhang@utoronto.ca},\ \ \ 
K. Zhang\footnote{University of Toronto,
kzhang@math.toronto.edu} } 
\begin{document}
\maketitle

\begin{abstract} In this paper we study existence of Normally 
Hyperbolic Invariant Laminations (NHIL) for a nearly integrable 
system given by the product of the pendulum and the rotator perturbed 
with a small coupling between the two. This example was introduced 
by Arnold \cite{Arn}. Using a {\it separatrix map}, introduced in a low 
dimensional case by Zaslavskii-Filonenko \cite{ZF68} and studied 
in a multidimensional case by Treschev and Piftankin \cite{Pif,PT,T0,T1}, 
for an open class of trigonometric perturbations we prove that NHIL do exist. 
Moreover, using a second order expansion for the separatrix map from 
\cite{GKZ}, we prove that the system restricted to this NHIL  is a skew 
product of nearly integrable cylinder maps. Application of the results 
from \cite{CK} about random iteration of such skew products show that in the proper $\eps$-dependent time scale 
the push forward of a Bernoulli measure supported on this NHIL weakly converges to an Ito diffusion process on the line
as $\eps$ tends to zero.
\end{abstract}

\tableofcontents

\section{The main result}

Consider the following nearly integrable Hamiltonian system:
\be \label{eq:Hamiltonian}
\beal 
H_\eps (p,q,I,\varphi,t)=H_0(p,q,I)+\eps H_1(p,q,I,\varphi,t)
:=
\qquad \qquad  \qquad \qquad 
\\
=\underbrace{\dfrac{\ \ I^2\ \ }{2}}_{rotor}+
\underbrace{\dfrac{p^2}{2}+(\cos q-1)}_{pendulum}+
\eps H_1 (p,q,I,\varphi,t),
\enal
\ee
where $q,\varphi,t\in \T$ are angles, $p,I\in \R$ (see Fig. \ref{fig:rotor-pendulum}).
In the case $H_1=(\cos q-1)(\cos \varphi+\cos t)$ 
this example was proposed by Arnold \cite{Arn}.

\begin{figure}[h]
  \begin{center}
  \includegraphics[width=12.8cm]{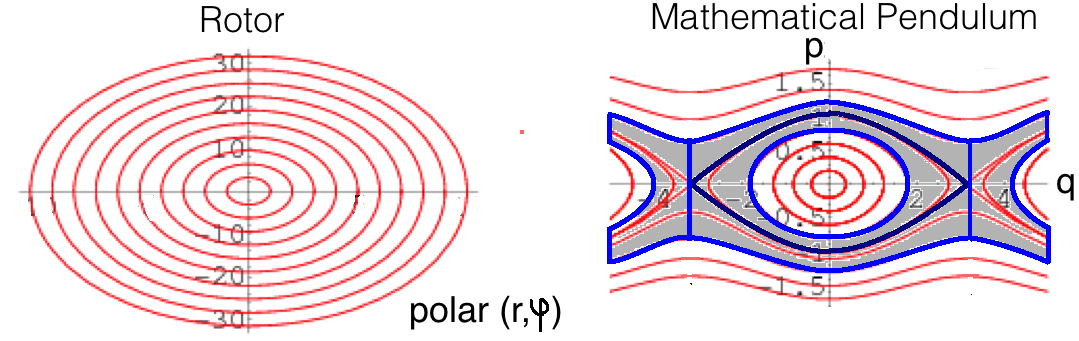}
  \end{center}
  \caption{The rotor times the pendulum }
  \label{fig:rotor-pendulum}
\end{figure}

For $\eps=0$ we have a direct product of 
the rotor $\{\dot \theta=I,\ \dot I=0\}$ and 
the pendulum $\{\dot q=p,\ \dot q=\sin q\}$. We shall study 
dynamics of this systems when the $(p,q)$-component 
is near the separatrices $\dfrac{p^2}{2}+(\cos q-1)=0$.  
Perturbations of systems, given by the product of the rotor 
and an integrable system with a separatrix  loop, 
are called {\it apiori unstable}. Since 
they were introduced by Arnold \cite{Arn}, they recieved 
a lot of attention both in mathematics, astronomy, and 
physics community, see e.g.
 \cite{Ber08,Be,CY1,CY2,Ch,CV,Ci,DH,DLS,DLS13,GL,La,LL,MS,R,T2,T3}. 
It also inspired a variety of examples with instabilities, see e.g. 
\cite{BBB,BB,BB1,BK,DKRS,FGKR,GK,KL1,KL2,KLS,KS,LMS,MS4,
Ma1,Ma2,Ma3,Moe}.

Numerical experiments and heuristic arguments proposed 
by Chirikov and his followers indicate that if we choose many 
initial conditions so that the $(p,q)$-component is close to 
$(p,q)=0$ and integrate solutions over 
$\sim \eps^{-2}\ln 1/\eps$-time, the outcome is that 
the $r$-displacement behaives stochastically, where 
the randomness comes from initial conditions. This is 
the reason Chirikov called this phenomenon  {\it Arnold diffusion}. 

\subsection{Random fluctuations of eccentricity in Kirkwood
gaps in the asteroid belt}
A similar diffusive behavior was observed numerically in many 
other nearly integrable problems. To give another illustrative 
example consider motion of asteroids in the asteroid belt.
The asteroid belt is located between orbits of Mars and 
Jupiter and has around one million  asteroids of diameter 
of at least one kilometer. When astromoters build 
a histogram based on orbital perioid of asteroids
there are well known gaps called {\it Kirkwood gaps.}
These gaps occur when ratio of Jupiter and of an asteroid
is a rational with small denominator: $3:1,5:2,7:3$
(see Fig. \ref{fig:kirkwood}).  
This correspond to so called {\it mean motion resonances
for the three body problem}. Wisdom \cite{Wi} made 
a numerical analysis of dynamics at mean motion resonance 
and observed {\it random jumps of eccentricity} of asteroids
for $3:1$ resonances. Later similar behavior was observed 
for $5:2$ resonance. For other resonances, following
the mechanism from \cite{FGKR}, one could expect 
that eccentricity has random fluctuations and as they 
accumulate eccentricity reaches a certain critical value an orbit 
of asteroid starts to cross the orbit of Mars. This eventually leads 
either to a collision with Mars, or capture by Mars, or a close 
encounter (see also \cite{Mo1}). The latter changes the orbit 
so drastically that almost certainly it disappears from the 
asteroid belt. In \cite{FGKR} in the $3:1$ Kirkwood gap
and small Jupiter's eccenricity we prove existence of certain 
orbits whose eccentricity change by $0.32$ for the restricted 
planar three body problem. 

 \begin{figure}[h]
  \begin{center}
  \includegraphics[width=10.51cm]{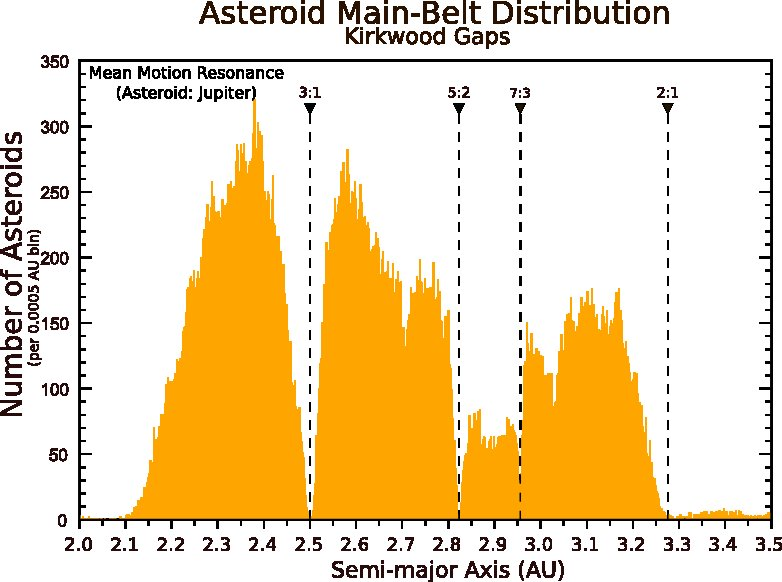}
  \end{center}
  \caption{The distribution of  Asteroids in the asteroid belt
   and Kirkwood gaps}
  \label{fig:kirkwood}
\end{figure}

\subsection{Diffusion processes and infinitesimal generators}
In order to formalize the statement about diffusive behavior we need 
to recall some basic probabilistic notions. A random process 
$\{W_t, t\ge 0\}$ called {\it the Wiener process} or {\it a Browninan motion}
if the following four conditions hold:

$B_0=0$, $B_t$ is almost surely continuous, 
$B_t$ has independent increments, 
$B_t-B_s\sim \mathcal N(0,t-s)$ for any $0\le s\le t$,
where $\mathcal N(\mu,\sigma^2)$ denotes the normal distribution 
with expected value $\mu$ and variance $\sigma^2$. 

The condition that it has independent increments means that if 
$0 \leq s_1 \leq t_1 \leq s_2 \leq t_2$, then $B_{t_1}-B_{s_1}$ 
and $B_{t_2}-B_{s_2}$ are independent random variables.

A Brownian motion is a properly chosen limit of the standard 
random walk. A generalization of a Brownian motion is 
{\it a diffusion process} or {\it an Ito diffusion}. To define it 
let $(\Omega,\Sigma,P)$ be a probability space. Let 
$X:[0,+\infty) \times \Omega \to \R$. It is called an Ito diffusion if 
it satisfies {\it a stochastic differential equation} of the form
\be \label{eq:diffusion}
\mathrm{d} X_{t} = b(X_{t}) \, \mathrm{d} t + 
\sigma (X_{t}) \, \mathrm{d} B_{t},
\ee
where B is an Brownian motion and $b : \R \to \R$ and 
$\sigma : \R \to \R$ are the drift and the variance respectively. 
For a point $x \in \R$, let $\mathbf{P}^x$ denote the law of $X$ 
given initial data $X_0 = x$, and let $\mathbf{E}^x$ denote 
expectation with respect to $\mathbf{P}^x$.

The {\it infinitesimal generator} of $X$ is the operator $A$, which 
is defined to act  on suitable functions $f :\R\to \R$ by
\[
A f (x) = \lim_{t \downarrow 0} \dfrac{\mathbf{E}^{x} [f(X_{t})] - f(x)}{t}.
\]
The set of all functions $f$ for which this limit exists at a point $x$ 
is denoted $D_A(x)$, while $D_A$ denotes the set of all $f$'s for which 
the limit exists for all $x\in \R$. One can show that any 
compactly-supported $C^2$  function $f$ lies in $D_A$ and 
that
\[
Af(x)=b(x) \dfrac{\partial f}{\partial x}+ \dfrac 12 \sigma(x)
\dfrac{\partial^2 f}{\partial x \partial x}.
\]
In particular, we can characterize a diffusion process by 
the drift $b(x)$ and the variance $\sigma(x)$. Thus, we can 
identify an Ito diffusion if we know the drift $b(x)$ and 
the variance $\sigma(x)$. 

\subsection{Conjecture on rotor's stochastic diffusive behavior}

Consider the Hamiltonian $H_\eps$ of the form (\ref{eq:Hamiltonian}).
Let $\A=\R\times \T$ be a $2$-dimensional annulus, and 
$B^2_{\sqrt \eps}(0)$ be the ${\sqrt \eps}$-ball around the origin 
in $\A \ni (p,q)$, and $B_\eps(I^*)$ be an $\eps$-neighborhood of $I^*$ in $\R$. Let $X=(p,q,I,\varphi,t)$ denote a point in the whole 
phase space and by $X_t^\eps$ the time $t$ map of $H_\eps$ 
with $X$ as the initial condition. Pick any $I^*\in \R$. Denote by 
$\mu^\eps(I^*)$ the normalized Lebesgue measure 
supported inside 
$$
D_\eps(I^*):=B^2_{\sqrt \eps}(0) \times B_\eps(I^*) \times \T^2\ni (p,q,I,\varphi,t).
$$
Denote by $\mu^\eps_t$ the image of $\mu^\eps(I^*)$ under 
the time $t$ map of $H_\eps$, by $\Pi_I$ the projection onto 
the $I$-component, by $t_\eps=- \frac{\ln \eps}{\eps^2}$ 
the rescaled time.   
 
\begin{conj} 
Let the initial distribution be the normalized 
Lebesgue measure $\mu^\eps(I^*)$ for some $I^*$. 
Then for a generic perturbation $\eps H_1(\cdot)$
there are smooth functions $b(I)$ and $\sigma(I)>0$,
depending on $H_1$ and $H_0$ only, such that 
for each $s> 0$ and $\,t_\eps=s\, \eps^{-2}\log \frac 1\eps$
the distribution $\Pi_I ( \phi^{t_\eps}_* \mu^\eps)$ 
converges weakly, as $\eps \to 0$,  to the distribution 
of $I_s$, where $I_\bullet$ is the diffusion process with 
the drift $b$ and the variance $\sigma$, starting at $I_0=I^*$. 
\end{conj}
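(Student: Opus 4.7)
The plan is to run the three-stage strategy announced in the abstract---separatrix map, NHIL with skew-product structure, abstract diffusion theorem from \cite{CK}---and then upgrade the resulting statement from the Bernoulli measure supported on the lamination to the Lebesgue measure $\mu^\eps(I^*)$ of the conjecture.

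First, near the pendulum separatrix I would replace the Hamiltonian flow by the separatrix return map $\cS_\eps$ of Zaslavskii--Filonenko and Treschev--Piftankin. Using the second-order expansion from \cite{GKZ}, $\cS_\eps$ takes the schematic form
\[
\bar I = I + \eps\, M(\varphi,t;I) + \eps^2 N(\varphi,t;I) + O(\eps^{5/2}),
\]
coupled to a twist-type map on $(\varphi,t)\in\T^2$. The first task is to check that, for the open class of trigonometric $H_1$ considered in the main body, the hypotheses of the NHIL construction of this paper are satisfied; this produces a normally hyperbolic invariant lamination $\Lambda_\eps$ and a conjugacy of $\cS_\eps|_{\Lambda_\eps}$ with a skew product of nearly integrable cylinder maps, in which the fast base dynamics drives the slow $I$-variable. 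The diffusion theorem from \cite{CK} then yields smooth $b(I),\sigma(I)>0$, computable as integrals of $M$ and $N$ against the invariant measure of the base map, for which the rescaled $I$-trajectories converge weakly to the Ito diffusion \eqref{eq:diffusion}, provided the initial distribution is a Bernoulli-type measure on $\Lambda_\eps$ concentrated above $I=I^*$. The time reparametrization matches automatically: one iteration of $\cS_\eps$ consumes Hamiltonian time of order $|\log\eps|$, so $s\,\eps^{-2}\log(1/\eps)$ units of continuous time correspond to $\sim s\eps^{-2}$ iterations of $\cS_\eps$, which is exactly the scale of the abstract theorem.

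The harder second step is to pass from the Bernoulli measure on $\Lambda_\eps$ to the Lebesgue measure $\mu^\eps(I^*)$ on the box $D_\eps(I^*)$. Since $\cS_\eps$ is uniformly hyperbolic transverse to the lamination, for Lebesgue-almost every initial point one expects the orbit to enter an exponentially thin tube around $\Lambda_\eps$ within $O(|\log\eps|)$ iterations, and the conditional distribution along stable fibres to couple exponentially fast with the invariant disintegration on $\Lambda_\eps$ produced in step one. The complement---points that escape the pendulum energy window or that wander into resonant zones before this absorption time---should carry $\mu^\eps(I^*)$-measure $o(1)$ for a generic $I^*$, which is precisely the ``away from resonances'' hypothesis in the title. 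Combining the coupling with the weak limit from step one and continuity of the diffusion law in the initial datum would then yield the conjecture.

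The main obstacle is exactly this transfer from lamination measure to box measure. The NHIL is strictly lower-dimensional than the phase space, so $\mu^\eps(I^*)$ cannot be pushed onto $\Lambda_\eps$ by mere restriction: a genuine coupling argument between the normally hyperbolic dynamics near $\Lambda_\eps$ and the skew-product model is required, and it must be strong enough to survive over $\eps^{-2}\log(1/\eps)$ iterations, during which the $O(\eps^{5/2})$ errors in the second-order expansion could in principle accumulate into an $O(\eps^{1/2}\log(1/\eps))$ perturbation of the diffusion statement. A secondary subtlety is to verify, via a martingale functional-limit argument, that the weak convergence from \cite{CK} for Bernoulli initial data is in fact uniform for absolutely continuous initial conditions supported in a thin tube around $\Lambda_\eps$; this uniformity is what allows the coupled portion of $\mu^\eps(I^*)$ to be fed directly into the abstract theorem without re-proving it.
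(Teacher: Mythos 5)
The statement you are proving is stated in the paper as a \emph{conjecture}, and the paper does not prove it: what is actually proved (Theorem \ref{thm:main-thm}) is a strictly weaker result in which (i) the perturbation lies in an open class of trigonometric polynomials satisfying the Melnikov condition {\bf [M1]}, not a generic $H_1$; (ii) the initial measure $\mu^\eps$ is \emph{not} the normalized Lebesgue measure on $D_\eps(I^*)$ but a Bernoulli-type measure supported on the NHIL ${\bf \Lb}_\eps$, whose support has zero Lebesgue measure; and (iii) the dynamics is stopped ($\widetilde\phi^{t}$, stopping time $\tau$) when the action reaches the boundary of the non-resonant domain $\DD^{(2)}_{\bt}(P)$. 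Your first stage is essentially the paper's strategy (separatrix map, NHIL via isolating blocks, skew-product normal form, the result of \cite{CK}), so up to that point you are reconstructing the proof of the weaker theorem, not of the conjecture.

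The genuine gap is your second step, the transfer from the Bernoulli measure on ${\bf \Lb}_\eps$ to Lebesgue measure on the box. The separatrix map is exact symplectic, hence area-preserving, so Lebesgue measure cannot be ``absorbed into an exponentially thin tube'' around a lamination of zero Lebesgue measure: the set of points whose orbits couple to ${\bf \Lb}_\eps$ along stable fibres is contained in the stable set of the lamination, which is itself a lower-dimensional (zero-measure) object, because the lamination is a Cantor family of cylinders sitting inside isolating blocks, not an attractor. Thus the claim that Lebesgue-almost every initial point in $D_\eps(I^*)$ enters such a tube within $O(|\log\eps|)$ iterations is false in a conservative setting, and no martingale or uniformity argument can repair it; an entirely different mechanism (e.g.\ controlling the statistics of typical orbits in the stochastic layer, not just orbits shadowing the NHIL) would be required. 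Two further points are also unaddressed: over the diffusive time scale $s\,\eps^{-2}\log\frac1\eps$ the action moves by order one, so with positive probability it crosses resonant zones where the skew-product model of section \ref{sec:derivation-random-model} and the theorem of \cite{CK} do not apply (this is exactly why the paper stops the process at $\partial\DD^{(2)}_{\bt}(P)$, and ``generic $I^*$'' does not help); and genericity of $H_1$ is not obtained, since the NHIL construction requires the special near-time-dependent form (\ref{eq:Melnikov-deriv-perturb}), a restriction the authors themselves list as an open extension in section \ref{sec:poss-extensions}.
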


This conjecture can be viewed as formalization of the
discussion in chapter 7 of \cite{Ch}. As a matter of fact
presence of a possible drift in not mentioned there. 
In this paper Chirikov coined the term for this instability 
phenomenon --- {\it Arnold diffusion}.

\begin{rmk} The strong form of this conjecture is to find 
a family of measures $\mu^\eps$ such that for some $c>0$ 
\[
\lim_{\eps \to 0}\ 
\dfrac{\text{Leb }(\text{supp }\mu^\eps)}{
\text{Leb }(B^2_{\sqrt \eps}(0) \times B_\eps(I_0) 
\times \T^2)}>0,
\]
where Leb is the $5$-dimensional Lebesgue measure. 
In other words, the conditional probability to start $\eps$-close 
to the unstable equilibria of the pendulum and action $r_0$
and exhibit stochastic diffusive behavior is uniformly positive. 
\end{rmk}

In \cite{KR} we give numerical evidence 
in favour of this conjecture. Here is the description of 
numerical experiments in \cite{KR}. 
Let $\eps=0.01$ and $T=\eps^{-2}\ln 1/\eps$. 
On Figure \ref{fig:histograms} we present several 
histograms plotting displacement of the $I$-component 
after time $T, \,2T,\, 4T,\,8T$ with 6 different groups of 
initial conditions. Each group has of $10^6$ points. In each 
group we start with a large set of initial conditions close 
to $p=q=0,\ I=I^*$. 

\subsection{Statement of the Main Result}
In this paper we study a simplified versions of $H_\eps$
in (\ref{eq:Hamiltonian}). Namely, we consider 
the following family of perturbations 

\newpage 

\begin{figure}[h]
  \begin{center}
  \includegraphics[width=11.8cm]{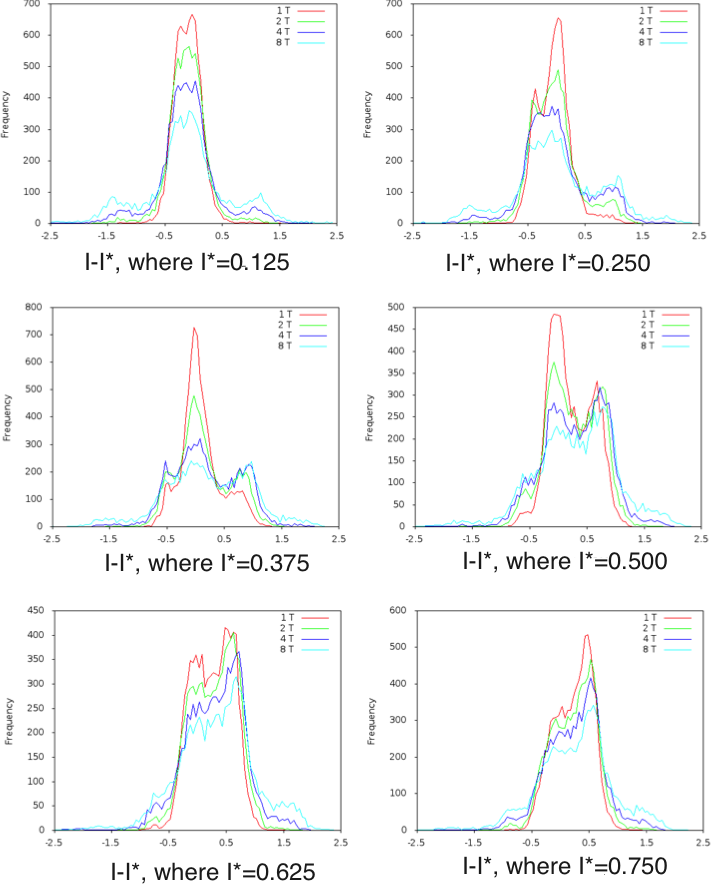}
  \end{center}
  \caption{Histograms of the $I$-dispacement}
  \label{fig:histograms}
\end{figure}

\be \label{eq:our-Hamiltonian}
H_\eps (p,q,I,\phi,t)=
\dfrac{I^2}{2}+
\dfrac{p^2}{2}+(\cos q-1)+
\eps P_N(\exp(i q),\exp(i \varphi),\exp( it)),
\ee
where $P_N(\exp(i q),\exp(i \varphi),\exp( it))$ is a real valued 
trigonometric polynomial, i.e. 
for some $N\ge 2$ and real coefficients $p'_{k_1,k_2,k_3}$ 
and $p''_{k_1,k_2,k_3}$ with $|k_i|\le N, i=1,2,3$
 we have 
\be \label{eq:perturb}
\beal 
P_N(\exp(i q),\exp(i \varphi),\exp( it))=& 
\\ 
\sum_{|k_i|\le N, i=1,2,3} 
p'_{k_1,k_2,k_3} \cos (k_1q+k_2 \varphi+k_3 t)&+
p''_{k_1,k_2,k_3} \sin (k_1q+k_2 \varphi+k_3 t).
\enal 
\ee
In the example proposed by Arnold \cite{Arn} we have 
$P_2=(1-\cos q)(\cos \varphi+\cos t)$. 

Denote by $\R^{m(N)}$ the space of real coefficients 
of $P_N$ and by $\phi^t$ the time $t$ map of 
the Hamiltonian vector field of $H_\eps$. Let 
\[
 \mathcal N_{\bt}(P_N)=\{k\in\Z^{3}: (p'_k,p''_k)\neq 0\}
\]
and 
\[
 \mathcal N_{\bt}^{(2)}(P_N)=
\{k\in\Z^{3}: k=k_1+k_2, k_1,k_2\in \mathcal N_{\bt}(P_N)\}.
\]

Fix $\beta>0$. 
Define a $\beta$-non-resonant domains
\be \label{eq:non-res-domain}
\DD_{\bt}(P_N)=\{I\in \R: \forall k\in \mathcal N_{\bt}^{(2)}(P_N)\ 
\text{we have }\ |k_2I+k_3|\ge \bt \}. 
\ee
Notice that $\DD_{\bt}(P_N)$ contains the subset of $\R$  
with  $\bt$-neighborhoods of all rational numbers 
$p/q$ with $0<|q|\le 2N$ removed. Here $N$ is degree of $P_N$. 
Let $I^* \in \DD_{\bt}(P_N)$ and $X^*=(p,q,I^*,\varphi,t)$. 
Denote {\small
\be 
\widetilde \phi^{t} X^*= 
\begin{cases}
\phi^{t} X^* \qquad \qquad \ &  \text{if}\ 
\Pi_I(\phi^{s} X^*) \in \DD^{(2)}_{\bt}(P_N) \ \text{ for all } 
\  0< s \le t. 
\\
\widetilde \phi^{t} X^*=\phi^{t^*} X^* \ & \text{if}\ 
\Pi_I(\phi^{s} X^*) \in \DD^{(2)}_{\bt}(P_N) \text{ for } 
0< s < t^* \\ 
 &\&  \ \Pi_I(\phi^{t^*} X^*) \in \partial \DD^{(2)}_{\bt}(P_N). 
\end{cases}
\ee}

\begin{thm} \label{thm:main-thm}
For the Arnold's example 
(\ref{eq:our-Hamiltonian}--\ref{eq:perturb}) there is  an open set 
of trigonometric polynomials $P_N$
and smooth functions $b(I)$ and $\sigma(I)$, depending on 
$P_N$ only, such that:

for each $\beta,s>0$ and each $I^*\in \DD^{(2)}_{2\bt}(P)$ 
there exists a probability measure  $\mu^\eps$, supported in 
$D_\eps(I^*)$, with the property that  for 
$\,t_\eps=s\, \eps^{-2}\log \frac 1\eps$ the distribution 
$\Pi_I (\widetilde \phi^{t_\eps}_* \mu^\eps)$ 
converges weakly, as $\eps \to 0$, to the distribution of 
$I_{\min\{s,\tau\}}$, where $I_\bullet$ is the diffusion process 
with the drift $b(I)$ and the variance $\sigma(I)$, starting at 
$I_0=I^*$, and $\tau$ is the first time that the process 
$I_\bullet$ reaches the boundary $\partial \DD^{(2)}_{\bt}(P)$. 
\end{thm}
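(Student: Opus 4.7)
The plan is to reduce the continuous-time flow of $H_\eps$ near the pendulum separatrix $\{p^2/2+\cos q-1=0\}$ to a discrete map -- the separatrix map of Zaslavskii--Filonenko and Treschev--Piftankin -- and extract a normally hyperbolic invariant lamination (NHIL) for that map. Concretely, I would fix a cross-section transverse to the pendulum separatrix and show that the first return map to this section is, to leading order, a twist map in $(I,\varphi)$ whose first-order Poincar\'e--Melnikov correction is a non-degenerate trigonometric function for an open set of $P_N$. Using $I^*\in\DD_\bt(P_N)$, the resonances appearing in the Melnikov potential are bounded away from zero, so the first-order map is a small genuine perturbation of an integrable twist map. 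The hyperbolicity comes from the pendulum's stable/unstable direction which has expansion factor comparable to $\log(1/\eps)$ per pass, while the drift in $(I,\varphi)$ is of order $\eps$; a graph-transform argument in this strongly contracting/expanding regime then gives the NHIL whose leaves are $4$-dimensional with a 1-dimensional normal direction.

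Second, restricting to the NHIL one uses the second-order expansion for the separatrix map from \cite{GKZ} to write the restricted dynamics as a skew product
\[
(I_{n+1},\varphi_{n+1})=F_\eps(I_n,\varphi_n;\om_n),\qquad \om_{n+1}=T_\eps(\om_n),
\]
where the base $(\om,T_\eps)$ is effectively a Bernoulli shift (obtained from the hyperbolic symbolic coding of the NHIL) and the fiber map $F_\eps$ is a nearly integrable cylinder map on $\A=\R\times\T$ with drift of order $\eps$ and fluctuation of order $\eps$ whose coefficients are explicit in the Fourier data of $P_N$. The smooth functions $b(I)$ and $\sigma(I)>0$ promised in the theorem are then read off from averaging the second-order terms of $F_\eps$ over the base: $b(I)$ is the averaged first moment per iterate, and $\sigma^2(I)$ the averaged second moment, both divided by the appropriate power of $\eps$ so that after $\eps^{-2}\log(1/\eps)$ iterations they give the generator of an Ito diffusion.

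Third, applying the random-iteration limit theorem of \cite{CK} to this skew product, I would push forward a Bernoulli measure $\mu^\eps$ on the NHIL (this is the measure claimed in the theorem), rescale time to $t_\eps=s\eps^{-2}\log(1/\eps)$, and conclude that $\Pi_I(\widetilde\phi^{t_\eps}_*\mu^\eps)$ converges weakly to the law of $I_s$, where $I_\bullet$ solves $\mathrm{d}I_t=b(I_t)\mathrm{d}t+\sigma(I_t)\mathrm{d}B_t$. The stopped version $I_{\min\{s,\tau\}}$ in the statement is obtained by the definition of $\widetilde\phi^t$: the flow is frozen as soon as the $I$-coordinate exits $\DD^{(2)}_\bt(P)$, which on the stochastic limit side corresponds exactly to the first exit time $\tau$ of the diffusion from that set. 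The shrinkage of the initial support $D_\eps(I^*)$ as $\eps\to 0$ ensures the deterministic initial point $I_0=I^*$ for the limiting diffusion.

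The hard part will be two interrelated steps. First, constructing the NHIL with uniform hyperbolicity as long as the orbit remains in the $\bt$-non-resonant region: one must carefully track losses of smoothness across each passage near the separatrix, verify cone conditions that survive the logarithmic expansion rate, and ensure the lamination extends continuously across scales. Second, justifying that the second-order separatrix-map expansion from \cite{GKZ} is precise enough on the NHIL to guarantee non-degeneracy $\sigma(I)>0$ for an \emph{open} set of $P_N$ -- this requires analyzing the associated Melnikov-type integrals and checking that they do not collapse for generic trigonometric polynomials of degree $\ge 2$. Once these are in place, the passage from the skew-product cylinder dynamics to the Ito diffusion via \cite{CK} is essentially an application of a martingale-problem / diffusion-approximation theorem.
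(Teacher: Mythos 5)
Your outline reproduces the paper's three-step route (separatrix map, NHIL, skew product plus the random-iteration theorem of \cite{CK}), but it has a genuine gap at the very point where the randomness is supposed to enter: you never explain why the invariant object is a \emph{lamination} with a Cantor (two-symbol) transverse structure rather than a single invariant cylinder. A graph-transform in the strongly hyperbolic normal directions, as you describe it, produces one NHIC; restricted to one cylinder the dynamics is a single nearly integrable map and no diffusion limit can be extracted. In the paper the Bernoulli base comes from a concrete mechanism: condition {\bf [M1]} (an open condition on $P_N$, verified under (\ref{eq:Melnikov-deriv-perturb})) provides \emph{two} distinct families $\tau_0(\eta,\xi)$, $\tau_1(\eta,\xi)$ of nondegenerate critical points of the Melnikov potential along the unperturbed flow; these give two almost-fixed cylinders and two almost period-two cylinders (Lemmas \ref{lem:fixed-centers} and \ref{lem:period-two-centers}), which serve as centers of four isolating blocks, and the horseshoe-like crossing of these blocks is what produces the coding by $\Sigma=\{0,1\}^\Z$ and, ultimately, the positivity of $\sigma(I)$ (in \cite{CK} the variance is the mean square of the \emph{difference} of the two branch potentials). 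Your proposal must identify and verify this bifurcated structure; "hyperbolic symbolic coding of the NHIL" cannot be assumed, it is the thing to be constructed, and it is also where the open set of $P_N$ is actually pinned down (the paper cannot handle generic trigonometric perturbations, only a neighborhood of the special form $(\cos q-1)f(t)+ag$).

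Two further points would need repair even granting the lamination. First, the hyperbolic data are misstated: for the separatrix map the leaves are $2$-dimensional cylinders (3-dimensional for the flow) with a $2$-dimensional normal bundle (one stable and one unstable direction), and the normal expansion per return is of order $\dt^{-1}$ with $\dt\in(\eps^\varpi,\eps^\rho)$, not $\log(1/\eps)$; the cone conditions, the $C^r$-section argument and the H\"older dependence on the symbol all rest on the spectral gap between this $\dt^{-1}$ rate and the $O(\log\eps)$ tangential growth, so the rates cannot be fudged. Second, before \cite{CK} applies you must put the leafwise dynamics into the exact form (\ref{eq:skew-shift-model}): the restriction of $\SM_\eps$ to a leaf is not automatically an exact area-preserving nearly integrable cylinder map in the required normalization (the rotation number is of size $\log\eps$), and the paper needs the conservative coordinates of section \ref{sec:symplectic-coord} (pulling back $d\eta\wedge d\xi+dh\wedge d\tau$ to each leaf and rescaling $R=\Delta\,\mathcal R(r)$) to achieve this; your step two silently assumes that normal form. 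With these ingredients supplied, the final passage to the stopped diffusion $I_{\min\{s,\tau\}}$ via $\widetilde\phi^t$ is as you describe.
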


The proof of this Theorem consists of three steps: 
\begin{enumerate}
\item ({\bf A separatrix map}) Write a separatrix map $\SM_\eps$ 
for the generalized Arnold example 
(\ref{eq:our-Hamiltonian}--\ref{eq:perturb}). 
First, the map $\SM_\eps$ is defined for 
general an apriori unstable systems in 
section \ref{sec:separ-map} and computed for this example in 
Corollary \ref{separatrix-for-arnold-example}. One can view 
the separatrix map as an induced return map of the time 
one map $\phi^1$ of $H_\eps$ into a carefully chosen 
fundamental domain (see Fig. \ref{fig:separatrix-fund-region}).

\begin{figure}
\qquad \qquad 
\includegraphics[width=13.1cm]{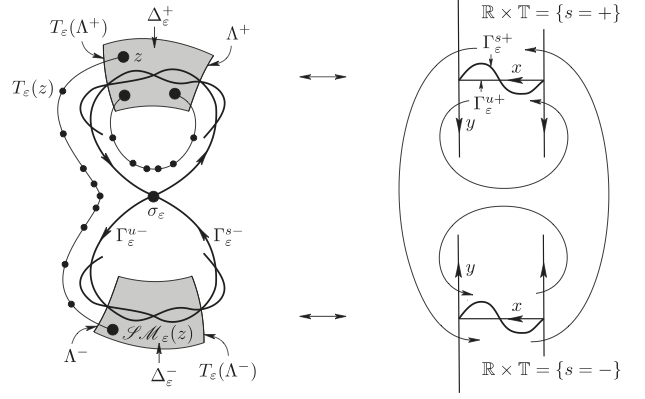}
\qquad 
\caption{The fundamental domain $\Delta_\eps^\pm$}
\label{fig:separatrix-fund-region}
\end{figure} 

\item ({\bf Isolating block and Normally Hyperbolic 
Laminations (NHIL)}) In Appendix \ref{sec:suff-nhil} using 
Conley's idea of isolating block (see e.g. \cite{BKZ,McG}) 
we derive a sufficient condition for existence 
of a NHIL and in section \ref{sec:construction-nhil}, after 
a careful analysis of the separatrix map and its linearization, 
we verify this sufficient condition and construct a NHIL 
${\bf \Lb}_\eps$. Leaves of this NHIL ${\bf \Lb}_\eps$
are $2$-dimensional cylinders. 

\item ({\bf A skew product of cylinder maps})
In section \ref{sec:derivation-random-model},
using results from \cite{GKZ}, we find coordinates
such that the restricted system 
$\SM_\eps|_{{\bf \Lb}_\eps}:{\bf \Lb}_\eps\to {\bf \Lb}_\eps$ 
has the following skew-product form of maps 
of a cylinder $\A=\R\times \T\ni (R,\theta)$
{\small 
\be
\label{eq:skew-shift-model} 
\beal 
R^*  =  R 
+ \eps\log \eps \cdot N^{[\om]_k}_1
\left(\theta,\small {\frac {R}{\log \eps}}\right)
+ \eps^2 \log \eps \cdot N^{[\om]_k}_2
\left(\theta,\small{\frac{R}{\log \eps}}\right)  
+ \mathcal O_\om (\eps^3 )|\log \eps|
\\
\theta^*= \theta + R +\mathcal O_\om(\eps\log \eps).
\qquad \qquad \qquad \qquad 
\qquad \qquad \qquad \qquad 
\qquad \qquad \ \quad 
\enal
\ee}
where $\om_i=0$ or $1,$ and $\om=(\dots,\om_i,\dots)
\in \{0,1\}^\Z$, $ N^{[\om]_k}_i,\ i=1,2$ are smooth functions, 
depending on only finite terms of $\omega$, i.e. $[\omega]_k=(\omega_{-k},\cdots,\omega_0,\cdots,\omega_k)$ and both remainder terms depend on $\om$. 
See Corollary \ref{cor:random-model}.  
This model fits into the framework of \cite{CK}. 
\end{enumerate}

\subsection{Possible extensions of Theorem \ref{thm:main-thm}.}
\label{sec:poss-extensions}

\begin{itemize}
\item ({\bf Extension to the whole $\R$})
We hope to extend our results to the whole $\R$, i.e. 
to neighborhood of rationals $p/q$ with $|q|\le N$. 
The difficulties are of purely technical nature.
For $I$ in the $\bt$-non-resonant domain $\DD^{(2)}_{\bt}(P)$ 
in \cite{GKZ} we show that the separatrix map $\SM_\eps$ 
has a relatively simple expression 
(see Theorem \ref{thm:SM-higher-order:SR})
In the $\beta$-resonant domain $\R\setminus \DD^{(2)}_{\bt}(P)$
we also compute the separatrix map $\SM_\eps$ 
with high accuracy,  but the corresponding expression 
is more involved (see Theorem 3.4, section 2 \cite{GKZ}). 
However, this leads to a skew product of cylinder maps not 
covered by \cite{CK}. It seems feasible that technique 
developped in \cite{CK} still applies.

\item ({\bf Generic trigonometric perturbations})
Even though it seems plausible, at the moment we are not 
able to construct a NHIL for a generic trigonometric 
perturbations. Our pertubations are close to purely time 
dependent perturbations, namely, 
$H_1(q,\varphi,t)=(\cos q-1)f(t)+ag(q,\varphi,t)$, where 
$f,g$ are trigonometric polynomials, $f(t)$ satisfies some 
nondegeneracy condition and $a$ is sufficiently small 
(see condition (\ref{eq:Melnikov-deriv-perturb})). 

\item ({\bf Generic smooth/analytic perturbations})
At the moment our scheme uses 
trigonometric nature of the perturbations
in a very essential way\footnote{Dependence 
on $q$ can be chosen smooth or analytic}. In this setting 
we can divide the fundamental region $\Delta$ into 
the $\beta$-resonant and the $2\beta$-non-resonant zones
(see definition (\ref{eq:non-res-domain})). In general, 
this definition is combersome. However, in \cite{DH} 
this problem is treated for generic smooth perturbations.

Removing this trigonometricity assumption leads to 
considerable technical difficulties. 
\begin{enumerate}
\item The second order expansion 
of the separatrix map \cite{GKZ} has to be redone. 
\item Derivation of the skew product model of maps of 
the cylinder from section \ref{sec:derivation-random-model} 
has to be worked out in that setting. 
\item For a new skew product one needs to 
adapt the technique from \cite{CK}. 
\end{enumerate}
\end{itemize}

\subsection{Remarks  
on Theorem \ref{thm:main-thm}.}

\begin{itemize}

\item Notice that the Hamiltonian $H_\eps$ in
(\ref{eq:our-Hamiltonian}) has a $3$-dimensional 
normally hyperbolic invariant cylinder, denoted $\Lb_\eps$, 
near the cylinder $\Lb_0:=\R\times \T^2=\{p=q=0\}$ 
(see section \ref{sec:nhil-skew} for definitions). The orbits 
we study always stay close to stable (resp. unstable) 
$W^s(\Lb_\eps)$ (resp. $W^u(\Lb_\eps)$) manifold of 
$\Lb_\eps$. Naturally, the dynamics of each such an orbit 
can be decomposed into ``loops'' starting and ending 
near $\Lb_0$.

\item A measure $\mu^\eps$ can be chosen so that
$\Pi_I (\mu^\eps)$ is the $\delta$-measure at $I^*$. 
The support of supp $\mu^\eps$ belongs to 
a NHIL ${\bf \Lb}_\eps$ constructed in 
section \ref{sec:construction-nhil}.

\item The NHIL ${\bf \Lb}_\eps$ is ``located'' near 
two connected components of intersections of 
stable \& unstable manifolds $W^s(\Lb_\eps)$ and 
$W^u(\Lb_\eps)$ resp. of the NHIC $\Lb_\eps$.

\item Locally ${\bf \Lb}_\eps$ is a prodict of 
a $3$-dimensional cylinder 
$\mathbb A^3=\R \times \T \times \T$ and a Cantor set 
$ \Lb_\eps$. This Cantor set is homeomorphic 
to $\Sigma=\{0,1\}^\Z$. 

\item $\mu^\eps$ can be chosen as a Benoulli measure
on ${\bf \Lb}_\eps\cap\{(I,\varphi,t)=(I^*,\varphi^*,t^*)\}$ 
for some $(I^*,\varphi^*,t^*)$ in the domain of definition,
which is homeomorphic to  $\Sigma$. 


\item Since $\mu^\eps$ is supported on the 
NHIL, Lebesgue measure of its support is zero.

\item Notice that such a lamination is not invariant, it is weakly 
invariant in the following sense: Let ${\bf \Lb}_\eps\cap 
\{I\in \DD_{\beta}(P) \}$. Then if $X\in {\bf \Lb}_\eps$ and 
$I\in \DD_{\beta}(P)$, then 
$\phi^1(X)\in {\bf \Lb}_\eps\cap \{I\in \DD_{\beta/2}(P) \}$. 
Indeed, $\beta$ is independent of $\eps$. 
In other words, the only way orbits can escape from 
${\bf \Lb}_\eps$ is through the top (resp. bottom) boundary 
given by intersections with $\partial \DD_{\beta}(P)$.

\item An open set $U\subset \R^{m(N)}$ of validity of 
this theorem
is stated in terms of an associated  Poincar\'e-Melnikov 
integral (or splitting potentials) $M(I,\varphi,t)$ (see 
section \ref{sec:NHL-diffusion-condition}). See also comments 
in the previous section \ref{sec:poss-extensions} about 
extensions to general trigonometric perturbations. 
 
\end{itemize}

Here is a detailed plan of the proof and of structure of the paper:
\begin{itemize}
\item Computation of a separatrix map $\SM_\eps$:
\begin{itemize}
\item Write a general separatrix map $\SM_\eps$ (section 
\ref{sec:separatrix-map}); 

\item Derive a specific form of $\SM_\eps$ for the generalized 
Arnold example (section \ref{sec:separatrix-map-for-Arnold});

\item The map $\SM_\eps$ involves the splitting potential,
which is computed in section \ref{sec:splitting-potential};

\item Properties of the splitting potential are analysed in section. 
\ref{sec:NHL-diffusion-condition}
\end{itemize}

\item Analysis of the linearization of the separatrix map
and construction of a normally hyperbolic invariant 
lamination (NHIL):

\begin{itemize}
\item We state the main existence theorem of NHILs 
in section \ref{sec:existence-nhil};

\item We start the proof of this Theorem by analyzing
the linearization of the separatrix map $\SM_\eps$ in 
section \ref{sec:SM-linearize};

\item In section \ref{sec:IB-centers} we compute
almost fixed cylinders $\SM_\eps(C_i)\approx C_i, i=0,1$
and almost period two cylinders 
$\SM_\eps(C_{01})\approx C_{10},\ 
\SM_\eps(C_{10})\approx C_{01}$. 
These cylinders serve as centers of the isolating blocks. 

\item In section \ref{sec:cone-cond} 
we construct isolating blocks and present cone fields on them, then proved the [C1-C5] conditions defined in Appendix
\ref{sec:suff-nhil}.
\end{itemize}

\item In section \ref{sec:derivation-random-model}
we derive a skew product of cylinder maps model
(\ref{eq:skew-shift-model}). This consists of two
steps. 

\begin{itemize} 
\item In section \ref{sec:separatrix-map-SR} 
we state a result from \cite{GKZ} about the expansion 
of the separatrix map up to the second order in actions.

\item In section \ref{sec:symplectic-coord} on each of 
cylindric leaves of the NHIL ${\bf \Lb}_\eps$ we introduce 
a concervative coordinates and derive the random 
cylinder map model (\ref{eq:skew-shift-model}).
\end{itemize}

\item In Appendix \ref{sec:suff-nhil}
we state a sufficient condition of existence of a NHIL,
which essentially goes back to Conley (see e.g. \cite{McG});

\item In Appendix \ref{sec:nhil-skew} we define 
normally hyperbolic invariant laminations and skew products.

\item In Appendix \ref{sec:oriol-result} we state the result 
from \cite{CK} about weak convergence to a diffusion process 
for distributions of the vertical component of random 
iterations of cylinder maps (\ref{eq:skew-shift-model}).

\item In Appendix \ref{sec:eapt} we study certain classes
of exact nearly integrable maps of a cylinder. This is used 
in derivation of the random cylinder map model 
(\ref{eq:skew-shift-model}) in 
section \ref{sec:symplectic-coord}.

\end{itemize}

{\bf Acknowledgement} The authors would like to warmly thank 
Marcel Guardia for many useful discussions of analytic
aspects of this work. Comments of Leonid Polterovich on 
symplectic structure of normally hyperbolic laminations led to 
a special normal form from section \ref{sec:symplectic-coord} 
and is a significant step in the proof. Remarks of Leonid Koralov, 
Dmitry Dolgopyat, Anatoly Neishtadt, Amie Wilkinson were useful 
for this project. The authors warmly thank to all these people. 
The first author acknowledges partial support of the NSF grant 
DMS-1402164.

\section{A separatrix map of apriori unstable systems}
\label{sec:separ-map}

 Consider a Hamiltonian system
\[
H_\eps(p,q,I,\varphi,t)=H_0(p,q,I)+\eps H_1(p,q,I,\varphi,t),
\]
where $H_0(p,q,I)=H_0(0,0,I)$ has two separatrix loops.
Denote by $\mathcal D_0$ any bounded region. 
For example, $H_0$ is the harmonic oscillator times the pendulum: 
\[
H_0(p,q,I)=\dfrac{I^2}{2}+\dfrac{p^2}{2}+(\cos q-1),
\]
where $p,I\in \mathbb R$ are actions and $q,\varphi\in \mathbb T$ 
are angles. We can use formula (1.10) in Piftankin-Treschev with 
$n=1$ and no $\eps^2$-term. In order to apply results of this paper
impose the following conditions: 

\begin{itemize}
\item[{\bf [H1]}] The function $H$ is $C^r$--smooth with respect 
to $(I, \varphi, p, q, t)$, where $r\ge 13$.

We consider the  alternative assumption. 

\item[\textbf{[H$1'$]}] The function $H_0$ is $C^r$
for $r\ge 50$ and $H$ is $C^s$-smooth in all arguments for 
$s\ge 6$ and $r\geq 8s+2$.  
\end{itemize}

Notice that regularity of $H_0$ exceeds that one of $H_1$.
The more regular $H_0+\eps H_1$, the better estimates 
of the remainder terms of the separatrix map we have. 
For a $C^1$ analysis of the separatrix map, it would suffice 
$s\geq 5$ and $r\geq 42$, $r\geq 8s+2$.

\begin{itemize}
\item[{\bf [H2]}] For any $r \in \mathcal D_0$ the function 
$H_0(I_0, p, q)$ has a non-degenerate saddle point 
$(p, q) = (p_0, q_0)$. Every point $(p_0, q_0)$ belongs to 
a compact connected component of the set 
$$
\{(p, q) \in \text{Fig}_{\bf 8} : H_0(I_0, p, q) = H_0(I_0, p_0, q_0)\}.
$$
Moreover, $(p_0, q_0)$ is the unique critical point of $H_0(I_0, p, q)$ 
on this component (see Fig. \ref{fig:figure-eight}).
\end{itemize}

\begin{rmk}\label{rmk:saddle}
 Using Prop.1, \cite{T1}, if one  
assumes that the saddle is at a certain point $(p,q)=(p^0,q^0)$ 
which depends smoothly on $I$, then, one can perform a
symplectic change of coordinates so that the critical point 
is at $(p,q)=(0,0)$ for all $I\in\mathcal D$. After such a coordinate 
change $C^r$ in \textbf{H1} is replaced by $C^{r-2}$.   
\end{rmk}

The point $(p_0, q_0) \in$ Fig$_{\bf 8}$ depends smoothly on 
$I_0$ and is a hyperbolic equilibrium point of a system with 
one degree of freedom and with Hamiltonian $H_0(I_0, p, q)$.
The corresponding separatrices are doubled and form 
a curve of figure-eight type. Below we denote the loops 
of the figure-eight by $\widehat \gm^\pm(I_0)$, where 
$\widehat \gm^+(I_0)$ is called {\it the upper loop} and 
$\widehat \gm^-(I_0)$ --- {\it the lower loop.} The loops 
$\widehat \gm^\pm(I_0)$ have a natural orientation 
generated by the flow of the system. The orientation on 
Fig$_{\bf 8}$ is determined by the system of coordinates $p,q$.

Notice that in our case these loops do not depend on $r_0$. To 
satisfy [{\bf H2}] consider the cylinder $\A=\R \times \T \ni (p,q)$ 
and a diffeomorphism from the set 
$|\frac{p^2}{2}+(\cos q-1)|\le 0.1$ to 
the figure-eight. 

\begin{figure}[h]
  \begin{center}
  \includegraphics[width=5.5cm]{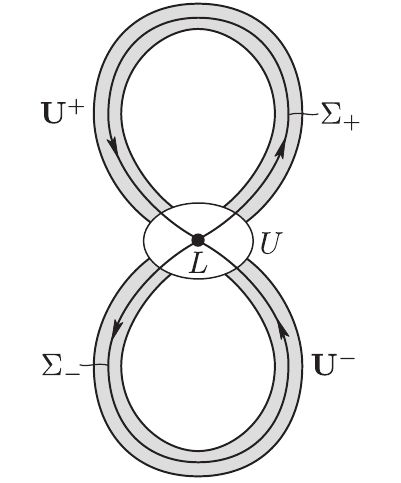}
  \end{center}
  \caption{Separatrices in the form of the figure-eight}
  \label{fig:figure-eight}
\end{figure}

\begin{itemize}
\item[{\bf [H3]}] For any $I_0 \in \mathcal D_0$ the natural 
orientation of $\widehat \gm^\pm(I_0)$ coincides with 
the orientation of the domain Fig$_{\bf 8}$, i.e.the motion along 
the separatrices is counterclockwise (see Fig. \ref{fig:figure-eight}).

\item[{\bf [H4]}] The variables $I$ are separated from 
$p$ and $q$ in the non-perturbed Hamiltonian, i.e. 
$H_0(I, p, q) = F(I, f(p, q)))$.
\end{itemize}
Both [{\bf H3}] and [{\bf H4}] are clearly satisfied
for the generalized example of Arnold 
(\ref{eq:our-Hamiltonian}--\ref{eq:perturb}). 

Now we define the separatrix map from \cite{PT} describing 
the dynamics of the systems satisfying assumptions [{\bf H1-H4}]. 
As an intermediate step it is also convenient to study 
perturbations vanishing on the cylinder $\Lb_0$:  
\be \label{eq:perturb1}
H_1(p,q,I,\varphi,t):=(\cos q-1)P(\exp(i \varphi),\,\exp( it)), 
\ee
where $P$ is a real valued trigonometric polynomial.This is 
a particular case of triginometric polynomials of the form 
(\ref{eq:perturb}). For the classical Arnold example \cite{Arn} 
we have $P= \cos \varphi+\cos t$. 

\subsection{Formulas of the separatrix map of a priori unstable 
systems}\label{sec:separatrix-map}

We would like to apply Theorem 6.1 from Piftankin-Treschev
\cite{PT} presenting almost explicit formulas with a remainder 
for the separatrix map. It uses the Poincar\'e-Melnikov potential 
for the ``outer'' dynamics and the restriction of the perturbation 
to $(p,q)=0$ for the ``inner'' dynamics. The words ``inner'' dynamics 
is used to describe dynamics of the Hamiltonian flow restricted 
to the normally hyperbolic invariant cylinder $\Lb$ and the ``outer'' 
dynamics to describe evolution along invariant manifolds of 
$\Lb$.\footnote{This is analogous to the ``inner'' and ``outer'' 
dynamics from \cite{DLS}. However, the separatrix map contains 
more information then the outer maps from \cite{DLS} 
as it is not constrained to invariant submanifolds. }

Consider the frequency map $\nu(I)=\partial_I H_0(0,0,I)=I$
as the map $\nu:\mathcal D_0\to \R^n$.
It gives the frequency of the torus $\mathcal T(I):=\{(0,0,I)\}$. 
Let $\phi: \mathbb R \to [0,1]$ be a $C^\infty$-smooth function 
such that $\phi(I) = 0$ for $|I| \ge  1$ and $\phi(I) = 1$ for 
$|I| <1/2$. Fix some $\dt \in (0,1/4]$. In (6.1--6.2) Piftankin-Treschev 
chapter 6 \S2 they introduce an auxiliary Hamiltonian 
\be \label{ave-remainder}
\beal 
\overline {\bf H}_1(I,\varphi,t)&=&\sum_{(k,k_0)\in \mathbb Z^2} 
\phi\left( \dfrac{k \varphi + k_0}{\eps^{\dt}}\right)
H^{k,k_0}_1(I) \exp( 2\pi\, i(k\varphi+k_0t)),\\
{\bf H}_1(I,\varphi)\ \ &=&\bar {\bf H}(I,\varphi,0), 
\qquad \qquad \qquad \qquad 
\qquad \qquad \qquad \qquad 
\enal
\ee
where $H^{k,k_0}_1(I) $ are Fourier coefficients of 
$H_1(0,0,I,\varphi,t)$. 
The function $\overline {\bf H}_1$ is the mollified mean of 
$H_1(0,0,I,\varphi,t)$ along the non-perturbed trajectories 
on the tori $\mathcal T(I)$. This procedure is similar to local 
averaging proposed in \cite{BKZ}, Thms 3.1, 3.2. This function 
tends pointwise to the usual average as $\eps \to 0$ 
$$
 \sum_{k I+k_0=0} H^{k,k_0}_1(I) \exp(2\pi i (k \tet+k_0 t) )=
\lim_{T \to \infty} \dfrac{1}{T} 
\int_0^T H_1(0,0,I,\varphi+\nu(I)\,s,t+s)\,ds. 
$$
Since averaged are discontinuous in $I$ 
we prefer to deal with ${\bf H}_1$ and $\overline {\bf H}_1$.
For the generalized Arnold example these functions vanish.

Let $\mathcal D \subset \mathcal D_0$ be an open connected 
domain with compact closure $\overline {\mathcal D}$. 
Let ${\bf K}$ be a compact set in $\R^{n+1}$. In 
the spaces $C^r(\overline \DD \times {\bf K})$ we introduce 
the following norms: for $f \in C^r(\overline \DD \times {\bf K})$ 
let 
\[
\| f(r,z)\|_r^{(b)} = \max_{0\le l'+l''\le r}
b^{l'}\left| \dfrac{\partial^{l'+l'' f}}{ \partial r^{l'}
\partial z_1^{l_1''}\dots
\partial z_m^{l_m''}},
\right|
\]
where $l''=l_1''+\dots+l_m''$. It is assumed that $f$ can take 
values in $\R^s$, where $s$ is an arbitrary positive integer. 
The norms $\|\cdot \|^{b}_r$ are anisotropic, and the variables 
$r$ play a special role in these norms because the additional 
factor $b$ corresponds to the derivatives with respect to $r$. 
Obviously, $\|\cdot \|^{1}_r$ is the usual $C^r$-norm. This norm 
is similar to a skew-symmetric norm introduced in \cite{KZ12}, 
section 7.2. The same definition applies to functions periodic 
in $z$, i.e. $z\in \T^{n+1}$.

For brievity denote 
\be \label{anisotropic-norm}
\|\cdot \|_r^*=\|\cdot \|_r^{(\eps^\dt)}. 
\ee
For functions $f \in C^r(\overline  {\mathcal D}\times \bf K)$ and 
$g\in C^0(\overline {\mathcal D}\times \bf K)$ we say that
$$
f = O^{(b)}_0(g)\ \textup{  if }\ \|f \|_r^{(b)}\le C\,g^{k},
$$
where $C$ does not depend on $b$. For brevity we write
\be \label{eq:skewsym-norm}
\|\cdot \|_r^*=\|\cdot \|_r^{(\eps^\dt)},  \ \ \ 
O^{(b)} =O^{(b)}_1, \ \ \  O^*_k=O^{(\eps^\dt)}_k
\ee 

Notice that for the generalized Arnold example  we have 
$n=1, \ E(r)=\frac{r^2}{2}$. 


\begin{thm} \label{thm:separatrix-map}
For the Hamiltonian $H_\eps$ there
are smooth functions
\[
\lb, \kappa^\pm : \bar \DD \to \R,
\  M^{\pm} : \bar \DD \times \T^2 \to \R,
\]
a constant $c>0$ and 
coordinates $(\eta,\xi,h,\tau)$ such that the following conditions hold:
\begin{itemize}
\item $\omega = d\eta \wedge d\xi+ dh \wedge d\tau$;
\item 
$\eta = I+O^*(\eps^{3/4}, H_0 -E(r)),\xi + \nu(I)\, \tau = q+f$, 
where the function $f$ depends only on $(p,I,\varphi,\eps)$ and is 
such that $f(I,0,0,0) = 0, \ h = H_0 +O^*(\eps^{3/4},H_0 -E(I))$, 
and $H_0 = H_0(p,q,I).$ Let 
\be \label{dist-to-separatrix}
w_0&:=&h^+ - E(\eta^+)-\eps {\bf H}(\eta^+,\xi+\nu(\eta^+)\tau,t), 
\ee
where $w_0$ measures distance to the invariant manifolds.
\item For any $(\eta^+,\xi,h^+,\tau)$ such that
\be \label{region-separatrix-map}
\beal 
c^{-1} \,\eps^{5/4}\, |\log \eps| < |w_0| < c \,\eps^{7/8},\ 
\footnote
{This condition puts  a range of admissible distances 
from the invariant manifolds. In particular, it gives 
a range for the transition time $t^+$. } \qquad \qquad 
\\
|\tau| < c^{-1},\qquad  c < |w_0|\exp(\lb(\eta^+)t^+) < c^{-1},
\enal
\ee
the map $g^{t^+}_\eps T^{t^+}_\eps = \mathcal{SM}_\eps$ 
at time $t^+$ is defined as follows: 
\[
\mathcal{SM}_\eps (\eta,\xi,h,\tau,s,t^+) = 
(\eta^+,\xi^+,h^+,\tau^+,s^+),
\] 
where
\be \label{eq:separatrix-map}
\beal 
\eta^+&=&\eta - \eps M^\sg_\xi (\eta^+,\xi,\tau)- \dfrac{\partial_\xi w_0}{\lambda}
\log \left| \dfrac{\kappa^\sg w_0}{\lambda}\right|+{\bf O}_2 \ 
\\
\xi^+&=&\xi + \eps M^\sg_\eta (\eta^+,\xi,\tau)+ \dfrac{\partial_{\eta^+}w_0}{\lambda}
\log \left| \dfrac{\kappa^\sg w_0}{\lambda}\right|+{\bf O}_1  
\\
h^+&=&h - \eps M^\sg_\tau (\eta^+,\xi,\tau)- 
\dfrac{\partial_{\tau}w_0}{\lambda}
\log \left| \dfrac{\kappa^\sg w_0}{\lambda}\right|+
{\bf O}_2 \   
\\
\tau^+&=&\tau + t^+ + \ \qquad \qquad \dfrac{\partial_{h^+}w_0}{\lambda}
\log \left| \dfrac{\kappa^\sg w_0}{\lambda}\right|+{\bf O}_1\
\\
\sg^+&=&\sg\text{ sgn }w. \quad \qquad \qquad 
\qquad \qquad \qquad \qquad  
\enal
\ee
\end{itemize}
where $\lb, \nu,$ and $\kappa^\sg$ are functions of $\eta^+$ 
and $t^+$ is an integer such that 
\be \label{t-condition}
\left| \tau+t^+ + \dfrac{\partial_{h^+}w_0}{\lambda}
\log \left| \dfrac{\kappa^\sg w_0}{\lambda}\right|
\right|<c^{-1}
\ee
\[
{\bf O}_1 = O^{(\eps^{1/4})}(\eps^{7/8}) \log \eps, \quad 
{\bf O}_2 = O^{(\eps^{1/4})}(\eps^{5/4}) \log^2 \eps.
\]
The superscript $\sg$ fixes the separatrix loop passed along 
by the trajectory.
\end{thm}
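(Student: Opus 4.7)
The plan is to follow the Piftankin--Treschev strategy of factoring the separatrix map as a composition of an \textbf{outer map}, which tracks the trajectory once around the separatrix loop far from the saddle, with an \textbf{inner map}, which captures the passage close to the hyperbolic equilibrium. First I would construct adapted symplectic coordinates $(\eta,\xi,h,\tau)$ in a tubular neighborhood of the normally hyperbolic invariant cylinder $\Lb_\eps$ obtained by persistence from $\Lb_0=\{p=q=0\}$. Using hypothesis [H2] together with the Moser-type normal form recalled in Remark~\ref{rmk:saddle} (see also \cite{T1}), one straightens the saddle to $(p,q)=(0,0)$ for every $I\in\DD$ while preserving the product structure coming from [H4], so that the symplectic form becomes $d\eta\wedge d\xi+dh\wedge d\tau$ and $h$ differs from $H_0$ only by terms vanishing on $\Lb_\eps$ to higher order. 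The quantity $w_0$ from \eqref{dist-to-separatrix} then plays the role of a signed distance to the stable/unstable manifolds of the perturbed cylinder, and averaging away the fast angles $\varphi,t$ via the mollifier of \eqref{ave-remainder} absorbs the most singular non-resonant harmonics into the normal form.

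For the outer piece I would integrate the unperturbed separatrix motion and apply first-order perturbation theory in $\eps$, so that the increments of $(\eta,\xi,h,\tau)$ across the homoclinic excursion along $\widehat{\gm}^\sg$ are governed by the Poincar\'e--Melnikov potential $M^\sg(\eta,\xi,\tau)$; convergence and smoothness of this improper integral are guaranteed by the exponential contraction along the separatrix together with [H1]/[H$1'$]. For the inner piece, after normal form the Hamiltonian in a neighborhood of the saddle reads to leading order $\lb(\eta)\,uv+\cdots$ in hyperbolic coordinates, and the passage time from a cross section at distance $|w_0|$ on the stable side to a symmetric section on the unstable side is $\lb^{-1}\log|\kappa^\sg w_0/\lb|+O(1)$, which is precisely the source of the logarithmic terms in \eqref{eq:separatrix-map}. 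Composing the two maps and symplectically propagating $w_0$ by the linearized flow yields the explicit formulas claimed.

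The hard part, and the one that forces the regularity in [H1]/[H$1'$], is controlling the error terms ${\bf O}_1,{\bf O}_2$ in the anisotropic norm $\|\cdot\|_r^*=\|\cdot\|_r^{(\eps^\dt)}$ uniformly in the admissible range \eqref{region-separatrix-map}. Two effects conspire here: the transition time $t^+\sim\lb^{-1}|\log|w_0||$ amplifies perturbative errors by a factor of $\log\eps$, and $\eta$-derivatives are costly because resonant denominators $k\nu(\eta)+k_0$ can be as small as $\eps^\dt$; this is exactly why the mollifier $\phi((k\nu+k_0)/\eps^\dt)$ appears in \eqref{ave-remainder} and why the weight $\eps^\dt$ multiplies $\partial_\eta$ in the norm. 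One must check that each Birkhoff-type normalization step loses at most one power of $\log\eps$, so that carrying the expansion to second order reproduces exactly the stated bounds ${\bf O}_1=O^{(\eps^{1/4})}(\eps^{7/8})\log\eps$ and ${\bf O}_2=O^{(\eps^{1/4})}(\eps^{5/4})\log^2\eps$.

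Finally, the lower bound $|w_0|>c^{-1}\eps^{5/4}|\log\eps|$ in \eqref{region-separatrix-map} is precisely the threshold at which these remainders remain smaller than the leading Melnikov and logarithmic contributions, while the upper bound $|w_0|<c\,\eps^{7/8}$ keeps the orbit within the validity domain of the local normal form around $\Lb_\eps$; the condition \eqref{t-condition} then fixes the integer $t^+$ uniquely modulo these bounds. Inside this window one reads off the five components of \eqref{eq:separatrix-map} from the generating function of the composition, and the sign rule $\sg^+=\sg\,\mathrm{sgn}\,w$ records which loop $\widehat{\gm}^\pm$ is followed on the next excursion, determined by the side of the invariant manifolds on which the orbit exits the saddle neighborhood.
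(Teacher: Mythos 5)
You should first note how the paper itself handles this statement: it is not proved in the paper at all, but imported verbatim as Theorem 6.1 of Piftankin--Treschev \cite{PT} (the text immediately preceding it says exactly this, and the subsequent sections only specialize the functions $\lb,\kappa^\pm,M^\pm$ to the Arnold example). Your outline is therefore not an alternative to the paper's argument but a compressed summary of the strategy of \cite{PT} (and of the refinement in \cite{GKZ}): adapted symplectic coordinates near the normally hyperbolic cylinder, decomposition into an outer excursion governed by the splitting potential $M^\sg$ and an inner passage near the saddle producing the $\lb^{-1}\log|\kappa^\sg w_0/\lb|$ terms, and error control in the anisotropic norms $\|\cdot\|_r^{(\eps^\dt)}$ with the mollified average \eqref{ave-remainder} handling near-resonant harmonics. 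As a description of the architecture this is accurate.

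The genuine gap is that the quantitative content of the theorem --- which is its entire point, since the leading-order formulas in \eqref{eq:separatrix-map} are classical --- is asserted rather than derived. The bounds ${\bf O}_1 = O^{(\eps^{1/4})}(\eps^{7/8})\log\eps$ and ${\bf O}_2 = O^{(\eps^{1/4})}(\eps^{5/4})\log^2\eps$, the closeness estimates $\eta = I+O^*(\eps^{3/4},H_0-E(I))$ and $h = H_0+O^*(\eps^{3/4},H_0-E(I))$, and the admissible window \eqref{region-separatrix-map} all depend on a multi-step normal form/averaging procedure in which one must track precisely how many powers of $\eps^{-\dt}$ and $\log\eps$ are lost at each step, how the loss of derivatives interacts with the regularity hypotheses [H1]/[H$1'$], and why the specific exponents $3/4$, $7/8$, $5/4$ emerge; your proposal defers all of this to the phrases ``one must check that each normalization step loses at most one power of $\log\eps$'' and ``the lower bound on $|w_0|$ is precisely the threshold,'' which restate the conclusion instead of establishing it. Likewise the claim that the symplectic form becomes exactly $d\eta\wedge d\xi+dh\wedge d\tau$ while simultaneously achieving the stated proximity of $(\eta,h)$ to $(I,H_0)$ requires an explicit generating-function construction (cf.\ Remark \ref{exact symplectic}), not just persistence of $\Lb_\eps$ plus Moser's straightening. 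In short, the skeleton is right, but filling it in amounts to reproducing the body of \cite{PT}, Chapter 6, which is precisely the work the paper avoids by citation.
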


\rmk\label{exact symplectic}
 For $t^+$ satisfying (\ref{t-condition}) the separatrix 
map is given by 
\[
\eta = \cS_\xi, \quad \xi^+ = \cS_{\eta^+},\quad 
h=\cS_\tau,\quad \tau^+=t^+ + \cS_{h^+}, \quad 
\sg^+=\sg \cdot \text{sgn }w_0,
\]
where the generating function $\cS$ has the form 
\[
\cS(\eta^+,\xi,h^+,\tau,s,t^+)=\eta^+ \xi + h^+\tau+
\eps \Theta^\sg(\eta^+,\xi,\tau)+
\frac{w_0}{\lb}\log \left| \dfrac{\kappa^\sg w_0}{\lb e}\right|  
+O^*(\eps^{9/8}) \log ^2 \eps. 
\]
Notice that the map $\cS$ depends on $t^+$ only via the last term.

\subsection{Parameters of the separatrix maps for 
the generalized Arnold example}
\label{sec:separatrix-map-for-Arnold}

Notice that for the Arnold's example the unperturbed Hamiltonian 
is given by a direct product of $(I,\varphi)$ and $(p,q)$ variables:
$H_0=\frac{I^2}{2}+\frac{p^2}{2}+(\cos q-1).$
Using explicit formulas for 
$\lb, \kappa^\pm$ and $M^\pm$ in Section 6 \S  \cite{PT} 
we compute them.

The functions $\lb>0$, $\kappa^\pm>0$ and $\mu^\pm\in \R$ 
are defined by the unperturbed Hamiltonian $H_0$ as follows. 
Hypothesis \textbf{H2} implies that both eigenvalues of the matrix
\begin{equation}\label{def:MatrixLambda}
\Lb(I)=
\begin{pmatrix}
 -\partial_{pq}H_0(I,0,0)&-\partial_{qq}H_0(I,0,0)\\
 \partial_{pp}H_0(I,0,0)&\partial_{pq}H_0(I,0,0)
\end{pmatrix}
\end{equation}
are real and the trace of this matrix is equal to 0 for all $I$. 
We denote by $\lb(I)$ the positive eigenvalue of this matrix.

Let $\gm^\pm(I,\cdot):\R \to
\{(p, q) \in \text{Fig}_{\bf 8} : H_0(I_0, p, q) = H_0(I_0, 0, 0)\}$
be the natural parametrizations of the separatrix 
loops $\widehat \gm^\pm(r)$, i.e. 
$$
\dot \gm^\pm(y,t)=(-\partial_q H_0(I,\gm^\pm(t)),
\partial_p H_0(I,\gm^\pm(t)))
$$
and $a_\pm=a_\pm(I)$ be the left eigenvectors of $A$,
i.e. 
$$
a_+A=\lb a_+, \qquad a_-A=\lb a_-. 
$$
such that the $2\times 2$ matrix with rows $a_+$ and 
$a_-$ has unit determinant.  

In Proposition 6.3 \cite{PT} there are explicit 
formulas for $\kappa^\pm(I)$, given as integrals
of $a_+$ along $\gm^\pm$. In the case that the separatrix 
loops $\widehat \gm^\pm(I)$ are independent of $I$ we 
have that $\kappa^\pm$ are also independent 
of $I$ (see formulas (6.13--6.14)).

The natural parametrizations on $\widehat \gm^\pm$ 
are determined up to a time shift $t \mapsto t+\phi_\pm(I)$.  
Natural parametrizations are said to be {\it compatible} if they 
depend smoothly on $I$ and
\[
\lim_{t \to - \infty} \frac{ \langle a_+(I), \gamma^+(I, t) \rangle }{
\langle a_+(I), \gamma^-(I, t) \rangle  }=-1.
\]

Compatible paramet\-rizations are determined up to 
a simultaneous shift, namely, if 
$\gm^+(I,t^+(I,t)), \gm^-(I,t^-(I,t))$ is another pair 
of compatible paramet\-rizations, then 
$t^+(I,t) = t^-(I, t) = t - t_0(I)$ with a smooth function $t_0$.

If a solution of the non-perturbed system belongs to 
$\Gamma^\pm(I)$, it has the form
\be \label{eq:separatrix} 
\beal 
(I, \varphi, p, q)(t) &= 
\Gamma^\sg(I, \xi, \tau + t),\qquad \xi \in \T, \ \tau \in \R, 
\ \sg \in \{+, -\},
\\
 &  \Gamma^\sg(I,\xi,\tau) = (I,\xi + \nu(I) \tau,
\gm^\sg(I,\tau)).
\qquad \qquad \quad 
\enal
\ee
Let 
$$
H^\sg_*(I,\xi,\tau,t)=
H_1(\Gamma^\sg(I,\xi,\tau),t-\tau)-
H_1(I,\xi+\nu t,0,0,t-\tau).
$$
The functions $H^\sg_*(I,\xi,\tau,t)$ vanishes as $t\to \pm \infty$. 

\begin{prop} \label{poincare-melnikov}
Suppose that the parametrizations $\gm^\pm$ 
are compatible. Then
\[
M^\sg(I,\xi,\tau)=
-\int_{-\infty}^\infty H^\sg_*(I,\xi,\tau,t) dt.
\]
\end{prop}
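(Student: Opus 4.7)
The strategy is to identify $M^\sigma$ with the classical Poincar\'e--Melnikov splitting potential of the unperturbed separatrix loop $\widehat\gamma^\sigma(I)$, and then to derive the stated integral representation by a first-order variational argument along the heteroclinic orbit.

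First, I would recall from Theorem~\ref{thm:separatrix-map} and Remark~\ref{exact symplectic} that the ``outer'' piece of the separatrix map is generated, to leading order in $\varepsilon$, by the term $\varepsilon\,\Theta^\sigma(\eta^+,\xi,\tau)$ in the generating function $\mathcal S$; by construction $\Theta^\sigma = M^\sigma$ modulo quantities already absorbed in the remainders $\mathbf O_1,\mathbf O_2$. So the proposition reduces to computing $\Theta^\sigma$ to leading order. For $\varepsilon$ small, the invariant manifolds $W^{s,u}(\Lambda_\varepsilon)$ of the normally hyperbolic cylinder $\Lambda_\varepsilon$ near $\Lambda_0 = \{p=q=0\}$ are Lagrangian in the extended phase space and admit generating functions $S^{u,s}_\varepsilon$; by the standard Lagrangian-intersection picture their difference (the splitting potential) equals $\varepsilon\,\Theta^\sigma + O(\varepsilon^2)$ when evaluated near $\widehat\gamma^\sigma$.

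Next, I would run the classical Melnikov computation. Along the unperturbed separatrix orbit $\Gamma^\sigma(I,\xi,\tau+t)$, Hamilton's equations and the variational formula for the action give
\[
\frac{d}{dt}\bigl[S^u_\varepsilon - S^s_\varepsilon\bigr]\bigl(\Gamma^\sigma(I,\xi,\tau+t)\bigr) = -\varepsilon\,H_1\bigl(\Gamma^\sigma(I,\xi,\tau+t),\,t\bigr) + O(\varepsilon^2).
\]
Because $\Gamma^\sigma$ asymptotes at $t\to\pm\infty$ to the torus $\mathcal T(I)\subset\Lambda_0$ rather than to a rest point, the integrand does not by itself decay. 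Subtracting the restriction of $H_1$ to $\Lambda_0$ along the limiting torus trajectory, which is exactly the second term in $H_*^\sigma$, produces an integrand that decays exponentially at both ends with rate $\lambda(I)$ (the positive eigenvalue of $\Lambda(I)$ from~\eqref{def:MatrixLambda}). Integrating from $-\infty$ to $+\infty$ and using that $S^u_\varepsilon$ matches the restriction of the action to $\Lambda_\varepsilon$ at $t\to-\infty$ while $S^s_\varepsilon$ does so at $t\to+\infty$ yields
\[
M^\sigma(I,\xi,\tau) = \Theta^\sigma(I,\xi,\tau) = -\int_{-\infty}^{\infty} H_*^\sigma(I,\xi,\tau,t)\,dt,
\]
which is the asserted formula.

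\textbf{Main obstacle.} The delicate point is not the existence of the integral but the absence of a finite additive constant in $\Theta^\sigma$. Exponential decay of $H_*^\sigma$ follows from hyperbolicity of the saddle at rate $\lambda(I)$, so absolute convergence is automatic once the subtraction is performed. Pinning down the additive constant, however, requires the compatibility of the parametrizations $\gamma^\pm$ from the hypothesis: the normalization $\lim_{t\to-\infty}\langle a_+,\gamma^+\rangle/\langle a_+,\gamma^-\rangle = -1$ synchronizes the time origin on the two loops $\widehat\gamma^+$ and $\widehat\gamma^-$ so that the boundary contributions at $\pm\infty$ in the two branches cancel against each other. Once this synchronization and the exponential asymptotics of $\gamma^\sigma$ near the saddle are verified, the remainder is the standard first-order variation of action along a heteroclinic between whiskered tori, a computation routine in the Treschev framework of \cite{PT,T1}.
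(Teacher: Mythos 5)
The paper does not actually prove Proposition~\ref{poincare-melnikov}: it is imported verbatim from the separatrix-map machinery of Piftankin--Treschev \cite{PT} (Section 6 there), and the text around it only records how $\lambda$, $\kappa^\pm$, $M^\pm$ are read off for the Arnold example. So you are supplying a proof where the paper supplies a citation, and your overall strategy --- identify $M^\sigma$ with the difference of generating functions of the exact Lagrangian manifolds $W^{u}(\Lambda_\varepsilon)$, $W^{s}(\Lambda_\varepsilon)$ and compute it to first order in $\varepsilon$ along the unperturbed separatrix, with the cylinder restriction subtracted to force exponential decay --- is indeed the standard route behind the cited result.

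However, your central displayed identity is wrong as written, and this is a genuine gap rather than a slip of notation. Writing $S^{u,s}_\varepsilon=S_0+\varepsilon S_1^{u,s}+O(\varepsilon^2)$, the first-order Hamilton--Jacobi (transport) equation gives $\frac{d}{dt}\bigl[S_1^{u}(\Gamma^\sigma(t))\bigr]=-H_1(\Gamma^\sigma(t),t)$ and the \emph{same} equation for $S_1^{s}$; hence $\frac{d}{dt}\bigl[(S^u_\varepsilon-S^s_\varepsilon)(\Gamma^\sigma(t))\bigr]=O(\varepsilon^2)$, i.e.\ the difference is constant along the orbit to first order --- which is precisely why the splitting potential is a well-defined function of $(\xi,\tau)$. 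If you integrate your identity over $t\in\mathbb{R}$ you obtain only the difference of boundary values at $t=\pm\infty$, not the splitting potential at the base point. The correct argument integrates the transport equation for $S_1^u$ from $t=-\infty$ and for $S_1^s$ from $t=+\infty$ separately (inserting the counterterm $H_1(I,\xi+\nu t,0,0,t-\tau)$ in each to get convergence) and then subtracts; the real work is then to show that the two ``boundary'' contributions, namely the first-order data of $W^{u}$ and $W^{s}$ restricted to the cylinder $\Lambda_\varepsilon$, coincide in the normalized coordinates $(\eta,\xi,h,\tau)$ of Theorem~\ref{thm:separatrix-map}, so that no extra term survives. Relatedly, the compatibility hypothesis does not remove an additive constant: changing the natural parametrizations only shifts $\tau$, so it fixes the \emph{phase}, aligning the time origins of $\gamma^{+}$ and $\gamma^{-}$ with each other and with the $\tau$ variable used in the separatrix map (there is no cancellation ``between the two branches''; each integral converges on its own). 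As it stands, your sketch conflates these two steps and does not address the cylinder boundary terms, which is exactly the part that the normalization in \cite{PT} is designed to handle.
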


The functions $M^\sg$ are called {\it splitting potentials.} 
They are 1-periodic with respect to $\xi$ and $\tau$.
We proved the following 

\begin{cor} \label{separatrix-for-arnold-example} 
For the generalized Arnold example (\ref{eq:our-Hamiltonian})
with trigonometric perturbations of the form (\ref{eq:perturb1})
there are constants $\kappa^\pm, c>0,$ and $\lb>0$ 
such that for $w=h^+-E(\eta^+)$ satisfying 
$c^{-1} \eps^{2}  < |w| < c \eps^{7/8}$ 
the separatrix map $\SM_\eps$ has the form 
\be \label{eq:separatrix-for-arnold-example} 
\beal 
\eta^+&=&\eta - \eps M^\sg_\xi (\eta^+,\xi,\tau)\ \qquad \qquad 
\qquad \quad +{\bf O}_2
\\
\xi^+&=&\xi + \eps M^\sg_{\eta^+}(\eta^+,\xi,\tau)-
\dfrac{\eta^+}{\lambda}
\log \left| \dfrac{\kappa^\sg w}{\lambda}\right|+{\bf O}_1
\\
h^+&=&h -  \eps M^\sg_{\tau}(\eta^+,\xi,\tau) \quad 
\qquad  \qquad \quad \quad 
+{\bf O}_2 
\\
\tau^+&=&\tau + t^+ +\quad \qquad  \qquad \frac 1 \lambda\ 
\log \left| \dfrac{\kappa^\sg w}{\lambda}\right|+{\bf O}_1 
\\
\sg^+&=&\sg\text{ sgn }w, \quad \qquad \qquad \qquad 
\qquad \qquad \qquad 
\enal
\ee
where
\[
{\bf O}_1 = O^*_1(\eps \log \eps), \qquad 
{\bf O}_2 = O^*_3(\eps^{2}) 
\]
and $t^+$ is an integer chosen so that $|\tau^+|<1$.  
\end{cor}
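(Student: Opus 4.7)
\medskip
\noindent\textbf{Proof proposal.} The plan is to derive Corollary \ref{separatrix-for-arnold-example} as a direct specialization of Theorem \ref{thm:separatrix-map} to the concrete data of the generalized Arnold Hamiltonian (\ref{eq:our-Hamiltonian}) with the restricted perturbation class (\ref{eq:perturb1}). First I would verify hypotheses [H1]--[H4]: smoothness is trivial for trigonometric polynomials; [H2] holds because the pendulum has the single nondegenerate saddle $(p,q)=(0,0)$ with the figure-eight level set; [H3] follows from the standard orientation of the pendulum flow; and [H4] holds with $F(I,z)=I^{2}/2+z$, $f(p,q)=p^{2}/2+(\cos q-1)$, so the variables $I$ and $(p,q)$ are fully separated in $H_{0}$.

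Next I would compute the structural constants. Evaluating (\ref{def:MatrixLambda}) at $(p,q)=(0,0)$ gives
\[
\Lb(I)=\begin{pmatrix}0&1\\1&0\end{pmatrix},
\]
independent of $I$, whose positive eigenvalue is $\lb\equiv 1$. The separatrix loops $\widehat\gm^{\pm}$ are the pendulum separatrices $p=\pm 2\sin(q/2)$, which again do not depend on $I$, so by formulas (6.13--6.14) of \cite{PT} the numbers $\kappa^{\pm}$ are also constants independent of $I$. The frequency map is $\nu(I)=I$ and $E(I)=I^{2}/2$, which gives $\partial_{\eta^{+}}w_{0}=-\eta^{+}$ and $\partial_{h^{+}}w_{0}=1$ in the definition (\ref{dist-to-separatrix}).

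The key simplification comes from the assumption (\ref{eq:perturb1}) that $H_{1}=(\cos q-1)P(e^{i\varphi},e^{it})$ vanishes identically on the cylinder $\{p=q=0\}$. Hence every Fourier coefficient $H_{1}^{k,k_{0}}(I)$ in (\ref{ave-remainder}) is zero, so the averaged potentials $\overline{\mathbf{H}}_{1}$ and $\mathbf{H}_{1}$ vanish. Consequently $w_{0}=h^{+}-E(\eta^{+})=:w$ and $\partial_{\xi}w_{0}=\partial_{\tau}w_{0}=0$. Substituting these ingredients, together with $\lb=1$ and the vanishing derivatives, into the general formulas (\ref{eq:separatrix-map}) collapses the logarithmic terms in the $\eta^{+}$ and $h^{+}$ equations to zero and yields exactly the four equations of (\ref{eq:separatrix-for-arnold-example}).

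The remaining issue, and the only nontrivial part, is to justify the sharper range $c^{-1}\eps^{2}<|w|<c\eps^{7/8}$ and the improved remainders ${\bf O}_{1}=O^{*}_{1}(\eps\log\eps)$, ${\bf O}_{2}=O^{*}_{3}(\eps^{2})$, which are tighter than the generic bounds of Theorem \ref{thm:separatrix-map}. I would argue that the error terms in the derivation of \cite{PT} are produced by the nonzero $\mathbf{H}_{1}$ contribution to $w_{0}$ and by secondary interactions between the inner averaged dynamics and the outer scattering; both sources are absent here, since $\mathbf{H}_{1}\equiv 0$ eliminates the inner dynamics entirely, and the perturbation factor $(\cos q-1)$ vanishes to second order on the cylinder, giving extra smallness along the homoclinic time integrals. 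This is the step I expect to be the main obstacle: the reader must chase the constants through the anisotropic norm (\ref{anisotropic-norm}) and through the generating-function expansion of Remark \ref{exact symplectic} to confirm that the $\eps^{1/4}$-anisotropy in Theorem \ref{thm:separatrix-map} can be replaced by the estimates claimed in the corollary, after which $t^{+}$ is chosen as the unique integer rendering $|\tau^{+}|<1$ per condition (\ref{t-condition}).
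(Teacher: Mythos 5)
Your specialization of Theorem \ref{thm:separatrix-map} to the Hamiltonian (\ref{eq:our-Hamiltonian}) with perturbations (\ref{eq:perturb1}) is correct and is essentially the paper's own route to the \emph{form} of (\ref{eq:separatrix-for-arnold-example}): the matrix (\ref{def:MatrixLambda}) is constant with $\lb=1$, the loops $\widehat\gm^\pm$ and hence $\kappa^\pm$ do not depend on $I$, and since $H_1$ vanishes on $\{p=q=0\}$ all Fourier coefficients $H_1^{k,k_0}$ vanish, so ${\bf H}_1\equiv 0$, $w_0=h^+-E(\eta^+)$, $\partial_\xi w_0=\partial_\tau w_0=0$, $\partial_{\eta^+}w_0=-\eta^+$, $\partial_{h^+}w_0=1$, which kills the logarithmic terms in the $\eta^+$ and $h^+$ equations and produces the stated five equations.

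The genuine gap is in the last step, which you yourself flag as the main obstacle: the enlarged domain $c^{-1}\eps^{2}<|w|<c\,\eps^{7/8}$ and the remainders ${\bf O}_1=O^*_1(\eps\log\eps)$, ${\bf O}_2=O^*_3(\eps^{2})$ cannot be obtained by re-auditing the Piftankin--Treschev error analysis behind Theorem \ref{thm:separatrix-map}, and the paper does not attempt to do so. There the lower cutoff $c^{-1}\eps^{5/4}|\log\eps|<|w_0|$ and the errors $O^{(\eps^{1/4})}(\eps^{7/8})\log\eps$, $O^{(\eps^{1/4})}(\eps^{5/4})\log^2\eps$ do not stem solely from the ${\bf H}_1$-contribution to $w_0$ or from inner--outer interactions; they come from the finite-order normal form near the saddle and from truncating the scattering expansion at first order in $\eps$ and $w$, and the neglected contributions of size $\eps^2$, $\eps|w|$, $|w|^2$ do not disappear just because $H_1$ vanishes on the cylinder (the factor $\cos q-1$ only guarantees $M^\sg$ is well defined, not second-order accuracy of the map). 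The paper instead invokes the second-order expansion of the separatrix map from \cite{GKZ}: Theorem \ref{thm:SM-higher-order:SR}, valid for $c^{-1}\eps^{1+\varpi}<|w|<c\,\eps$, is specialized to the Arnold example in the theorem following the corollary, simplified to (\ref{modified SM}) using $w'(0)=\lb^{-1}$ and the constancy of $\kappa^\sg,\lb$, and the explicit second-order terms $\eps^2M_2^{\sg,*}$ are then absorbed into ${\bf O}_2=O^*_3(\eps^{2})$. Without this higher-order normal form (Moser coordinates and the second-order Melnikov-type terms), the claimed range and remainders remain unproved; so your proof should replace the constant-chasing plan by an appeal to (or a reproduction of) Theorem \ref{thm:SM-higher-order:SR}.
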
 
\begin{rmk}
Here we expand the available domain to $c^{-1} \eps^{2}  < |w| < c \eps^{7/8}$ and re-evaluate the reminder $\mathbf{O}_1$, $\mathbf{O}_2$. This is because we improved the separatrix map and got a more precise expression in \cite{GKZ}, i.e. we can always find a canonical change of coordinate such that $\mathcal{SM}_{\epsilon}$ can be defined as follows:
\end{rmk}

\begin{thm}
For fixed $\beta>0$, $1\ge \varpi>0$ and $\eps$ sufficiently small,
there exist $c>0$ independent of $\eps$ and a canonical 
system of coordinates $(\eta,\xi,h,\tau)$ such that
\[
\eta = I +\mathcal O^*_{1}(\eps)+\mathcal O^*_{2}(H_0-E(I)), \ 
\xi + \nu(\eta)\tau=\varphi+f, \ 
h=H_0+\mathcal O^*_{1}(\eps)+\mathcal O^*_{2}(H_0-E(I)),
\]
where $f$ denotes a function depending only on 
$(I,p,q,\eps)$ and such that $f(I,0,0,0)=0$ and 
$f=\mathcal O(w+\eps)$. For any 
$\sigma\in \{-,+\}$ and $(\eta^+,h^+)$ such that 
\[
 c^{-1}\eps^{1+\varpi}<|w(\eta^+,h^+)|<c\eps,\qquad |\tau|<c^{-1}, \qquad
c<|w(\eta^+,h^+)|\,e^{\lb(\eta^+)\bar t}<c^{-1},
\]
where $\omega=\lambda^{-1}(h-E(\eta))+\mathcal{O}((h-E(\eta))^2)$ is a function of $h-E(\eta)$, the separatrix map $(\eta^+, \xi^+,h^+,\tau^+)=
\SM(\eta, \xi,h,\tau)$ is defined implicitly as follows
 \[
  \begin{split}
 \eta^+=&\ \eta- \ \ \eps
\partial_{\xi}M^{\sigma}(\eta^+,\xi,\tau)+\ \ \eps^2 M_2^{\sigma,\eta}+\ 
\ \mathcal O_3^*(\eps,|w|)|\log|w||\\
   \xi^+=&\ \xi
+\partial_{\eta^+} w(\eta^+,h^+) 
\left[\log |w(\eta^+,h^+)| + \log|\kappa^{\sigma}|
\right] 
\\
&+
\mathcal O_1^*(\eps+|w|)\,( |\log\eps|+|\log|w||)+\mathcal{O}_2^*(|\omega|)\\
 h^+=&\ h - \ \ \eps \partial_{\tau}M^{\sigma}(\eta^+,\xi,\tau)
+\ \ \eps^2 M_2^{\sigma,h}+\ \ \mathcal O_3^*(\eps,|w|)\\
 \tau^+=&\ \tau+\ \qquad \qquad  \bar t +\qquad \quad 
\partial_{h^+} w(\eta^+,h^+) 
\left[\log| w(\eta^+,h^+) |+ \log|\kappa^{\sigma}|)
\right] 
\\
&+ \mathcal O_1^*(\eps+|w|)\,( |\log\eps|+|\log|w||)+\mathcal{O}_2^*(|w|),
  \end{split}
 \]
where
$\bar t$ is an integer satisfying 
\be 
 \left|\tau+\bar t+{\partial_{h^+} w}
\log \left|{\kappa^\sigma w}\right|\right|<c^{-1}
\ee
and the functions $M_2^{\sigma,*}$ 
are evaluated at  $(\eta^+, \xi,h^+,\tau)$. 
\begin{rmk}
This Theorem is an application of Theorem \ref{thm:SM-higher-order:SR} for the Arnold-type Hamiltonian (\ref{eq:perturb1}). Recall that $c^{-1}\epsilon^{1+\varpi}<|w(h^+-E(\eta^+))|<c\epsilon$ and $w'(0)=\lambda^{-1}$, so we can simplify aforementioned expression by:
\begin{eqnarray}\label{modified SM}
 \eta^+=&\ \eta- \ \ \eps
\partial_{\xi}M^{\sigma}(\eta^+,\xi,\tau)+\ \ \eps^2 M_2^{\sigma,\eta}+\ 
\ \mathcal O_3^*(\eps)|\log\epsilon|\nonumber\\
   \xi^+=&\ \xi+\epsilon\partial_{\eta^+}M^{\sigma}(\eta^+,\xi,\tau)
-\frac{\eta^+}{\lambda}
\log \Big{|}\frac{\kappa^{\sigma}(h^+-E(\eta^+))}{\lambda}\Big{|}+
\mathcal O_1^*(\eps)|\log\eps|\\
 h^+=&\ h - \ \ \eps \partial_{\tau}M^{\sigma}(\eta^+,\xi,\tau)
+\ \ \eps^2 M_2^{\sigma,h}+\ \ \mathcal O_3^*(\eps)\nonumber\\
 \tau^+=&\ \tau+\bar t +\frac{1}{\lambda}\log\Big{|}\frac{\kappa^{\sigma(h^+-E(\eta^+))}}{\lambda}\Big{|}+ \mathcal O_1^*(\eps)|\log\eps|,\nonumber
\end{eqnarray}
which is of the same form with (\ref{eq:separatrix-for-arnold-example}) but has a preciser estimate of the reminders.
\end{rmk}

\end{thm}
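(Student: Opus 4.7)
The strategy is to obtain this theorem as a direct specialization of the higher–order separatrix map Theorem~\ref{thm:SM-higher-order:SR} from \cite{GKZ} to the Arnold Hamiltonian (\ref{eq:perturb1}); all that is needed is to check the hypotheses, compute the geometric data $(\lambda,\kappa^\pm,w)$ explicitly, and then simplify the general formulas using the special structure of the product rotor–pendulum system.

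First I would verify the hypotheses \textbf{[H1]--[H4]} for the generalized Arnold example. Since $H_0(p,q,I)=\frac{I^2}{2}+\frac{p^2}{2}+(\cos q-1)$ is real analytic and the perturbation is a trigonometric polynomial, \textbf{[H1]} (or \textbf{[H$1'$]}) holds with arbitrarily high $r,s$. \textbf{[H2]} follows because $(p,q)=(0,0)$ is a non--degenerate pendulum saddle whose level set is the standard figure--eight, independent of $I$. \textbf{[H3]} is the standard counterclockwise orientation of the pendulum separatrices, and \textbf{[H4]} holds with $F(I,E)=I^2/2+E$ and $f(p,q)=p^2/2+(\cos q-1)$. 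Next, from (\ref{def:MatrixLambda}) one reads off $\Lambda(I)=\left(\begin{smallmatrix}0 & 1\\ 1 & 0\end{smallmatrix}\right)$, so $\lambda\equiv 1$; because the separatrix loops $\widehat\gamma^\pm$ are $I$--independent, the constants $\kappa^\pm$ given by the integrals in Proposition~6.3 of \cite{PT} are likewise independent of $I$, matching the constants $\kappa^\sigma$ appearing in the statement.

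With these inputs, Theorem~\ref{thm:SM-higher-order:SR} produces canonical coordinates $(\eta,\xi,h,\tau)$ and a separatrix map $\SM_\eps$ in the general form displayed in the theorem, valid on the annular window $c^{-1}\eps^{1+\varpi}<|w(\eta^+,h^+)|<c\eps$. To pass to the simplified form (\ref{modified SM}), I would substitute $\lambda=1$ and use the explicit expansion $w(\eta^+,h^+)=\lambda^{-1}(h^+-E(\eta^+))+\mathcal O((h^+-E(\eta^+))^2)$; differentiating gives $\partial_{h^+}w=\lambda^{-1}+\mathcal O(h^+-E(\eta^+))$ and $\partial_{\eta^+}w=-\eta^+/\lambda+\mathcal O(h^+-E(\eta^+))$, since $E'(\eta^+)=\eta^+$. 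The domain condition forces $|h^+-E(\eta^+)|=\mathcal O(\eps)$ and therefore $|\log|w||=\mathcal O(|\log\eps|)$; consequently every product of an $\mathcal O_2^{*}(|w|)$ factor with a $\log|w|$ is absorbed into $\mathcal O_1^*(\eps)|\log\eps|$, while $\bar t\,\mathcal O_2^*(|w|)$ is absorbed into $\mathcal O_1^*(\eps)|\log\eps|$ as well (since $\bar t=\mathcal O(|\log\eps|)$). Collecting everything yields the four displayed lines of (\ref{modified SM}), with the remainders $\mathcal O_3^*(\eps)|\log\eps|$ and $\mathcal O_1^*(\eps)|\log\eps|$ exactly as claimed.

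The main obstacle is not the algebra but the bookkeeping: one must track the anisotropic norms $\|\cdot\|_r^*$ carefully through the substitution, and verify that the enlargement of the admissible window from $c^{-1}\eps^{5/4}|\log\eps|<|w|<c\eps^{7/8}$ in Corollary~\ref{separatrix-for-arnold-example} to $c^{-1}\eps^{1+\varpi}<|w|<c\eps$ is indeed delivered by the $\eps^2$-correction $M_2^{\sigma,*}$ produced in \cite{GKZ}, and that the new $\mathcal O_3^*(\eps,|w|)$ term absorbs the lower--order error previously written as $\mathcal O^*(\eps^{5/4})\log^2\eps$. Once this accounting is done, the statement is immediate from the cited theorem.
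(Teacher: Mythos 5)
Your proposal is correct and follows essentially the same route as the paper: the theorem is obtained by specializing Theorem \ref{thm:SM-higher-order:SR} of \cite{GKZ} to the Arnold-type Hamiltonian (\ref{eq:perturb1}), using that $\lambda$, $\kappa^\sigma$ and the function $g$ are independent of $\eta$, expanding $w=\lambda^{-1}(h-E(\eta))+\mathcal O\bigl((h-E(\eta))^2\bigr)$ so that $\partial_{h^+}w$ and $\partial_{\eta^+}w$ give the displayed $\log$-terms, and absorbing the resulting corrections into the stated remainders on the window $c^{-1}\eps^{1+\varpi}<|w|<c\eps$. The only cosmetic difference is that you substitute $\lambda=1$ explicitly (true for this pendulum), whereas the paper keeps $\lambda$ symbolic in (\ref{modified SM}); this does not affect the argument.
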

In turns out that this Theorem also applies to general 
trigonometric perturbations of the form (\ref{eq:perturb})
after an additional change of coordinates. 
\begin{lem} 
For the the generalized Arnold example, i.e. for 
the Hamiltonian $H_\eps$ of the form
for (\ref{eq:our-Hamiltonian}) with trigonometric 
pertubations $\eps P$ (\ref{eq:perturb}) for 
any $k\ge 2$ in the $\beta$-nonresonant region 
$\mathcal D^{(2)}_\beta(P)$ has smooth change 
of coordinates $\Phi$ such that 
\[
H_\eps \circ \Phi (p,q,I,\varphi,t)= H_0(p,q,I)+
\eps H^*_1(p,q,I,\varphi,t)+\mathcal O_k^*(\eps^3),
\]
where $H_1^*(0,0,I,\varphi,t)\equiv 0$.
\end{lem}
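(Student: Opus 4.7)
The plan is to construct $\Phi=\Phi_1\circ\Phi_2$ as a composition of two near-identity canonical transformations obtained by Lie-series normal form, each designed to kill the restriction of the perturbation to $\{p=q=0\}$ at the next order in $\eps$. Writing the composed Lie series as
\[
H_\eps\circ\Phi = H_0 + \eps H_1 + \{H_0,\eps S_1+\eps^2 S_2\} + \tfrac12\{\{H_0,\eps S_1+\eps^2 S_2\},\eps S_1+\eps^2 S_2\} + \cdots,
\]
I solve for $S_1,S_2$ order by order.

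\textbf{First order.} Take $S_1=S_1(I,\varphi,t)$ independent of $(p,q)$, so that $\{H_0,S_1\}|_{p=q=0}=(I\partial_\varphi+\partial_t)S_1$. The first homological equation
\[
(I\partial_\varphi+\partial_t)\,S_1 = -H_1(0,0,I,\varphi,t) + c_1
\]
is solved by Fourier expansion: writing $H_1(0,0,I,\varphi,t)=\sum a_{k_2,k_3}(I)e^{i(k_2\varphi+k_3 t)}$ (a finite sum, with $(k_2,k_3)$ in the projection of $\mathcal N_\bt(P)$ onto the last two coordinates), one has
\[
S_1 = \sum_{(k_2,k_3)\neq 0} \frac{a_{k_2,k_3}(I)}{i(k_2 I+k_3)}\,e^{i(k_2\varphi+k_3 t)}.
\]
On $\DD_\bt(P)$ the denominators are bounded below by $\bt$, so $S_1$ is smooth with derivatives controlled by fixed powers of $\bt^{-1}$, uniformly in $\eps$. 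For $H_1=P_N$ the mean $c_1=\sum_{k_1}p'_{k_1,0,0}$ is an $I$-independent constant, dropped as a dynamically inert shift.

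\textbf{Second order.} Setting $\Phi_1=\exp(\eps L_{S_1})$ one obtains
\[
H_\eps\circ\Phi_1 = H_0 + \eps\widetilde H_1 + \eps^2\widetilde H_2 + \cO^*_k(\eps^3),\qquad \widetilde H_1|_{p=q=0}\equiv 0,
\]
with $\widetilde H_2 = \{H_1,S_1\}+\tfrac12\{\{H_0,S_1\},S_1\}$. Each Poisson bracket adds the Fourier indices of its two factors, hence the spectrum of $\widetilde H_2(0,0,I,\varphi,t)$ in $(\varphi,t)$ lies in the projection of $\mathcal N^{(2)}_\bt(P)$ --- this is precisely the purpose of the definition of $\mathcal N^{(2)}_\bt$. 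One now solves
\[
(I\partial_\varphi+\partial_t)\,S_2 = -\widetilde H_2(0,0,I,\varphi,t) + c_2(I)
\]
by the same Fourier procedure, using now the Diophantine bound $|k_2 I+k_3|\ge \bt$ for $k\in\mathcal N^{(2)}_\bt(P)$, i.e.\ $I\in\DD^{(2)}_\bt(P)$. The $I$-dependent residual mean $c_2(I)$ is absorbed into the integrable background by a harmless $\eps^2$-redefinition of $H_0$ (any function of $I$ alone is dynamically inert along orbits staying near $\{p=q=0\}$). Composing with $\Phi_2=\exp(\eps^2 L_{S_2})$ yields the desired $H_\eps\circ\Phi = H_0 + \eps H^*_1 + \cO^*_k(\eps^3)$ with $H^*_1(0,0,\cdot)\equiv 0$.

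\textbf{Regularity and the main obstacle.} The $C^k$-smoothness of $\Phi$ and the remainder bound follow by truncating the Lie-series expansion after the $\eps^2$ terms and bounding the tail by standard estimates on $\DD^{(2)}_\bt(P)\times \T^2$. The delicate point --- and the main obstacle --- is that the remainder must belong to the anisotropic class $\|\cdot\|^*_k$ defined in (\ref{anisotropic-norm}), not merely to $C^k$. Since $S_1,S_2$ are independent of $(p,q)$ and depend on $I$ only through denominators $k_2 I+k_3$ with $|k_j|\le 2N$, each $I$-derivative costs at most a fixed power of $\bt^{-1}$, uniformly in $\eps$; this is strictly compatible with (and indeed improved by) the $\eps^\dt$-weight on $I$-derivatives in $\|\cdot\|^*_k$. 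Hence the truncation error lies in $\cO^*_k(\eps^3)$ and the lemma follows.
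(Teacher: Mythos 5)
Your first-order step is fine and, modulo constants, matches what must happen: the generator $S_1(I,\varphi,t)$ solving $(I\partial_\varphi+\partial_t)S_1=-H_1(0,0,I,\varphi,t)+c_1$ with denominators $k_2I+k_3$ bounded below on the non-resonant domain (note only $\bt/2$ is directly guaranteed, since $\mathcal N\not\subset\mathcal N^{(2)}$ in general but $2k\in\mathcal N^{(2)}$ whenever $k\in\mathcal N$), and the observation that brackets produce harmonics in $\mathcal N^{(2)}$, which is exactly why $\DD^{(2)}_\bt(P)$ enters. Your route is, however, genuinely different from the paper's: the paper does not run a two-step Lie-series argument in the original variables at all; it passes to Moser coordinates near the saddle (Lemma 4.1 of \cite{GKZ}), invokes the second-order normal form of Lemma 4.5 of \cite{GKZ}, valid for $|xy|\in(\eps^{3/2},\eps)$, in which every non-$\cO^*(\eps^3)$ term carries a factor $\wh x\wh y$ (hence vanishes on the cylinder) and the mollified averages $\overline{\bf H}_j$ vanish in the non-resonant region, and then mollifies the transformation to the identity away from $(p,q)=0$.

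The genuine gap is your treatment of the second-order resonant average $c_2(I)$. It cannot be ``absorbed into the integrable background by a harmless $\eps^2$-redefinition of $H_0$'': the lemma fixes $H_0(p,q,I)=I^2/2+p^2/2+\cos q-1$, a term $\eps^2c_2(I)$ does not vanish on $\{p=q=0\}$ (so it cannot be folded into $\eps H_1^*$), it is not $\cO^*_k(\eps^3)$, and a function of $I$ alone cannot be removed by a near-identity canonical transformation periodic in the angles, since it shifts the frequency map at order $\eps^2$ --- precisely the order of accuracy the paper needs later (the $\eps^2 M_2$ terms of the separatrix map and the identification $w_0=h^+-E(\eta^+)$ in Remark \ref{rmk:nonvanish-perturb} are sensitive to an $\eps^2$ change of $E(I)$). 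Nor is it ``dynamically inert'': over the time scale $\eps^{-2}\log\frac1\eps$ it produces order-one effects on the angle. Moreover it is genuinely nonzero for your scheme: with your $S_1$ one computes on the cylinder $c_2(I)=\tfrac12\sum_{(k_2,k_3)\neq0}k_2^2|a_{k_2,k_3}|^2(k_2I+k_3)^{-2}>0$ whenever $H_1(0,0,\cdot)$ has a $\varphi$-dependent harmonic, and it depends on $I$ through the denominators, so it cannot be dropped as a constant the way $c_1$ can. To close the argument you must either show that this averaged term is absent in a suitably chosen normalization (this is what the paper outsources to \cite{GKZ}, where the surviving terms come multiplied by $\wh x\wh y$), or quantify precisely why an $\eps^2$, $I$-dependent, cylinder-nonvanishing correction is admissible for the subsequent use of the lemma; as written, the statement you prove is $H_0+\eps^2c_2(I)+\eps H_1^*+\cO^*_k(\eps^3)$, which is weaker than the lemma.
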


\begin{rmk}\label{rmk:nonvanish-perturb}
Notice that the fact that $H_1^*$ vanishes on 
the cylinder $(p,q)=0$ implies that $w_0$ has 
the form $h^+-E(\eta^+)$. Indeed,  if the term 
$\mathcal O_k^*(\eps^3)$ is added to $w_0$,
then its partials  
$$
\left| \frac{\partial_* w_0}{\lb} 
\log \left|\frac{\kappa^\sg w_0}{\lb}\right|\,\right|
\le -C\eps^3\log \left|\frac{\kappa^\sg w_0}{\lb}\right|
$$
for some $C>0$ and $*\in \{\eta,\xi,h,\tau\}$. Notice 
that the $C^1$-norm of this expression on the right 
for $w\in (\eps^{3/2},\eps)$ is bounded by 
$\mathcal O(\eps^{3/2})$ and belongs to the remainder 
term in (\ref{eq:separatrix-for-arnold-example}). 
\end{rmk}
 
\begin{proof} The proof is an application of the normal
form derived in \cite{GKZ}. The set up studied there 
covers the generalized Arnold's example. 
In Lemma 4.1 \cite{GKZ} we rewrite the Hamiltonian 
$H_\eps (p,q,I,\varphi,t)$ in Moser's coordinates 
$$
\cH_\eps=H_\eps \circ \mathcal F_0 (x,y,I,\varphi,t)=
\cH_0+\eps \cH_1=
$$, 
$$
H_0 \circ \mathcal F_0 (x,y,I,\varphi,t)+
\eps H_1\circ \mathcal F_0 (x,y,I,\varphi,t),
$$
where $x=0$ is the stable manifold 
and $y=0$ is the unstable manifold of saddle 
$(p,q)=0$. 

In Lemma 4.5 \cite{GKZ} for $|xy| \in (\eps^{3/2},\eps)$ 
we find a smooth coordinate change $\Phi'$ such that 
$$
\cH_\eps \circ \Phi'=
\cH_0 +\mathcal O^*\left(\eps^{3}\beta^{-4}
+\eps^{2}\beta^{-2}\wh x\wh y+\eps(\wh x\wh y)^2\right),
$$
where the skew symmetric norm is defined in 
(\ref{eq:skewsym-norm}). Notice that 
$\overline {\bf H}_j,\ j=1,2,3$ from this Lemma vanish in 
the $\beta$-nonresonant region (see Section 5.1 right after 
this Lemma). 

The change of coordinates $\Phi'$ is $\eps$-close to 
the identity and can be molified outside of a neighborhood 
of $(p,q)=0$  as the identity. 
\end{proof}

\subsection{Computation of the splitting potential}
\label{sec:splitting-potential}
Consider the generalized Arnold example with 
the Hamiltonian (\ref{eq:our-Hamiltonian}) 
with perturbations of the form (\ref{eq:perturb1}). 
By Remark \ref{rmk:nonvanish-perturb} the case of 
general trigonometric perturbations reduces to this case. 
Thus, in this case we have 
\begin{equation}\label{h1}
H_\pm^\sg(\eta,\xi,\tau,t)=(1-\cos q^\sg(t-\tau)) 
P(\exp(i (\xi+\eta t)),\exp(i t)),
\end{equation}
where $P$ is a real valued trigonometric polynomial, i.e. 
for some $N$ we have 
\[
P(\exp(i \xi),\exp(i t))= \sum_{|k_1|,|k_2|\le N}  
p'_{k_1,k_2} \cos (k_1 \xi+k_2 t)+
p''_{k_1,k_2} \sin (k_1 \xi+k_2 t).
\]
The case of general trigonometric perturbations 
in discussed above.

Using formula (1.2) in Bessi \cite{Be} for the Arnold example 
for every harmonic 
$$
 p_{k_1,k_2} \exp i(k_1 \xi_t+k_2 t)=
p_{k_1,k_2} \exp i(k_1 (\xi_0+ \eta t) +k_2 t),\ \xi=\xi_0
$$ 
and we have 
\[
\int_\R [1-\cos q^\sg(t)] 
\cos 2\pi (k_1 (\xi_0 + \eta \tau)  + k_2 t+k_2 \tau))\ dt=\]\[
=2\pi \dfrac{(k_1 \eta +k_2 )
\cos 2\pi (k_1 \xi_0  +(k_1\eta+k_2) \tau)}{\sinh 
\frac{\pi (k_1 \eta +k_2 )}{2}},
\]
where $\xi_t=\xi+\eta t$ for all $t\in \R$. 

Combining we have 
\begin{lem} \label{lem:melnikov}
Let $H_\pm^\sg(\eta,\xi,\tau,t)=(1-\cos q^\sg(t-\tau)) 
P(\exp(i (\xi+\eta t)),\exp(i t))$, then the associated splitting potential 
has the form: 
\begin{eqnarray*}
M^\sg(\eta,\xi,\tau)&=&-2 \pi \sum _{|k_1|,|k_2|\le N}   
\left[p'_{k_1,k_2} \,\dfrac{(k_1\eta+k_2)}{\sinh \frac{\pi (k_1\eta+k_2)}{2}}\cos (k_1\xi+ (k_1\eta+k_2)\tau) \right.\\
&&\left.+p''_{k_1,k_2} \,\dfrac{(k_1\eta+k_2)}{\sinh \frac{\pi(k_1\eta+k_2)}{2}}\sin (k_1\xi+ (k_1\eta+k_2)\tau)\right],
\end{eqnarray*}
where $\xi,\tau\in\T,\ \eta\in \R$. 
\end{lem}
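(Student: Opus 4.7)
The plan is to reduce the Melnikov integral $M^\sigma=-\int_\R H^\sigma_*\,dt$ to a finite sum of elementary Fourier integrals against the bump $1-\cos q^\sigma(\cdot)$, each of which has a closed-form value via the residue calculation recorded as formula (1.2) in \cite{Be}. The overall structure is a direct computation; there is no new analytic ingredient beyond that cited formula.

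First I would note that because the perturbation carries the factor $\cos q-1$, which vanishes on the cylinder $\{(p,q)=0\}$, the subtraction term $H_1(I,\xi+\nu t,0,0,\cdot)$ in the definition of $H^\sigma_*$ from Proposition \ref{poincare-melnikov} drops out. Consequently $H^\sigma_*$ agrees (up to an overall sign) with $H_\pm^\sigma$, and the formula (\ref{h1}) may be plugged directly into the definition of $M^\sigma$.

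Next I would perform the substitution $s=t-\tau$ in the integral over $t$ and expand the trigonometric polynomial $P$ into its finite sum of harmonics indexed by $(k_1,k_2)$. Using the sum-to-product identities for $\cos(A+B)$ and $\sin(A+B)$ with $A=k_1\xi+(k_1\eta+k_2)\tau$ and $B=(k_1\eta+k_2)s$, I would pull the $(\xi,\eta,\tau)$-dependent phase outside the integral. This is precisely where the combination $k_1\xi+(k_1\eta+k_2)\tau$ appearing in the lemma arises, and it is the only step where one must carefully track how the substitution redistributes the $\tau$-dependence between the $q^\sigma$-factor and the $P$-factor.

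The remaining scalar integrals are $\int_\R(1-\cos q^\sigma(s))\cos(\omega s)\,ds$ and the sine analogue, with $\omega=k_1\eta+k_2$. Since the pendulum separatrix satisfies $\cos q^\sigma(-s)=\cos q^\sigma(s)$ (as one checks from the explicit formula $q^\sigma(s)=4\arctan(e^{\pm s})$), the weight $1-\cos q^\sigma$ is even in $s$; the sine integral vanishes and the cosine integral equals $2\pi\omega/\sinh(\pi\omega/2)$ by the Bessi formula (1.2) in \cite{Be}. Summing over $(k_1,k_2)$ and inserting the overall sign from the definition of $M^\sigma$ produces the claimed expression. The main potential obstacle, if any, is the phase bookkeeping in the previous paragraph; everything else is routine.
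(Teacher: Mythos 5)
Your proposal is correct and follows essentially the same route as the paper: expand $P$ harmonic by harmonic, observe that the subtraction term in $H^\sg_*$ vanishes because the perturbation carries the factor $\cos q-1$, and evaluate each resulting integral against $1-\cos q^\sg$ via Bessi's formula (1.2), i.e. $\int_\R(1-\cos q^\sg(s))\cos(\omega s)\,ds=2\pi\omega/\sinh(\pi\omega/2)$ with $\omega=k_1\eta+k_2$. Your explicit shift $s=t-\tau$ and the parity argument killing the sine integral are just a slightly more detailed bookkeeping of the same computation the paper performs.
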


%
%

\subsection{Properties of the Melnikov potential}
\label{sec:NHL-diffusion-condition}

Suppose the splitting potentials $M^+ (\eta,\xi,\tau)$ satisfies 
the following condition:

\begin{itemize}
\item[{\bf [M1]}]
 There are two smooth families $\tau_{i}(\eta,\xi), \ i=0,1$ such that 
for each point $(\eta,\xi)\in {\bf K}\times \T$ we have 
\[
(\partial_\tau M^+ (\eta,\xi,\tau)-
\eta\,\partial_\xi M^+ (\eta,\xi,\tau)) |_{\tau=\tau_i(\eta,\xi)}=0
\ \text{ and }\]
\[ 
(\partial^2_{\tau\tau} M^+ (\eta,\xi,\tau)- 
2\eta\,\partial^2_{\xi \tau} M^+ (\eta,\xi,\tau)+
\eta^2 \partial^2_{\xi \xi} M)|_{\tau=\tau_i(\eta,\xi)}
\ne 0.\quad   
\]
\end{itemize}

We choose $\tau_\pm(I,\varphi)$ with values in $(-1,1)$. 
Similarly, one can define this condition for $M^-(I,\varphi,\tau)$. 
Condition {\bf [M1]} is natural in the sense that 
$$
(\partial_\tau M^+ (\eta,\xi,\tau)-
\eta\,\partial_\xi M^+ (\eta,\xi,\tau))
$$ 
is the time derivative of  
the Melnikov function and \newline 
$$
\partial^2_{\tau\tau} M^+ (\eta,\xi,\tau)- 
2\eta\,\partial^2_{\xi \tau} M^+ (\eta,\xi,\tau)+
\eta^2 \partial^2_{\xi \xi} M^+(\eta,\xi,\tau)
$$ 
is the second order time derivative.

In this section we verify that the condition {\bf [M1]} holds
for an open class of trigonometric pertrubations 
$H_1(q,\varphi,t)$. By Lemma \ref{lem:melnikov} we have 
\be \label{eq:Melnikov-deriv}
\beal
M^+(\eta,\xi,\tau)&=&2 \pi \sum _{|k_1|,|k_2|\le N}   
\left[p'_{k_1,k_2}(\eta) \dfrac{(k_1\eta+k_2) 
\cos (k_1 \xi+(k_1\eta+k_2)\tau )}{\sinh \frac{\pi (k_1\eta+k_2)}{2}}\right.\\
&&\left.-p''_{k_1,k_2}(\eta) \dfrac{(k_1\eta+k_2)
\sin (k_1 \xi+(k_1\eta+k_2)\tau )}{\sinh \frac{\pi (k_1\eta+k_2)}{2}}\right],\\
M^+_\tau(\eta,\xi,\tau)&=&2 \pi \sum _{|k_1|,|k_2|\le N}   
\left[p'_{k_1,k_2}(\eta) \dfrac{(k_1\eta+k_2)^2 
\sin (k_1 \xi+(k_1\eta+k_2)\tau )}{\sinh \frac{\pi (k_1\eta+k_2)}{2}}\right.\\
&&\left.
-p''_{k_1,k_2}(\eta) \dfrac{(k_1\eta+k_2)^2 
\cos (k_1 \xi+(k_1\eta+k_2)\tau )}{\sinh \frac{\pi (k_1\eta+k_2)}{2}}\right]\\
M^+_\xi(\eta,\xi,\tau)&=&2 \pi \sum _{|k_1|,|k_2|\le N}   
\left[p'_{k_1,k_2}(\eta) \dfrac{(k_1\eta+k_2)k_1 
\sin (k_1 \xi+(k_1\eta+k_2)\tau )}{\sinh \frac{\pi (k_1\eta+k_2)}{2}}\right.\\
&&\left.
-p''_{k_1,k_2}(\eta) \dfrac{(k_1\eta+k_2)k_1
\cos (k_1 \xi+(k_1\eta+k_2)\tau )}{\sinh \frac{\pi (k_1\eta+k_2)}{2}}\right].
\enal
\ee
Fix $\rho>0$.  
Consider the generalized Arnold example and 
assume that for some $a>0$ we have  
\be \label{eq:pert-cond}
\beal 
p'_{0,1}=\sinh \frac \pi 2,\quad  |p'_{1,0}|\le a,
\ |p'_{i,j}|,|p''_{i,j}|\le a, \textrm{ for all }0\le i,j\le N, i+j\ge 2,
\\
|(p'_{1,1},p''_{1,1})|,
|(p'_{i,j},p''_{i,j})|\ge \rho a\ \textrm{ for some odd }i\ne 0
\textrm{ and an even } j.
\enal 
\ee

In addition, assume that $a$ is small, then 
by Lemma \ref{lem:melnikov} we have  
\be \label{eq:Melnikov-deriv-perturb}
\beal
M^+(\eta,\xi,\tau)&=:& 2\pi \cos \tau+ 
2\pi a \overline M^+(\eta,\xi,\tau)\ \, \\
M^+_\tau(\eta,\xi,\tau)&=:& -2\pi \sin \tau+ 
2\pi a \overline M_\tau^+(\eta,\xi,\tau).
\enal
\ee

\begin{lem} \label{lem:suff-cond-lamination}
If conditions (\ref{eq:pert-cond}) holds, then 
conditions {\bf [M1]} are satisfied for 
all $(\eta,\xi)\in \R\times \T$. 
\end{lem}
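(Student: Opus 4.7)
My plan is to treat $M^+$ as a perturbation of its largest harmonic and verify [M1] via the Implicit Function Theorem applied globally on $\R\times\T$. The first step is to isolate the $(k_1,k_2)=(0,1)$ harmonic from the explicit formula in Lemma \ref{lem:melnikov}: the normalization $p'_{0,1}=\sinh(\pi/2)$ makes this contribution equal to exactly $2\pi\cos\tau$, independent of $\xi$ and $\eta$, while (\ref{eq:pert-cond}) forces all remaining Fourier coefficients to have magnitude at most $a$. This recovers the decomposition $M^+(\eta,\xi,\tau) = 2\pi\cos\tau + 2\pi a\,\bar M^+(\eta,\xi,\tau)$ already recorded in (\ref{eq:Melnikov-deriv-perturb}).

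The key algebraic observation is that the first-order differential operator appearing in [M1], namely $L := \partial_\tau - \eta\,\partial_\xi$, acts on a harmonic of the form $e^{i(k_1\xi + (k_1\eta+k_2)\tau)}$ by multiplication by $i\bigl((k_1\eta+k_2)-\eta k_1\bigr) = ik_2$. Consequently $L^2$ acts by $-k_2^2$, and the $\eta$- and $\eta^2$-prefactors that look dangerous a priori are algebraically absorbed, so no polynomial growth in $\eta$ ever appears in $L^m M^+$. Summing over the remaining harmonics and using that $P_N$ is a trigonometric polynomial (hence a finite sum) together with the exponential decay of $\zeta/\sinh(\pi\zeta/2)$ as $|\zeta|\to\infty$ for $k_1\ne 0$, one obtains the uniform estimates
\[
L M^+(\eta,\xi,\tau) = -2\pi\sin\tau + O(a), \qquad L^2 M^+(\eta,\xi,\tau) = -2\pi\cos\tau + O(a),
\]
with remainders bounded (together with all derivatives) uniformly on $\R\times\T^2$.

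With these bounds, for $a$ small enough the equation $LM^+(\eta,\xi,\tau)=0$ is a $C^k$-small perturbation of $-2\pi\sin\tau=0$ uniformly over $(\eta,\xi)\in\R\times\T$. A quantitative Implicit Function Theorem, using that $\partial_\tau L M^+ = -2\pi\cos\tau + O(a)$ is uniformly bounded away from zero near $\tau=0$ and near the second zero of $\sin$ in the fundamental domain, then produces two smooth families of global solutions $\tau_0(\eta,\xi), \tau_1(\eta,\xi)$ defined on all of $\R\times\T$, each confined to a small neighborhood of one of those zeros. Evaluating $L^2 M^+$ at these solutions yields $\mp 2\pi + O(a) \ne 0$ for $a$ small, which is exactly the non-degeneracy required by [M1].

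The only genuine delicacy is to verify the global uniformity in $\eta\in\R$: compact-$\eta$ versions of the same argument are standard, but the statement asks for $(\eta,\xi)\in\R\times\T$. Uniformity boils down to the two observations above, namely the $k_2$-absorption that prevents $\eta$-polynomial growth and the exponential smallness of $\zeta/\sinh(\pi\zeta/2)$ at infinity, together with their derivative versions; this is careful bookkeeping rather than a conceptual obstacle, and I would expect the whole argument to reduce to essentially a one-page verification.
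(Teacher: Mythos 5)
Your proposal is correct and takes essentially the same route as the paper's proof: the decomposition $M^+=2\pi\cos\tau+2\pi a\,\overline M^+$ with the remainder uniformly $O(a)$ in $\eta$ (finitely many harmonics plus the decay of $(k_1\eta+k_2)^d/\sinh\frac{\pi(k_1\eta+k_2)}{2}$), followed by the implicit function theorem near $\tau=0$ and $\tau=\pi$ using the non-degeneracy $|M^+_{\tau\tau}|>2\pi-\mathcal O(a)$. Your observation that $\partial_\tau-\eta\,\partial_\xi$ acts on each harmonic by multiplication by $ik_2$ is a slightly cleaner bookkeeping device for the uniformity in $\eta$ than the paper's direct coefficient estimate, but the argument is the same in substance.
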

\begin{proof} 
Notice that coefficients in front of each harmonic 
$\sin (k_1 \xi+(k_1\eta+k_2)\tau )$ and 
$\cos (k_1 \xi+(k_1\eta+k_2)\tau )$ have the form
$(k_1\eta+k_2)^d/\sinh \frac{\pi (k_1\eta+k_2)\tau )}{2}$
for $d=1,2$. This expression tends to zero as 
$\eta\to \infty$. Since we have only finitely many 
harmonics, we can choose $a$ small enough so that 
we have $O(a)$ uniformly in $\eta$. 

Due to the implicit theorem and previous coefficient estimate, the condition 
$$
M^+_\tau(\eta,\xi,\tau)=0
$$ 
holds for $\tau_-=O(a)$ or $\tau_+=\pi+O(a)$. 
This is because
\be \label{eqn:Melnikov-nondeg}
|M^+_{\tau\tau}(\eta,\xi,\tau_\pm)| >2\pi-\mathcal{O}(a)>\pi \text{ for each } 
(\eta,\xi)\in 
{\bf K}\times \T
\ee
by taking $a$ small enough.

\end{proof}

One can check that even in the case 
$H_1=(1-\cos q)(\cos \varphi+\cos  t )$
condition {\bf [M1]} is violated at $I=1,\ \varphi=\pm 1/2$. 
In this case, we have only one zero $\tau=\mp 1/2$.  
In the case $H_1=(1-\cos  q)(a \cos \varphi+\cos  t)$
with any $|a|<1$ condition {\bf [M1]} is satisfied. 
In addition, we need to assume that $a$ is small.

\section{Construction of isolating blocks 
and existence of a NHIL}
\label{sec:construction-nhil}

In this section we construct a normally hyperbolic 
invariant lamination ${\bf \Lb}_\eps$. 
It has three steps. We state the main result 
of this section in subsection \ref{sec:existence-nhil}.
Then in subsection \ref{sec:SM-linearize}
we analyze the linearization of $\SM_\eps$.
In subsection \ref{sec:IB-centers} we construct 
almost fixed cylinders $\SM_\eps(C_{ii})\approx C_{ii},
\ i=0,1$ and almost period two cylinders 
$ \SM_\eps(C_{01})\approx C_{10},\ \SM_\eps(C_{10})
\approx C_{01}$. In subsection \ref{sec:cone-cond}
we construct a Lipschitz NHIL by verifying {\bf C1 to C5} conditions 
from Appendix \ref{sec:suff-nhil} and finally in subsection \ref{sec:cr-regularity}
we improve the smoothness of leaves by Theorem \ref{cr} and prove the 
H\"older continuity between different leaves.

\subsection{A Theorem on existence of NHIL}
\label{sec:existence-nhil}

In this section we construct Normally Hyperbolic
Invariant Lamination (NHIL) using isolating block 
construction presented in Appendix \ref{sec:suff-nhil}. 

\begin{figure}[h]
  \begin{center}
  \includegraphics[width=8.5cm]{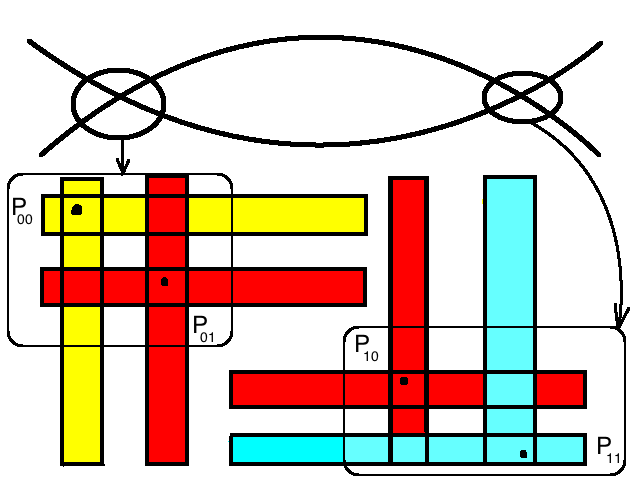}
  \end{center}
  \caption{Isolating blocks for NHIL}
  \label{fig:NHIL}
\end{figure}

To define centers of isolating blocks $P_{ij},\ i,j=\{0,1\}$
as on Fig. \ref{fig:NHIL} we prove existence of 
four sets of functions: 
\be \label{eq:laminations-isol-block-centers}
\beal 
h_{ii}(\eta,\xi,\eps), \ w_{ii}(\eta,\xi,\eps),\ \tau_i(\eta,\xi,\eps),\quad i=0,1
\qquad \text{and }\\
h_{ij}(\eta,\xi,\eps), \ w_{ij}(\eta,\xi,\eps),\ 
\tau_{ij}(\eta,\xi,\eps),\quad i\ne j,\
i,j \in \{0,1\}. 
\enal 
\ee
such that for all $(\xi,\eta)\in {\bf K}\times \T$  equations 
(\ref{eq:nhil-fixed-legs}) and (\ref{eq:nhil-permuted-legs}) hold. 
See Lemmas \ref{lem:fixed-centers} and \ref{lem:period-two-centers}. 
We also have 
\[
w_*(\eta,\xi,\eps)\equiv h_*(\eta,\xi,\eps)-\frac{\eta^2}{2}.
\]  
In Lemma \ref{lem:eigenvalues} we compute eigenvectors 
$v_j(x)$ and eigenvalues $\lb_j(x),\ j=1,\dots,4$ of the rescaled 
linearization of the separatrix map (under new $(\eta,\xi,I,t)-$coordinate).
Since $\SM_\eps$ is symplectic, eigenvalues of its linearization 
$D\SM_\eps$ at any point at come in pairs: one pair of eigenvalues 
$\lb_{1,2}$ is close to one, the other pair is $\lb_3 \sim c\dt$ and  
$\lb_4 \sim (c\dt)^{-1}$. Note that there is no immediate dynamical implication 
from these eigenvectors as we do not claim existence of 
fixed points. However, these eigenvectors are used 
to construct a cone field in section \ref{sec:cone-cond}. 


Denote for $i,j\in\{0,1\}$
\be \label{eq:vij}
v_{4}^{ij}(\eta,\xi,\eps)=
v_{4}^{ij}(\eta,\xi,h_{ij}(\eta,\xi,\eps),\tau_{ij}(\eta,\xi,\eps)). 
\ee
Fix small $\dt>0$, some $\kappa>0$ and define 
the following four sets:  

\be \label{eq:isolating-block}
\beal
\Pi^{\dt,\kappa}_{ij}:=\left\{(\eta,\xi,h,\tau): \text{there is }
(\eta_0,\xi_0) \in {\bf K}\times \T,\ 
|\dt_3|\le \kappa_1 \dt,|\dt_4|\le \kappa_2 \dt^2\ \right. \\ 
\text{ such that } \quad  (\eta,\xi,h,\tau)=
\qquad \qquad \qquad \qquad \qquad \qquad \qquad \qquad 
\\
\left. 
(\eta_0,\xi_0,h_{ij}(\eta_0,\xi_0,\eps),\tau_{ij}(\eta_0,\xi_0,\eps))
+\dt_3 L_\eps v_{3}^{ij}(\eta_0,\xi_0,\eps) +
\dt_4 L_\eps v_{4}^{ij}(\eta_0,\xi_0,\eps)\right\}.
\enal 
\ee


These sets $\Pi^{\dt,\kappa}_{ij}, \ i,j\in\{0,1\}$ can be viewed 
as the union of parallelograms centered at  $(\eta_0,\xi_0,
h^{ij}_\eps(\eta_0,\xi_0), \tau^{ij}_\eps(\eta_0,\xi_0))$ 
with $(\eta_0,\xi_0)$ varying inside ${\bf K}\times \T$.


\vskip 0.1in 

Consider the Hamiltonian $H_\eps=H_0+\eps P_N$, 
given by (\ref{eq:our-Hamiltonian}) and let $P$ be a polynomial 
such that the associated Melnikov function $M^{\pm}$, given by 
Lemma \ref{lem:melnikov}, satisfies (\ref{eq:Melnikov-deriv-perturb}).

Let $\Sigma=\{0,1\}^\Z$ be the space of infinite sequences on two symbols, $\om=(\dots,\om_{-1},\om_0,\om_1,\dots)\in \Sigma$,
and $\sg:\Sigma\to \Sigma$ be the shift, i.e. 
$\sg \om=\om'$, where $\om'_{i+1}=\om_i$ for all $i\in\Z$.
Let $\mathbb A_0:=\mathcal D_0\times \T \subset 
\mathbb A:=\R \times \T$ be a cylinder, 
$(\eta,\xi)\in \mathcal D_0\times \T$. We call a map
$$
F:\mathbb A_0 \times \Sigma \to \mathbb A \times \Sigma
$$
a $C^r$ smooth skew-product map, if it is given by 
$$
F:(\eta,\xi,\om)\longmapsto (\eta',\xi',\om')=(f_\om(\eta,\xi),\sigma \om),
$$
where $f_\om:\mathbb A_0\to \mathbb A$ is a family of 
$C^r$ smooth cylinder maps with $C^r$ dependence on $\om$, 
i.e. the difference of $f_\om-f_{\om'}$ goes to zero with respect 
to the $C^r$ norm if $\om-\om' \to 0$. 
See also Appendix \ref{sec:nhil-skew} for related
definitions. 

\begin{thm} Fix small $\rho>0$. Suppose the trigonometric 
polynomial $P$ from (\ref{eq:perturb}) satisfies 
(\ref{eq:Melnikov-deriv-perturb}) for small $a>0$, then for 
$\kappa^s>0$, depending on $H_0$ and $H_1$ only, and 
any $\eps>0$ small enough the associated separatrix 
map $\SM_\eps$, given by 
(\ref{modified SM}), 
has a NHI\footnote{as a matter of fact this lamination is 
weakly invariant in the sense that if we extend this lamination 
to a $\mathcal O(\eps)$-neighbourhood of $\mathcal D_0$, 
then $\SM_\eps ({\bf \Lb}_{\eps})$ is a subset of the extension 
of ${\bf \Lb}_\eps$. In other words, the only way orbits can 
escape from ${\bf \Lb}_{\eps}$ are throught the boundary 
$\partial \mathcal D_0$.}L, 
denoted ${\bf \Lb}_{\eps}$, i.e. 
$$
{\bf \Lb}_{\eps} \subset \cup_{ij \in\{0,1\}}\ \Pi^{\dt,\eps}_{ij}.
$$
Moreover, there is a map 
$$
C:\mathbb A_0 \times \Sigma \to {\bf \Lb}_{\eps}
$$
such that for each 
$(\eta,\xi,\om)\in \mathcal D_0\times \T\times \Sigma$ 
we have   
$$
\SM_\eps (C(\eta,\xi,\om))=
C(f_\om(\eta,\xi),\sigma \om).
$$
In other words, for a $C^r$ smooth skew-product map $F$
the following diagram commutes:
\be \label{eq:first-skew-shift-diagram}
\begin{array}[c]{ccc} 
\quad\ {\bf \Lb}_\eps&
\quad  \stackrel{\SM_\eps} {\longrightarrow} & 
\quad {\bf \Lb}_\eps \\
C\ \uparrow &  & C\ \uparrow \\ 
\ \ \ \ | &  & \ \ \ \ | \\ 
\mathbb A_0 \times \Sigma
& \quad  \stackrel{F} {\longrightarrow} & \ \ 
\mathbb A \times \Sigma.
\end{array}
\ee
In addition, there exists $k\in\mathbb{N}$ such that
\[
[\omega]_k:=(\omega_{-k},\cdots,\omega_0,\cdots,\omega_k)
\]
is a truncation and exist functions $\tau_{[\omega]_k}(\eta,\xi)$, $I_{[\omega]_k}(\eta,\xi)$
such that
the map $F(\cdot,\om)$ has the following form 
\be \label{eq:separatrix-for-arnold-example-model} 
\beal 
\eta^+&=&\eta - \eps M^+_\xi (\eta^+,\xi,\tau_{[\om]_k}(\eta,\xi))+
{\bf O}_2,
\\
\xi^+&=&\xi +\epsilon M_{\eta^+}^+(\eta^+,\xi,\tau_{[\omega]_k}(\eta,\xi))
-\dfrac{\eta^+}{\lambda} \log 
\left| \dfrac{\kappa^s\,I_{[\sigma\om]_k}(\eta^+,\xi^+)}{\lambda}\right|
+{\bf O}_1.
\enal
\ee

\end{thm}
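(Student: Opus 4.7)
The approach is to apply the general isolating-block criterion for NHIL from Appendix \ref{sec:suff-nhil} to the four blocks $\Pi_{ij}^{\dt,\kappa}$, $i,j\in\{0,1\}$, and then read off the skew-product model from the associated symbolic coding. First I would produce the four centers $(h_{ij},w_{ij},\tau_{ij})$ in \eqref{eq:laminations-isol-block-centers}: for $i=j$ I would solve the approximate fixed-cylinder equations $\SM_\eps(C_{ii})\approx C_{ii}$ via Lemma \ref{lem:fixed-centers}, and for $i\neq j$ the approximate period-two equations $\SM_\eps(C_{ij})\approx C_{ji}$ via Lemma \ref{lem:period-two-centers}. Existence and smoothness of these centers rests on the Melnikov non-degeneracy {\bf [M1]}, which is valid by Lemma \ref{lem:suff-cond-lamination} under \eqref{eq:Melnikov-deriv-perturb}; it supplies two transverse zeros $\tau_\pm$ of the time derivative of the Melnikov function with second derivative bounded below as in \eqref{eqn:Melnikov-nondeg}, so the implicit function theorem gives the desired smooth parametrizations by $(\eta,\xi)\in\mathbf{K}\times\T$.

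Next I would analyse the linearization $D\SM_\eps$ at each center using Lemma \ref{lem:eigenvalues}: symplecticity forces eigenvalues to come in pairs, so one obtains two near-unit eigenvalues $\lb_{1,2}$ tangent to the center direction together with a strongly hyperbolic pair $\lb_3\sim c\dt$, $\lb_4\sim (c\dt)^{-1}$ whose eigenvectors $v_3,v_4$ dictate the parallelogram shape in \eqref{eq:isolating-block} with lengths scaled as $\kappa_1\dt$ and $\kappa_2\dt^2$ to match the asymmetric hyperbolicity. With these in hand, I would construct invariant cone fields about $v_3$ (stable) and $v_4$ (unstable), and verify conditions {\bf [C1]}--{\bf [C5]} from the appendix: topological correctness of entry and exit faces (using the orientation-switching rule $\sg^+=\sg\,\text{sgn}\,w$ so that the four blocks are linked into a Smale-type two-shift with admissible transitions $(i,j)\to(j,k)$), and uniform expansion/contraction rates dominating $1/\dt$ coming from the $\log|w|$-terms in the second and fourth lines of \eqref{modified SM}. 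Feasibility of the cone conditions under the two-shift linkage is precisely what the hypotheses {\bf [M1]} and \eqref{eq:pert-cond} were designed to deliver.

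From the appendix criterion I then obtain a NHIL ${\bf \Lb}_\eps\subset \cup_{i,j\in\{0,1\}}\Pi^{\dt,\eps}_{ij}$ together with a continuous parametrization $C:\A_0\times\Sigma\to{\bf \Lb}_\eps$ conjugating $\SM_\eps|_{{\bf \Lb}_\eps}$ to a skew-product $F(\eta,\xi,\om)=(f_\om(\eta,\xi),\sg\om)$, where $\om\in\Sigma=\{0,1\}^\Z$ records which loop is traversed at each iterate. To derive the explicit form \eqref{eq:separatrix-for-arnold-example-model}, I would exploit the fact that on each leaf the transverse coordinates $\tau$ and $w$ are slaved to the symbolic itinerary by exponential contraction: with contraction rate $\sim c\dt$ in the stable direction and $\sim (c\dt)^{-1}$ in the unstable direction, the $\tau$-coordinate at the current iterate is determined up to an $O(\eps^{Ck})$-error by the finite window $[\om]_k=(\om_{-k},\dots,\om_k)$, giving a smooth function $\tau_{[\om]_k}(\eta,\xi)$; analogously, $w=h^+-E(\eta^+)$ on the leaf labelled by $\sg\om$ becomes the function $I_{[\sg\om]_k}(\eta^+,\xi^+)$. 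Substituting these into the $\eta^+$- and $\xi^+$-equations of \eqref{modified SM} and choosing $k$ large enough that the truncation error is absorbed into $\mathbf{O}_1,\mathbf{O}_2$ yields the two displayed formulas. Commutativity of the diagram \eqref{eq:first-skew-shift-diagram} is then automatic from the coding.

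The main technical obstacle will be the uniform quantitative control of the cone fields and of their forward and backward iterates across all four blocks simultaneously, in the presence of the $\log|w|$-singularity of $\SM_\eps$: the linearization is unbounded near the separatrix, so I must arrange each $\Pi^{\dt,\eps}_{ij}$ and each admissible transition so that the orbit remains in the admissible annulus $c^{-1}\eps^{1+\varpi}<|w|<c\eps$ where the separatrix-map theorem applies, while still keeping the hyperbolicity ratio large enough to dominate the remainders $\mathbf{O}_1,\mathbf{O}_2$ and to close the cone conditions uniformly in $(\eta,\xi)\in\mathbf{K}\times\T$. Once this quantitative cone analysis is established, the symbolic coding, the passage to the skew-product $F$, and the derivation of \eqref{eq:separatrix-for-arnold-example-model} are formal consequences of the hyperbolic graph transform together with the normal-form coordinates imported from \cite{GKZ}.
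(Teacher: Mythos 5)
Your route matches the paper's own: approximate fixed and period-two cylinders from Lemmas \ref{lem:fixed-centers} and \ref{lem:period-two-centers} under {\bf [M1]}, the spectral analysis of $D\SM_\eps$ from Lemma \ref{lem:eigenvalues}, parallelogram blocks with cone fields verifying {\bf [C1-C5]} of Appendix \ref{sec:suff-nhil}, and then the symbolic coding by $\Sigma=\{0,1\}^\Z$ producing the conjugacy $C$ and the skew product $F$. Up to this point the proposal is the same decomposition the paper uses, and it is correct.

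The genuine gap is in the last step, where you pass from the Lipschitz lamination to the statement as formulated. The theorem asserts that $F$ is a $C^r$ smooth skew product and that there are smooth functions $\tau_{[\omega]_k}(\eta,\xi)$, $I_{[\sigma\omega]_k}(\eta^+,\xi^+)$ whose substitution into (\ref{modified SM}) produces (\ref{eq:separatrix-for-arnold-example-model}) with the truncation error absorbed into ${\bf O}_1,{\bf O}_2$. Your justification --- exponential slaving of $(\tau,w)$ to the itinerary, so that the finite window $[\omega]_k$ determines these coordinates up to $O(\eps^{Ck})$ --- gives only $C^0$ closeness of leaves sharing a window, and the graph transform you invoke gives smoothness of each individual leaf but says nothing about how the leaves and their derivatives vary with $\omega$. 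The paper has to prove exactly this: in section \ref{sec:cr-regularity} it combines the $C^r$-section theorem (Theorem \ref{cr}) with a H\"older continuity estimate for the jets of the invariant splitting (Theorem \ref{holder}), yielding $\|\mathcal{L}_{\omega}-\mathcal{L}_{\omega'}\|_{C^r}\le C'_r\|\omega-\omega'\|^{\prod_k\varphi^c_k}$; it is this jet-level H\"older bound that makes $\omega\mapsto f_\omega$ continuous in the $C^r$ topology and guarantees that replacing $\tau_\omega, I_\omega$ by their truncations yields an error small in $C^r$, not merely in $C^0$ --- which matters both for the statement itself and for the later passage to conservative coordinates in section \ref{sec:symplectic-coord}, where the skew product is differentiated. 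Without some version of this jet-space H\"older estimate, your truncation step and the claim that $F$ is a $C^r$ smooth skew product remain unjustified.
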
 

\begin{rmk} 
Smallness of $a$ is independent from the size the compact 
domain ${\bf K}\subset \R$, because $\xi$-dependent 
components of the Melnikov function average out (see 
the proof of Lemma \ref{lem:suff-cond-lamination}).  

Notice that in (\ref{eq:separatrix-for-arnold-example-model}) there exists one invalid term $\epsilon M_{\eta^+}^+(\eta^+,\xi,\tau_{[\omega]_k}(\eta,\xi))$ because it's smaller than the reminder $\mathbf{O}_1$. We leave it in this position to match the system (\ref{eq:separatrix-for-arnold-example}) better.

Actually, $\Big{\{}(\eta,\xi,I_{\omega}(\eta,\xi),t_{\omega}(\eta,\xi))\Big{|}(\eta,\xi)\in{\bf K}\times\mathbb{T},\omega\in\Sigma\Big{\}}$ is the coordinate of NHIL (see section \ref{sec:cr-regularity}). The H\"older continuity of $\omega$ benefits us with a finite truncation and we just need to consider $I_{\omega,k}$ and $t_{\omega,k}$ instead. The error caused by truncation can be much less than the $\mathbf{ O}_1$ and $\mathbf{ O}_2$ terms. 


\end{rmk} 

\begin{proof} The proof consists of following parts:
\begin{itemize}
\item Derive properties of the linearization $D\SM_\eps$ near 
zeroes of the Melnikov potential $M^+_\tau-\eta M^+_\xi=0$ such 
as eigenvalues and eigenvectors (see Lemma \ref{lem:eigenvalues}).

\item Find an approximately invariant cylinders for 
separatrix map $\SM_\eps$: for $i,j\in \{0,1\}$ we have 
$$
C_{ij}:{\mathcal D}_0\times \T \to {\mathcal D}\times \T \times \R \times \T  
$$
$$
C(r,\theta)=(\eta(r,\theta),\xi(r,\theta),\tau_{ij}(\eta,\xi,\eps),w_{ij}(\eta,\xi,\eps))
$$
so that 
$$
\SM_\eps (C_{01}({\mathcal D}_0\times \T)) \approx 
C_{10}({\mathcal D}_0\times \T)$$
$$
\SM_\eps (C_{10}({\mathcal D}_0\times \T))\approx 
C_{01}({\mathcal D}_0\times \T).
$$ 
These cylinders play the role of centers of the isolating 
blocks containing the normally hyperbolic lamination
(see points $P_{ij}$ on Fig. \ref{fig:NHIL}). 

\item Show that for proper $\kappa$ the 
$\kappa_1 \dt \times \kappa_2 \dt^2$-paralellogram 
neighborhoods of these cylinders
$\Pi^{\dt,\kappa}_{ij}$, given by (\ref{eq:isolating-block})
satisfy conditions [C1-C5].  
\item Prove NHIL's H\"older dependence of $\omega$ and the smoothness of every leaf, which leads to a skew product satisfiy (\ref{eq:separatrix-for-arnold-example-model}).
\end{itemize}
\end{proof}

\subsection{Properties of the linearization of $\SM_\eps$}
\label{sec:SM-linearize}
We star with the setting of Corollary \ref{separatrix-for-arnold-example}. Actually, (\ref{eq:separatrix-for-arnold-example}) is enough to achieve the existence of NHIL. But we should keep in mind the reminders $\mathbf{O}_1$ and $\mathbf{O}_2$ can be further evaluated due to (\ref{modified SM}). In the sequel we limit to the symbol $\sigma = +$, 
in that we consider the map to be undefined when $w < 0$, 
and show that $\SM_\epsilon$ has an NHIL. With almost the same procedure we can get the NHIL for the case $\sigma=-$. The system \eqref{eq:separatrix-for-arnold-example} can be seen as two 
coupled subsystems, to see this more clearly, define
\[
    I =\frac{1}{\epsilon}( h - E(\eta)),
\]
which also includes a rescaling. Note that 
\begin{multline*}
   \epsilon I^+= h^+ -  E(\eta^+) =  h  - \epsilon M^+_\tau -[E(\eta)+E'(\eta)(\eta^+-\eta)]+ {\bf O}_2 \\ 
    = \epsilon I -\epsilon \bigl(M^+_\tau(\eta, \xi, \tau) -E'(\eta) M^+_\xi(\eta, \xi, \tau) \bigr) + {\bf O}_2
\end{multline*}
since $\eta^+ - \eta  = O(\epsilon)$. We will also omit the superscripts from $M^+$ and $\kappa^+$. Then
\begin{equation}\label{eq:rescaled-sep}
\begin{aligned}
\eta^+ & = \eta &{}-  \epsilon M_\xi (\eta, \xi, \tau) & &{} + 
{\bf O}_2  &\\
\xi^+ & = \xi &{} +  \epsilon M_\eta (\eta, \xi, \tau) & - \frac{\eta^+}{\lambda} \log \left(  \frac{\epsilon \kappa I^+}{\lambda}  \right) &{} + {\bf O}_1 &\\
I^+ & = I & {} -  \bigl(M_\tau - E'(\eta)M_\xi)(\eta, \xi, \tau\bigr) & &{} + \frac{1}{\epsilon}{\bf O}_2 & \\
\tau^+ & = \tau &   + \frac{1}{\lambda} \log \left(  \frac{\epsilon \kappa I^+}{\lambda}  \right) &\mod 2\pi &+\;  {\bf O}_1  &
\end{aligned}
\end{equation}
We removed the absolute value from the $\log$ term and noting the map is undefined for $I^+ < 0$. As (\ref{eq:perturb1}) is mechanical, $\omega=h^+-{\eta^{+2}}/{2}$.

\begin{lem} \label{lem:eigenvalues}
Consider the separatrix map $\SM_\eps$ for 
the generalized example of Arnold 
(\ref{eq:our-Hamiltonian}-\ref{eq:perturb}). Suppose the Melnikov potential 
$M(\eta,\xi,\tau)$ satisfies condition {\bf [M1]}. Then 
for some positive $C,\nu$ and any sufficiently small $\dt$ such 
that $\eps^{\varpi}\leq \dt$, $1\geq\varpi>1/4$ for any 
$$
x=(\eta,\xi,I,\tau)\in {\bf K}\times \T\times
(-C\dt,C\dt)\times \T
$$ 
the differential D$\SM_\eps$ has eigenvalues 
$$
|\lambda_i-1|\le C\eps^{1/8} \log \eps,\ i=1,2,
\qquad  |\lambda_4|<C\dt ,\ |\lambda_3|>\frac{1}{2C\dt}.
$$
For $|\eta|\ge \nu$ there are eigenvectors 
$e_j(x),\ j=1,2,3,4$, i.e.
\be \label{eq:eigenvectors}
  \textup{D} \SM_\eps(x)\, e_j(x)=\lb_j(x)e_j(x).
\ee
such that 
$$
e_3(x)=\frac{(0,\eta,0,-1)}
{\sqrt{1+\eta^2}}+\mathcal O(\dt)
$$
$$
e_4(x)=\frac{(0,\eta,
\Delta M,-1)
}{\sqrt{1+\eta^2+\Delta M^2}}
+\mathcal O(\dt^2)
$$
$$
e_{1,2}(x)=
\frac{(0,-M_{\tau\tau}+\eta M_{\xi\tau},0,M_{\xi\tau}-\eta M_{\xi\xi})}{\sqrt{(-M_{\tau\tau}+\eta M_{\xi\tau})^2+(-M_{\xi\tau}+\eta M_{\xi\xi})^2}}+\mathcal{O}(\epsilon^{1/8}\log\epsilon)
$$
with $\Delta M=M_{\tau\tau}-2\eta M_{\tau\xi}+\eta^2 M_{\xi\xi}$.

In particular, for each $(\eta,\xi)\in {\bf K}\times \T$
angles between $e_i(x)$ and $e_j(x)$ with $i\ne j$ 
and $\{i,j\}\ne \{1,2\}$ is uniformly away from zero. Moreover, for each $x$ such that $\dt$ satisfies the above conditions the vector $\textup{D}\SM_\eps(x)e_4$ in absolute value is bounded by $C\dt$. 

\end{lem}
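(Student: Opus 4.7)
The plan is to compute the $4\times 4$ Jacobian $A := D\SM_\eps(x)$ of the rescaled separatrix map \eqref{eq:rescaled-sep} entry by entry and exploit the fact that, in the region $|I|\le C\dt$, every entry that is not already $O(\eps\log\eps)$ comes from differentiating the logarithmic term $\log(\eps\kappa I^+/\lb)$ in the formulas for $\xi^+$ and $\tau^+$. Each such derivative produces a factor $\partial_* I^+/(\lb I^+) = \varrho\,\partial_* G$, where $\varrho := 1/(\lb I^+)$ is large (of order $1/\dt$) and $G(\eta,\xi,\tau) := \eta M_\xi - M_\tau$. Collecting these gives the decomposition
\[
A = I + R + \varrho\, w\, v^\top,\qquad w := (0,-\eta,0,1)^\top,\quad v := (\partial_\eta G,\partial_\xi G,1,\partial_\tau G)^\top,
\]
with $R$ a bounded remainder of size $O(\eps\log\eps)$ in the anisotropic norm $\|\cdot\|^*$, and the key identity $v^\top w = -\Delta M$, where $\Delta M = \eta\,\partial_\xi G - \partial_\tau G$.

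From this decomposition one computes $\mathrm{tr}\, A = 4 - \varrho\Delta M + O(\eps\log\eps)$ and, via the six $2\times 2$ principal minors, the second elementary symmetric function $e_2(A) = 6 - 2\varrho\Delta M + O(\eps\log\eps)$. Since $\SM_\eps$ is symplectic, $\det A = 1$ and the characteristic polynomial is palindromic, so it factors as $p(z) = (z^2 - s_1 z + 1)(z^2 - s_2 z + 1)$ with $s_1+s_2 = \mathrm{tr}\, A$ and $s_1 s_2 = e_2 - 2$. These two equations force $s_1 = 2 + O(\eps\log\eps)$ and $s_2 = 2 - \varrho\Delta M + O(\eps\log\eps)$. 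Consequently $\lb_{1,2}$, being the roots of $z^2 - s_1 z + 1$, satisfy $|\lb_i - 1|\le C\eps^{1/8}\log\eps$ after tracking the remainders ${\bf O}_1 = O^*(\eps^{7/8})\log\eps$ and ${\bf O}_2 = O^*(\eps^{5/4})\log^2\eps$ through the anisotropic derivatives, while $\lb_{3,4}$ are the roots of $z^2 - (2-\varrho\Delta M) z + 1 = 0$, giving $\lb_3 \sim -\varrho\Delta M \sim -1/\dt$ and $\lb_4 = 1/\lb_3 \sim -\dt$, matching the claimed bounds.

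To produce the eigenvectors I would leverage the explicit rank-one structure. The large eigenvalue $\lb_3$ has eigenvector $w$ directly (modulo corrections of size $\|R\|/|\lb_3|$), which yields $e_3 = (0,\eta,0,-1)/\sqrt{1+\eta^2} + O(\dt)$ after sign normalization. For $e_4$ I would restrict $A$ to the two-plane $\mathrm{span}(w, e_I)$ with $e_I := (0,0,1,0)$; the identities $Aw = (1-\varrho\Delta M)w + \Delta M\,e_I + O(\eps\log\eps)$ and $Ae_I = -\varrho w + e_I + O(\eps\log\eps)$ show this plane is approximately invariant, and the $2\times 2$ reduction has determinant $1$ and trace $2-\varrho\Delta M$, with small-eigenvalue eigenvector $\propto w - \Delta M\,e_I$, i.e.\ $\propto (0,\eta,\Delta M,-1)$ after a sign flip. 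For $e_{1,2}$, I would look for eigenvectors in $\ker v^\top$ transverse to $\mathrm{span}(w, e_I)$; one checks directly that $v^\top(0,-M_{\tau\tau}+\eta M_{\xi\tau},0,M_{\xi\tau}-\eta M_{\xi\xi}) = 0$, so the displayed vector lies in the correct subspace, and the restriction of $A$ to this subspace is $I + O(\eps\log\eps)$, producing eigenvalues near $1$ with this vector as the leading-order eigendirection. The angle non-degeneracy for $|\eta|\ge \nu$ then follows from a short linear-algebra check on the three explicit unit vectors, using that $\Delta M$ is uniformly bounded away from zero in the region of interest.

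The main obstacle I anticipate is the bookkeeping of subleading corrections needed to justify the sharp error sizes, in particular the $O(\dt^2)$ (rather than the generic $O(\dt)$) correction on $e_4$: this requires showing that the component of the true eigenvector along the $\ker v^\top$-complement of $\mathrm{span}(w, e_I)$ is quadratically small in $\dt$, which comes from a second application of the rank-one identity (a Sherman--Morrison-type inversion of the $2\times 2$ reduction) combined with the symplectic constraint $\lb_3\lb_4 = 1$. The final bound $|D\SM_\eps(x)\cdot e_4|\le C\dt$ is then immediate from $A e_4 = \lb_4 e_4$ and $|\lb_4|\le C\dt$.
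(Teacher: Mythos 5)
Your overall strategy coincides with the paper's: write out the $4\times 4$ differential of the rescaled map \eqref{eq:rescaled-sep}, note that every large entry carries the factor $\beta=1/(\lb I^+)\sim 1/\dt$ coming from differentiating the logarithm, use symplecticity ($\det=1$) together with the trace to split the spectrum into a pair near $1$ and a pair of size $\sim\dt^{\mp 1}$ (the paper does this through the characteristic polynomial $(1-\lb)^4-(1-\lb)^2\lb\,\beta\,\Delta M+\mathcal O(\eps^{1/4}\log^2\eps)$, you through the palindromic factorization in $s_1,s_2$), and then read off the leading eigendirections, with condition {\bf [M1]} ($\Delta M\neq 0$) giving the uniform angle separation.

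There is, however, a concrete flaw in the decomposition on which you build everything. The remainder $R:=D\SM_\eps-\mathrm{Id}-\varrho\,w\,v^\top$ is \emph{not} $O(\eps\log\eps)$: the $I^+$-row of the differential equals $v^\top=(\partial_\eta G,\partial_\xi G,1,\partial_\tau G)$ up to the remainder of \eqref{eq:rescaled-sep}, and since the third component of $w$ vanishes, the $O(1)$ entries $\partial_\eta G,\partial_\xi G,\partial_\tau G$ all sit inside $R$; in addition the $(\xi^+,\eta)$-entry contains $-\lb^{-1}\log(\eps\kappa I^+/\lb)\,\partial_\eta\eta^+=O(\log\eps)$. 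These terms are load-bearing. If $R$ really were $O(\eps\log\eps)$, then writing $B=\varrho wv^\top+R$ one would get $e_2(D\SM_\eps)=6+3\,\mathrm{tr}(B)+e_2(B)=6-3\varrho\Delta M+o(\varrho)$ (a rank-one matrix has $e_2=0$), not the value $6-2\varrho\Delta M$ you state; and with $s_1+s_2=4-\varrho\Delta M$, $s_1s_2=4-3\varrho\Delta M$ one finds $s_1\to 3$, which destroys the conclusion $|\lb_{1,2}-1|\ll 1$. The correct coefficient $-2$ comes exactly from the mixed $2\times 2$ minors pairing the $O(1)$ entries of the $I^+$-row with the $\varrho$-sized rows, which contribute $\varrho(\eta\,\partial_\xi G-\partial_\tau G)=\varrho\,\Delta M$ to $e_2(B)$. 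The same entries are used silently in your $e_{1,2}$ step: the claim that $D\SM_\eps$ restricted to $\ker v^\top$ is $\mathrm{Id}+O(\eps\log\eps)$ requires checking that the $I^+$-row annihilates the candidate vector, i.e.\ the cancellation $\alpha\gamma-\gamma\alpha=0$, rather than appealing to smallness of $R$. All of this is repaired by keeping the full matrix explicitly, as the paper does; with that correction your trace/$e_2$ route and the two-plane reduction for $e_4$ do reproduce the paper's eigenvalues and eigenvectors (up to sign conventions in $\Delta M$), but the advertised bound $|\lb_{1,2}-1|\le C\eps^{1/8}\log\eps$ still requires the explicit error bookkeeping (the paper's $\mathcal O(\eps^{1/4}\log^2\eps)$ for the characteristic polynomial) rather than the blanket $O(\eps\log\eps)$.
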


\begin{proof} Denote $w=\eps\dt/\lb$. 
The differential of the separatrix map D$\SM_\eps^+$ for 
the Arnold's example (\ref{eq:separatrix-for-arnold-example})
is given by:
\[
\begin{pmatrix}
\frac{\partial\eta^+}{\partial \eta}& \frac{\partial\eta^+}{\partial \xi} & 0 &\frac{\partial\eta^+}{\partial \tau}\\
-\frac{1}{\lambda}\log\frac{\epsilon I^+}{\lambda}\frac{\partial\eta^+}{\partial\eta}-\frac{\eta^+}{\lambda I^+}\frac{\partial\Delta}{\partial \eta}   &  1-\frac{\eta^+}{\lambda I^+}\frac{\partial \Delta}{\partial \xi}-2\pi l\frac{\partial \eta^+}{\partial \xi} & -\frac{\eta^+}{\lambda I^+}  &    -\frac{\eta^+}{\lambda I^+}\frac{\partial \Delta}{\partial \tau}-2\pi l\frac{\partial \eta^+}{\partial \tau}  \\
\frac{\partial \Delta}{\partial \eta} & \frac{\partial \Delta}{\partial \xi}
 &1& \frac{\partial \Delta}{\partial \tau}\\
\frac{1}{\lambda I^+}\frac{\partial \Delta}{\partial \eta}  &  \frac{1}{\lambda I^+}\frac{\partial \Delta}{\partial \xi}   & \frac{1}{\lambda I^+} &   1+\frac{1}{\lambda I^+}\frac{\partial \Delta}{\partial \tau} \\
\end{pmatrix} +
\begin{pmatrix}
      \mathbf{O}_2    \\
            \mathbf{O}_1    \\
      \mathbf{O}_2/\epsilon    \\
      \mathbf{O}_1 
\end{pmatrix},
\]
which can be translated into

{\tiny
$$ \left( \begin {array}{cccc} 
1-\eps M_{\xi \eta}
& 
-\eps M_{\xi\xi} 
&
0
&
-\eps M_{\xi\tau}
\\ 
\noalign{\medskip}
-\eta^+
\frac {-  M_{\tau \eta}+M_\xi+\eta M_{\xi\eta}}{\lb I^+}
- (1-\eps M_{\xi \eta}) \log  \left| {\eps I^+} \right| 
&
1
-\frac {\eta^+\,\gamma}{\lb I^+}
+\eps M_{\xi\xi} \log|{\eps I^+}|
&
\frac {-\eta^+}{\lb I^+} 
&
\frac {-\eta^+\,\alpha}{\lb I^+}
+\eps M_{\xi\tau} \log|{\eps I^+}|
\\ 
\noalign{\medskip}
-  M_{\tau \eta}+M_\xi+\eta M_{\xi\eta}&
\gamma&
1&
\alpha
\\ 
\noalign{\medskip}
\frac {-  M_{\tau \eta}+M_\xi+\eta M_{\xi\eta}}{\lb I^+}
&
{\frac {\gamma}{\lb I^+}}
&
{\frac {1}{\lb I^+}}
&
1+{\frac {\alpha}{\lb I^+}}\end {array} \right),
$$}
where:
$$\Delta=-M_\tau+\eta M_\xi,$$
$$\gamma=-M_{\xi\tau}+\eta M_{\xi\xi},$$
$$ \alpha=-M_{\tau\tau}+\eta M_{\xi\tau},$$
$$\zeta=-M_{\tau\eta}+M_\xi+\eta M_{\xi\eta},$$
$$\beta=\frac{1}{\lambda I^+},$$
$$ l\in\mathbb{Z}\;\text{such that}\; l=[\frac{1}{2\pi\lambda}\log\frac{\kappa\epsilon}{\lambda}]$$
and the error of entries in the first and third rows is 
$\mathcal O(\eps^{5/4}\log^2 \eps)$, $\mathcal O(\eps^{1/4}\log^2 \eps)$ and the error of 
entries in the second and forth rows is 
$\mathcal O(\eps^{7/8}\log^2 \eps)$. Notice that the $\kappa$ and $\lambda$ are just contants in the original separatrix map (\ref{eq:perturb1}), so we can remove them from the matrix.

As the separatrix map is symplectic, the determinant of this matrix should be one (although we take a new coordinate). So we can get a couple of eigenvalues close to 1

\[
\lambda_{1,2}(x)=1\pm \mathcal \mathcal O(\eps^{1/8} \log \eps).
\]

 This point can be verified from a simple calculation:
\[
det(D\mathcal{SM}_{\epsilon}^+-\lambda Id)=(1-\lambda)^4-(1-\lambda)^2\lambda\frac{\alpha-\eta\gamma}{\delta}+\mathcal{O}(\epsilon^{1/4}\log^2\epsilon)=0.
\]
Neglecting error terms of order $\cO(\eps^{1/4}\log^2 \eps)$,
we get 
%
%
 \be \label{eq:trace}
 \text{trace} (\text{D}\SM_\eps^+)=
 4+\frac{\alpha-\eta \gamma}{\dt}.
 \ee
Due to {\bf [M1]}, for each $(\eta,\xi)\in {\bf K}\times \T$ we have 
$$
\alpha-\eta\gamma=\Delta M=M_{\tau\tau}-2\eta M_{\tau\xi}+\eta^2 M_{\xi\xi}\ne 0
$$
uniformly hold, then for small enough $\delta$, this trace should be $\mathcal{O}(1/\delta)$. So there should existsthe other couple of eigenvalues 
\[
\lambda_3(x)\sim\mathcal{O}(1/\delta),\quad\lambda_4(x)\sim\mathcal{O}(\delta)
\]
because the determinant equals one.

Now we compute approximation of the eigenvectors: 
$$
D\SM_\eps(x)e_j(x)=\lb_j(x)e_j(x), \quad j=1,2,3,4, 
$$

%
so we can estimate the eigenvalues 
\begin{equation}\label{eigenvalue}
\begin{aligned}
\lambda_{1,2} & =1+\mathcal{O}(\epsilon^{1/8}\log\epsilon),\\
\lambda_3 & =\frac{2+\alpha\beta-\eta^+\beta\gamma+sgn(\alpha)\sqrt{(2+\alpha\beta-\eta^+\beta\gamma)^2-4}}{2}+\mathcal{O}(\epsilon/\delta),\\
\lambda_4 & =\frac{2+\alpha\beta-\eta^+\beta\gamma-sgn(\alpha)\sqrt{(2+\alpha\beta-\eta^+\beta\gamma)^2-4}}{2}+\mathcal{O}(\epsilon\delta),
\end{aligned}
\end{equation}
and corresponding eigenvectors by
\begin{equation}
\begin{aligned}
e_{1,2} &=(0,\frac{\alpha}{\sqrt{\alpha^2+\gamma^2}},0,-\frac{\gamma}{\sqrt{\alpha^2+\gamma^2}})^t+\mathcal{O}(\epsilon^{1/8}\log\epsilon),\\
e_3 &=\frac{|\lambda_3-1|\cdot|\beta|}{\sqrt{\lambda_3^2(1+\eta^{+2})\beta^2+(\lambda_3-1)^2}}(0,\frac{\eta^+\lambda_3}{1-\lambda_3},\frac{1}{\beta},\frac{\lambda_3}{\lambda_3-1})^t+\mathcal{O}(\delta),\\
 &\approx\frac{1}{\sqrt{1+\eta^2}}(0,\eta,0-1)+\mathcal{O}(\delta),\\
e_4 &=\frac{1}{\sqrt{\lambda_4^2\beta^2(1+\eta^{+2})+(\lambda_4-1)^2}}(0,-\lambda_4\beta\eta^+,\lambda_4-1,\lambda_4\beta)^t+\mathcal{O}(\delta^2),\\
& \approx\frac{1}{\sqrt{1+\eta^2+(\alpha-\eta\gamma)^2}}(0,\eta,\alpha-\eta\gamma,-1)+\mathcal{O}(\delta^2)\\
\end{aligned}
\end{equation}
with $\beta\sim\mathcal{O}(1/\delta)$. {\bf [M1]} ensures the angles between different eigenvectors are uniformly away from zero. Change $\alpha$, $\gamma$ back into the notation depending on $M$ we proved the Lemma.

\end{proof}

\subsection{Calculation of centers of isolating blocks}
\label{sec:IB-centers}

In this section we calculate the set of functions $w$'s and $h$'s
from (\ref{eq:laminations-isol-block-centers}). 
Recall that the sepatatrix map can be written in the new coordinate
$$
\SM_\eps(\eta,\xi,I,\tau)=(\eta^+,\xi^+,I^+,\tau^+)
$$ 
with
$w=\epsilon I+\epsilon\Delta(\eta,\xi,\tau)+\mathbf{O}_2$. So we just need to get weak invariant functions 
\be 
\beal 
I_{ii}(\eta,\xi,\eps),\ \tau_{ii}(\eta,\xi,\eps),\quad i=0,1
\qquad \text{and }\\
I_{ij}(\eta,\xi,\eps), \ 
\tau_{ij}(\eta,\xi,\eps),\quad i\ne j,\
i,j \in \{0,1\}
\enal 
\ee
which satisfy the following Lemma.

\begin{lem}  \label{lem:fixed-centers}
Suppose condition (\ref{eq:Melnikov-deriv-perturb}) holds 
for a sufficiently small $a>0$. Fix $1\geq \varpi>1/4>\rho>0$. Then for 
a sufficiently small $\eps>0$ and $\dt \in (\eps^{\varpi},\eps^{\rho})$
there are functions $\tau_{ii}$ and $I_{ii},\ i=0,1$  such that 
$I_{ii}=\mathcal O( \dt)$ and these functions satisfy 
\be \label{eq:nhil-fixed-legs}
\beal 
\eta^+&=\eta -  \eps M^+_{\xi}(\eta,\xi,\tau_{ii}(\xi,\eta,\eps))
\\
\xi^+&=\xi + \eps M^+_{\eta^+}(\eta,\xi,\tau_{ii}(\xi,\eta,\eps))-
\dfrac{\eta^+}{\lambda}
\log \left| \dfrac{\kappa^+ \epsilon I_{ii}(\xi^+,\eta^+,\eps)}{\lambda}\right|\;\text{mod}\;2\pi
\\
 \left| \ \tau_{ii}\right .& \left.(\eta,\xi,\eps) + 2n\pi + \frac 1 \lambda\ 
\log \left| \dfrac{\kappa^+ \epsilon I_{ii}(\xi^+,\eta^+,\eps)}{\lambda}\right|-
\tau_{ii}(\eta^+,\xi^+,\eps) \right | \le \mathcal O(a\,\dt^2)
\\
\left| I_{ii}\right .& \left.(\eta,\xi,\eps)  
+\Delta(\eta,\xi,\tau_{ii}(\eta,\xi,\eps))- I_{ii}(\eta^+,\xi^+,\eps) \right| \le \mathcal O( \dt^2),
\enal
\ee
where $I_{ii}(\eta,\xi,\eps)>0$ for all $(\eta,\xi)\in {\bf K}\times \T$. 

Moreover, these solutions satisfy 
\be \label{eq:wii}
I_{ii}(\eta,\xi,\eps)&=& \dt +a \dt \bar{I}_{ii}(\eta,\xi,a)\\
\tau_{ii}(\eta,\xi,\eps)&=& {i\pi}+a\tau_{i}^1(\eta,\xi,a)+
a\dt\, \tau_{ii}^2(\eta,\xi,a)\nonumber
\ee
for some smooth functions $\bar I_{ii},\ \tau_{i}^1,
\  \tau_{ii}^2$
with $i\pi+a\tau_{i}^1$ solving the first implicit equation in {\bf [M1]}. 
\end{lem}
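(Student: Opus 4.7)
The plan is to reduce the four-equation system to a fixed-point problem for two scalar correction functions $\bar I_{ii}$ and $\tau_{ii}^{2}$, after first constructing $\tau_i^{1}$ by the implicit function theorem. The starting observation is that, under the ansatz
\[
I_{ii}(\eta,\xi) = \delta + a\delta\,\bar I_{ii}(\eta,\xi),
\qquad
\tau_{ii}(\eta,\xi) = i\pi + a\tau_i^{1}(\eta,\xi) + a\delta\,\tau_{ii}^{2}(\eta,\xi),
\]
the first two equations in \eqref{eq:nhil-fixed-legs} simply define $(\eta^+,\xi^+)$ in terms of $(\eta,\xi)$ via the perturbation-of-identity structure of \eqref{eq:rescaled-sep}; the hard content lies in making the last two equations hold.

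First I would construct $\tau_i^{1}$. By \eqref{eq:Melnikov-deriv-perturb} the equation $M_\tau^+ - \eta M_\xi^+ = -2\pi\sin\tau + 2\pi a\bar M_\tau^+ - 2\pi\eta a\bar M_\xi^+ = 0$ has the two approximate roots $\tau = i\pi$, $i=0,1$, and these roots are non-degenerate thanks to \eqref{eqn:Melnikov-nondeg}: $|M_{\tau\tau}^+(\eta,\xi,i\pi)| \ge 2\pi - O(a) > \pi$. Hence, for $a$ sufficiently small, the implicit function theorem yields smooth functions $\tau_i^{1}(\eta,\xi,a)$ on ${\bf K}\times\T$ with $\tau_i = i\pi + a\tau_i^{1}$ solving the first condition in \textbf{[M1]}. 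In particular, along $\tau=\tau_i$ one has $M_\tau^+ - \eta M_\xi^+ = 0$, so the Melnikov drift in the third line of \eqref{eq:rescaled-sep} is $O(a\delta)$ once $\tau_{ii}^{2}$ is allowed.

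Next I would solve the first two equations of \eqref{eq:nhil-fixed-legs} for $(\eta^+,\xi^+)$ as smooth functions of $(\eta,\xi)$ and of the unknowns $(\bar I_{ii},\tau_{ii}^{2})$. The $\eta$-equation is an $O(\epsilon)$ perturbation of the identity; the $\xi$-equation is a diffeomorphism of $\T$ once the $O(\log\epsilon)$ log-shift is accounted for, since $\partial_\eta\bigl(-\tfrac{\eta^+}{\lambda}\log|\epsilon\kappa\delta/\lambda|\bigr)$ is bounded. The resulting map $\Phi:(\eta,\xi)\mapsto(\eta^+,\xi^+)$ depends smoothly on the correction functions, with Lipschitz constant $1+O(\epsilon|\log\epsilon|)$ in $(\bar I_{ii},\tau_{ii}^{2})$.

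I would then recast the remaining two equations as a coupled fixed-point problem $\mathcal F(\bar I_{ii},\tau_{ii}^{2})=(\bar I_{ii},\tau_{ii}^{2})$ on the Banach space $C^{0}({\bf K}\times\T)\times C^{0}({\bf K}\times\T)$. Choosing the integer $n$ so that $2n\pi + \tfrac{1}{\lambda}\log(\epsilon\kappa\delta/\lambda)$ is $O(a)$, the $\tau$-equation becomes
\[
a\delta\bigl(\tau_{ii}^{2}(\eta^+,\xi^+) - \tau_{ii}^{2}(\eta,\xi)\bigr)
= -\tfrac{a\,\bar I_{ii}(\eta^+,\xi^+)}{\lambda} - a\bigl(\tau_i^{1}(\eta^+,\xi^+)-\tau_i^{1}(\eta,\xi)\bigr) + O(a\delta^{2}),
\]
and the $I$-equation becomes
\[
a\delta\bigl(\bar I_{ii}(\eta^+,\xi^+) - \bar I_{ii}(\eta,\xi)\bigr)
= -\bigl(M_\tau^+ - \eta M_\xi^+\bigr)(\eta,\xi,\tau_{ii}) + O(\delta^{2}).
\]
Expanding the Melnikov discrepancy around $\tau=\tau_i$ and using the non-degeneracy from \textbf{[M1]}, the RHS of the $I$-equation is $O(a\delta\,\tau_{ii}^{2})+O(\delta^{2})$, which cancels the $a\delta$ prefactor on the left. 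Both components of $\mathcal F$ are then Lipschitz with small constant on the ball $\{\|\bar I_{ii}\|_{C^0}, \|\tau_{ii}^{2}\|_{C^0} \le K\}$ for some $K$ independent of $\epsilon$, as the only ``transport'' terms involve differences at $(\eta,\xi)$ and $(\eta^+,\xi^+)=(\eta,\xi)+O(\epsilon)+O(\log\epsilon)\bmod 2\pi$ of functions whose oscillation contributes a uniformly bounded amount relative to the $a\delta$ prefactor. A Banach fixed-point iteration therefore produces a unique fixed point, and bootstrapping in $C^{1}$ (which is possible because $\Phi$ is a smooth diffeomorphism) gives the required smoothness.

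The main obstacle, which I would treat with care, is the interaction between the large $O(|\log\epsilon|)$ twist in the $\xi$ direction and the slow-variable equations: when one subtracts $\bar I_{ii}(\eta^+,\xi^+) - \bar I_{ii}(\eta,\xi)$, the angular shift is not small, so the contraction does not come from proximity of the two points but from the explicit $a\delta$ (respectively $a$) prefactors and the smallness of $\delta$ relative to $\delta^{-1}$-free terms. Verifying that the composite operator $\mathcal F$ genuinely contracts in a weighted $C^{0}$ (or $C^{1}$) norm, with constants uniform in $\eta\in{\bf K}$, is the technical heart of the argument. Positivity $I_{ii}>0$ follows automatically since $I_{ii}=\delta(1+O(a))>0$ for $a$ small; the claimed error bounds $O(a\delta^{2})$ and $O(\delta^{2})$ in \eqref{eq:nhil-fixed-legs} are simply the leading parts of the remainders ${\bf O}_1,{\bf O}_2$ in \eqref{modified SM} after rescaling by $\epsilon^{-1}$ on the $I$-line, together with the higher-order Taylor residues discarded above.
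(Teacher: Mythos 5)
Your opening steps coincide with the paper's: you build $\tau_i^1$ by the implicit function theorem from \eqref{eq:Melnikov-deriv-perturb} and the non-degeneracy \eqref{eqn:Melnikov-nondeg}, and you insert the ansatz \eqref{eq:wii} into the separatrix map. After that the routes diverge, and your route has gaps that the paper's avoids. The paper never solves an exact functional equation: since \eqref{eq:nhil-fixed-legs} only requires the last two relations up to $\mathcal O(a\,\dt^2)$ and $\mathcal O(\dt^2)$, it reads off explicit leading-order solutions. At the order needed, $\tau_{ii}^2$ drops out of the $\tau$-matching line, so the system is triangular rather than genuinely coupled: first $\bar I_{ii}(\eta,\sigma(\eta,\xi))=\lambda\bigl[\tau_i^1(\eta,\sigma(\eta,\xi))-\tau_i^1(\eta,\xi)\bigr]+\mathcal O(a)$ with $\sigma(\eta,\xi)=\xi+2\pi\{n\eta\}$, then $\tau_{ii}^2=\bigl(\bar I_{ii}(\eta,\sigma)-\bar I_{ii}(\eta,\xi)\bigr)/\alpha+\mathcal O(a\dt)$, using $\Delta(\eta,\xi,i\pi+a\tau_i^1)=0$ and $\alpha\neq0$ from {\bf [M1]}; one then checks the residuals fit the stated tolerances. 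No contraction argument is needed.

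Two steps in your proposal are genuine gaps. First, the contraction is asserted, not proved, and componentwise it is false as stated: the update of $\tau_{ii}^2$ involves the difference $\bar I_{ii}(\eta^+,\xi^+)-\bar I_{ii}(\eta,\xi)$ divided by $\alpha=\mathcal O(1)$, so its Lipschitz constant in $\bar I_{ii}$ is of order $2/|\alpha|$, not ``small''; any smallness comes only from the off-diagonal block structure (the $\bar I$-update depends on $\tau^2$ with an $\mathcal O(\lambda\dt)$ factor), i.e.\ from a weighted norm you mention but never set up. Worse, your operator composes the unknowns with the implicitly defined, unknown-dependent map $(\eta,\xi)\mapsto(\eta^+,\xi^+)$; composition is not Lipschitz in $C^0$ when the inner map varies, so the iteration must be run with uniform Lipschitz/$C^1$ control of the iterates from the start — ``bootstrapping in $C^1$'' afterwards does not repair the $C^0$ contraction. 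Second, your choice of $n$ ``so that $2n\pi+\tfrac1\lambda\log(\eps\kappa\dt/\lambda)$ is $O(a)$'' is not available for an arbitrary $\dt\in(\eps^{\varpi},\eps^{\rho})$: varying $n\in\Z$ only brings this quantity within $\mathcal O(1)$ of zero, and an $\mathcal O(1)$ mismatch forces $\bar I_{ii}\sim 1/a$, destroying the ansatz \eqref{eq:wii}. The paper sidesteps this by tying $\dt$ to $n$ through $\dt=\tfrac{\lambda}{\kappa\eps}\exp(2n\lambda\pi)$, so the mismatch vanishes identically; your argument needs the same discrete choice (or an equivalent redefinition of $\dt$) to close. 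There is also a sign slip in your rewritten $\tau$-equation, but that is cosmetic; the positivity of $I_{ii}$ and the error bookkeeping you give are fine.
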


Notice that this lemma says that neglecting the error terms in 
the separatrix map $\SM_\eps$ from Corollary \ref{separatrix-for-arnold-example}
has two weak invariant cylinders
\[
\Lb_{ii}=
\{(\eta,\xi,h_{ii}(\eta,\xi),\tau_{ii}(\eta,\xi)):\ (\eta,\xi) \in {\bf K}\times \T\}
\] 
with $\omega_{ii}=I_{ii}(\eta,\xi,\epsilon)+\Delta(\eta,\xi,\tau_{ii}(\eta,\xi,\epsilon))$
and $h_{ii}=\epsilon I_{ii}+\eta^2/2$. Denote by $\Lb_{ii}^*$ the invariant cylinder obtained by extending 
$h_{ii}$ and $\tau_{ii}$ for an $O(\eps)$-neighbour\-hood of $\bf K$. 
Then up to error terms $\SM_\eps(\Lb_{ii})\subset \Lb^*_{ii}.$

\begin{proof}

Start by proving existence of $w_{ii},\, \tau_{ii}$'s solving functional 
equations (\ref{eq:nhil-fixed-legs}) for $i=0,1$. 

\begin{itemize}
\item By {\bf M1} we have 
\[
M^+_{\tau}(\eta,\xi,\tau_i(\eta,\xi)) -\eta
M^+_{\xi}(\eta,\xi,\tau_i(\eta,\xi))  =0.
 \]
Actually we can take $\tau_i(\eta,\xi)={i\pi}+a\tau_{i}^1(\eta,\xi,a)$ where $a$ is sufficiently small. This can be derived from the $\mathcal{O}(a)$ estimate. We formally solve the $\tau_{i}^1(\eta,\xi,a)$ by
\[
\tau_{i}^1(\eta,\xi,a)=\frac{\partial_t\overline{M}(\eta,\xi,i\pi)-\eta\partial_{\xi}\overline{M}(\eta,\xi,i\pi)}{2\pi\cos i\pi}+\mathcal{O}(a).
\]

\item Take the formal solution (\ref{eq:wii}) into the separatrix map. It should satisfies
\begin{equation} \label{1-order estimate}
\begin{aligned}
\eta^{+} & = \eta-a\epsilon \partial_{\xi}\overline{M}^s(\eta,\xi,\tau_{ii}(\eta,\xi,a))+\mathbf{O}_2,\\
\xi^{+} & = \xi+2\pi\{n\eta\}-\frac{\eta}{\lambda}\log(1+a{\bar{I}_{ii}(\eta^+,\xi^+)})+\mathbf{O}_1,\\
a\delta\bar{I}_{ii}(\eta^+,\xi^+) & = a\delta\bar{I}_{ii}(\eta,\xi)-\Big{\{}\Delta(\eta,\xi,i\pi+a\tau_{i}^1)+[\partial_{tt}M^s(i\pi+a\tau_{i}^1)\\
&-a\eta\partial_{\xi t}\overline{M}(i\pi+a\tau_{i}^1)\big{]}a\delta \tau_{ii}^2\Big{\}}+\mathcal{O}(a^2\delta^{2}),\\
a\tau_{i}^1(\eta^+,\xi^+)& = a\tau_{i}^1(\eta,\xi)+\frac{1}{\lambda}\ln(1+{a\bar{I}_{ii}(\eta^+,\xi^+)})+\mathcal{O}(a\delta),
\end{aligned}
\end{equation}
where $\delta=\frac{\lambda}{\kappa\epsilon}\exp(2n\lambda\pi)$. Within the third equation,
 \[
 \Delta(\eta,\xi,i\pi+a\tau_{i}^1)=0\]
  due to the first item, and 
  \[
  \partial_{tt}M^s(i\pi+a\tau_{i}^1)-a\eta\partial_{\xi t}\overline{M}(i\pi+a\tau_{i}^1)\neq 0 
  \]
as $a$ sufficiently small. So we can update the third equation into
\[
\bar{I}_{ii}(\eta^+,\xi^+)=\bar{I}_{ii}(\eta,\xi)+\alpha(\eta,\xi,i\pi+a\tau_{i}^1)\tau_{ii}^2+\mathcal{O}(a\delta)
\]
with $\alpha$ defined in previous section. 
Since $\eta $ belongs to a compact region $\bf K$, $a$ 
can be chosen sufficiently small and
\[
\begin{aligned}
\bar{I}_{ii}(\eta^+,\xi^+)&=\bar{I}_{ii}(\eta,\xi^+)+\mathcal{O}(a\epsilon)\\
&=\bar{I}_{ii}(\eta,\xi+2\pi\{n\eta\}-\frac{\eta}{\lambda}\ln(1+a{\bar{I}_{ii}(\eta^+,\xi^+)}))+\mathbf{O}_1\\
&=\bar{I}_{ii}(\eta,\xi+2\pi\{n\eta\}-\frac{\eta}{\lambda}\ln(1+a{\bar{I}_{ii}(\eta,\xi+2\pi\{n\eta\})}))+\mathcal{O}(a^2),
\end{aligned}
\]

so we solve the fourth equation of (\ref{1-order estimate}) and get
%
%
\[
\bar{I}_{ii}(\eta,\sigma(\eta,\xi))=\lambda[\tau_{i}^1(\eta,\sigma(\eta,\xi))-\tau_{i}^1(\eta,\xi)]+\mathcal{O}(a)
\]
where $\sigma(\eta,\xi)=\xi+2\pi\{n\eta\}$. Take this equation back into the third equation of (\ref{1-order estimate}) and get
\[
\tau_{ii}^2(\eta,\xi)=\frac{\bar{I}_{ii}(\eta,\sigma(\eta,\xi))-\bar{I}_{ii}(\eta,\xi)}{\alpha(i\pi+a \tau_{i}^1)}+\mathcal{O}(a\delta).
\]

\end{itemize}
\end{proof}

\begin{lem} \label{lem:period-two-centers}
Suppose condition (\ref{eq:Melnikov-deriv-perturb}) 
holds for a sufficiently small $a>0$. Fix $1\geq\varpi>1/4>\rho>0$. Then for 
a sufficiently small $\eps>0$ and $\dt \in (\eps^{\varpi},\eps^{\rho})$
there are functions $\tau_{ij}$ and $I_{ij},\ \{i,j\}=\{0,1\}$  such that 
$I_{ij}=\mathcal O( \dt)$ and these functions satisfy 
\be \label{eq:nhil-permuted-legs}
\beal 
\eta^+&=\eta -  \eps M^+_{\xi}(\eta,\xi,\tau_{ij}(\xi,\eta,\eps)) 
\\
\xi^+&=\xi + \eps M^+_{\eta^+}(\eta,\xi,\tau_{ij}(\xi,\eta,\eps))-
\dfrac{\eta^+}{\lambda}
\log \left| \dfrac{\kappa^+ w_{ji}(\xi^+,\eta^+,\eps)}{\lambda}\right|
\\
 \left| \ \tau_{ij}\right .& \left.(\eta,\xi,\eps) + 2n\pi + \frac 1 \lambda\ 
\log \left| \dfrac{\kappa^+ w_{ij}(\xi^+,\eta^+,\eps)}{\lambda}\right|-
\tau_{ji}(\eta^+,\xi^+,\eps) \right| \le \mathcal O(a\,\dt^2)
\\
\left| I_{ij}\right .& \left.(\eta,\xi,\eps) - 
 \left[M^+_{\tau}(\eta,\xi,\tau_{ij}(\eta,\xi,\eps)) -\eta
M^+_{\xi}(\eta,\xi,\tau_{ij}(\eta,\xi,\eps)) \right] \right. 
\\ & \left. \qquad \qquad \qquad  \qquad \qquad \qquad 
 - I_{ji}(\eta^+,\xi^+,\eps) \right| \le \mathcal O( \dt^2),
\enal
\ee
where $I_{ij}(\eta,\xi,\eps)>0$ for all $(\eta,\xi)\in {\bf K}\times \T$. 

Moreover, these solutions satisfy 
\be \label{eq:wij}
I_{ij}(\eta,\xi,\eps)=\dt e^{-\lambda\pi} (1+
a \bar I_{ij}(\eta,\xi,a))
\ee
\[
\tau_{ij}(\eta,\xi,\eps)= {i\pi}+a\tau_{i}^1(\eta,\xi,a)+
a\dt\, \tau_{ij}^2(\eta,\xi,a)
\]
for some smooth functions $\bar{I}_{ij},\ \tau_{i}^1,\ \tau_{ij}^2$
with $i\pi+a\tau_{i}^1$ solving the first implicit equation in {\bf [M1]}. 
\end{lem}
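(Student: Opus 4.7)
The strategy parallels the proof of Lemma \ref{lem:fixed-centers}, with the ansatz modified so that $\SM_\eps$ produces a period-two orbit (in the cylinder-to-cylinder sense) rather than a nearly invariant one. The key observation driving the ansatz (\ref{eq:wij}) is that the $\tau$-update in (\ref{modified SM}) contains $\lambda^{-1}\log|\kappa^+ \epsilon I^+/\lambda|$: choosing $I_{ij} = \delta e^{-\lambda \pi}(1 + a\bar{I}_{ij})$ makes this logarithmic term produce a phase shift of $-\pi$ modulo $2\pi$ relative to the fixed case, so that $\tau_{ij} \approx i\pi$ is mapped to $\tau_{ji} \approx j\pi$ with $j = 1-i$. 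Thus the zeroth-order period-two structure is encoded purely in the extra factor $e^{-\lambda \pi}$, while the integer $n$ defining $\delta$ is adjusted to keep $I_{ij}$ in the prescribed range.

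First, I substitute the ansatze into the four equations of (\ref{modified SM}), treating $\bar I_{ij}(\eta, \xi, a)$ and $\tau_{ij}^2(\eta, \xi, a)$ as unknowns, while reusing the same $\tau_i^1$ that solves the Melnikov equation from Lemma \ref{lem:fixed-centers}. As in (\ref{1-order estimate}), the $\eta^+$ equation again yields $\eta^+ - \eta = \mathcal O(a\epsilon)$ and the $\xi^+$ equation gives $\xi^+ = \sigma(\eta,\xi) + \mathcal O(a)$ with $\sigma(\eta,\xi) = \xi + 2\pi\{n\eta\}$ plus corrections from $\log(1 + a\bar I_{ji}(\eta^+,\xi^+))$. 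The third and fourth equations now couple $\bar I_{ij}$ and $\tau_{ij}^2$ at $(\eta,\xi)$ to $\bar I_{ji}$ and $\tau_{ji}^2$ at the image point, reflecting the swap $(ij) \leftrightarrow (ji)$ instead of a self-consistency at $(ii)$.

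Second, pairing $(ij) = (01)$ with $(ji) = (10)$ produces a closed system of four functional equations on ${\bf K}\times \mathbb T$. Leading-order invertibility is guaranteed by $\alpha - \eta\gamma = \Delta M|_{\tau = i\pi} \neq 0$, which is the second inequality in \textbf{[M1]}, and this persists under the $\mathcal O(a)$ perturbation since \textbf{[M1]} is open. I would solve by a contraction mapping argument on $C^0({\bf K}\times\mathbb T)^4$, with contraction rate controlled by $a$, since nonlinearity enters only through $a\bar I_{ij}$ inside the logarithm and through the $a$-small Melnikov tail $a\overline M$. The explicit leading-order solution mirrors that in Lemma \ref{lem:fixed-centers}: one obtains schematically $\bar I_{ij}(\eta,\sigma(\eta,\xi)) = \lambda[\tau_j^1(\eta,\sigma(\eta,\xi)) - \tau_i^1(\eta,\xi)] + \mathcal O(a)$ and $\tau_{ij}^2(\eta,\xi) = [\bar I_{ji}(\eta^+,\xi^+) - \bar I_{ij}(\eta,\xi)]/\alpha(\eta,\xi,i\pi + a\tau_i^1) + \mathcal O(a\delta)$.

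The remainder estimates $\mathcal O(a\delta^2)$ for the $\tau$ residual and $\mathcal O(\delta^2)$ for the $I$ residual follow from the same bookkeeping used in Lemma \ref{lem:fixed-centers}, invoking the improved bounds $\mathbf O_1 = \mathcal O^*_1(\epsilon\log\epsilon)$ and $\mathbf O_2 = \mathcal O^*_3(\epsilon^2)$ from (\ref{modified SM}) together with $\delta \in (\epsilon^\varpi, \epsilon^\rho)$ with $\varpi > 1/4$, which ensures these error terms sit well below the claimed $\delta^2$ threshold. The main obstacle is the coupling across the swap: unlike Lemma \ref{lem:fixed-centers}, where the fixed-point equations close on $(\bar I_{ii}, \tau_{ii}^2)$ alone, here the system must be solved simultaneously for both pairs $(\bar I_{01}, \tau_{01}^2)$ and $(\bar I_{10}, \tau_{10}^2)$. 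The contraction property survives because the cross-coupling between $(01)$ and $(10)$ enters only through the $a$-small quantities $a\bar I_{ji}$ and $a\tau_j^1$, so for $a$ sufficiently small the operator remains a contraction on the product space.
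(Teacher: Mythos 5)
Your proposal is correct in substance and follows essentially the same route as the paper: the same ansatz (\ref{eq:wij}) with the $e^{-\lambda\pi}$ factor, reuse of $\tau_i^1$ from the fixed-center case, the nondegeneracy $\alpha-\eta\gamma\neq 0$ from \textbf{[M1]}, and the same coupled leading-order system for $(\bar I_{01},\tau^2_{01},\bar I_{10},\tau^2_{10})$, which the paper likewise solves to leading order (your contraction-mapping packaging of that step is a harmless reformulation). One correction: the factor $e^{-\lambda\pi}$ also enters the $\xi$-update through $-\frac{\eta^+}{\lambda}\log\left|\kappa\eps I^+/\lambda\right|$, so the shifted base point is $\sigma(\eta,\xi)=\xi+2\pi\{(n-\tfrac12)\eta\}$ rather than $\xi+2\pi\{n\eta\}$; with this $\sigma$ your schematic formulas $\bar I_{ij}(\eta,\sigma)=\lambda[\tau_j^1(\eta,\sigma)-\tau_i^1(\eta,\xi)]+\mathcal O(a)$ and $\tau^2_{ij}=[\bar I_{ji}(\eta,\sigma)-\bar I_{ij}(\eta,\xi)]/\alpha+\mathcal O(a\dt)$ match the paper's.
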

Neglecting the error terms in 
the square of separatrix map $\SM_\eps^2$ from Corollary \ref{separatrix-for-arnold-example}
we get two weak invariant cylinders
\[
\Lb_{ij}=
\{(\eta,\xi,h_{ij}(\eta,\xi),\tau_{ij}(\eta,\xi)):\ (\eta,\xi) \in {\bf K}\times \T\}
\] 
with $\omega_{ij}=I_{ij}(\eta,\xi,\epsilon)+\Delta(\eta,\xi,\tau_{ij}(\eta,\xi,\epsilon))$
and $h_{ij}=\epsilon I_{ij}+\eta^2/2$. Denote by $\Lb_{ij}^*$ the invariant cylinder obtained by extending 
$h_{ij}$ and $\tau_{ij}$ for an $O(\eps)$-neighbour\-hood of $\bf K$. 
Then up to error terms $\SM_\eps(\Lb_{ij})\subset \Lb^*_{ji}.$

\begin{proof}

We use almost the same procedure as previous Lemma. Start by proving existence of $w_{ij},\ \tau_{ij}$'s solving 
functional equations (\ref{eq:nhil-fixed-legs}) for $\{i,j\}=\{0,1\}$. 

\begin{itemize}
\item Recall that we have already solved the $\tau_{i}^1(\eta,\xi)$ by
\[
\tau_{i}^1(\eta,\xi,a)=\frac{\partial_t\overline{M}(\eta,\xi,i\pi)-\eta\partial_{\xi}\overline{M}(\eta,\xi,i\pi)}{2\pi\cos i\pi}+\mathcal{O}(a),
\]
which satisifes
\[
M^+_{\tau}(\eta,\xi,i\pi+a\tau_{i}^1(\eta,\xi)) -\eta
M^+_{\xi}(\eta,\xi,i\pi+a\tau_{i}^1(\eta,\xi))  =0.
 \]

\item Take the formal solution (\ref{eq:wij}) into the separatrix map, which should satisfies
\begin{equation} 
\left\{
\begin{aligned}
\eta^{+} & = \eta-a\epsilon \partial_{\xi}\overline{M}^s(\eta,\xi,\tau_{01}(\eta,\xi,a))+\mathbf{O}_2,\\
\xi^{+} & = \xi+2\pi\{(n-\frac{1}{2})\eta\}-\frac{\eta}{\lambda}\ln(1+a{\bar{I}_{10}(\eta^+,\xi^+)})+\mathbf{O}_1,\\
\bar{I}_{10}(\eta^+,\xi^+,a)& =\bar{I}_{01}(\eta,\xi,a)+\alpha(\eta,\xi,i\pi+a\tau_{0}^1(\eta,\xi,a)) \tau_{01}^2+\mathcal{O}(a\delta)\\
a\tau_{1}^1(\eta^+,\xi^+,a)& = a\tau_{0}^1(\eta,\xi)+\frac{1}{\lambda}\ln(1+{a\bar{I}_{10}(\eta^+,\xi^+,a)})+\mathcal{O}(a\delta),
\end{aligned}
\right.
\end{equation}
and
\begin{equation} 
\left\{
\begin{aligned}
\eta^{+} & = \eta-a\epsilon \partial_{\xi}\overline{M}^s(\eta,\xi,\tau_{10}(\eta,\xi,a))+\mathbf{O}_2,\\
\xi^{+} & = \xi+2\pi\{(n-\frac{1}{2})\eta\}-\frac{\eta}{\lambda}\ln(1+a{\bar{I}_{01}(\eta^+,\xi^+)})+\mathbf{O}_1,\\
\bar{I}_{01}(\eta^+,\xi^+,a)& =\bar{I}_{10}(\eta,\xi,0)+\alpha(\eta,\xi,i\pi+a\tau_{1}^1)\tau_{10}^2+\mathcal{O}(a\delta)\\ 
a\tau_{0}^1(\eta^+,\xi^+,a)& = a\tau_{1}^1(\eta,\xi)+\frac{1}{\lambda}\ln(1+{a\bar{I}_{01}(\eta^+,\xi^+,a)})+\mathcal{O}(a\delta).
\end{aligned}
\right.
\end{equation}
Recall that 
\[
\alpha(\eta,\xi,i\pi+a\tau_{i}^1)\neq 0 ,\quad i=0,1,
\]
uniformly hold for sufficient small $a$ and
\[
\xi^+=\xi+2\pi\{(n-\frac{1}{2})\eta\}+\mathcal{O}(a)
\]
because $\eta $ belongs to a compact region $\bf K$ and $a$ 
can be chosen sufficiently small. So we can solve the solution by
\begin{equation}
\left\{
\begin{aligned}
\bar{I}_{10}(\eta,\sigma) & =\lambda(\tau_{1}^1(\eta,\sigma)-\tau_{0}^1(\eta,\xi))+\mathcal{O}(a),\\
\bar{I}_{01}(\eta,\sigma) & =\lambda(\tau_{0}^1(\eta,\sigma)-\tau_{1}^1(\eta,\xi))+\mathcal{O}(a),\\\end{aligned}
\right.
\end{equation}
and
\begin{equation}
\left\{
\begin{aligned}
\tau_{01}^2(\eta,\xi) & =\frac{\bar{I}_{10}(\eta,\sigma)-\bar{I}_{01}(\eta,\xi)}{\alpha(\eta,\xi,i\pi+a\tau_{ii}^1)}+\mathcal{O}(a\delta),\\
\tau_{10}^2(\eta,\xi) & =\frac{\bar{I}_{01}(\eta,\sigma)-\bar{I}_{10}(\eta,\xi)}{\alpha(\eta,\xi,i\pi+a\tau_{ii}^1)}+\mathcal{O}(a\delta),\\
\end{aligned}
\right.
\end{equation}
where $\sigma(\eta,\xi)=\xi+2\pi\{(n-\frac{1}{2})\eta\}$ and $\delta=\frac{\lambda}{\kappa\epsilon}\exp(2n\lambda\pi)$.

\end{itemize}
\end{proof}

\subsection{Verification of isolating block conditions [C1-C5]}
\label{sec:cone-cond}

In this section we isolating 
blocks $\Pi^{u,s,\kappa}_{ij}=\Pi_{ij}, i,j=\{0,1\}$ and verify {\bf C1-C3} for them. Then we define cone field over each point in isolating blocks and verify {\bf C4-C5}. Recall that In Lemma \ref{lem:eigenvalues} we compute 
eigenvectors $e_j(x)$ and eigenvalues $\lb_j(x),\ j=1,\dots,4$ of 
the $\text{D}\SM_\eps(x)$.

Since the map $\SM_\eps$ is symplectic, eigenvalues come in pairs: 
one pair of eigenvalues $\lb_{1,2}$ is close to one, the other pair is 
$\lb_3 \sim c\dt$ and  $\lb_4 \sim (c\dt)^{-1}$. Besides, from Lemma \ref{lem:fixed-centers} and Lemma \ref{lem:period-two-centers} we get the eigenvectors based on the centers by 
$$
v_{s}^{ij}(\eta,\xi,\eps)=
v_{s}^{ij}(\eta,\xi,I_{ij}(\eta,\xi,\eps),\tau_{ij}(\eta,\xi,\eps)), \quad 
s=3,4. 
$$
For small $\dt>0$ and properly large $\kappa>0$ 
we define the following four sets: 
{\small
\be \label{isolating block}
\beal
\Pi^{s,\kappa}_{ij}:=\left\{(\eta,\xi,I,\tau): \text{there is }
(\eta_0,\xi_0) \in {\bf K}\times \T,\qquad \qquad \qquad  
\qquad \qquad \qquad \qquad \right. \\ 
|c|\le \kappa_1 \dt^2,|d|\le \kappa_2 \dt\ 
\text{ such that } \qquad \qquad \qquad \qquad 
\qquad \qquad \qquad \qquad \\ \left. (\eta,\xi,I,\tau)=
(\eta_0,\xi_0,I_{ij}(\eta_0,\xi_0,\eps),\tau_{ij}(\eta_0,\xi_0,\eps))
+c v_{3}^{ij}(\eta_0,\xi_0,\eps) +
d v_{4}^{ij}(\eta_0,\xi_0,\eps)\right\}.
\enal 
\ee}

We drop $\eps$-dependence for brievity. 
These sets $\Pi^{s,\kappa}_{ij}, \ i,j\in\{0,1\}$ can be viewed 
as the union of parallelograms centered at  $(\eta_0,\xi_0,
I^{ij}_\eps(\eta_0,\xi_0), \tau^{ij}_\eps(\eta_0,\xi_0))$ 
with $(\eta_0,\xi_0)$ varying inside ${\bf K}\times \T$.

By Lemma \ref{lem:eigenvalues} we derive that 
these eigenvectors of $D\SM_\eps$ have the following form:
$$
e_3(x)=\frac{(0,\eta,0,-1)}
{\sqrt{1+\eta^2}}+\mathcal O(\dt)
$$
$$
e_4(x)=\frac{(0,\eta,
\Delta M,-1)
}{\sqrt{1+\eta^2+\Delta M^2}}
+\mathcal O(\dt^2)
$$
$$
e_{1,2}(x)=
\frac{(0,-M_{\tau\tau}+\eta M_{\xi\tau},0,M_{\xi\tau}-\eta M_{\xi\xi})}{\sqrt{(-M_{\tau\tau}+\eta M_{\xi\tau})^2+(-M_{\xi\tau}+\eta M_{\xi\xi})^2}}+\mathcal{O}(\epsilon^{1/8}\log\epsilon).
$$
%
%
%

Define 
\[
\psi:=\min \{\measuredangle(v_3(x),v_4(x)):
\ x\in \Pi_{ij},\ i,j\in\{0,1\}\}.
\]

\begin{lem}\label{angle}
If condition {\bf [M1]} holds, then $\psi>0$ uniformly holds. 
\end{lem}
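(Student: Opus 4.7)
The plan is to read off the angle directly from the leading-order expressions for $e_3$ and $e_4$ supplied by Lemma~\ref{lem:eigenvalues} and to show that condition \textbf{[M1]} forces them to be transverse, uniformly on the compact parameter set.

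First I would compute, to leading order, the sine of the angle between
\[
e_3(x) \;=\; \frac{(0,\eta,0,-1)}{\sqrt{1+\eta^2}}+\mathcal{O}(\delta),
\qquad
e_4(x) \;=\; \frac{(0,\eta,\Delta M,-1)}{\sqrt{1+\eta^2+(\Delta M)^2}}+\mathcal{O}(\delta^2),
\]
where $\Delta M = M_{\tau\tau}-2\eta M_{\xi\tau}+\eta^{2}M_{\xi\xi}$ is evaluated at $\tau=\tau_{ij}(\eta,\xi,\eps)$. The only coordinate in which the two normalized leading vectors differ is the third, where $e_3$ has zero while $e_4$ has $\Delta M/\sqrt{1+\eta^2+(\Delta M)^2}$. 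A direct computation then gives
\[
\sin\measuredangle\bigl(e_3(x),e_4(x)\bigr)
\;=\; \frac{|\Delta M|}{\sqrt{1+\eta^{2}+(\Delta M)^{2}}}\cdot\frac{1}{\sqrt{1+\eta^{2}}}\cdot\sqrt{1+\eta^2}+\mathcal{O}(\delta),
\]
so that up to a bounded factor the angle is controlled from below by $|\Delta M|$ at the point in question.

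Next I would invoke \textbf{[M1]} to get a uniform lower bound on $|\Delta M|$. By Lemmas~\ref{lem:fixed-centers} and~\ref{lem:period-two-centers}, $\tau_{ij}(\eta,\xi,\eps)$ is $\mathcal{O}(a\dt)$--close to $\tau_{i}(\eta,\xi)$, which is the time at which \textbf{[M1]} guarantees
\[
\bigl(M_{\tau\tau}-2\eta M_{\xi\tau}+\eta^{2}M_{\xi\xi}\bigr)\bigl|_{\tau=\tau_i(\eta,\xi)} \;\neq\; 0.
\]
Because this expression is a continuous function on the compact set $\mathbf{K}\times\mathbb{T}$ (one checks continuity of $\tau_i$ via the implicit function theorem applied to the first line of \textbf{[M1]}, using \eqref{eqn:Melnikov-nondeg}), its absolute value attains a positive minimum $m>0$. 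Since $\tau_{ij}-\tau_i=\mathcal{O}(a\dt)$ and the second derivatives of $M^+$ are smooth, for $\eps$ and $a$ sufficiently small we still have $|\Delta M|\ge m/2$ uniformly on $\Pi_{ij}$.

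Finally, combining the two estimates yields a positive lower bound
\[
\sin\measuredangle(e_3(x),e_4(x)) \;\ge\; \frac{m/2}{\sqrt{1+\max_{\mathbf{K}}\eta^{2}+(m/2)^{2}}\,\sqrt{1+\max_{\mathbf{K}}\eta^{2}}}-\mathcal{O}(\delta),
\]
which is strictly positive once $\delta$ is chosen small enough; taking the infimum over $x\in\Pi_{ij}$ and $i,j\in\{0,1\}$ (a finite union of compact pieces) gives $\psi>0$. The only mildly delicate step is verifying that the nondegeneracy of $\Delta M$ at $\tau_i$ provided by \textbf{[M1]} transfers to $\tau_{ij}$; this is routine given the $\mathcal{O}(a\dt)$ perturbation estimate, but it is the place where the proof genuinely uses the hypothesis, so I would state it explicitly.
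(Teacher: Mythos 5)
Your argument is correct and takes essentially the same route as the paper: compare the leading-order expressions for $e_3$ and $e_4$, whose only difference is the third component $\Delta M$, and use \textbf{[M1]} together with compactness of $\mathbf{K}\times\mathbb{T}$ to bound $|\Delta M|$ away from zero uniformly, which the paper's own proof states in a single line. Your explicit transfer of the nondegeneracy from $\tau_i$ to the nearby $\tau_{ij}$ is a harmless (indeed welcome) refinement; the only cosmetic issue is that your displayed sine formula and final lower bound carry redundant cancelling factors of $\sqrt{1+\eta^2}$, which weaken the constant but not the conclusion $\psi>0$.
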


\begin{proof}
By definition $v_3(x)\parallel\approx (0,\eta,0,-1)$ 
and $v_4(x)\parallel\approx (0,\eta,\alpha-\eta\gamma,-1)$. By condition {\bf [M1]} we have 
\[
\Delta M=\alpha-\eta \gamma\ne 0
\]
uniformly for $(\eta,\xi)\in\mathbf{K}\times\mathbb{T}$.
\end{proof}

\begin{figure}[h]
  \begin{center}
  \includegraphics[width=9.5cm]{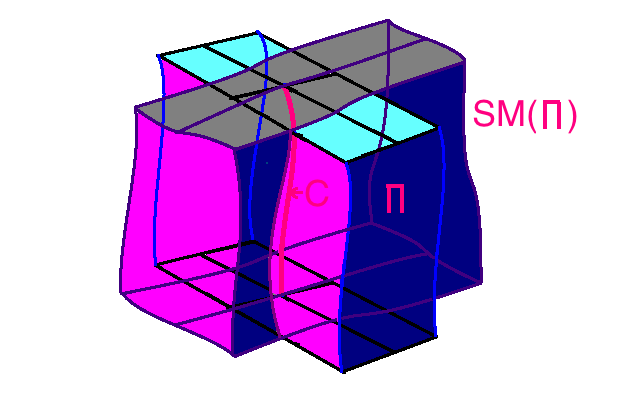}
  \end{center}
  \caption{Isolating Block}
  \label{fig:isolating-block}
\end{figure}
For any point in the isolating block $\Pi_{ij}^{s,\kappa}$, we can define a local transformation by:
\[
\Phi_{ij}:(\eta,\xi,I,\tau)\rightarrow(\eta_0,\xi_0,c,d),
\]
where $c$ and $d$ are the projections of $(\eta-\eta_0,\xi-\xi_0,I-I_{ij}(\eta_0,\xi_0),\tau-\tau_{ij}(\eta_0,\xi_0))$ to $e_3$ and $e_4$ with $(\eta_0,\xi_0)$ the corrsponding point in the center. Under this new coordinate, we have
\[
\mathfrak{SM}_{\epsilon}:=\Phi_{ji}\circ\mathcal{SM}_{\epsilon}\circ\Phi_{ij}^{-1}
\]
defined on the new straight grids
\[
\mathfrak{N}_{ij}^{s,\kappa}=\Phi_{ij}(\Pi_{ij}^{s,\kappa}).
\]
Then we can prove the following stronger conditions:
\begin{lem}
\begin{equation}\label{shrink}
\Big{|}\pi_4\circ\mathfrak{SM}_{\epsilon}\mathfrak{N}_{ij}^{s,\kappa}\Big{|}\leq\mathcal{O}(\delta^2),
\end{equation}
\begin{equation}\label{expand}
\Big{|}\pi_3\circ\mathfrak{SM}_{\epsilon}\mathfrak{N}_{ij}^{s,\kappa}\Big{|}\geq\mathcal{O}(\delta)
\end{equation}
and
\begin{equation}\label{homotopy}
\mathfrak{SM}_{\epsilon}\partial^u\mathfrak{N}_{ij}^{s,\kappa}\simeq\partial^u\mathfrak{N}_{ij}^{s,\kappa},
\end{equation}
for any $(\eta,\xi)\in\mathbf{K}\times\mathbb{T}$. Here $\partial^u$ means the boundary of the 3-rd component and $\simeq$ means homotopic equivalence.
\end{lem}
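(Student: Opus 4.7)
The plan is to pass to the local coordinates $(\eta_0, \xi_0, c, d)$ provided by $\Phi_{ij}$, in which $\mathfrak{N}_{ij}^{s,\kappa}$ is the straight product $\mathbf{K} \times \T \times [-\kappa_1 \dt^2, \kappa_1 \dt^2] \times [-\kappa_2 \dt, \kappa_2 \dt]$, and Taylor-expand $\mathfrak{SM}_\eps$ about the centre cylinder $\{c = d = 0\}$. By Lemmas \ref{lem:fixed-centers}--\ref{lem:period-two-centers} the centre is sent into an $\mathcal{O}(a\dt^2)$-neighbourhood of the centre of $\mathfrak{N}_{ji}^{s,\kappa}$; by Lemma \ref{lem:eigenvalues} the differential $D\SM_\eps$ there is essentially block-diagonal in the basis $(v_3, v_4)$, with eigenvalues $\lambda_3 \sim 1/\dt$ and $\lambda_4 \sim \dt$; and since $\eta^+ - \eta = \mathcal{O}(\eps)$, the source and target eigenvectors $v^{ij}_{3,4}$ and $v^{ji}_{3,4}$ differ by at most $\mathcal{O}(\eps + \dt)$, which is absorbable in the constants $\kappa_1, \kappa_2$.

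Combining these inputs gives a normal form
\begin{equation*}
c^+ = \lambda_3 \, c + E_c, \qquad d^+ = \lambda_4 \, d + E_d,
\end{equation*}
where $E_c, E_d$ collect the centre drift $\mathcal{O}(a\dt^2)$, the eigenvector twist $\mathcal{O}((|c|+|d|)\dt)$, the quadratic Taylor remainder, and the separatrix-map errors $\mathbf{O}_1, \mathbf{O}_2$ transported to the $(c,d)$-frame. For $\dt$ in the window $(\eps^\varpi, \eps^\rho)$ with $\varpi > 1/4 > \rho$, prescribed in Lemmas \ref{lem:fixed-centers}--\ref{lem:period-two-centers}, one checks $|E_c| = o(\dt)$ and $|E_d| = o(\dt^2) + \mathcal{O}(a\dt^2)$. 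Then \eqref{shrink} is immediate from $|d^+| \leq |\lambda_4| \kappa_2 \dt + |E_d| = \mathcal{O}(\dt^2)$; and \eqref{expand} follows because on each unstable face $c = \pm \kappa_1 \dt^2$ we have $|c^+| \geq \tfrac{1}{2} |\lambda_3| \kappa_1 \dt^2 = \mathcal{O}(\dt)$, so the $c^+$-projection of the image already sweeps an interval of length at least $\mathcal{O}(\dt)$.

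For the homotopy \eqref{homotopy}, the unstable boundary $\partial^u \mathfrak{N}_{ij}^{s,\kappa}$ consists of two cylinders $\mathbf{K} \times \T \times \{\pm \kappa_1 \dt^2\} \times [-\kappa_2 \dt, \kappa_2 \dt]$. The normal form shows that $\mathfrak{SM}_\eps$ sends each component to a graph in the $(\eta_0^+, \xi_0^+, d^+)$-variables sitting at $c^+ \approx \pm \lambda_3 \kappa_1 \dt^2$ with $d^+ = \mathcal{O}(\dt^2)$. Since $\lambda_3 > 0$, each component stays on the prescribed side of $\{c^+ = 0\}$, so the straight-line interpolation in $c^+$ between $\pm \lambda_3 \kappa_1 \dt^2$ and $\pm \kappa_1 \dt^2$ provides a homotopy inside the complement of the centre cylinder, giving the homotopy equivalence to $\partial^u \mathfrak{N}_{ji}^{s,\kappa}$, and hence to $\partial^u \mathfrak{N}_{ij}^{s,\kappa}$ by the symmetry between the two blocks in the $(ij)$-pair.

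The main technical obstacle will be the bookkeeping for $E_c$ and $E_d$. After the rescaling $I = (h - E(\eta))/\eps$ the $I$-equation in \eqref{eq:rescaled-sep} carries a factor $1/\eps$ in front of $\mathbf{O}_2$, and the eigenvector corrections in Lemma \ref{lem:eigenvalues} are only $\mathcal{O}(\dt)$ and $\mathcal{O}(\dt^2)$; forcing the resulting contributions to be subleading compared with $\lambda_3 c = \mathcal{O}(\dt)$ and $\lambda_4 d = \mathcal{O}(\dt^2)$ is exactly what the window $\dt \in (\eps^\varpi, \eps^\rho)$ with $\varpi > 1/4 > \rho$ is designed to deliver, and verifying this inside the anisotropic norm $\|\cdot\|_r^*$ used in the separatrix-map error terms is where most of the work lies.
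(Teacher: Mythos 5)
Your overall strategy is the same as the paper's: work in the straightened coordinates $(\eta_0,\xi_0,c,d)$, linearize $\SM_\eps$ at the approximately invariant centers from Lemmas \ref{lem:fixed-centers}--\ref{lem:period-two-centers}, use the eigenvalue dichotomy $\lambda_3\sim\delta^{-1}$, $\lambda_4\sim\delta$ from Lemma \ref{lem:eigenvalues}, and settle \eqref{homotopy} by a deformation argument (the paper lifts to the covering space, following \cite{KZ14}, so that $\partial^u$ becomes simply connected). The gap is in your error bookkeeping, specifically the claim $|E_c|=o(\delta)$. The variation of $D\SM_\eps$ across the block is not uniformly small: the entries $\beta=1/(\lambda I^+)\sim\delta^{-1}$ and the $\log$-terms oscillate by an amount of order $1/\delta$ times the $I$-displacement, and the $I$-extent of the block is governed by the $d\,v_4$ component, of size $\mathcal{O}(\kappa_2\delta)$ (recall $v_4$ has an $\mathcal{O}(1)$ $I$-component $\Delta M$, while $|d|\le\kappa_2\delta$). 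A naive Taylor remainder or "eigenvector twist" bound therefore does not give $o(\delta)$; what the paper actually proves, by writing the difference as $\int_0^1\Upsilon(s)(cv_3+dv_4)\,ds$ and computing the oscillation matrix $\Upsilon'$ explicitly, is only $|\Upsilon' v_4^{ij}|\sim\mathcal{O}(1)$ -- and even this uses a structural cancellation tied to $\alpha-\eta\gamma=\Delta M$ and the special form of $v_3,v_4$ (so that $\Upsilon V$ is $\delta$-parallel to $(0,\eta_0,0,-1)$). Consequently the error in the expanding coordinate is $|\Upsilon'\,d\,v_4^{ij}|\le C(\kappa_2)\delta$, i.e.\ of the \emph{same} order in $\delta$ as the main term $|\lambda_3 c|\sim C(\kappa_1)\delta$ on the unstable faces.

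Because of this, \eqref{expand} cannot be obtained by declaring the error subleading; it requires the interplay of the block constants, namely choosing $\kappa_1$ properly larger than $\kappa_2$ so that $(C(\kappa_1)-C(\kappa_2))\delta\ge C(\kappa_1)\delta/2$, which is exactly how the paper closes the estimate (and the same asymmetry $\kappa_1$ vs.\ $\kappa_2$ is what makes the homotopy statement survive the $\mathcal{O}(\kappa_2)$-size perturbation of the unstable faces). Your remark that the discrepancy between $v^{ij}_{3,4}$ and $v^{ji}_{3,4}$ is "absorbable in the constants $\kappa_1,\kappa_2$" points in the right direction but is not enough: the constants enter the lower bound with opposite signs, so one must exhibit the cancellation giving $|\Upsilon'v_4|=\mathcal{O}(1)$ (rather than $\mathcal{O}(1/\delta)$) and then fix the ratio $\kappa_1/\kappa_2$; without these two ingredients the inequality \eqref{expand}, and with it condition [C2], is not established. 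The contraction estimate \eqref{shrink} in your write-up is fine, since there the relevant projection $\pi_4\Upsilon'$ carries the extra factor $\delta$ coming from the $\delta$-parallelism of $\Upsilon V$ to the unstable direction, which again is a structural fact you would need to state and prove rather than fold into a generic Taylor remainder.
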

Recall that {\bf C3} is actually ensured by the weak invariance of centers in aforementioned section. So in the following we just need to prove {\bf C1 and C2}, which  can be derived from this Lemma.
\begin{proof}
We remove the $\epsilon$ dependence for convenience. $\forall ({\eta},\xi,I,\tau)\in \Pi_{ij}^{s,\kappa}$, which corresponds to a unique point $(\eta_0,\xi_0,c,d)\in\mathfrak{N}_{ij}^{s,\kappa}$, we get
\begin{equation}\label{1 jet}
\begin{aligned}
\mathfrak{SM}(\eta_0,\xi_0,c,d)-&(\eta_0^+,\xi^+_0,0,0)=\Phi_{ji}[\mathcal{SM}(\eta,\xi,I,\tau)-(\eta_0^+,\xi_0^+,I_{ji}(\eta_0^+,\xi_0^+),\tau_{ji}(\eta_0^+,\xi_0^+))]\\
&=\Phi_{ji}\int_{0}^{1}D\mathcal{SM}(s\eta+(1-s)\eta_0,s\xi+(1-s)\xi_0,sI+\\
&\quad\quad(1-s)I_{ij}(\eta_0,\xi_0), s\tau+(1-s)\tau_{ij}(\eta_0,\xi_0))\cdot(cv_3^{ij}+dv_4^{ij})ds\\
&=\Phi_{ji}\Big{[}D\mathcal{SM}(\eta_0,\xi_0,I_{ij}(\eta_0,\xi_0),\tau_{ij}(\eta_0,\xi_0))\cdot(cv_3^{ij}+dv_4^{ij})+\\
&\quad\quad\int_0^1\Upsilon(s)(cv_3^{ij}+dv_4^{ij})ds\Big{]}\\
&=\Phi_{ji}\Big{[}c\lambda_3v_3^{ij}+d\lambda_4v_4^{ij}+\int_0^1(\Upsilon'+\mathcal{O}(\kappa_2,\delta))(cv_3^{ij}+dv_4^{ij})ds\Big{]},
\end{aligned}
\end{equation}
where $\Upsilon(s)=D\mathcal{SM}\Big{|}_{0}^{s}$ is a variational matrix with $s\in(0,1)$. Formally it equals to
\[
\begin{pmatrix}
   0 & 0 & 0 & 0\\
   Osc(-\frac{1}{\lambda}\ln I^+-\eta^+\beta\zeta)   & Osc(-\eta^+\beta\gamma) & Osc(-\eta^+\beta) &Osc(-\eta^+\beta\alpha)  \\
   Osc(\zeta) & Osc(\gamma) & 0 & Osc(\alpha)\\

        Osc(\zeta\beta) & Osc(\beta\gamma) & Osc(\beta) & Osc(\beta\alpha)   \\
\end{pmatrix}
+\delta\mathbf{O}_1,
\]
where the `Osc' means the variation between $0$ and $s$. Recall that we take $\epsilon^{\varpi}\lesssim\delta\lesssim\epsilon^{\rho}$, $v_{3}^{ij}$ is $\delta-$parallel to $(0,\eta_0,0,-1)$ and $v_4^{ij}$ is $\delta^2-$parallel to $(0,\eta_0,\alpha(\eta_0,\xi_0)-\eta_0\gamma(\eta_0,\xi_0),-1)$. By removing the $\mathcal{O}(\kappa_2,\delta)$ order error, we can simplify $\Upsilon$ by
\[
\Upsilon'=
\begin{pmatrix}
   0 & 0 & 0 & 0\\
-\eta Osc(\beta\zeta)-\frac{1}{\lambda}Osc(\ln I^+)  & -\eta Osc(\beta\gamma) & -\eta Osc(\beta) & -\eta Osc(\beta\alpha)\\
   0 & 0 & 0 & 0\\
        Osc(\zeta\beta) & Osc(\beta\gamma) & Osc(\beta) & Osc(\beta\alpha)   \\
\end{pmatrix}.
\]
Here the $\mathcal{O}(\kappa_2,\delta)$ implies the error term depends on $\kappa_2$. This is because both $(0,\eta_0,0,-1)$ and $(0,\eta_0,\alpha(\eta_0,\xi_0)-\eta_0\gamma(\eta_0,\xi_0),-1)$ have a degenerate first component. Another observation is that for any vector $V$ linearly composed by $v_3^{ij}$ and $v_4^{ij}$, $\Upsilon\cdot V$ is $\delta-$parallel to $(0,\eta_0,0,-1)$. Besides, we get the norm estimate $|\Upsilon'v_4^{ij}|\sim\mathcal{O}(1)$.

Due to these observations, (\ref{shrink}) and (\ref{expand}) are almost obvious now:
\begin{equation}
\begin{aligned}
|\pi_4\circ\mathfrak{SM}\mathfrak{N}_{ij}^{s,\kappa}| =&|\pi_4(\mathfrak{SM}(\eta_0,\xi_0,c,d)-(\eta_0^+,\xi^+_0,0,0))|\\
\leq & |\lambda_4d|+\delta(|\pi_4\Upsilon' c v_3^{ij}|+|dv_4^{ij}|)\\
 \leq & C(\kappa_1,\kappa_2)\delta^2,
\end{aligned}
\end{equation}

\begin{equation}\label{C2}
\begin{aligned}
|\pi_3\circ\mathfrak{SM}\mathfrak{N}_{ij}^{s,\kappa}| =&|\pi_3(\mathfrak{SM}(\eta_0,\xi_0,c,d)-(\eta_0^+,\xi^+_0,0,0))|\\
\geq & |\lambda_3c|-|\Upsilon'dv^{ij}_4|-\mathcal{O}(\kappa_2,\delta^2)\\
 \geq & (C(\kappa_1)-C(\kappa_2))\delta\geq C(\kappa_1)\delta/2,
\end{aligned}
\end{equation}
where $C(\kappa_i)$ is $\mathcal{O}(1)$ constants depending on $\kappa_i$, $i=1,2$,  so we can always take $\kappa_1$ properly greater than $\kappa_2$ such that previous inequalities hold.

As for the homotopy equivalence of (\ref{homotopy}), we can use the same approach as in \cite{KZ14} by lifting $\mathcal{SM}$ by $\overline{\mathcal{SM}}$ in the covering space $(\eta,\xi,I,\tau)\in\mathbf{K}\times\mathbb{R}\times(-C\delta,C\delta)\times\mathbb{T}$ and the isolating blocks $\mathfrak{N}_{ij}^{s,\kappa}$ by $\overline{\mathfrak{N}}_{ij}^{s,\kappa}$. The benefit of doing this is that $\partial^u\overline{\mathfrak{N}}_{ij}^{s,\kappa}$ becomes single connected. So the boundary corresponds to a fixed $|c|=\kappa_1\delta^2$ into (\ref{1 jet}), of which we can always take a properly small $\kappa_2$ and get a slightly deformed new boundary $\mathfrak{SM}\partial^u\mathfrak{G}_{ij}^{s}$ which is also single connected. 
\end{proof}
%
\vspace{10pt}

Use almost the same approach we can prove {\bf C1'-C3'} for $\mathcal{SM}_{\epsilon}^{-1}$. We drop $\eps$-dependence for brievity. Suppose $x^+=\mathcal{SM}(x)$ for $x=(\eta,\xi,I,\tau)$, then we get
\[
D\mathcal{SM}^{-1}(x^+)=(D\mathcal{SM}(x))^{-1}
\]
and
\[
D\mathcal{SM}^{-1}(x^+)e_i(x)=\frac{1}{\lambda_i}e_i(x),\quad i=3,4.
\]
Notice that $e_i(x)\in T_{x^+}\mathbb{R}^4$ is a parallel shift from $T_x\mathbb{R}^4$ of Euclid metric. For small $\dt>0$ and properly large $\kappa>0$ 
we define the following four sets: 
{\small
\be \label{isolating block}
\beal
\Pi^{u,\kappa}_{ij}:=\left\{(\eta^+,\xi^+,I^+,\tau^+): \text{there is }
(\eta^+_0,\xi^+_0) \in {\bf K}\times \T,\qquad \qquad \qquad  
\qquad \qquad \qquad \qquad \right. \\ 
|c|\le \kappa_3 \dt,|d|\le \kappa_4 \dt^2\ 
\text{ such that } \qquad \qquad \qquad \qquad 
\qquad \qquad \qquad \qquad \\ \left. (\eta^+,\xi^+,I^+,\tau^+)=
(\eta_0^+,\xi_0^+,I_{ij}(\eta_0^+,\xi_0^+,\eps),\tau_{ij}(\eta_0^+,\xi_0^+,\eps))
+c v_{3}^{ji}(\eta_0,\xi_0,\eps) +
d v_{4}^{ji}(\eta_0,\xi_0,\eps)\right\}
\enal 
\ee}
with $(\eta_0,\xi_0,I_{ji}(\eta_0,\xi_0),\tau_{ji}(\eta_0,\xi_0))=\mathcal{SM}^{-1}(\eta_0^+,\xi_0^+,I_{ij}(\eta_0^+,\xi_0^+),\tau_{ij}(\eta_0^+,\xi_0^+))$. Via the transformation $\Phi_{ij}$ we can define 
\[
\mathfrak{SM}_{\epsilon}^{-1}:=\Phi_{ji}\circ\mathcal{SM}_{\epsilon}^{-1}\circ\Phi_{ij}^{-1}
\]
on the new straight grids
\[
\mathfrak{N}_{ij}^{u,\kappa}=\Phi_{ij}(\Pi_{ij}^{u,\kappa}).
\]
For later use, we write down $D\mathcal{SM}^{-1}(x^+)$ here:
\[
\begin{pmatrix}
    1&0&0&0\\
    \frac{1}{\lambda}\ln\frac{\kappa\epsilon I^+}{\lambda}  & 1&\eta\beta &0 \\
   -\frac{\gamma}{\lambda}\ln\frac{\kappa\epsilon I^+}{\lambda}-\zeta  &-\gamma &1+\beta(\alpha-\eta\gamma) &-\alpha\\
     0 &0  & -\beta&1    \\
\end{pmatrix}+\mathbf{O}_1.
\]

Now we can prove the following stronger conditions:
\begin{lem}
\begin{equation}\label{shrink'}
\Big{|}\pi_3\circ\mathfrak{SM}_{\epsilon}^{-1}\mathfrak{N}_{ij}^{u,\kappa}\Big{|}\leq\mathcal{O}(\delta^2),
\end{equation}
\begin{equation}\label{expand'}
\Big{|}\pi_4\circ\mathfrak{SM}_{\epsilon}^{-1}\mathfrak{N}_{ij}^{u,\kappa}\Big{|}\geq\mathcal{O}(\delta)
\end{equation}
and
\begin{equation}\label{homotopy'}
\mathfrak{SM}_{\epsilon}^{-1}\partial^s\mathfrak{N}_{ij}^{u,\kappa}\simeq\partial^s\mathfrak{N}_{ij}^{u,\kappa},
\end{equation}
for any $(\eta,\xi)\in\mathbf{K}\times\mathbb{T}$. Here $\partial^s$ means the boundary of the 4-th component and $\simeq$ means homotopic equivalence.
\end{lem}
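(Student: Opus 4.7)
The plan is to mirror the argument used for the forward map in the previous lemma, exploiting the symplectic duality: since $\mathcal{SM}_\epsilon$ has eigenvalues $\lambda_3\sim 1/\delta$ (expanding) and $\lambda_4\sim \delta$ (contracting) in the relevant directions, the inverse $\mathcal{SM}_\epsilon^{-1}$ has reciprocal eigenvalues $1/\lambda_3\sim\delta$ along $e_3$ and $1/\lambda_4\sim 1/\delta$ along $e_4$. Thus, reading ``$\partial^s$'' (the forward-stable boundary, i.e.\ the 4-th coordinate) as the unstable boundary for $\mathcal{SM}_\epsilon^{-1}$, the situation becomes completely analogous to the one just handled, with the roles of $e_3$ and $e_4$ swapped and with the isolating block $\mathfrak{N}_{ij}^{u,\kappa}$ now playing the role of $\mathfrak{N}_{ij}^{s,\kappa}$.

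Concretely, first I would write, for any $(\eta_0^+,\xi_0^+,c,d)\in\mathfrak{N}_{ij}^{u,\kappa}$, the analogue of identity (\ref{1 jet}): apply the fundamental theorem of calculus along the segment joining $(\eta^+,\xi^+,I^+,\tau^+)$ to the center $(\eta_0^+,\xi_0^+,I_{ij}(\eta_0^+,\xi_0^+),\tau_{ij}(\eta_0^+,\xi_0^+))$, using the matrix $D\mathcal{SM}_\epsilon^{-1}$ given immediately before the statement. This decomposes $\mathfrak{SM}_\epsilon^{-1}(\eta_0^+,\xi_0^+,c,d)-(\eta_0,\xi_0,0,0)$ as the principal contribution $\tfrac{c}{\lambda_3}v_3^{ji}+\tfrac{d}{\lambda_4}v_4^{ji}$ plus an oscillation integral $\int_0^1 \Upsilon^-(s)(cv_3^{ji}+dv_4^{ji})\,ds$, where $\Upsilon^-$ is the variation of $D\mathcal{SM}_\epsilon^{-1}$ along the segment. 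As in the previous lemma, after discarding $\mathcal{O}(\kappa_4,\delta)$ terms one can replace $\Upsilon^-$ by a simplified matrix whose non-vanishing rows lie in directions $\delta$-parallel to $v_4^{ji}$.

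Second, I would deduce (\ref{shrink'}) and (\ref{expand'}) from these bounds. For (\ref{shrink'}): the $v_3$-component contributes $|\tfrac{c}{\lambda_3}|\lesssim \kappa_3\delta\cdot\delta = \kappa_3\delta^2$, while the $v_4$-contribution and the oscillation error, thanks to the structural observation above, remain of order $\delta^2$ after choosing $\kappa_3$ properly larger than $\kappa_4$. For (\ref{expand'}): the $v_4$-component gives a lower bound $|\tfrac{d}{\lambda_4}|\gtrsim \kappa_4\delta/\delta^{?}\sim\kappa_4\delta$ after the eigenvalue estimate $\lambda_4\sim\delta$, which dominates the oscillation terms of order $\mathcal{O}(\kappa_3\delta^2)$ provided $\kappa_4$ is chosen appropriately (again, balancing $\kappa_3$ against $\kappa_4$).

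Finally, for (\ref{homotopy'}) I would adapt the lifting argument of \cite{KZ14} used in the proof of (\ref{homotopy}): lift $\mathcal{SM}_\epsilon^{-1}$ and $\mathfrak{N}_{ij}^{u,\kappa}$ to the covering space where $\partial^s\overline{\mathfrak{N}}_{ij}^{u,\kappa}$ becomes simply connected, apply the bounds above (which are uniform in the choice of center $(\eta_0^+,\xi_0^+)$) to see that the image of this boundary is a continuous deformation that cannot intersect the opposite stable boundary, and conclude the required homotopy equivalence. The main subtlety, just as in the forward case, is calibrating the two constants $\kappa_3\gg\kappa_4$ so that the principal linearized contribution always outweighs the oscillation/error contributions uniformly over $\mathbf{K}\times\mathbb{T}$; this is where the uniform non-degeneracy $\Delta M=\alpha-\eta\gamma\ne 0$ from Lemma \ref{angle} enters and guarantees that both estimates hold simultaneously.
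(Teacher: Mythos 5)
Your proposal follows the same route as the paper's own proof: the fundamental-theorem-of-calculus decomposition of $\mathfrak{SM}_{\epsilon}^{-1}$ around the block centers into the principal part $\frac{c}{\lambda_3}v_3^{ji}+\frac{d}{\lambda_4}v_4^{ji}$ plus a variational (oscillation) integral, the reciprocal eigenvalue estimates $1/\lambda_3\sim\delta$, $1/\lambda_4\sim 1/\delta$ from Lemma \ref{lem:eigenvalues}, and the lifting argument of \cite{KZ14} for the homotopy statement (\ref{homotopy'}).

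The one concrete point where you go wrong is the calibration of the block constants: you ask for $\kappa_3$ properly larger than $\kappa_4$, while the argument requires the opposite, namely $\kappa_4$ properly greater than $\kappa_3$. Recall that in $\mathfrak{N}^{u,\kappa}_{ij}$ the slab width in the $v_3$-direction (the direction contracted by $\mathcal{SM}_{\epsilon}^{-1}$) is $\kappa_3\delta$, while the width in the expanding $v_4$-direction is only $\kappa_4\delta^2$. The upper bound (\ref{shrink'}) needs no calibration at all: every contribution there is $\mathcal{O}(\delta^2)$ with a constant depending on both $\kappa_3$ and $\kappa_4$, and enlarging $\kappa_3$ does not make any term smaller, contrary to what you write. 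The estimate that does need protection is the lower bound (\ref{expand'}): its principal term is $|d|/\lambda_4\sim\kappa_4\delta$, and it must dominate the error produced when the variational matrix acts on the $c$-component, whose size is governed by the much wider $\kappa_3\delta$ slab (in the paper's bookkeeping this error is booked as $C(\kappa_3)\delta$, i.e.\ the same order in $\delta$ as the main term). Hence one takes $\kappa_4$ large relative to $\kappa_3$, exactly mirroring the forward lemma, where $\kappa_1$ (attached to the direction expanded by $\mathcal{SM}_{\epsilon}$, of width $\kappa_1\delta^2$) is taken properly greater than $\kappa_2$. Choosing $\kappa_3\gg\kappa_4$ as you propose only inflates the competing error relative to the principal expansion and endangers (\ref{expand'}) and the homotopy argument built on it; the rest of your sketch is fine once this calibration is reversed.
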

(\ref{shrink'}), (\ref{expand'}) and (\ref{homotopy'}) are sufficient to {\bf C1', C2'} and {\bf C3'}. 
\begin{proof}
$\forall ({\eta}^+,\xi^+,I^+,\tau^+)\in \Pi_{ij}^{u,\kappa}$, which corresponds to a unique point $(\eta_0^+,\xi_0^+,c,d)\in\mathfrak{N}_{ij}^{u,\kappa}$, the following holds:
\begin{equation}\label{1 jet}
\begin{aligned}
\mathfrak{SM}^{-1}(\eta_0^+,\xi_0^+,c,d)-&(\eta_0,\xi_0,0,0)=\Phi_{ji}[\mathcal{SM}^{-1}(\eta^+,\xi^+,I^+,\tau^+)-\\
&\quad\quad\quad\quad \mathcal{SM}^{-1}(\eta_0^+,\xi_0^+,I_{ji}(\eta_0^+,\xi_0^+),\tau_{ji}(\eta_0^+,\xi_0^+))]\\
&=\Phi_{ji}\int_{0}^{1}D\mathcal{SM}^{-1}(s\eta^++(1-s)\eta^+_0,s\xi^++(1-s)\xi^+_0,sI^++\\
&\quad\quad(1-s)I_{ij}(\eta^+_0,\xi^+_0), s\tau^++(1-s)\tau_{ij}(\eta^+_0,\xi^+_0))\cdot(cv_3^{ji}+dv_4^{ji})ds\\
&=\Phi_{ji}\Big{[}D\mathcal{SM}^{-1}(\eta_0^+,\xi_0^+,I_{ij}(\eta^+_0,\xi^+_0),\tau_{ij}(\eta^+_0,\xi^+_0))\cdot(cv_3^{ji}+dv_4^{ji})+\\
&\quad\quad\int_0^1\Upsilon(s)(cv_3^{ji}+dv_4^{ji})ds\Big{]}\\
&=\Phi_{ji}\Big{[}cv_3^{ji}/\lambda_3+dv_4^{ji}/\lambda_4+\int_0^1(\Upsilon'+\mathcal{O}(\kappa,\delta))(cv_3^{ji}+dv_4^{ji})ds\Big{]},
\end{aligned}
\end{equation}
where $\Upsilon(s)=\mathcal{SM}^{-1}\Big{|}_{0}^{s}$ is a variational matrix with $s\in(0,1)$. Formally it equals to
\[
\begin{pmatrix}
   0 & 0 & 0 & 0\\
   Osc(\frac{1}{\lambda}\ln I^+)   &0 & Osc(\eta\beta) &0 \\
   -Osc(\frac{\gamma}{\lambda}\ln\frac{\kappa\epsilon I^+}{\lambda}+\zeta) & -Osc(\gamma) & Osc(\beta(\alpha-\eta\gamma)) & -Osc(\alpha)\\

0& 0 & -Osc(\beta) & 0   \\
\end{pmatrix}
+\delta\mathbf{O}_1.
\]
 Recall that we take $\epsilon^{\varpi}\lesssim\delta\lesssim\epsilon^{\rho}$, $v_{3}^{ji}$ is $\delta-$parallel to $(0,\eta_0,0,-1)$ and $v_4^{ji}$ is $\delta^2-$parallel to $(0,\eta_0,\alpha(\eta_0,\xi_0)-\eta_0\gamma(\eta_0,\xi_0),-1)$, so $Osc(\alpha-\eta\gamma)\sim\mathcal{O}(\kappa_4,\delta^2)$ and $Osc(\eta)\sim\mathcal{O}(\kappa_3,\delta^2)$. Here the $\mathcal{O}(\kappa_i,\delta)$ implies the error term is dependent of $\kappa_i$, $i=3,4$. By removing the $\mathcal{O}(\delta)$ order error, we can simplify $\Upsilon$ by
\[
\Upsilon'=
\begin{pmatrix}
   0 & 0 & 0 & 0\\
   Osc(\frac{1}{\lambda}\ln I^+)   &0 & \eta Osc(\beta) &0 \\
   -Osc(\frac{\gamma}{\lambda}\ln\frac{\kappa\epsilon I^+}{\lambda}+\zeta) & 0 & (\alpha-\eta\gamma)Osc(\beta) & 0\\

0& 0 & -Osc(\beta) & 0   \\
\end{pmatrix}.
\]
Also we have the following observations: (1) both $(0,\eta_0,0,-1)$ and $(0,\eta_0,\alpha(\eta_0,\xi_0)-\eta_0\gamma(\eta_0,\xi_0),-1)$ have a degenerate first component; (2) For any vector $V$ linearly composed by $v_3^{ji}$ and $v_4^{ji}$, $\Upsilon\cdot V$ is $\delta-$parallel to $(0,\eta_0,\alpha(\eta_0,\xi_0)-\eta_0\gamma(\eta_0,\xi_0),-1)$. Besides, $|\Upsilon'v_3^{ji}|\sim\mathcal{O}(\delta)$.

Due to these observations, (\ref{shrink'}) and (\ref{expand'}) are almost obvious now:
\begin{equation}
\begin{aligned}
|\pi_4\circ\mathfrak{SM}^{-1}\mathfrak{N}_{ij}^{u,\kappa}| \geq & |d/\lambda_4|-\delta(|\pi_4\Upsilon' cv_3^{ji}|+|dv_4^{ji}|)\\
 \geq & C(\kappa_4)\delta-C(\kappa_3)\delta\geq C(\kappa_4)\delta/2,
\end{aligned}
\end{equation}

\begin{equation}\label{C2}
\begin{aligned}
|\pi_3\circ\mathfrak{SM}^{-1}\mathfrak{N}_{ij}^{u,\kappa}|  \leq & |c/\lambda_3|+|\Upsilon'dv_4^{ji}|+\mathcal{O}(\kappa_4,\delta^2)\\
 \leq & C(\kappa_3,\kappa_4)\delta^2,
\end{aligned}
\end{equation}
where $C(\kappa_i)$ is $\mathcal{O}(1)$ constants depending on $\kappa_i$, $i=3,4$,  so we can always take $\kappa_4$ properly greater than $\kappa_3$ such that previous inequalities hold.

As for the homotopy equivalence of (\ref{homotopy'}), we still lift $\mathcal{SM}^{-1}$ by $\overline{\mathcal{SM}}^{-1}$ in the covering space $(\eta,\xi,I,\tau)\in\mathbf{K}\times\mathbb{R}\times(-C\delta,C\delta)\times\mathbb{T}$ and the isolating blocks $\mathfrak{N}_{ij}^{u,\kappa}$ by $\overline{\mathfrak{N}}_{ij}^{u,\kappa}$. Then $\partial^u\overline{\mathfrak{N}}_{ij}^{s,\kappa}$ becomes single connecte, which corresponds to $|d|=\kappa_4\delta^2$. Take $|d|=\kappa_4\delta^2$ into (\ref{1 jet}) and get a slightly deformed new boundary $\mathfrak{SM}^{-1}\partial^s\mathfrak{N}_{ij}^{u,\kappa}$ which is also single connected. 
\end{proof}

The following Fig. \ref{fig:pen-iso} is a projection graph for the isolating blocks, which can give the readers a clear geometric explanation of previous Lemmas.

\begin{figure}[h]
  \def\svgwidth{6in}
  \centering
  \input 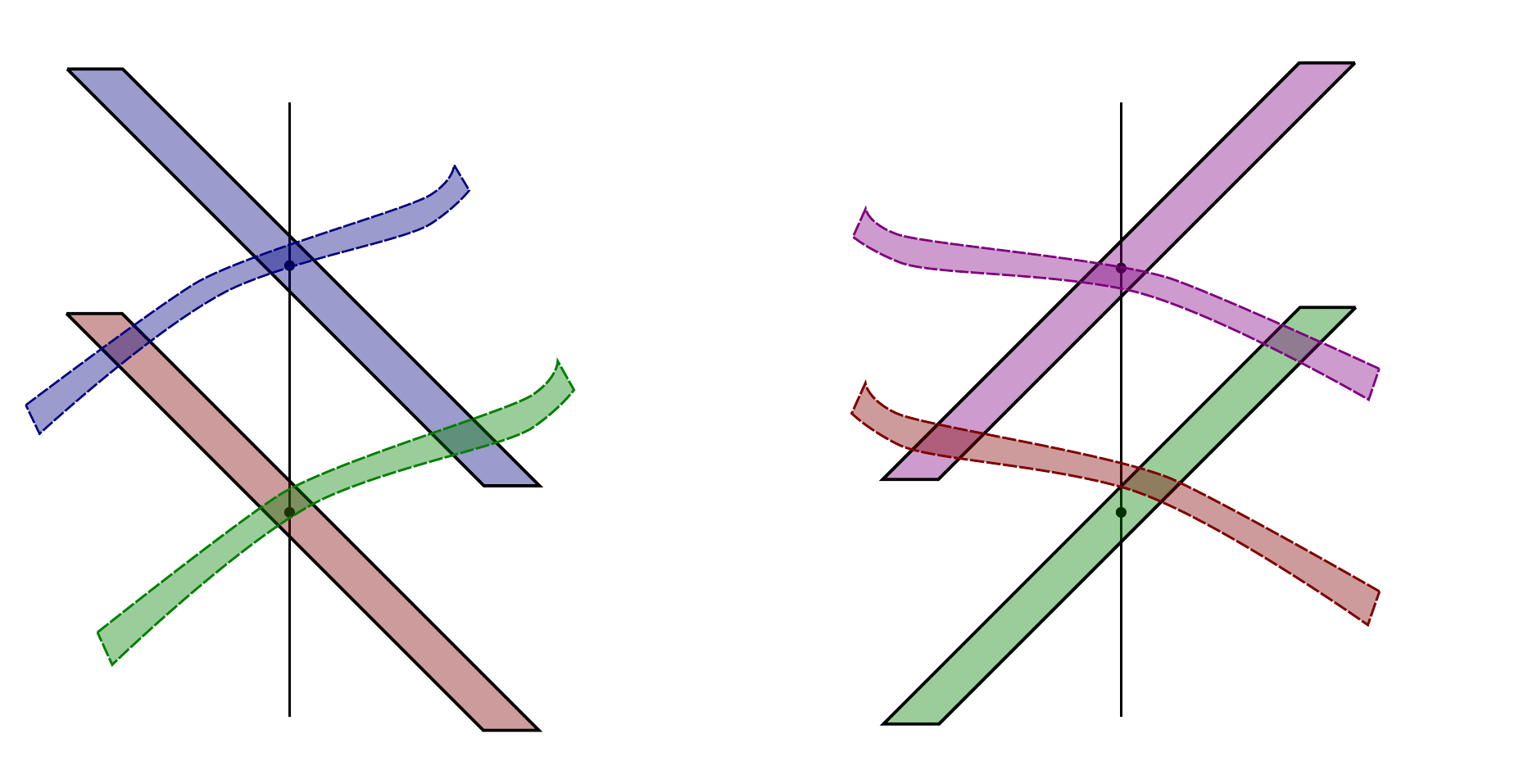_tex
  \caption{Isolating blocks}
  \label{fig:pen-iso}
\end{figure}
\vspace{20pt}
From Appendix \ref{sec:suff-nhil} we can now get a topological invariant set in the intersectional parts of $\Pi_{ij}^{u,\kappa}\bigcap\Pi_{lk}^{s,\kappa}$, $i,j,k,l=0,1$, which is shown in Fig. \ref{fig:NHIL}. But we still need to prove the cone conditions for it, i.e. {\bf C4, C5}.\\

Recall that our invariant set lies in a domain $\mathbf{K}\times\mathbb{T}\times[-C\delta,C\delta]\times\mathbb{T}$, which is denoted by the original manifold $M$ and can be embedded into $\mathbb{R}^4$. On the other side, we can take a group of 
base vectors of $TM$ by
\[
E_1^c=(0,1,0,0)^t,
\]
\[
E_2^c=(1,0,0,0)^t,
\]
\[
E^u=(0,-\eta,0,1)^t,
\]
\[
E^s=(0,-\eta,-\alpha+\eta\gamma,1)^t.
\]
Notice that $TM=span\{E_1^c,E_2^c,E^u,E^s\}$, 
so every vector $v\in T_xM$ corresponds a unique coordinate $(a,b,c,d)\in\mathbb{R}^4$ such that
\[
v=a{E_1^c}{\mathcal{X}}+b{E_2^c}{\mathcal{X}}+c E^u+dE^s,
\]
where $\mathcal{X}$ is a rescale constant decided later on. Besides, we can take the following metric on $TM$:
\[
\|v\|_{\mathcal{X}}:=\|(a,b,c,d)\|_e,
\]
with $\|\cdot\|_e$ is the typical Euclid metric. Define the unstable cones in the bundle of isolating blocks $T_x\mathbb{R}^4\Big{|}_{x\in\Pi_{ij}^{s,\kappa}}$ by
\[
C_{ij}^u(x)=\{v\in T_xM: \ \measuredangle (v,E^u(x))\le \theta^u \},\quad i,j=0,1
\]
and on $T_x\mathbb{R}^4\Big{|}_{x\in\Pi_{ij}^{u,\kappa}}$ the stable cones 
\[
\qquad 
C_{ij}^s(x)=\{v\in T_xM: \ \measuredangle (v,E^s(x))\le  \theta^s \},\quad i,j=0,1.
\]

\begin{lem} \label{lem:cone}
For any $x\in \Pi^{s,\kappa}_{ij}$ and any $v\in C_{ij}^u(x)$ we have 
\[
D\SM_\eps(x) v \in C_{ji}^u(\SM_\eps(x)) \quad 
\text{ and }\quad\|D\SM_\eps (x)v\|_{\mathcal{X}}\ge \frac{m^u}{4\dt}\,\|v\|_{\mathcal{X}}
\]
Similarly, for any $x^+\in\Pi^{u,\kappa}_{ij}$ and 
any $v\in C_{ij}^s(x^+)$ we have 
\[
D\SM^{-1}_\eps(x^+) v \in C_{ji}^s(\SM^{-1}_\eps(x^+)) \quad 
\text{ and }\quad \|D\SM^{-1}_\eps (x^+)v\|_{\mathcal{X}}\ge \frac{m^s}{4\dt}\,\|v\|_{\mathcal{X}}
\]
with 
\[
\theta_u=\arctan\min\{\mathcal{O}(\frac{\mathcal{X}}{\gamma}),\mathcal{O}(\frac{1}{\delta}),\mathcal{O}(\frac{1}{\gamma\delta\mathcal{X}})\},
\]
\[
\theta_s=\arctan\min\{\mathcal{O}(\frac{\mathcal{X}}{\delta\ln\epsilon}),\mathcal{O}(\frac{1}{\delta}),\mathcal{O}(\frac{1}{\delta^2\ln\epsilon\mathcal{X}})\}
\]
for
\[
\mathcal{O}(\delta^2\ln\epsilon)\leq\mathcal{X}\leq\mathcal{O}(\frac{1}{\gamma},\frac{1}{\zeta})
\]
 and $m^{u,s}$ are $\mathcal{O}(1)$ constants depending on them.
\end{lem}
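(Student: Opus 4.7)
The plan is a direct matrix calculation in the adapted (non-orthonormal) basis $\{\mathcal{X} E_1^c,\ \mathcal{X} E_2^c,\ E^u,\ E^s\}$ of $T_x M$. At a center point $x_0=(\eta_0,\xi_0,I_{ij}(\eta_0,\xi_0),\tau_{ij}(\eta_0,\xi_0))$, the vectors $E^u$ and $E^s$ are proportional, up to $\mathcal{O}(\delta)$ and $\mathcal{O}(\delta^2)$ errors respectively, to the eigenvectors $e_3, e_4$ of $D\SM_\eps(x_0)$ given by Lemma \ref{lem:eigenvalues}, with eigenvalues $\lambda_3\sim 1/\delta$ and $\lambda_4\sim\delta$. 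Hence in this basis $D\SM_\eps(x_0)$ is approximately block-diagonal: $\mathrm{Id}+\mathcal{O}(\eps^{1/8}\log\eps)$ on the central $2\times 2$ block, and $\mathrm{diag}(\lambda_3,\lambda_4)$ on the hyperbolic block. The rescaling $\mathcal{X}$ is the single tunable parameter, chosen so that the metric $\|\cdot\|_\mathcal{X}$ balances the central and hyperbolic scales against each other.

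At a general $x\in \Pi^{s,\kappa}_{ij}$, I use the variational decomposition from (\ref{1 jet}) to write $D\SM_\eps(x)=D\SM_\eps(x_0)+\Upsilon'(x)+\mathcal{O}(\kappa,\delta)$, where $\Upsilon'(x)$ is the explicit matrix displayed immediately after (\ref{1 jet}). Re-expressing $\Upsilon'$ in the new basis rescales each entry by $\mathcal{X}$ or $1/\mathcal{X}$ depending on which coordinate is being perturbed; the resulting $4\times 4$ matrix has diagonal approximately $(1,1,\lambda_3,\lambda_4)$ and off-diagonal couplings that I tabulate entry by entry: center-to-unstable of size $\mathcal{O}(\gamma/\mathcal{X})$, unstable-to-center of size $\mathcal{O}(\gamma\mathcal{X})$, stable-to-unstable of size $\mathcal{O}(\delta)$, and mixed entries involving $\log\eps$ through the $(1/\lambda)\log(\kappa\eps I^+/\lambda)$ terms (which only appear in the inverse).

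For any $v=a\mathcal{X} E_1^c+b\mathcal{X} E_2^c+c E^u+d E^s\in C^u_{ij}(x)$ the cone condition is $\sqrt{a^2+b^2+d^2}\leq|c|\tan\theta^u$, and writing $D\SM_\eps(x)v=(a',b',c',d')$ yields $|c'|\geq |\lambda_3 c|/2$ together with explicit bounds on $|a'|,|b'|,|d'|$ in terms of $(|a|,|b|,|c|,|d|)$ and the matrix entries above. Requiring the image to again lie in a cone of aperture $\theta^u$ produces three pairwise constraints: dominating the $c\mapsto a',b'$ coupling gives $\tan\theta^u\lesssim \mathcal{X}/\gamma$; dominating the $c\mapsto d'$ coupling gives $\tan\theta^u\lesssim 1/\delta$; and requiring the back-reaction of $a,b,d$ on $c'$ to remain subdominant to $\lambda_3 c$ gives $\tan\theta^u\lesssim 1/(\gamma\delta\mathcal{X})$, which is exactly the statement. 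The expansion estimate $\|D\SM_\eps(x)v\|_\mathcal{X}\geq (m^u/(4\delta))\|v\|_\mathcal{X}$ then follows from $|c'|\geq|\lambda_3 c|/2$ combined with $\|v\|_\mathcal{X}\leq \sqrt{2}|c|$ inside the cone. The statement for $D\SM_\eps^{-1}$ on stable cones is symmetric, starting from the inverse matrix displayed before (\ref{shrink'}): the entries $\frac{1}{\lambda}\log(\kappa\eps I^+/\lambda)$ and $\frac{\gamma}{\lambda}\log(\kappa\eps I^+/\lambda)+\zeta$ introduce the factor $\log\eps$ in place of $\gamma$ in the corresponding bounds for $\theta^s$, producing the displayed expressions $\tan\theta^s\lesssim \mathcal{X}/(\delta\log\eps)$, $1/\delta$, and $1/(\delta^2\log\eps\,\mathcal{X})$.

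The main obstacle is the bookkeeping of couplings: many of them appear with sizes spanning $\delta^2$ to $\log\eps$ and involving $\alpha,\gamma,\zeta$ and their oscillations, all expressed in a non-orthonormal basis, so consistency requires tracking each one through both forward and inverse maps. The rescaling $\mathcal{X}$ provides exactly one degree of freedom: taking it larger shrinks the center-to-hyperbolic couplings but inflates the hyperbolic-to-center ones, and requiring all pairwise cone-invariance conditions for both $\SM_\eps$ and $\SM_\eps^{-1}$ to hold simultaneously pins $\mathcal{X}$ down to the displayed window $\mathcal{O}(\delta^2\log\eps)\leq \mathcal{X}\leq \mathcal{O}(\min(1/\gamma,1/\zeta))$.
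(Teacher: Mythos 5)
Your overall strategy is the same as the paper's: write $D\SM_\eps$ in the rescaled frame $\{\mathcal{X}E_1^c,\mathcal{X}E_2^c,E^u,E^s\}$, bound the matrix entries, extract the cone-invariance constraints and the expansion $\sim 1/\dt$ from the dominant unstable diagonal entry $1+\beta(\alpha-\eta\gamma)$, and repeat symmetrically for $D\SM_\eps^{-1}$. The paper does this by conjugating $D\SM_\eps(x)$ at the actual point by the frame matrices $\Xi,\Xi_+$ and computing the conjugated, rescaled matrices explicitly (see (\ref{differ-matrix}) and (\ref{inverse matrix})), rather than expanding about the block center and invoking the variational matrix $\Upsilon'$; that routing difference is mostly cosmetic, except that in section \ref{sec:cone-cond} the matrix $\Upsilon'$ was only controlled acting on vectors in the span of $v_3^{ij},v_4^{ij}$, so your decomposition would still require controlling its action on the center directions $E_1^c,E_2^c$ before you can tabulate anything.

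The genuine gap is in the entry bookkeeping, which is the whole content of the lemma. In the frame $\{E_1^c,E_2^c,E^u,E^s\}$ the differential is \emph{not} approximately $\mathrm{Id}\oplus\mathrm{diag}(\lb_3,\lb_4)$: the central $2\times 2$ block contains the $\mathcal{O}(|\log\eps|)$ shear coming from the twist term (the $(1,2)$ entry of (\ref{differ-matrix}) is $-\tfrac1\lb\log\tfrac{\kappa\eps I^+}{\lb}$), and the hyperbolic block has off-diagonal entries $1$ and $\mathcal{O}(1)$. Consequently your claim that the $\log$ terms ``only appear in the inverse'' is false; what the proof actually uses is that in the forward map these terms sit inside the central block (hence never bind, since $\dt|\log\eps|\ll1$), whereas in (\ref{inverse matrix}) they appear in the hyperbolic rows and this is what produces the $\log\eps$ factors in $\theta^s$. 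Moreover your tabulated coupling sizes do not match the conjugated matrices: after rescaling, the center-column entries of the unstable row are $\mathcal{O}(\gamma\mathcal{X}/\dt)$ (not $\mathcal{O}(\gamma\mathcal{X})$ or $\mathcal{O}(\gamma/\mathcal{X})$), the unstable-column entries of the center rows are $\mathbf{O}_1/(\dt\mathcal{X})$, and the stable-to-unstable coupling is $\mathcal{O}(1)$, not $\mathcal{O}(\dt)$. With the sizes as you list them, the three displayed bounds on $\tan\theta^u$ and the admissible window for $\mathcal{X}$ do not follow; indeed your attribution logic is inconsistent, since a coupling of $c$ into $d'$ (whatever its size) can only force a \emph{lower} bound on the aperture, while the upper bounds come from protecting $|c'|\gtrsim |c|/\dt$ against the center and stable inputs. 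So the plan is the right one, but to prove the lemma you must actually compute the two conjugated, rescaled matrices as in (\ref{differ-matrix})--(\ref{inverse matrix}) and re-derive the constraints from the correct entries.
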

\begin{proof}
 $\forall x\in \Pi_{ij}^{s,\kappa}$, $C_{ij}^u(x)$ can be converted into
\begin{eqnarray*}
C_{ij}^u(x)=\Big{\{}v=a {E_1^c}{\mathcal{X}}+b {E_2^c}{\mathcal{X}}+cE^u(x)+dE^s(x)\Big{|}a^2+b^2+d^2\leq k_u^2c^2\Big{\}}
\end{eqnarray*}
with $\theta^u=\arctan k_u$. We should remind the readers $\mathcal{X}$ only influences the length of vectors in the cone but not the direction, so the angle $\theta^u$ keeps invariant.

Suppose $(a',b',c',d')$ is the coordinate of $D\mathcal{SM}(x)v$ of base vectors 
\[
{E_1^c(x^+)}{\mathcal{X}},{E_2^c(x^+)}{\mathcal{X}}, E^u(x^+), E^s(x^+),
\] 
then
\[
(a',b',c',d')^t=X^{-1}\cdot \Xi_{+}^{-1}\cdot D\mathcal{SM}(x)\cdot\Xi\cdot X(a,b,c,d)^t
\]
with $\Xi:=[E^c_1,E^c_2,E^u,E^s]_{4\times4}$ and
\[
X=\begin{pmatrix}
     \mathcal{X} &0 &0 &0    \\
  0    &       \mathcal{X} &0&0\\
  0&0&1&0\\
  0&0&0&1
\end{pmatrix}. 
\]
By calculation 
\[
\Xi_{+}^{-1}=
\begin{pmatrix}
    0 & 0 & -\frac{1}{\alpha^+-\eta^+\gamma^+} & 0  \\
     1 & 0 & 0 &0 \\ 
    0 & 0 & \frac{1}{\alpha^+-\eta^+\gamma^+} & 1 \\
     0   & 1 & 0 & \eta^+  
\end{pmatrix}
\]
and
\begin{equation}\label{differ-matrix}
\Xi_{+}^{-1}\cdot D\mathcal{SM}\cdot\Xi=
\begin{pmatrix}
   1   & -\frac{1}{\lambda}\ln\frac{\kappa\epsilon I^+}{\lambda}+\mathbf{O}_1/\delta &  \mathbf{O}_1/\delta & \mathbf{O}_1  \\
  \mathbf{O}_1   & 1 & \mathbf{O}_1  & \mathbf{O}_1 \\
   \beta\gamma+\frac{\gamma}{\alpha^+-\eta^+\gamma^+} & \beta\zeta+ \frac{\zeta}{\alpha^+-\eta^+\gamma^+} & 1+\beta(\alpha-\gamma\eta)+\frac{\alpha-\eta\gamma}{\alpha^+-\eta^+\gamma^+} & 1 \\
   -\frac{\gamma}{\alpha^+-\eta^+\gamma^+} & -\frac{\zeta}{\alpha^+-\eta^+\gamma^+} & -\frac{\alpha-\eta\gamma}{\alpha^+-\eta^+\gamma^+} & \mathbf{O}_1 
\end{pmatrix},
\end{equation}
so the rescaled matrix should be
{\small \[
X^{-1}\cdot \Xi_{+}^{-1}\cdot D\mathcal{SM}\cdot\Xi\cdot X=
\begin{pmatrix}
   1   & -\frac{1}{\lambda}\ln\frac{\kappa\epsilon I^+}{\lambda}+\mathbf{O}_1/\delta &  \mathbf{O}_1/\delta\mathcal{X} & \mathbf{O}_1/\mathcal{X}  \\
  \mathbf{O}_1   & 1 & \mathbf{O}_1/\mathcal{X}  & \mathbf{O}_1/\mathcal{X} \\
\mathcal{O}(\mathcal{X}/\delta) & \mathcal{O}(\mathcal{X}/\delta) & 1+\beta(\alpha-\eta\gamma)+\frac{\alpha-\eta\gamma}{\alpha^+-\eta^+\gamma^+} & 1 \\
\mathcal{O}(\mathcal{X}) & \mathcal{O}(\mathcal{X}) & -\frac{\alpha-\eta\gamma}{\alpha^+-\eta^+\gamma^+} & \mathbf{O}_1 
\end{pmatrix}.
\]}
An advantage of involving $\mathcal{X}$ is now the diagonal terms of aforementioned matrix are much greater than the rest.  
To make ${a}'^2+{b}'^2+{d}'^2\leq k_u^2{c}'^2$, we need  
\[
k_u^2\leq\min\{\mathcal{O}(\frac{\mathcal{X}}{\gamma}),\mathcal{O}(\frac{1}{\delta}),\mathcal{O}(\frac{1}{\gamma\delta\mathcal{X}})\}
\]
for
\[
\mathcal{O}(\gamma\delta^2,\zeta\delta^2)\leq\mathcal{X}\leq\mathcal{O}(\frac{1}{\gamma},\frac{1}{\zeta}).
\]
Recall that $\alpha-\eta\gamma\neq0$ and $\beta\sim\mathcal{O}(1/\delta)$ for any $x\in\Pi_{ij}^{u,s,\kappa}$, then we also get
\begin{eqnarray*}
\|D\mathcal{SM}(x)v\|_{\mathcal{X}}\geq|c'|&\geq& \frac{\beta(\alpha-\eta\gamma)}{2}|c|,\\
&\geq&\frac{\beta(\alpha-\eta\gamma)}{2}\frac{\|v\|_{\mathcal{X}}}{\sqrt{1+k_u^2}}.
\end{eqnarray*}
Taking $m^u={2(\alpha-\eta\gamma)}/{\sqrt{1+k_u^2}}$ we proved the first part.
\vspace{10pt}

Similarly,  $\forall x^+\in \Pi_{ij}^{u,\kappa}$, $C_{ij}^s(x^+)$ can be converted into
\begin{eqnarray*}
C_{ij}^s(x^+)=\Big{\{}v=a {E_1^c}{\mathcal{X}}+b {E_2^c}{\mathcal{X}}+cE^u(x^+)+dE^s(x^+)\Big{|}a^2+b^2+c^2\leq k_s^2d^2\Big{\}}
\end{eqnarray*}
with $\theta^s=\arctan k_s$. Also for this case $\mathcal{X}$ only influences the length of vectors in the cone but not the direction, so the angle $\theta^s$ keeps invariant.

Now we have
{\small
\begin{equation}\label{inverse matrix}
\begin{aligned}
\Xi^{-1}\cdot D\mathcal{SM}(x^+)^{-1}\cdot\Xi_+  &=[\Xi_+^{-1}\cdot D\mathcal{SM}(x)\cdot \Xi]^{-1}\\
& =\begin{pmatrix}
       1   & \frac{1}{\lambda}\ln\frac{\kappa\epsilon I^+}{\lambda} & 0 & 0 \\
           0  & 1 & 0 & 0 \\
-\frac{\gamma}{\alpha-\eta\gamma} & -\frac{1}{\alpha-\eta\gamma}(\frac{\gamma}{\lambda}\ln\frac{\kappa\epsilon I^+}{\lambda}+\zeta) & 0 &  -\frac{\alpha^+-\eta^+\gamma^+}{\alpha-\eta\gamma} \\
            \frac{\gamma}{\alpha-\eta\gamma}& \frac{1}{\alpha-\eta\gamma}(\frac{\gamma}{\lambda}\ln\frac{\kappa\epsilon I^+}{\lambda}+\zeta) & 1 &       (1+\alpha\beta-\eta\beta\gamma)\frac{\alpha^+-\eta^+\gamma^+}{\alpha-\eta\gamma} +1
\end{pmatrix}\\
&+\mathbf{O}_1/\delta^2.
\end{aligned}
\end{equation}}

Suppose $(a',b',c',d')$ is the coordinate of $D\mathcal{SM}^{-1}(x^+)v$, 
then
\[
(a',b',c',d')^t=X^{-1}\cdot \Xi^{-1}\cdot D\mathcal{SM}^{-1}(x^+)\cdot\Xi_+\cdot X(a,b,c,d)^t.
\]

To make ${a}'^2+{b}'^2+{c}'^2\leq k_s^2{d}'^2$, we need 

\[
k_s^2\leq\min\{\mathcal{O}(\frac{\mathcal{X}}{\delta\ln\epsilon}),\mathcal{O}(\frac{1}{\delta}),\mathcal{O}(\frac{1}{\delta^2\ln\epsilon\mathcal{X}})\}
\]
for
\[
\mathcal{O}(\delta^2\ln\epsilon)\leq\mathcal{X}\leq\mathcal{O}(\frac{1}{\delta\ln\epsilon}).
\]
Based on these
\begin{eqnarray*}
\|D\mathcal{SM}^{-1}(x^+)v\|_{\mathcal{X}}\geq|d'|&\geq& \frac{\beta(\alpha-\eta\gamma)}{2}|d|,\\
&\geq&\frac{\beta(\alpha-\eta\gamma)}{2}\frac{\|v\|_{\mathcal{X}}}{\sqrt{1+k_s^2}}.
\end{eqnarray*}
Taking $m^s={2(\alpha-\eta\gamma)}/{\sqrt{1+k_s^2}}$ we proved the second part.

\end{proof}
\begin{rmk}
{\bf C4, C5, C4', C5'} can be deduced from this Lemma with 
\[
\nu^{u,s}=\frac{m^{u,s}}{4\delta}
\]
and aforementioned $\theta^u$, $\theta^s$.
\end{rmk}

\subsection{$C^r$ smoothness and H\"older continuity of NHIL}
\label{sec:cr-regularity}

Based on previous analysis, we have proved {\bf C1 to C5} for the separatrix map, which lead to first part of Theorem \ref{thm:nhil}, i.e. we get two collections of Lipschitz graphs by $W^{uc}_{ij}$ and $W^{sc}_{ij}$, which corresponds to the invariant set in $\Pi_{ij}^{s,\kappa}\cap\Pi_{lk}^{u,\kappa}$, $i,j,l,k=0,1$ (see Fig. \ref{fig:NHIL}).  Actually, $W_{ij}^c:=W_{ij}^{uc}\pitchfork W_{ij}^{sc}$ is the normally hyperbolic invariant lamination and $W_{ij}^{uc}$, $W_{ij}^{sc}$ are the unstable, stable manifold of it. 

$\forall x\in W_{ij}^c$, $\{\mathcal{SM}_{\epsilon}^n(x)\}_{n\in\mathbb{Z}}$ will decide a unique bilateral sequence 
\[
\omega=(\omega_k),\quad k\in\mathbb{Z}, \;\omega_k\in\{0,1\},
\]
where $(\omega_k,\omega_{k+1})$ is the index of the isolating block where $\mathcal{SM}_{\epsilon}^k(x)$ lies. 

If we take the rescaled metric $\|\cdot\|_{\mathcal{X}}$ and base vectors $\{E_1(x)^c,E_2^c(x),E^u(x),E^s(x)\}$ on $T_xM\Big{|}_{x\in W_{ij}^c}$, we can get {\bf C6} with

\[
\lambda_{sc}^+=\ln\epsilon,\;\lambda_s^-\sim\mathcal{O}(1/\delta),
\]
\[
\lambda_{uc}^-\sim\mathcal{O}(\sqrt{1+a^2\mathcal{X}^2}\ln\epsilon),\;\lambda_u^+\sim\mathcal{O}(1/\delta),
\]
\[
m=\max\Big{\{}\sqrt{1+2\Big{(}\frac{\epsilon}{\delta^2\mathcal{X}}\Big{)}^2},1,a\mathcal{X}\ln\epsilon\Big{\}}.
\]
whereas $\delta\in[\epsilon^{\varpi},\epsilon^{\rho}]$ with $0<\rho<1/4$. 

Besides, the bundle $T\mathbb{R}^4$ restricted on it has a continuous splitting by 
$$
T_x\mathbb{R}^4\Big{|}_{x\in{W}_{ij}^c}=E_{ij}^u(x)\oplus E_{ij}^c(x)\oplus E_{ij}^s(x)
$$ 
and
\[
D\mathcal{SM}E_{ij}^*(x)//E_{jk}^*(\mathcal{SM}(x)),\;\forall x\in{W}^c_{ij},\;i,j,k\in\{0,1\},
\]
where $*$ can be any of $s,c,u$, $(i,j)=(\omega_0,\omega_1)$ and $(j,k)=(\omega_1,\omega_2)$. Notice that
$E_{ij}^{c,u,s}$ are different from aforementioned base vectors $\{E_1^c,E_2^c,E^u, E^s\}$ , but they still inherit the spectral estimate, i.e. the following inequalities hold:
\[
\max_{v\in E_{\omega}^c}\{\frac{||D\mathcal{SM}v||_{\mathcal{X}}}{||v||_{\mathcal{X}}},(\frac{||D\mathcal{SM}^{-1}v||_{\mathcal{X}}}{||v||_{\mathcal{X}}})\}\leq m,
\]
\[
\max_{v\in E_{\omega}^s}\{\frac{||D\mathcal{SM}v||_{\mathcal{X}}}{||v||_{\mathcal{X}}},(\frac{||D\mathcal{SM}^{-1}v||_{\mathcal{X}}}{||v||_{\mathcal{X}}})^{-1}\}\leq\frac{1}{\lambda_s^-}<1
\]
and
\[
\min_{v\in E_{\omega}^u}\{\frac{||D\mathcal{SM}v||_{\mathcal{X}}}{||v||_{\mathcal{X}}},(\frac{||D\mathcal{SM}^{-1}v||_{\mathcal{X}}}{||v||_{\mathcal{X}}})^{-1}\}\geq\lambda_u^+>1
\]
with $\lambda_s^-, \lambda_u^+\sim\mathcal{O}(1/\delta)$ and $m\leq\mathcal{O}(\ln\epsilon)$ due to {\bf C6}. \\

Now we make the following convention: recall that $\forall x\in W^c_{ij}$, there exists a corresponding bilateral sequence $\omega\in\Sigma$, conversely we can define a leaf of $W^c$ by
\[
\mathcal{L}_{\omega}=\{x\in W^c|\mathcal{SM}^n(x)\text{ corresponds to a fixed }\omega\in\Sigma, n\in\mathbb{Z}\}.
\]
So it's an one to one correspondence between $\omega\in\Sigma$ and $\mathcal{L}_{\omega}\subset W^c$. Besides, we can see that $\mathcal{L}_{\omega}$ is a collection of countably many 2-dimensional submanifolds, i.e.
\[
\mathcal{L}_{\omega}=\{(\eta,\xi,I_{\omega}(\eta,\xi,\epsilon),\tau_{\omega}(\eta,\xi,\epsilon))|\omega\in\Sigma, (\eta,\xi)\in\mathbf{K}\times\mathbb{T}\}.
\]
Usually, we can define the Bernoulli metric $\|\cdot\|_{\varrho}$ on $\Sigma$ by
\[
\|\omega-\omega'\|_{\varrho}:=\sum_{i\in\mathbb{Z}}\frac{|\omega_i-\omega'_i|}{\varrho^{|i+1|}},\quad\forall \omega=(\omega_i),\;\omega'=(\omega'_i),
\]
where $\varrho$ is a positive constant. In this article we take $\varrho=1/\delta$, and we explain why in the following.\\

From {\bf C1 to C5}, for any two bilateral sequences $\omega$ and $\omega'$ satisfying
$(\omega)_i=(\omega')_i$ for $-m\leq i\leq n$, $m,n\in\mathbb{N}$, 
\begin{equation}\label{norm left}
\|\pi_u(x'-x)\|\leq\mathcal{O}(\delta^{n+1})
\end{equation}
and
\begin{equation}\label{norm right}
\|\pi_s(x'-x)\|\leq\mathcal{O}(\delta^{m}),
\end{equation}
hold for any $x=(\eta,\xi,I_{\omega}(\eta,\xi),\tau_{\omega}(\eta,\xi))\in\mathcal{L}_{\omega}$, $x'=(\eta,\xi,I_{\omega'}(\eta,\xi),\tau_{\omega'}(\eta,\xi))\in\mathcal{L}_{\omega'}$. $\forall v\in T_xM$, $\pi_u v$, $\pi_s v$ is the unstable, stable component, i.e. if
\[
v=aE_1^c(x)+bE_2^c(x)+cE^u(x)+dE^s(x),
\]
$\pi_u v=cE^u(x)$ and $\pi_s v=dE^s(x)$.

On the other side, we can define the $\|\cdot\|_{C^r}$ norm on different leaves by:
\begin{equation}\label{norm leaf}
\|\mathcal{L}_{\omega}-\mathcal{L}_{\omega'}\|_{C^r}\doteq\min_{(\eta,\xi)\in\mathbf{K}\times\mathbb{T}}\|I_{\omega}(\eta,\xi)-I_{\omega'}(\eta,\xi)\|_{C^r}+\|\tau_{\omega}(\eta,\xi)-\tau_{\omega'}(\eta,\xi)\|_{C^r}.
\end{equation}
Recall that $(\eta,\xi)\in\mathbf{K}\times\mathbb{T}$ is compact, and 
\[
E^u(\eta,\xi,I,\tau)=(0,-\eta,0,1)^t,
\]
\[
E^s(\eta,\xi,I,\tau)=(0,-\eta,-\alpha+\eta\gamma,1)^t
\]
have a uniform angle away from zero due to Lemma \ref{angle}, so there exists an $\mathcal{O}(1)$ constant $C'$ such that 
\begin{equation}\label{norm symbol}
\frac{1}{C'}\|\omega-\omega'\|_{\delta}\leq\|\mathcal{L}_{\omega}-\mathcal{L}_{\omega'}\|_{C^0}\leq C'\|\omega-\omega'\|_{\delta},
\end{equation}
due to (\ref{norm left}), (\ref{norm right}) and (\ref{norm symbol}).

\begin{itemize}
\item {\bf the smoothness of each leaf's stable (unstable) manifolds:}
\end{itemize}

To fix the setting of Theorem \ref{cr}, we can take $X_0$ by $W^c_{\omega}$, $X$ by $\mathbf{K}\times\mathbb{T}\times(-C\delta,C\delta)\times\mathbb{T}$ and $f$ by $\mathcal{SM}_{\epsilon}$. We take the admissible metric by $||\cdot||_{\mathcal{X}}$. $\forall x\in W^c_{\omega}$, the exponential map will pull back $W^{uc}(x)$ into a unique backward invariant graph Lipschitz graph $g^{uc}_{inv}(x)\in\mathfrak{L}_{1/k_s}(E_{ij}^{uc}(x),E_{ij}^s(x))$, where $1/k_s$ is achieved due to Proposition \ref{pro} and {\bf C5}.  

Take $x$ on a certain leaf $\mathcal{L}_{\omega}$, i.e. $\omega$ is fixed, then $\rho_{uc}=m$, $\nu_{uc}^{-1}={m}/\lambda_s^-$ due to {\bf C6} condition. Then
\[
\rho_{uc}^r\nu_{uc}^{-1}\sim\mathcal{O}(\delta\cdot\ln\epsilon^{r+1}),
\]
 where the right side far less than $1$ for arbitrary $r>1$ as long as $\epsilon$ sufficiently small. Due to Theorem \ref{cr} we get the $C^r-$smoothness of the unstable manifold ${W}_{\omega}^{uc}(x)=\exp(g_{inv}^{uc}(x))$, $\forall x\in\mathcal{L}_{\omega}$.

Similarly, we can get the $C^r-$smoothness of the stable manifold ${W}_{\omega}^{sc}(x)=\exp(g_{inv}^{sc}(x))$ for $x\in\mathcal{L}_{\omega}$.

As $\mathcal{L}_{\omega}={W}_{\omega}^{sc}\pitchfork{W}_{\omega}^{uc}$, we get the $C^r-$smoothness of each leaf.
\begin{itemize}
\item \bf{H\"older continuity on $\omega\in\Sigma$:}
\end{itemize}

We take $f$ by $\mathcal{SM}_{\epsilon}$ and $\Lambda$ by $W^c_{\omega}$ in Theorem \ref{holder}, and $\lambda_+^{sc}\leq\ln\epsilon$, $\lambda_+^u\sim\mathcal{O}(1/\delta)$ due to {\bf C6}. Then $E_{ij}^{sc}(x)|_{x\in\Lambda}$ is H\"older with exponent $\varphi^{s,c}_1=\ln(1/\delta\ln\epsilon)/\ln(b_1/\ln\epsilon)$, which is greater than $1/(4^2+1)$ because $b_1\sim\mathcal{O}(1/\delta^{16})$. Similarly, we get $\lambda_-^{uc}\lesssim\ln\epsilon$ and $\lambda_-^s\sim\mathcal{O}(1/\delta)$. Then $E_{ij}^{uc}(x)|_{x\in\Lambda}$ is H\"older with exponent $\varphi^{u,c}_1=\ln(1/\delta\ln\epsilon)/\ln(1/\delta^{16}\ln\epsilon)$.

As $E_{ij}^c(x)|_{x\in\Lambda}=E_{ij}^{uc}(x)\cap E_{ij}^{sc}(x)|_{x\in\Lambda}$, we actually get the $\frac{1}{17}-$H\"older continuity of $E^c(x)$, i.e. 
\[
\|E_{ij}^c(x)-E_{ij}^c(y)\|\leq C_1\|x-y\|^{\varphi^{c}_1},\quad\forall x,y\in\Lambda,
\]
where the distance between two linear spaces is defined by
\[
dist(A,B)=\max\{\max_{v\in A,\;\|v\|=1}dist(v,B),\;\max_{w\in B,\;\|w\|=1}dist(w,A)\}.
\]
Recall that $\Lambda=W^c=\cup_{\omega\in\Sigma}\mathcal{L}_\omega=\{(\xi,\eta, t_{\omega}(\xi,\eta),I_{\omega}(\xi,\eta))|(\xi,\eta)\in\mathbb{T}\times\mathcal{D},\omega\in\Sigma\}$, for $x\in\Lambda$, 
\[
E_{ij}^c(x)=T_x\Lambda=span\{\partial_\xi\Lambda(x),\partial_\eta\Lambda(x)\}
\]
with 
\[
\partial_\xi\Lambda(x)=(1,0,\partial_\xi t_\omega(\xi,\eta),\partial_\xi I_\omega(\xi,\eta))
\]
and
\[
\partial_\eta\Lambda(x)=(0,1,\partial_\eta t_\omega(\xi,\eta),\partial_\eta I_\omega(\xi,\eta)).
\]
$\forall x,y\in\Lambda$ (unnecessarily on the same leaf) we have
\begin{eqnarray*}
\|\partial_\xi\Lambda(x)-\partial_\xi\Lambda(y)\|&\leq&\|\partial_\xi\Lambda(x)\|\cdot\|\frac{\partial_\xi\Lambda(x)}{|\partial_\xi\Lambda(x)|}-\frac{\partial_\xi\Lambda(y)}{|\partial_\xi\Lambda(x)|}\|\\
&\leq&\tilde{C}_1\|\partial_\xi\Lambda(x)\|\cdot \max_{v\in E_{ij}^c(x),\|v\|=1}dist(v,E_{ij}^c(y))\\
&\leq& \tilde{C}_1\|\partial_\xi\Lambda(x)\|dist(E_{ij}^c(x),E_{ij}^c(y))\\
&\leq& \tilde{C}_1\|\partial_\xi\Lambda(x)\|C_1\|x-y\|^{\varphi_1^c}\\
&\leq& C'_1\|x-y\|^{\varphi_1^c},
\end{eqnarray*}
where $\tilde{C}_1$ depends on $\max\{\|\partial_\xi\Lambda(x)\|,\|\partial_\xi\Lambda(y)\|\}$ and we can absorb it into $C'_1$. Use the same way we get 
\[
\|\partial_\eta\Lambda(x)-\partial_\eta\Lambda(y)\|\leq C'_1\|x-y\|^{\varphi_1^c}
\]
and then
\begin{eqnarray}
\|\mathcal{L}_{\omega}-\mathcal{L}_{\omega'}\|_{C^1}&\leq& 2C'_1 \|\mathcal{L}_{\omega}-\mathcal{L}_{\omega'}\|_{C^0}^{\varphi_1^c}\nonumber\\
&\leq&  2C'_1C'\|\omega-\omega'\|_{\delta}^{\varphi^{c}_1},
\end{eqnarray}
where $\omega,\omega'\in\Sigma$ uniquely decided by $x$,$y$.\\

In the following we remove the subscript `$ij$' for brevity. By induction of Theorem \ref{holder}, we get $E_{i+1}^c(x_i,v_i)|_{ \underbrace{T\cdots T }_i\Lambda}=E_{i+1}^{uc}(x_i,v_i)\cap E_{i+1}^{sc}(x_i,v_i)|_{\underbrace{T\cdots T }_i\Lambda}=\underbrace{T\cdots T}_{i+1}\Lambda$, $x_k=(x_{k-1},v_{k-1})$ and for $\|v_k\|\leq1$, $1\leq k\leq i$,
\[
\varphi_{i+1}^{sc}=(\ln\lambda_{i+1,+}^u-\ln\lambda_{i+1,+}^{s,c})/(\ln b_{i+1}-\ln\lambda_{i+1,+}^{s,c})>1/(i+3)^2
\]
and
\[
\varphi_{i+1}^{uc}=(\ln\lambda_{i+1,-}^s-\ln\lambda_{i+1,-}^{u,c})/(\ln b_{i+1}-\ln\lambda_{i+1,-}^{u,c})>1/(i+3)^2
\]
with $\lambda_{i+1,+}^{u}\sim\mathcal{O}(1/\delta)$, $\lambda_{i+1,+}^{sc}\lesssim\ln\epsilon$, $\lambda_{i+1,-}^s\sim\mathcal{O}(1/\delta)$ and $\lambda_{i+1,-}^{uc}\lesssim\ln\epsilon$ due to (\ref{matrix}). That means 
\begin{eqnarray*}
dist(E_{i+1}^c(x_{i},v_{i}),E_{i+1}^c(y_{i},w_{i}))&\leq& C_{i+1}(\|x_{i}-y_{i}\|^{\varphi^c_{i+1}}+\|v_{i}-w_{i}\|^{\varphi^c_{i+1}})\\
&\leq&C_{i+1}(\|x_{i}-y_{i}\|^{\varphi^c_{i+1}}+dist^{\varphi^c_{i+1}}(E_i^c(x_i), E_i^c(y_i)))\\
&\leq&C_{i+1}\|x_{i}-y_{i}\|^{\varphi^c_{i+1}}+C_{i+1}(C_i\|x_i-y_i\|^{\varphi_i^c})^{\varphi_{i+1}^c}\\
&\leq&C_{i+1}(\|x_{i-1}-y_{i-1}\|+\|v_{i-1}-w_{i-1}\|)^{\varphi_{i+1}^c}+\\
& & C_{i+1}C_i^{\varphi_{i+1}^c}(\|x_{i-1}-y_{i-1}\|+\|v_{i-1}-w_{i-1}\|)^{\varphi_{i+1}^c\varphi_{i}^c}\\
&\leq&\cdots\\
&\leq& \hat{C}_{i+1}\|x_1-y_1\|^{\prod_{k=1}^{i+1}\varphi_k^c},
\end{eqnarray*}
where $\hat{C}_{i+1}$ is a constant depending on $C_k$, $1\leq k\leq i+1$. \\

Recall that $E_{i+1}^c(x_i,v_i)=T_{(x_i,v_i)}(\underbrace{T\cdots T}_i\Lambda)$ and each leaf is sufficiently smooth,  $\partial_\xi\partial_\eta\Lambda=\partial_\eta\partial_\xi\Lambda$ holds.
\begin{eqnarray}\label{holder inequality}
\|\mathcal{L}_{\omega}-\mathcal{L}_{\omega'}\|_{C^{i+1}}
&=&\sum_{j=-1}^{i}\sum_{k=0}^{j+1}\|\partial_\xi^k\partial_\eta^{j+1-k}\Lambda(x)-\partial_\xi^k\partial_\eta^{j+1-k}\Lambda(y)\|_{C^0}\nonumber\\
&\leq&\sum_{j=-1}^{i}\sum_{k=0}^{j+1}|\partial_\xi^{k}\partial_\eta^{j+1-k}\Lambda(x)|\cdot\|\frac{\partial_\xi^{k}\partial_\eta^{j+1-k}\Lambda(x)}{|\partial_\xi^{k}\partial_\eta^{j+1-k}\Lambda(x)|}-\frac{\partial_\xi^{k}\partial_\eta^{j+1-k}\Lambda(y)}{|\partial_\xi^{k}\partial_\eta^{j+1-k}\Lambda(x)|}\|_{C^0}\nonumber\\
&\leq&\tilde{C}_{i+1}\sum_{j=-1}^{i}\sum_{k=0}^{j+1}|\partial_\xi^{k}\partial_\eta^{j+1-k}\Lambda(x)|\cdot \max_{v_{i+1}\in E_{i+1}^c(x_i),\atop \|v_{i+1}\|=1}dist(v_{i+1},E_{i+1}^c(y_i,w_i))\nonumber\\
&\leq& \tilde{C}_{i+1}\sum_{j=-1}^{i}\sum_{k=0}^{j+1}|\partial_\xi^{k}\partial_\eta^{j+1-k}\Lambda(x)|dist(E_{i+1}^c(x_i,v_i),E_{i+1}^c(y_i,w_i))\nonumber\\
&\leq& \tilde{C}_{i+1}\sum_{j=-1}^{i}\sum_{k=0}^{j+1}|\partial_\xi^{k}\partial_\eta^{j+1-k}\Lambda(x)|\hat{C}_{i+1}\|x_1-y_1\|^{\prod_{k=1}^{i+1}\varphi_k^c},\nonumber\\
&\leq& C'_{i+1}(\eta,\xi)\|\omega-\omega'\|^{\prod_{k=1}^{i+1}\varphi_k^c}.
\end{eqnarray}
where $x_1=x$ and $y_1=y$ have the same $\eta$ and $\xi$ components but belong to different leaves. We can always assume $|\partial_\xi^{k}\partial_\eta^{j+1-k}\Lambda(x)|>|\partial_\xi^{k}\partial_\eta^{j+1-k}\Lambda(y)|$, so the second line of aforementioned inequalities holds. If we specially take $(x_i,v_i)$ satisfying $v_k=0$ and $(y_i,w_i)$ satisfying $w_k=0$ for all $1\leq k\leq i$, the third line holds with $\tilde{C}_{i+1}$ depending on $x$, $y$. In the last line, we absorb all these constants and assume a new constant $C'_{i+1}$ which depends on $\eta$, $\xi$ and $i$. Recall that $(\eta,\xi)\in\mathbf{K}\times\mathbb{T}$ is compact, so $C'_{i+1}$ is of $\mathcal{O}(1)$ comparing to sufficiently small $\epsilon$.\\

Now we gather all these materials and construct the following commutative diagram:
\be
\begin{array}[c]{ccc} 
\quad\ {\bf \Lb}_\eps&
\quad  \stackrel{\SM_\eps} {\longrightarrow} & 
\quad {\bf \Lb}_\eps \\
C\ \uparrow &  & C\ \uparrow \\ 
\ \ \ \ | &  & \ \ \ \ | \\ 
\mathbb A_0 \times \Sigma
& \quad  \stackrel{F} {\longrightarrow} & \ \ 
\mathbb A_0 \times \Sigma
\end{array}
\ee
via
\be
\begin{array}[c]{ccc} 
\quad\ (\eta,\xi,I_{\omega}(\eta,\xi),\tau_{\omega}(\eta,\xi))&
\quad  \stackrel{\SM_\eps} {\longrightarrow} & 
\quad (\eta^+,\xi^+,I_{\sigma\omega}(\eta^+,\xi^+),\tau_{\sigma\omega}(\eta^+,\xi^+)) \\
C\ \uparrow &  & C\ \uparrow \\ 
\ \ \ \ | &  & \ \ \ \ | \\ 
\mathbb (\eta,\xi,\omega) 
& \quad  \stackrel{F} {\longrightarrow} & \ \ 
\mathbb (\eta^+,\xi^+,\sigma\omega)
\end{array},
\ee
where ${\bf \Lb}_\eps=W^c=\cup_{\omega\in\Sigma}\mathcal{L}_{\omega}$, $\mathbb{A}_0=\mathbf{K}\times\mathbb{T}$, $C$ is the standard projection and  $\sigma$ is the typical Bernoulli shift. As $C$ is a smooth diffeomorphism and ${\bf \Lb}_\eps$ is a collection of 2-dimensional graphs, $F$ is uniquely defined by $F=C^{-1}\circ\mathcal{SM}_{\epsilon}\circ C$. Actually, $F(\eta,\xi)=(\eta^+,\xi^+)$ should obey 
\begin{eqnarray*}
\eta^+&=&\eta - \eps M^+_\xi (\eta^+,\xi,\tau_{\om}(\eta,\xi))+
{\bf O}_2
\\
\xi^+&=&\xi +\epsilon M_{\eta^+}^+(\eta^+,\xi,\tau_{\omega}(\eta,\xi))
-\dfrac{\eta^+}{\lambda} \log 
\left| \dfrac{\kappa^s\,I_{\sigma\om}(\eta^+,\xi^+)}{\lambda}\right|
+{\bf O}_1
\end{eqnarray*}
due to Corollary \ref{separatrix-for-arnold-example}.

Now we can see aforementioned diffeomorphism is quite similar to (\ref{eq:separatrix-for-arnold-example-model}), but we need to deduce the dependence of $\omega$ further. \\

From (\ref{holder inequality}), we know 
\[
\|\mathcal{L}_{\omega}-\mathcal{L}_{\omega'}\|_{C^r}\leq C'_r\|\omega-\omega'\|^{\prod_{k=1}^{r}\varphi_k^c}.
\]
So $\forall\; r>1$ and $\hbar\gg1$, $\exists\; K_{r,\hbar}\in\mathbb{N}$ such that if $\omega$, $\omega'\in\Sigma$ with 
\[
\omega_i=\omega'_i,\quad\forall -K_{r,\hbar}\leq i\leq K_{r,\hbar},
\]
we have
\[
\|\mathcal{L}_{\omega}-\mathcal{L}_{\omega'}\|_{C^r}\ll\mathcal{O} (\epsilon^{\hbar}).
\]
 Recall that $\mathbf{K}\times\mathbb{T}$ is compact and $\mathcal{SM}_{\epsilon}$ is smooth enough, so
we can take a truncation by 
\[
[\omega]_{K_{r,\hbar}}=(\omega_{-K_{r,\hbar}},\cdots,\omega_0,\cdots,\omega_{K_{r,\hbar}}),\quad\forall \omega\in\Sigma
\]
and take $2^{2K_{r,\hbar}+1}$ leaves 
\[
\mathcal{L}_{\omega}=\{(\eta,\xi,I_{\omega}(\eta,\xi),\tau_{\omega}(\eta,\xi))|(\eta,\xi)\in\mathbb{A}_0 \}
\]
of different $[\omega]_{K_{r,\hbar}}$, such that (\ref{eq:separatrix-for-arnold-example-model}) holds.
\section{Derivation of a skew product model}
\label{sec:derivation-random-model}

In this section we derive a skew-product of cylinder maps 
model (\ref{eq:skew-shift-model}). It requires a second 
order expansion of the separatrix map from 
Theorem \ref{thm:separatrix-map} and 
a new ``concervative'' system of coordinates on each of 
cylinder leave.\\

In last section we get a skew product satisfying (\ref{eq:separatrix-for-arnold-example-model}). We rewrite it here for later use:
\begin{equation}\label{eq:skew-product}
\begin{aligned}
\eta^+ & = \eta &{}-  \epsilon M_\xi (\eta, \xi, \tau_{\om}) & &{} + 
{\bf O}_2  &,\\
\xi^+ & = \xi &{} +  \epsilon M_\eta (\eta, \xi, \tau_\om) & - \frac{\eta^+}{\lambda} \log \left(  \frac{\epsilon \kappa I^+_{\sigma\om}}{\lambda}  \right) &{} + {\bf O}_1 &,\\
I_{\sigma\om}^+ & = I_{\om}& {} -  \bigl(M_\tau - E'(\eta)M_\xi)(\eta, \xi, \tau\bigr) & &{} + \frac{1}{\epsilon}{\bf O}_2 &, \\
\tau_{\sigma\om}^+ & = \tau_{\om} &   + \frac{1}{\lambda} \log \left(  \frac{\epsilon \kappa I_{\sigma\om}^+}{\lambda}  \right) &\mod 2\pi &+\;  {\bf O}_1  &,
\end{aligned}
\end{equation}
where any leaf of the lamination can be expressed by 
\[
\mathcal{L}_{\omega}=\{(\eta,\xi,I_{\omega}(\eta,\xi,\epsilon),\tau_{\omega}(\eta,\xi,\epsilon))|(\eta,\xi)\in\mathbf{K}\times\mathbb{T}\}
\]
with $I_{\omega}$ and $\tau_\omega$ $C^r$ smooth, $r\geq12$. Besides, due to Lemma \ref{lem:fixed-centers} and Lemma \ref{lem:period-two-centers}, we know that the lamination lies in the $\mathcal{O}(\delta)$ neighborhood of the isolating centers, i.e. 
\begin{eqnarray}\label{expansion of epsilon}
I_{\omega}(\eta,\xi,\epsilon)&=&\delta I_{\omega_0\om_1}^1(\eta,\xi,a)+\delta^2 I_\omega^2(\eta,\xi,a)+O_1^{(1)}(\delta^3)\\
\tau_{\omega}(\eta,\xi,\epsilon)&=&\omega_0\pi+\tau_{\omega_0}^1(\eta,\xi,a)+a\delta\tau_{\om}^2(\eta,\xi)+{O}_1^{(1)}(a^2\delta^2)+\cdots\nonumber
\end{eqnarray}
hold with $\|I_\om^1\|_{C^1}$, $\|\tau_{\om_0}^1\|_{C^1}$ and $\|\tau_\om^2\|_{C^1}$ being $\mathcal{O}(a)-$bounded. Due to {\bf M1} condition $\tau_{\omega_0}^1$ ia non-degenerate, i.e. $\exists\; C_0,\;C_1>0$ such that $\|\tau^1_{\omega_0}\|_{C^0}>C_0$ and $\|\tau^1_{\omega_0}\|_{C^1}>C_1$. Recall that $\epsilon^{\varpi}<\delta<\epsilon^{\rho}$ with $1\geq\varpi>1/4>\rho>0$, so we can take $\varpi=1$ and let $\delta\sim\mathcal{O}(\epsilon)$ throughout this section. \\

Another observation is the following: As $\mathcal{SM}_{\epsilon}$ is an exact symplectic diffeomorphism (see Remark \ref{exact symplectic}), $d\eta\wedge d\xi+dh\wedge d\tau$ is invariant and we can pull it back onto the NHIL  and get an area form $d\mu_{\omega}(\eta,\xi,\omega)=\rho_\omega(\eta,\xi,a)d\eta\wedge d\xi$. Actually, we have
\begin{equation}\label{area form}
\rho_\om(\eta,\xi)=\Big{(}1+\epsilon\Big{|}\frac{\partial(I_\om,\tau_\om)}{\partial(\eta,\xi)}\Big{|}+\eta\frac{\partial\tau_\om}{\partial\xi}\Big{)}d\eta\wedge d\xi,\quad\om\in\Sigma.
\end{equation}

So if we take $0<\epsilon\ll a\ll1$, $\rho_\omega-1\sim\mathcal{O}(a)$ is uniformly bounded because $\|I_{\om}\|_{C^1}$ and $\|\tau_{\om}\|_{C^1}$ is $\mathcal{O}(a)$ bounded due to the cone condition. For this we just need to take $\mathcal{X}\sim\mathcal{O}(1)$ in Lemma \ref{lem:cone} and the corresponding $1/k_u\leq\mathcal{O}(a)$ and $1/k_s\leq\mathcal{O}(\delta\log\eps)$. Later in section \ref{sec:symplectic-coord} we will further transform (\ref{eq:separatrix-for-arnold-example-model}) into the form (\ref{eq:skew-shift-model}) with the standard symplectic 2-form $dr\wedge d\theta$ on it (see section \ref{sec:symplectic-coord}), which totally fits into the framework of \cite{CK}.

\subsection{The second order of the separatrix map for 
trigonometric perturbations in the single resonance regime}
\label{sec:separatrix-map-SR}

Here we give formulas for the separatrix map for 
the trigonometric perturbations expanded to the second 
order. They are obtained in \cite{GKZ}. 

First fix some notation. Take a function
$f:\T^n\times\R^n\times\R^2\times\T\longrightarrow \R$ 
with Fourier series 
\[
f=\sum_{k\in\Z^{n+1}}\,f^k(I,p,q)e^{2\pi i k\cdot (\varphi,t)}.
\]
Define $\mathcal N$ as 
\[
 \mathcal N(f)=\{k\in\Z^{n+1}: f^k\neq 0\}
\]
and 
\[
 \mathcal N^{(2)}(f)=\{k\in\Z^{n+1}: k=k_1+k_2,\ 
k_1,k_2\in \mathcal N(f)\}.
\]

Consider the \emph{non-resonant region}, which 
stays away from resonances created by 
the harmonics in $\mathcal N^{(2)}( H_1)$.

Define
\begin{equation}\label{def:SR}
 \text{Non}_\beta=\left\{I:  \forall k\in \mathcal N^{(2)}( H_1),\,\,  
|k\cdot (\nu(I),1)|\geq\beta\right\}.
\end{equation}
for a fixed parameter $\beta$. The complement of 
the non-resonant zone is build up by the different resonant 
zones associated to the harmonics in $\mathcal N^{(2)}( H_1)$. 
Fix $k\in\mathcal N^{(2)}( H_1)$, then we define the 
resonant zone
\begin{equation}\label{def:DR}
 \text{Res}^k_\beta=\left\{I: |k\cdot
(\nu(I),1)|\leq\beta\right\}.
\end{equation}
The parameter $\beta$ in both regions will be chosen 
differently, so that the different zones overlap.

We abuse notation and we redefine the norms  in
\eqref{anisotropic-norm} as
\[
\|\cdot \|_r^*=\|\cdot \|_r^{(\beta)},  \ \ \ 
O^{(b)} =O^{(b)}_1, \ \ \  O^*_k=O^{(\beta)}_k.
\] 
Now we can give formulas for the separatrix map in 
both regions.

The main result of this section is Theorem 
\ref{thm:SM-higher-order:SR} 
which gives refined formulas for the separatrix map in 
the single resonance zone (see \eqref{def:SR}). 
To state it we need to define an auxiliary function $w$. 
This $w$ is a slight modification of the function $w_0$ 
given in ( \ref{dist-to-separatrix}).

Consider a function $g(\eta, r)$. It is obtained 
in Section 4.1\cite{GKZ} by applying Moser's normal 
form to $H_0$. This $g$ satisfies 
$g(\eta,r)=\lb(\eta) r+\mathcal O(r^2)$, where $\lb$ is
the positive eigenvalue of the matrix \eqref{def:MatrixLambda}.
Therefore, $g$ is invertible with respect to the second variable 
for small $r$. Somewhat abusing notation, call $g_r^{-1}$ 
the inverse of $g$ with respect to the second variable
\footnote{the subindex is to emphasize that the inverse is 
performed with respect to the variable $r$}. Then define 
the function $w$  by
\begin{equation}\label{def:omega}
 w(\eta,h)=g_r^{-1} (\eta, h-E(\eta)).
\end{equation}

\begin{thm} \label{thm:SM-higher-order:SR}
Fix $\beta>0$ and $1\ge a>0$. For $\eps$ sufficiently small
there exist $c>0$ independent of $\eps$ and a canonical 
system of coordinates $(\eta,\xi,h,\tau)$ such that in 
the non-resonant zone $\textrm{Non}_\beta$ we have 
\[
\eta = I +\mathcal O^*_{1}(\eps)+\mathcal O^*_{2}(H_0-E(I)), \ 
\xi + \nu(\eta)\tau=\varphi+f, \ 
h=H_0+\mathcal O^*_{1}(\eps)+\mathcal O^*_{2}(H_0-E(I)),
\]
where $f$ denotes a function depending only on 
$(I,p,q,\eps)$ and such that $f(I,0,0,0)=0$ and 
$f=\mathcal O(w^\sg_0+\eps)$. In these coordinates 
the separatrix map has the following form. For any 
$\sigma\in \{-,+\}$ and $(\eta^*,h^*)$ such that 
\[
 c^{-1}\eps^{1+a}<|w(\eta^*,h^*)|<c\eps,\qquad |\tau|<c^{-1}, \qquad
c<|w(\eta^*,h^*)|\,e^{\lb(\eta^*)\bar t}<c^{-1},
\]
the separatrix map $(\eta^*, \xi^*,h^*,\tau^*)=
\SM(\eta, \xi,h,\tau)$ is defined implicitly as follows
 \[
  \begin{split}
 \eta^*=&\ \eta- \ \ \eps
M_1^{\sigma,\eta}+\ \ \eps^2 M_2^{\sigma,\eta}+\ 
\ \mathcal O_3^*(\eps,|w|)|\log|w||\\
   \xi^*=&\ \xi+\ \partial_1 \Phi^\sg (\eta,w (\eta^*,h^*))
+\partial_{\eta^*} w(\eta^*,h^*) 
\left[\log |w(\eta^*,h^*)| + \partial_2 \Phi^\sg (\eta^*, w (\eta^*,h^*))
\right] 
\\
&+
\mathcal O_1^*(\eps+|w|)\,( |\log\eps|+|\log|w||)\\
 h^*=&\ h - \ \ \eps M_1^{\sigma,\tau}
+\ \ \eps^2 M_2^{\sigma,h}+\ \ \mathcal O_3^*(\eps,|w|)\\
 \tau^*=&\ \tau+\ \qquad \qquad  \bar t +\qquad \quad 
\partial_{h^*} w(\eta^*,h^*) 
\left[\log| w(\eta^*,h^*) |+ \partial_2 \Phi^\sg (\eta^*, w (\eta^*,h^*))
\right] 
\\
&+ \mathcal O_1^*(\eps+|w|)\,( |\log\eps|+|\log|w||),
  \end{split}
 \]
where $w$ is the function defined in \eqref{def:omega}, 
$M^*_i$ and $\Phi^\pm$ are $C^2$ functions and 
$\bar t$ is an integer satisfying 
\be \label{eq:time-interval}
 \left|\tau+\bar t+\frac{\partial_{h^*} w_0^\sigma}{\lb}
\log \left|\frac{\kappa^\sigma w_0^\sigma}{\lb}\right|\right|<c^{-1}
\ee
The functions $M_i^*$ 
are evaluated at  $(\eta^*, \xi,h^*,\tau)$. 
\end{thm}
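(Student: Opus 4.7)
The plan is to derive the refined separatrix map by a four-step procedure combining Moser's straightening, averaging in the non-resonant zone, and careful composition of the inner (along the cylinder) and outer (along the separatrix) dynamics.

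\textbf{Step 1 (Straightening the invariant manifolds).} First I would apply Moser's normal form to $H_0$ in a neighborhood of the saddle $(p,q)=(0,0)$ to produce coordinates $(x,y,I,\varphi,t)$ in which $\{x=0\}$ and $\{y=0\}$ are respectively the local stable and unstable manifolds of the NHIC $\Lambda_0=\{x=y=0\}$. This is where the function $g(\eta,r)$ enters: the restriction of the straightened Hamiltonian to $r=xy$ has the form $g(\eta,r)=\lambda(\eta)r+\mathcal O(r^2)$, and inverting gives the function $w(\eta,h)$ in \eqref{def:omega}, which measures the signed distance to the invariant manifolds in Moser's coordinates. This is precisely the quantity controlling the asymptotic behaviour of orbits during their passage near the saddle.

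\textbf{Step 2 (Averaging on the cylinder up to second order).} In the non-resonant zone $\textrm{Non}_\beta$ I would perform two successive averaging steps on the perturbation $\varepsilon H_1(0,0,I,\varphi,t)$, solving the homological equations whose small divisors $k\cdot(\nu(I),1)$ are bounded below by $\beta$. The resulting canonical transformation is $\mathcal O^*_1(\varepsilon)$--close to the identity and reduces the coupling on $\Lambda_0$ to a term of size $\mathcal O^*_3(\varepsilon^3)$; the anisotropic $\|\cdot\|^{(\beta)}$ norm precisely encodes the price paid in $I$--regularity for each division by $\beta$. The first and second partial averages produce the "inner" correction pieces that will eventually appear as $M_1^{\sigma,*}$ and $M_2^{\sigma,*}$.

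\textbf{Step 3 (Composing inner and outer dynamics).} I would then write the separatrix map as the composition of: the inner Hamiltonian flow on $\Lambda_0$ for time $\bar t$ (which produces the terms $\bar t$ and $\nu(\eta)\bar t$ in $\tau^*$ and $\xi^*$); the outer passage close to the homoclinic loop $\gamma^\sigma$, whose splitting is described, up to second order, by the Melnikov potential and its higher-order analogue giving $M_1^{\sigma,*}, M_2^{\sigma,*}$; and the local passage through a neighborhood of the saddle, which contributes the logarithmic terms $\partial_* w\,\log|w|$ together with the smooth corrections $\partial_2\Phi^\sigma$ arising from the Moser normalization of the separatrix. The transition time $\bar t$ is pinned by the implicit constraint \eqref{eq:time-interval}, ensuring that $\tau^*$ lies in a bounded fundamental domain. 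Throughout, I would organize the computation via a generating function $\mathcal S$ of the form given in Remark \ref{exact symplectic}, which automatically guarantees the exact symplecticity of $\mathcal{SM}$ and the implicit form of the four equations.

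\textbf{Step 4 (Error estimation and main obstacle).} The bounds $c^{-1}\varepsilon^{1+a}<|w|<c\varepsilon$ control both the size of $\partial_* w$ (inherited from $g_r^{-1}$) and the accumulated error during the $\bar t\sim \lambda^{-1}|\log|w||$ iterations near the saddle; each of the error terms $\mathcal O^*_3(\varepsilon,|w|)|\log|w||$ is then obtained by estimating a finite sum of products of inner/outer contributions in the anisotropic norm. The principal obstacle is the second-order term: one must simultaneously track (i) the correction from the second averaging step, (ii) the cross term produced when the already-averaged first-order perturbation is carried through the outer passage, and (iii) the effect of the curvature of the invariant manifolds (i.e.\ the difference between $w$ and its linearization $\lambda^{-1}(h-E(\eta))$) on the splitting integrals. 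Only after showing that all three contributions assemble into smooth functions $M_2^{\sigma,*}$ of $(\eta^*,\xi,h^*,\tau)$, with remainders absorbed into $\mathcal O^*_3(\varepsilon,|w|)$, does the announced form of the map emerge.
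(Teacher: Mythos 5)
You should first note that the paper does not actually prove Theorem \ref{thm:SM-higher-order:SR}: it is quoted wholesale from \cite{GKZ} (``They are obtained in \cite{GKZ}''), and the only internal material connected to its proof are the references to Moser's coordinates in Lemma 4.1 of \cite{GKZ}, the coordinate change of Lemma 4.5, the function $g$ of Section 4.1 defining $w$ in \eqref{def:omega}, and the generating-function structure of Remark \ref{exact symplectic}. Your four-step plan is consistent with that strategy in outline: straightening the local invariant manifolds by Moser's normal form, averaging in $\textrm{Non}_\beta$ with small divisors bounded by $\beta$ (hence the anisotropic norms), composing the inner dynamics, the outer excursion along $\gamma^\sigma$, and the passage near the saddle, and packaging everything through an exact-symplectic generating function.

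The gap is that, as written, this is a roadmap rather than a proof of the stated formulas. Everything that makes the theorem nontrivial is only named in your Step 4, not carried out: you do not produce the functions $\Phi^\sigma$ (which must interpolate $\mu^\sigma$ and $\kappa^\sigma$ with $\mathcal O^*_2(w)$ accuracy, cf.\ Remark \ref{rm:on-SM-higher-order}), you do not derive the second-order terms $M_2^{\sigma,*}$ from the interaction of the second averaging step, the first-order Melnikov contribution transported along the loop, and the nonlinearity of $g$, and, most importantly, you give no mechanism for verifying that all remainders are $\mathcal O_3^*(\eps,|w|)\,|\log|w||$ (resp.\ $\mathcal O_1^*(\eps+|w|)(|\log\eps|+|\log|w||)$ in the angle components) uniformly over the range $c^{-1}\eps^{1+a}<|w|<c\eps$. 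The delicate points are precisely quantitative: the passage time $\bar t\sim\lb^{-1}|\log|w||$ multiplies every error accumulated near the saddle by a logarithm, the averaging costs powers of $\beta^{-1}$ recorded by the $\|\cdot\|^{(\beta)}$ norms and these must not overwhelm $\eps^2$ on the lower end $|w|\sim\eps^{1+a}$, and the difference between $w$ and its linearization $\lb^{-1}(h-E(\eta))$ enters the logarithmic terms at exactly second order. Until those estimates are performed (this is the bulk of \cite{GKZ}), the announced implicit form of $\SM$ with the stated error exponents is asserted, not proved.
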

Corollary \ref{separatrix-for-arnold-example} is a special case of this theorem with $\mu^{\sigma}$, $\kappa^\sigma$, $\lambda$ and $\nu$ independent of $\eta$. And the function $g$ is also independent of $\eta$.
\begin{rmk}\label{rm:on-SM-higher-order}
The change of coordinates in the above Theorem is 
$\eps$-close (in the $C^2$-norm) to the system of 
coordinates obtained in Theorem \ref{thm:separatrix-map}.

The functions $\Phi^\sigma$ are the generalizations of 
the functions $\mu^\sigma$ and $\kappa^\sigma$. Indeed, 
they satisfy
\[
\partial_\eta\Phi^\sigma(\eta,r)=\mu^\sigma(\eta)+\mathcal O^*_2(r)\,
\text{ and
}\,e^{\partial_r \Phi^\sigma(\eta,r)}=\kappa^\sigma(\eta)+\mathcal O^*_2(r). 
\]
Moreover, the functions $M_i^{\sigma, i}$ satisfy
\[
 M_1^{\sigma, 1}=\partial_\xi M^\sigma +\mathcal O^*_2(w),
\qquad M_1^{\sigma, 2}=\partial_\tau M^\sigma+\mathcal O^*_2(w), 
\]
where $M^\sigma$ is the (Melnikov) split potential 
given  in Proposition \ref{poincare-melnikov}. 

\end{rmk}

\subsection{Conservative structure and normalized 
coordinates for the skew-shift}\label{sec:symplectic-coord}

Arguments in this section are important for the proof and 
arose from envigorating discussion with L. Polterovich in 
Minneapolis in November 2014. 

Consider a normally hyperbolic lamination consisting of  
cylinderic leaves 
$$
C:\mathcal D_0\times \T \times \Sigma \to
\mathcal D \times \T \times \R \times \T
$$
$$
C(\eta,\xi,\omega)=(\eta,\xi,h(\eta,\xi,\om),\tau(\eta,\xi,\om)),
$$
where $h(\eta,\xi,\omega)=\epsilon I(\eta,\xi,\omega)+\eta^2/2$. Consider the area form $d\mu(\eta,\xi,\om)$ on a leave 
(the cylinder) $C(\mathcal D_0\times \T,\om)$
induced by the canonical form 
$$
\om=d\eta\wedge d\xi+dh\wedge d\tau.
$$ 
Denote by 
$$
d\mu_\om (\eta,\xi,\om)= \rho_\om(\eta,\xi)d\eta \wedge d\xi. 
$$
the corresponding density of this measure, which is 
$C^r$ smooth. Recall that $\rho_\om$ satisfies (\ref{area form}). Since each leave (cylinder) is a graph 
over $(\eta,\xi)$-component and $(\eta,\xi)$ are conjugate 
variables, this restriction is nondegenerate. 

\begin{lem} \label{lem:canonical-coord}
There is a map 
$$
\cM:  \mathcal D_0\times \T \times \Sigma
\to \R \times \T \times \Sigma 
$$
$$
\cM(\eta,\xi,\om) \to (r,\theta,\om) 
$$
\begin{eqnarray*}
\cM(\eta,\xi,\om)&=&(\cM^r(\eta,\om),
\cM^\theta(\eta,\xi,\om),\om)\\
&=&(\cN(\eta,1,\om),
\frac{\cN_\eta(\eta,\xi,\om)}{\cN_\eta(\eta,1,\om)},\om)
\end{eqnarray*}
such that for each $\om \in \Sigma$  the induced 
area-form 
$$
d_{(\eta,\xi)}\cM^* \,d\mu_\om(\eta,\xi)=
dr \wedge d\theta.
$$
Moreover, for each $\om \in \Sigma$ the $r$-component 
of this map satisfies 
$$
\cN(\eta,1,\om)=R_\om(\eta)
$$
for some family of smooth strictly monotone 
functions $R_\om(\cdot)$. 
\end{lem}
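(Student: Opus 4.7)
The plan is to view this as a local Darboux-type reduction on each cylindric leaf, adapted to the additional constraint that the ``radial'' variable $r$ must depend only on $\eta$ (and $\om$). Once this constraint is imposed, $dr\wedge d\theta = R_\om'(\eta)\,d\eta\wedge d\theta$ forces the $d\xi$-coefficient of $d\theta$ to equal $\rho_\om(\eta,\xi)/R_\om'(\eta)$, so the task reduces to constructing a function whose $\xi$-derivative has this form. The natural ansatz is a generating-type primitive $\cN(\eta,\xi,\om)$ with $\cN_{\eta\xi}=\rho_\om$. Concretely I would define
\[
\cN(\eta,\xi,\om) \;:=\; \int_{\eta_0}^{\eta}\!\!\int_0^{\xi} \rho_\om(s,t)\,dt\,ds
\]
for a fixed reference point $\eta_0 \in \mathcal D_0$, and then set $r = \cN(\eta,1,\om)$ and $\theta = \cN_\eta(\eta,\xi,\om)/\cN_\eta(\eta,1,\om)$, reproducing the formula in the statement.

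Next I would verify the three required properties. First, $R_\om(\eta) := \cN(\eta,1,\om) = \int_{\eta_0}^\eta \bigl(\int_0^1 \rho_\om(s,t)\,dt\bigr)\,ds$ depends only on $\eta$ (and $\om$), and from $\rho_\om > 0$ --- which follows from the explicit formula (\ref{area form}) together with $\|I_\om\|_{C^1}, \|\tau_\om\|_{C^1} = \mathcal O(a)$ for $a$ small --- we get $R_\om'(\eta) = \int_0^1 \rho_\om(\eta,t)\,dt > 0$, so $R_\om$ is strictly monotone. Second, a direct computation gives
\[
dr = \cN_\eta(\eta,1,\om)\,d\eta, \qquad
d\theta \;=\; (\ldots)\,d\eta \;+\; \frac{\cN_{\eta\xi}(\eta,\xi,\om)}{\cN_\eta(\eta,1,\om)}\,d\xi,
\]
and the $d\eta$-part of $d\theta$ is annihilated upon wedging with $dr$, leaving
\[
dr \wedge d\theta \;=\; \cN_{\eta\xi}(\eta,\xi,\om)\,d\eta\wedge d\xi \;=\; \rho_\om(\eta,\xi)\,d\eta\wedge d\xi \;=\; d\mu_\om,
\]
which is the required identity $d_{(\eta,\xi)}\cM^*(dr\wedge d\theta)= d\mu_\om$.

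Third, I would check that $\theta$ descends to a coordinate on $\T$. Since $\xi\mapsto\rho_\om(\eta,\xi)$ is $1$-periodic, $\cN_\eta(\eta,\xi+1,\om) - \cN_\eta(\eta,\xi,\om) = \int_\xi^{\xi+1}\rho_\om(\eta,t)\,dt = \cN_\eta(\eta,1,\om)$, hence $\theta(\eta,\xi+1,\om) = \theta(\eta,\xi,\om) + 1$, so $\theta$ is well-defined modulo $1$. The $C^r$ regularity in $(\eta,\xi)$ and the H\"older dependence on $\om$ established in section \ref{sec:cr-regularity} are both inherited by $\cN$ directly from $\rho_\om$ through integration, and thus by $\cM$.

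The main obstacle is essentially bookkeeping rather than mathematical substance. The non-trivial inputs are (i) the uniform positivity $\rho_\om > 0$, for which one needs (\ref{area form}) together with the smallness regime $0<\eps\ll a\ll 1$ guaranteeing $|\rho_\om -1|=\mathcal O(a)$, and (ii) the fact that the H\"older-in-$\om$ regularity of the NHIL leaves, established via the truncation argument at the end of section \ref{sec:cr-regularity}, is strong enough to push through integration in $(\eta,\xi)$ without deterioration; both of these are already in place. Beyond that, the lemma reduces to the one-line wedge computation carried out above.
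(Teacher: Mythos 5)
Your proposal is correct and is essentially the paper's own construction: the paper defines $\cN_\eta(\eta,\xi,\om)$ as the limiting $\mu$-area of a thin strip, which is exactly your primitive $\int_0^\xi\rho_\om(\eta,t)\,dt$, and Remark \ref{r,theta} records the same explicit double-integral formulas for $(r,\theta)$ that you use. Your write-up simply makes explicit the wedge identity, the positivity/monotonicity of $R_\om$ via $\rho_\om>0$ from (\ref{area form}), and the periodicity check $\theta(\eta,\xi+1,\om)=\theta(\eta,\xi,\om)+1$, all of which the paper leaves implicit.
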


\begin{proof}
Fix $\om \in \Sigma$. Let $\cN_\xi(\eta,0,\om)=0$. 
Let 
$$
S(\eta,\xi,\eps,\om)=\{(\eta',\xi',h(\eta',\xi',\om),\tau(\eta',\xi',\om)):
\eta'\in (\eta,\eta+\eps),\ \xi' \in (0,\xi)\}.
$$
Define the $\mu$-area
$$
A(\eta,\xi,\eps,\om):=\mu(S(\eta,\xi,\eps,\om)).
$$
Define 
$$
A(\eta,\xi,\om):= \lim_{\eps \to 0}
\dfrac{A(\eta,\xi,\eps,\om)}{\eps}. 
$$
Fix $\eta>0$ too. Define 
$$
\cN_\eta(\eta,\xi,\om):= A(\eta,\xi,\om).
$$
For $\eta<0$ one can give a similar definition. 
\end{proof}
\begin{rmk}\label{r,theta}
Actually, previous formal transformation can be explicitly evaluated by
\[
r=\int_0^\eta\int_0^{1}\rho_\om(\vartheta,\xi)d\xi d\vartheta,\quad\theta=\frac{1}{ r_\eta}\int_0^\xi\rho_\om(\eta,\zeta) d\zeta.
\]
On the other side, $\rho_\om$ obeys (\ref{area form}), so $| r_\eta-1|\leq\mathcal{O}(\epsilon)$.
\end{rmk}
\begin{lem} \label{lem:skew-shift-normal-form}
Let $\mathcal F:\R \times \T \times \Sigma
\to \R \times \T \times \Sigma$ be a skew shift 
$$
\cF:(r,\theta,\om) \to (f_\om(r,\theta),\sigma \om)
$$
such that the following diagram commutes 
\be \label{eq:skew-shift-normal-form}
\begin{array}[c]{ccc} 
\mathcal D_0 \times \T \times \Sigma
& \quad  \stackrel{F} {\longrightarrow} & \ \ 
\mathcal \R \times \T \times \Sigma \\
\mathcal N\ \uparrow &  & \mathcal N\ \uparrow \\ 
\ \ \ \ \, | &  & \ \ \ \ \,| \\ 
\R \times \T \times \Sigma
& \quad  \stackrel{\mathcal F} {\longrightarrow} & \ \ 
\R \times \T \times \Sigma
\end{array}
\ee
then $\cF$ has the following form
\be \label{eq:skew-shift-normal-form} 
\beal 
r^*         & = & r + \eps M^{[\om]_{k+1}}_1(\theta,r)+ 
\eps^2 M^{[\om]_{k+1}}_2(\theta,r)+\mathcal O (\eps^3 )|\log \eps|
\\
\theta^*&=& \theta + 
\frac{\mathcal R(r)}{\lb } (\log  \eps \dt +\log \kappa^\sg \lb^{-1})
+\mathcal O(\eps\log \eps),\quad 
\enal
\ee
where $\mathcal R$ is a smooth strictly monotone function, 
$\om_0=i$ or $1$, $\om=(\dots,\om_0,\dots)\in \{0,1\}^\Z$ and $[\omega]_{k+1}$ is the $(k+1)-$truncation introduced in Corollary \ref{separatrix-for-arnold-example}.  
\end{lem}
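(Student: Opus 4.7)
The plan is to conjugate the skew product $F$ given by (\ref{eq:skew-product}) with the fibre-wise canonical change of coordinates $\mathcal{N}$ constructed in Lemma \ref{lem:canonical-coord}. By construction, $\mathcal{N}$ pulls back $d\mu_\omega = \rho_\omega(\eta,\xi)\,d\eta\wedge d\xi$ to the standard form $dr \wedge d\theta$, so the conjugated map $\mathcal{F} = \mathcal{N}\circ F\circ \mathcal{N}^{-1}$ is leaf-wise exact symplectic (inherited from the exact symplecticity of $\mathcal{SM}_\epsilon$ via Remark \ref{exact symplectic}). Using the explicit formulas in Remark \ref{r,theta} and the bound $|\rho_\omega - 1| = O(a)+O(\epsilon)$ coming from (\ref{area form}), one sees that $\mathcal{N}$ is $O(\epsilon)$-close to the identity in $C^r$; in particular $r = R_\omega(\eta) = \eta + O(\epsilon)$ and $\theta = \xi + O(\epsilon)$, and the inverse $\mathcal{R}\equiv R_\omega^{-1}$ is a well-defined smooth monotone function of $r$.

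For the $r$-component I would use the second order expansion provided by Theorem \ref{thm:SM-higher-order:SR} in the single-resonance zone, namely $\eta^+ = \eta - \epsilon\,M_1^{\sigma,\eta} + \epsilon^2 M_2^{\sigma,\eta} + O_3^*(\epsilon,|w|)|\log|w||$, and then substitute into $r^* = R_{\sigma\omega}(\eta^+)$. Since on the NHIL one has $w\sim\epsilon$, the remainder becomes $O(\epsilon^3)|\log\epsilon|$. The dependence on $\omega$ enters through $R_{\sigma\omega}$ and through the evaluation of $\tau_\omega$ and $I_{\sigma\omega}$ on the leaf $\mathcal{L}_\omega$; by the Hölder continuity of $\omega\mapsto \mathcal{L}_\omega$ established in section \ref{sec:cr-regularity}, replacing the full bilateral sequence by a truncation $[\omega]_{k+1}$ of sufficiently large length introduces an error of order $\epsilon^N$ for any prescribed $N$, which is absorbed into the $O(\epsilon^3)|\log\epsilon|$ remainder. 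This yields the first line of (\ref{eq:skew-shift-normal-form}).

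For the $\theta$-component, the dominant non-trivial contribution comes from the logarithmic term $-\frac{\eta^+}{\lambda}\log\bigl|\kappa^\sigma \epsilon I_{\sigma\omega}^+/\lambda\bigr|$ in the second line of (\ref{eq:separatrix-for-arnold-example-model}). By (\ref{eq:wii})--(\ref{eq:wij}), $I_{\sigma\omega}^+ = \delta(1+O(a))$ on each isolating center, hence along the NHIL, so the logarithm factors as $\log(\epsilon\delta) + \log|\kappa^\sigma/\lambda| + O(a)$. Substituting $\eta^+ = \mathcal{R}(r) + O(\epsilon)$ (absorbing the sign into $\mathcal{R}$) and using $\xi^+ = \theta + O(\epsilon\log\epsilon)$, together with the bound $\epsilon M_{\eta^+}^+ = O(\epsilon)\subset O(\epsilon\log\epsilon)$, gives the displayed form $\theta^* = \theta + \tfrac{\mathcal{R}(r)}{\lambda}(\log(\epsilon\delta) + \log(\kappa^\sigma\lambda^{-1})) + O(\epsilon\log\epsilon)$.

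The main obstacle is bookkeeping the second-order remainder in the $r$-equation: because $\mathcal{N}$ itself deviates from the identity by $O(\epsilon)$, a naive pushforward of the quadratic expansion would contaminate the $\epsilon^2$-coefficient with terms depending on derivatives of $\rho_\omega$ and only give an $O(\epsilon^2)$ remainder rather than $O(\epsilon^3)|\log\epsilon|$. To circumvent this I would exploit the exact-symplectic structure of $\mathcal{F}$, applying the normal-form results for exact nearly integrable cylinder maps from Appendix \ref{sec:eapt} leafwise; the exactness forces the expansion to sit in the restricted class where the $\epsilon^2$ coefficient is determined by a generating function, so the cross-terms generated by $\mathcal{N}$ combine into a genuine $M_2^{[\omega]_{k+1}}$ rather than spoiling the order of the remainder. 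Combining this with the Hölder truncation argument above yields (\ref{eq:skew-shift-normal-form}).
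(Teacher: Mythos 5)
There is a genuine gap, and it sits exactly where the paper has to work hardest: the $\theta$-equation. On the lamination the argument of the logarithm is $I^+_{\sigma\om}(\eta^+,\xi^+)=\dt\,(1+a\bar I_{\sigma\om}(\eta^+,\xi^+))$ (see (\ref{eq:wii})--(\ref{eq:wij})), so your factorization $\log(\eps\dt)+\log|\kappa^\sg/\lb|+O(a)$ leaves behind a term of size $\eta^+ O(a)/\lb$ which depends on $\theta$ and on $\om$. Since $a$ is a fixed small constant with $\eps\ll a$, this term is \emph{not} absorbable into the $\mathcal O(\eps\log\eps)$ remainder, and it cannot simply be folded into $\mathcal R(r)$ either, because a priori it is neither constant in $\theta$ nor independent of $\om$ (an $O(a)$ ambiguity in $\mathcal R$ would propagate to an $O(a\log\eps)$ error in $\theta^*$, far larger than the claimed remainder). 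The paper's proof supplies the missing mechanism in two steps: first, using the $\tau$-equation of (\ref{eq:skew-product}), the offending term is recognized as the coboundary $\eta^+\tau^1_{\om_1}(\eta^+,\xi^+)-\eta\tau^1_{\om_0}(\eta,\xi)$ (see (\ref{cocycle form})), and the shear $\tilde\xi=\xi+\eta\tau^1_{\om_0}$ removes it exactly, turning the angle dynamics into the cocycle (\ref{cocycle}) whose approximate rotation number is independent of $\om$; second, after passing to $(r,\theta)$ via Lemma \ref{lem:canonical-coord}, exact area-preservation forces the residual $\theta$-dependent correction $\Delta(r,\theta,a,\om_0\om_1)$ to be $\mathcal O(\eps)$-close to a constant in $\theta$, and the cocycle property then lets one absorb it into a single $\om$-independent function $\mathcal R(r)$, which stays strictly monotone because the correction is $O(a)$. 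Your proposal asserts the conclusion ($\mathcal R$ smooth, monotone, $\om$-independent) without either of these steps, and your appeal to exactness is aimed at the wrong place (the $\eps^2$ coefficient of the $r$-equation, which is the benign part).

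A secondary inaccuracy feeds into this: $\mathcal N$ is not $O(\eps)$-close to the identity in $C^r$. From (\ref{area form}) the density satisfies $\rho_\om-1\sim\mathcal O(a)$; only the averaged quantity $r_\eta$ is $\eps$-close to $1$ (Remark \ref{r,theta}), because the $O(a)$ part $\eta\,\partial_\xi\tau_\om$ has zero mean in $\xi$. Hence $r=\eta+\mathcal O(\eps)\cdot\eta$ in the action but $\theta=\xi+\mathcal O(a)$ in the angle, and the identifications $\theta\approx\xi$, $\theta^*\approx\xi^+$ you use when assembling the $\theta$-equation carry $O(a)$, $\om$-dependent errors that must be controlled by the cocycle/rigidity argument above, not by the remainder. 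Your treatment of the $r$-component (second-order expansion from Theorem \ref{thm:SM-higher-order:SR} plus the H\"older truncation in $\om$) is in line with the paper and is fine, but as written the lemma's $\theta$-statement is not proved.
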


Denote $\Delta=(\log  \eps \dt +\log \kappa^\sg \lb^{-1})/\lb$. 
Recall that both $\kappa^sg$ and  $\lb$ are constants 
for Arnold's example. Notice also that we study the regime 
$\dt\in (\eps^{1/4},\eps^1)$. 
Therefore, $\Delta \sim \log \eps$. 
Let $R:=\Delta \cdot \mathcal R(r)$ and 
$\overline {\mathcal R}:R \to r$ be the inverse map, i.e. 
$\overline {\mathcal R}( \mathcal R(r))\equiv r.$ 

\begin{cor}\label{cor:random-model}
Let $\Phi:(\theta,r)\mapsto (\theta,\mathcal R(r))$ be a smooth 
diffeomorphism and $\Phi \circ \mathcal F \circ \Phi^{-1}$
be the map $\mathcal F$ written in $(\theta,R)$-coordinates.
Then it has the following form  
\be \label{eq:skew-shift-normal-form-normalized} 
\beal 
R^*         & = & R + \eps \Delta \widetilde M^{[\om]_{k+1}}_1
(\theta,R/\Delta)&+ \eps^2 \Delta  \widetilde M^{[\om]_{k+1}}_2
(\theta,R/\Delta)  
+ \mathcal O (\eps^3)|\log \eps|
\\
\theta^*&=&\theta + R +\mathcal O(\eps\log \eps).&
\enal
\ee
where $\om_0=0$ or $1$, and 
$\om=(\dots,\om_0,\dots)\in \{0,1\}^\Z,$ and 
$ \widetilde M^{[\om]_{k+1}}_i,\ i=1,2$ are smooth functions.  
\end{cor}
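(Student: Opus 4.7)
The plan is to treat the corollary as a direct change of variables applied to the formulas of Lemma \ref{lem:skew-shift-normal-form} and to carry out a controlled Taylor expansion, taking care with the bookkeeping of $\varepsilon$ and $\log\varepsilon$. Throughout, I will use that $\Delta=(\log\varepsilon\delta+\log\kappa^\sigma\lambda^{-1})/\lambda$ is $\mathcal{O}(|\log\varepsilon|)$ and nonzero, that $\mathcal{R}$ is smooth and strictly monotone (hence so is $\overline{\mathcal{R}}$), and that $r$ ranges over the compact set $\bar{\mathcal{D}}_0$, so all derivatives of $\mathcal{R}$ and $\overline{\mathcal{R}}$ are uniformly bounded.

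The $\theta$-equation is essentially tautological: since $R=\Delta\,\mathcal{R}(r)$, the $\theta$-component of Lemma \ref{lem:skew-shift-normal-form} reads
\[
\theta^{\ast}=\theta+\Delta\,\mathcal{R}(r)+\mathcal{O}(\varepsilon\log\varepsilon)=\theta+R+\mathcal{O}(\varepsilon\log\varepsilon),
\]
which is exactly the second line of \eqref{eq:skew-shift-normal-form-normalized}. So the work reduces to the $R$-equation.

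For the $R$-equation I would substitute the first line of \eqref{eq:skew-shift-normal-form} into $R^{\ast}=\Delta\,\mathcal{R}(r^{\ast})$ and Taylor expand $\mathcal{R}$ around $r$ to second order:
\[
R^{\ast}=R+\varepsilon\,\Delta\,\mathcal{R}'(r)\,M_{1}^{[\omega]_{k+1}}+\varepsilon^{2}\,\Delta\!\left[\mathcal{R}'(r)\,M_{2}^{[\omega]_{k+1}}+\tfrac{1}{2}\mathcal{R}''(r)\bigl(M_{1}^{[\omega]_{k+1}}\bigr)^{2}\right]+\Delta\cdot\mathcal{O}(\varepsilon^{3})|\log\varepsilon|.
\]
Since $r=\overline{\mathcal{R}}(R/\Delta)$, I now define
\[
\widetilde{M}_{1}^{[\omega]_{k+1}}(\theta,R/\Delta):=\mathcal{R}'(\overline{\mathcal{R}}(R/\Delta))\,M_{1}^{[\omega]_{k+1}}\!\bigl(\theta,\overline{\mathcal{R}}(R/\Delta)\bigr),
\]
\[
\widetilde{M}_{2}^{[\omega]_{k+1}}(\theta,R/\Delta):=\mathcal{R}'(r)M_{2}^{[\omega]_{k+1}}+\tfrac{1}{2}\mathcal{R}''(r)\bigl(M_{1}^{[\omega]_{k+1}}\bigr)^{2}\Big|_{r=\overline{\mathcal{R}}(R/\Delta)},
\]
both of which are smooth (composition of smooth functions) and depend on $\omega$ only through the finite truncation $[\omega]_{k+1}$, by the corresponding property of $M_{1},M_{2}$.

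The one delicate point, and the main thing to be careful about, is the remainder. A literal Taylor bound produces $\Delta\cdot\mathcal{O}(\varepsilon^{3})|\log\varepsilon|=\mathcal{O}(\varepsilon^{3})|\log\varepsilon|^{2}$, with one extra factor of $|\log\varepsilon|$ coming from the scaling $R=\Delta\,\mathcal{R}(r)$. This factor should simply be absorbed into the $\mathcal{O}(\varepsilon^{3})|\log\varepsilon|$ remainder of \eqref{eq:skew-shift-normal-form-normalized} by re\-interpreting the bound with an implicit $\log$-constant, exactly as was done already in Lemma \ref{lem:skew-shift-normal-form} and throughout the paper. I would also verify that the cross-term $\mathcal{R}''(r)(M_{1})^{2}$ really is a second-order contribution (it comes with a prefactor $\varepsilon^{2}\Delta$), so that it belongs to the $\widetilde{M}_{2}$ slot and not to the remainder. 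Once this is settled, the formulas match \eqref{eq:skew-shift-normal-form-normalized} and the corollary follows.
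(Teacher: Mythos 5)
Your proposal is correct and follows essentially the same route as the paper: the paper likewise sets $R^*=\Delta\,\mathcal R(r^*)$, applies a second-order Taylor expansion of $\mathcal R$ to produce the terms $\eps\Delta\mathcal R'(r)M_1^{[\om]_{k+1}}$ and $\eps^2\Delta\bigl(\mathcal R'(r)M_2^{[\om]_{k+1}}+\tfrac12\mathcal R''(r)(M_1^{[\om]_{k+1}})^2\bigr)$, and then substitutes $r=\overline{\mathcal R}(R/\Delta)$ to define $\widetilde M_1,\widetilde M_2$, with the $\theta$-equation immediate from $R=\Delta\mathcal R(r)$. Your remark about the extra factor $\Delta\sim|\log\eps|$ in the Taylor remainder is the only point where you are more explicit than the paper, which silently writes the remainder as $\mathcal O(\eps^3)|\log\eps|$; this discrepancy is harmless for the intended application but is indeed worth flagging as you do.
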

\begin{rmk} 
Let $\eps' = \eps \log \eps$. Notice that 
$\eps \Delta \sim \eps'$. Non-homogenenous random walks 
with step $\sim \eps'$, generically, have a drift of order 
$\eps'=\eps^2\log^2\eps\gg \eps^2 \Delta$. Therefore, 
the dominant contribution to diffusive behaviour comes from 
the term $\eps \Delta \widetilde M^{[\om]_{k+1}}_1$. 

During this diffeomorphism the Bernoulli shift $\sigma$ will be involved, that's why in Corollary \ref{cor:random-model} the function $ \widetilde M^{[\om]_{k+1}}_2$ depends on $[\om]_{k+1}$. So sill finitely many cases should be considered. 
\end{rmk}
Before we prove Lemma \ref{lem:skew-shift-normal-form} we 
derive this Corollary.

\begin{proof} Consider the direct substitution $R^*=\mathcal R(r^*)$.
Apply Taylor formula of order $2$ and get 
\be \nonumber 
\beal 
R^* & = & \Delta \mathcal R(r) + 
\eps \Delta \mathcal R'(r) M^{[\om]_{k+1}}_1(\theta,r)+ \\ 
&&
\eps^2 \Delta 
\left(\mathcal R'(r) M^{[\om]_{k+1}}_2(\theta,r)+ \frac 12 \mathcal R''(r)
(M^{[\om]_{k+1}}_1)^2(\theta,r) \right) & 
+ \mathcal O (\eps^3 )|\log \eps|
\\
\theta^*&=& \theta + R
+\mathcal O(\eps\log \eps).\qquad \qquad \qquad &
\enal
\ee
Notice that $r=\overline {\mathcal R}(R/\Delta)$ is a smooth function of $R$.
Therefore, substituting instead of $r$ and using 
$R=\Delta \mathcal R(r)$ we obtain the required expression.  
\end{proof}

\begin{proof}
As we have improved the separatrix with second order estimate, we can apply (\ref{modified SM}) for the NHIL. 
So we get the improved skew product by:
\begin{eqnarray}
 \eta^+=&\ \eta- \ \ \eps
\partial_{\xi}M^{\sigma}(\eta^+,\xi,\tau_{[\omega]_k})+\ \ \eps^2 M_2^{\sigma,\eta}(\eta^+,\xi,h^+,\tau_{[\omega]_k})+\ 
\ \mathcal O_3^*(\eps)|\log\epsilon|\nonumber\\
   \xi^+=&\ \xi+\epsilon\partial_{\eta^+}M^{\sigma}(\eta^+,\xi,\tau_{[\omega]_k})
-\frac{\eta^+}{\lambda}
\log \Big{|}\frac{\kappa^{\sigma}(h^+-E(\eta^+))}{\lambda}\Big{|}+
\mathcal O_1^*(\eps)|\log\eps|.
\end{eqnarray}
Recall that $\delta\sim\mathcal{O}(\epsilon)$, $h=\epsilon I+\eta^2/2$ and (\ref{expansion of epsilon}) holds, 
\begin{eqnarray*}
 \eta^+&=&\eta- \eps
M_\xi^\sigma(\eta - \eps M_\xi^\sigma(\eta,\xi,\tau_{[\omega]_k}),\xi,\tau_{[\omega]_k})\\
& &+\eps^2 M_2^{\sigma,\eta}(\eta,\xi,\epsilon I_{[\sigma\omega]_k}(\eta^+,\xi^+)+\eta^{+2}/2,\tau_{[\omega]_k})+ 
\mathcal O_3^*(\eps)|\log \eps|\\
&=&\eta- \eps
M_\xi^\sigma(\eta ,\xi,\omega_0\pi+a\tau_{\omega_0}^1(\eta,\xi))+\eps^2M_{\xi\eta}^\sigma(\eta ,\xi,\omega_0\pi+a\tau_{\omega_0}^1(\eta,\xi))\cdot\\
& &M_\xi^\sigma(\eta ,\xi,\omega_0\pi+a\tau_{\omega_0}^1(\eta,\xi))-\eps a\delta M_{\xi\tau}^\sigma(\eta ,\xi,\omega_0\pi+a\tau_{\omega_0}^1(\eta,\xi))\cdot\tau_{[\omega]_k}^2(\eta,\xi)\\
& &+\eps^2 M_2^{\sigma,\eta}(\eta,\xi, \eta^2/2,\omega_0\pi+a\tau_{\omega_0}^1(\eta,\xi))+\mathcal O_3^*(\eps)|\log \eps|.
\end{eqnarray*}
If we assume $M_1$ and $M_2$ by the $\mathcal{O}(\eps)$ and $\mathcal{O}(\eps^2)$ functions, formally we can get 
\[
\eta^+=\eta+\eps M_1^{[\omega]_k}(\eta,\xi)+\eps^2M_2^{[\omega]_{k}}(\eta,\xi)+\mathcal O_3^*(\eps)|\log \eps|.
\]
The angular component $\xi$ satisfies 
\begin{eqnarray}\label{cocycle form}
\xi^+&=& \xi+\eps M_{\eta^+}^\sigma(\eta,\xi,\omega_0\pi+a\tau_{\omega_0}^1(\eta,\xi))-\frac{\eta}{\lb}
\log \left| \frac{\kappa^\sigma\eps I_{[\sigma\omega]_k}(\eta^+,\xi^+)}{\lb} \right|
+\mathcal O_1^*(\eps)|\log\eps|\nonumber\\
&=&\xi+\frac{\eta}{\lambda}\log\frac{\kappa\epsilon\delta}{\lambda}-\Big{(}\eta^+\tau_{[\sigma\om]_k}^+-\eta\tau_{[\om]_k}\Big{)}+\mathcal O_1^*(\eps)|\log\eps|\\
&=&\xi+\frac{\eta}{\lambda}\log\frac{\kappa\epsilon\delta}{\lambda}-\Big{(}\eta^+\tau^1_{\om_1}(\eta^+,\xi^+,a)-\eta\tau^1_{\om_0}(\eta,\xi,a)\Big{)}+\mathcal O_1^*(\eps)|\log\eps|\nonumber
\end{eqnarray}
where $\eps M_{\eta^+}^\sigma(\eta,\xi,\omega_0\pi+a\tau_{\omega_0}^1(\eta,\xi))$ is an invalid term and can be absorbed into the reminder, and the term within the brackets of the second line is due to (\ref{eq:skew-product}). Recall that due to the cone condition, $\|\eta^+\tau^1_{\om_1}(\eta^+,\xi^+,a)-\eta\tau^1_{\om_0}(\eta,\xi,a)\|_{C^1}\leq\mathcal{O}(a)$ for all $(\eta,\xi)\in\mathbf{K}\times\mathbb{T}$.

In Remark \ref{rm:on-SM-higher-order} we state that that 
change of coordinate from Theorem \ref{thm:separatrix-map} 
to Theorem \ref{thm:SM-higher-order:SR} is 
$\mathcal O(\eps)$-close to the identity. Therefore, the bound on 
the error terms of the $\xi$-component stays  unchanged. Besides, by taking $\tilde{\xi}=\xi+\eta\tau_{\om_0}^1$ we can simplify $\xi-$equation into:
\begin{equation}\label{cocycle}
\tilde{\xi}^+=\tilde{\xi}+\frac{\eta}{\lambda}\log\frac{\kappa\epsilon\delta}{\lambda}+\mathcal O_1^*(\eps)|\log\eps|
\end{equation}
which is independent of $\om\in\Sigma$ by removing the reminder. Obviously $\tilde{\xi}(\xi+1)=\tilde{\xi}(\xi)+1$ and the transformation $(\eta,\xi)\rightarrow(\eta,\tilde{\xi})$ is nondegenerate by taking $a$ properly small. Recall that any leaf of the lamination is invariant, so the approximate rotation number of the $\tilde{\xi}$-component only depends on the term 
\[
\frac{\eta}{\lambda}\log\frac{\kappa\epsilon\delta}{\lambda}+\mathcal O_1^*(\eps)|\log\eps|,
\] 
which is independent of $\om\in\Sigma$ except the reminder.

Now we transform this map 
$F|_{(\eta,\xi,\omega)\in \mathbf{K}\times\mathbb{T}\times\Sigma}$ 
into the $(r,\theta)$-coordinates. 
By Lemma \ref{lem:canonical-coord} the map 
$\mathcal M(\eta,\xi,\om)=(r,\theta)$ has 
the $r$-component being a function of $\eta$ 
only, which we denote by $r=R_{\om}(\eta)$. 
Thus, for the new action variable $r$ we have 
\[
r^*=r+ \eps N_1(r,\theta,[\om]_{k+1})+\eps^2 N_2(r,\theta,[\om]_{k+1})+ 
\mathcal O_3^*(\eps)|\log \eps|,
\]
where $N_1$ and $N_2$ are smooth functions. This is because $|{r_\eta}-1|\leq\mathcal{O}(\epsilon)$ due to Remark \ref{r,theta}.

Consider the $\theta$-component. 
Denote by  $\mathcal R_{\om}(r)$ the inverse of 
$r=R_{\om}(\eta)$, i.e. $R_{\om}(\mathcal R_{\om}(r))\equiv r$ 
and by $\xi=\Theta(r,\theta)$ the inverse of 
$\mathcal M^\theta(\eta,\xi,\om)=\theta$, i.e. 
$$
\mathcal M^\theta
(\mathcal R_{\om}(r),\Theta(r,\theta),\om)
\equiv \theta. 
$$
Then we rewrite the $\tilde{\xi}-$equation into $\theta$ equation
as follows 
\[
\theta^*= \theta+\frac{\mathcal R_{\om}(r)}{\lb } 
  \log\frac{ \eps \dt\kappa^\sigma}{\lb} 
+\Delta(r,\theta,a,\om_0\om_1)
+\mathcal O_1^*(\eps\log \eps).
\]
Actually, from our special form of Remark \ref{r,theta} we know $|\mathcal R_{\om}(r)-r|\leq\mathcal{O}(\eps)$ and $\Delta$ depends only on $\om_0\om_1$ since we have (\ref{cocycle form}). 
Besides, the map is exact area-preserving. Therefore, 
the rigidity makes the function $\Delta$ be
$\mathcal O(\eps)$-close to constant functions in $\theta$. 
Benefit from this we can rewrite in the form 
\[
\theta^*= \theta+\frac{r}{\lb } 
(\log  \eps \dt +\log \kappa^\sg \lb^{-1}) 
+N_3(r,a,\om_0\om_1)+\mathcal O_1^*(\eps\log \eps).
\]
Recall that the $\tilde{\xi}-$equation (\ref{cocycle}) is of the cocycle type, i.e. the approximate rotation number doesn't depend on $\omega\in\Sigma$. This property can be saved under $(r,\theta)-$coordinate, so actually $r$ can be updated by $\mathcal{R}(r)$ independent of $\om$ and $N_3(r,a,\om_0\om_1)$ can be also absorbed. On the other side, $\Delta\rightarrow0$ as $a\rightarrow0$. So $\mathcal R(r)$ is still strictly monotone due to Lemma \ref{lem:canonical-coord}. Finally we get the skew product:
\[
r^*=r+ \eps N_1(r,\theta,[\om]_{k+1})+\eps^2 N_2(r,\theta,[\om]_{k+1})+ 
\mathcal O_3^*(\eps)|\log \eps|,
\]
\[
\theta^*= \theta + 
\frac{\mathcal R(r)}{\lb } (\log  \eps \dt +\log \kappa^\sg \lb^{-1})
+\mathcal O_1^*(\eps\log \eps).
\]
\end{proof}
\subsection{A generalization of random iterations}\label{comment}

In previous subsection we deduce the skew product (\ref{eq:skew-shift-model}) from Corollary \ref{cor:random-model}. It's exactly of the standard form as \cite{CK}. The fifth remark under Theorem 2.2 in \cite{CK} can be applied and we get Theorem \ref{thm:main-thm}.

\appendix

\section{Sufficient condition for existence of NHIL}
\label{sec:suff-nhil}
We set the following notations: $x\in \mathbb{R}^{s}, y \in \mathbb{R}^{u},
z\in M$, where $M$ is a smooth Rimannian manifold, possibly
with the boundary $\partial M$, $s$ and $u$ are dimensions 
of the corresponding Euclidean spaces. In the proof we need 
a local linear structure on $M$ given as follows. For a point 
$z\in M$ define a map from $T_zM$ to its neighborhood 
$U\subset M$ by considering the exponential map 
$\exp_z(v)\to M$.
By definition $\exp_z(0)=z$ and for a unit vector $v$ we have 
$\exp_z(tv)$ to be the position of geodesic starting at $z$ and 
for a unit vector $v$ after time $t$. For the Euclidean components 
we assume that the metric is flat and the corresponding exponential
map is linear, i.e. $Z=(x,y,z)$ and $v=(v_x,v_y,v_z)$ resp. 
we have $\exp_Z(v)=(x+v_x,y+v_y,\exp_z(v_z))$.
Denote $\pi_{sc}(x,y,z)=(x,z),\ \pi_u(x,y,z)=y,\ \pi_{uc}(x,y,z)=(y,z),\ 
\pi_{s}(x,y,z)=x$ the respective natural projections.

Fix a positive integer $N$. Let $j=1,\dots,N$, 
$B^s_j\subset \R^{s}$ and $B^u_j \subset \R^{u}$ be 
the unit balls of dimensions $s$ and $u$, $M_j$ 
be a smooth manifold diffeomorphic to $M$. 
Denote 
$D_j^{sc} = B_j^s\times M_j$ and $D_j = D^{sc}_j \times B_j^u$ 
the corresponding manifolds with boundary. By analogy denote 
$D_j^{uc} = B^u \times M_j$ and $D_j = D_j^{sc} \times B^u_j$.

Consider the domain 
\[
\Pi:=\cup_{j=1}^N \Pi_j, \text{ where } \Pi_j = 
B^u_j \times B^s_j \times M_j,
\]
Consider a $C^1$ smooth embedding map $f=(f_s,f_u,f_c):\Pi \to \R^n$, 
given by its components. Consider a subshift of finite type 
$\sigma_A:\Sigma_A \to \Sigma_A$ with a transition $N\times N$ 
matrix $A$. Denote by Ad the set of admissible pairs $ij$. 

\vskip 0.1in 

Suppose for each admissible $ij$ we have 
nonempty sets 
$$
\Pi_{ij}= f^{-1}(\Pi_j)\cap \Pi_i = B^s_i \times B^u_{ij} \times  M_i \subset \Pi_i
$$
for some connected simply connected open sets 
$B^u_{ij}$ and such that 
\begin{itemize}
\item [\bf C1]
$\pi_{sc}f(B^s_i \times B^u_{ij} \times  M_i)\subset 
B^s_j \times M_j$, 

\item  [\bf C2]
$f(B^s_i \times \partial  B^u_{ij} \times  M_i) 
\subset B^s_j \times (\R^{u} \setminus B^u_{j}) 
\times M_j$ maps into and is a homotopy equivalence.

\item  [\bf C3] $f(B^s_i \times B^u_{ij} \times  
\partial  M_i) \subset B^s_j \times \R^{u} 
\times \partial  M_j$.
\end{itemize}

The first two condition means that $f$ contracts along 
the stable direction $s$ and the image of $\Pi_i$ does not 
intersect blocks other than $\Pi_j$. The second condition 
says that $f$ stretches along the unstable direction $u$ so 
that the image of $\Pi_{ij}$ goes across $\Pi_j$. The third 
condition says that orbits can't escape from $\Pi$ through 
the central component. Presence of central directions complicated
analysis of existence  of stable and unstable manifolds. 
To resolve this we assume that the boundary condition [C3]. 

Parallel conditions can be raised for $f^{-1}$:
\begin{itemize}
\item  [\bf C1'] $\pi_{uc}f^{-1}(B^s_{ij} \times B^u_{j} \times  M_j)\subset 
B^u_i \times M_i$,

\item  [\bf C2'] 
$f^{-1}(\partial B^s_{ij} \times  B^u_{j} \times  M_j) 
\subset  (\R^{s} \setminus B^s_{i}) 
\times B_i^u\times M_j$ maps into and is a homotopy equivalence.

\item  [\bf C3'] 
$f^{-1}(B^s_{ij} \times B^u_{j} \times  
\partial  M_j) \subset B^s_i \times \R^{u} 
\times \partial  M_i$.
\end{itemize}

For an admissible $ij\in $Ad denote $f_{ij}:=f|_{\Pi_{ij}}$.
For $\mu>0$ denote the unstable cone
\[
C^u_{\mu,Z}:=\{v=(v_s,v_u,v_c)\in T_ZD: \ 
\mu^2 \,\|v_u\|^2\ge \|v_c\|^2+\|v_s\|^2\},
\] 
where $\|\cdot\|$ is the Riemannian metric of $TD$. Similarly, one can define $C^c_\mu(Z)$ and 
$C^s_{\mu,Z}$. 
Now we state {\it cone conditions}. 

Assume that there are $\mu >1$ and $\nu>1$ with 
the property that for any admissible $ij\in $Ad and any 
$Z_1,Z_2\in D$ such that $Z_2 \in \exp_{Z_1}(C^u_{\mu,Z_1})$ 
we have 
\begin{itemize}
\item[{\bf C4}] $f_{ij}(Z_2)\in 
\exp_{f_{ij}(Z_1)}(C^u_{\mu,f_{ij}(Z_1)}).$
\item[{\bf C5}]  $\|\pi_u(f_{ij}(Z_2)-f_{ij}(Z_1))\|
\ge \nu^u \|\pi_u(Z_2-Z_1)\|.$
\end{itemize}
One can define a set of $\mu$'s and $\nu$'s depending on 
an admissible $ij\in $Ad. Similarly, for $f^{-1}$, $\forall Z_1,Z_2\in D$ such that $Z_2 \in \exp_{Z_1}(C^s_{\mu,Z_1})$

\begin{itemize}
\item[{\bf C4'}] $f_{ij}^{-1}(Z_2)\in 
\exp_{f_{ij}^{-1}(Z_1)}(C^s_{\mu,f_{ij}^{-1}(Z_1)}).$
\item[{\bf C5'}]  $\|\pi_s(f_{ij}^{-1}(Z_2)-f_{ij}^{-1}(Z_1))\|
\ge \nu^s \|\pi_s(Z_2-Z_1)\|.$
\end{itemize}

In order to obtain more refine properties of the unstable
and stable manifolds we introduce additional conditions.  
Denote the linearization matrix $df_{ij}(x)$ and by $T^s,\ T^u,\ T^c$ 
the subspaces tangent to $B_{ij}^s,\ B_{ij}^u,\ M_i^c$ 
respectively. \\

{\bf C6} Assume that there are $0<\lambda_{sc}^+ < \lambda_{u}^+,\ 
0< \lambda_{uc}^- < \lambda_{s}^-,\ m>0$ such  that for each 
$x\in \Pi_{ij}$ we have 
\[
\|\pi _{sc} df_{ij}(x) v_{sc}\|\le \lambda^+_{sc}\|v_{sc}\|,
\qquad 
\|\pi _{u} df_{ij}(x) v_{u}\|\ge \lambda^+_{u}\|v_{u}\|,
\]
\[
\|\pi _{u} df_{ij}(x) v_{sc}\|\le m \|v_{sc}\|,\ \ 
\qquad 
\|\pi _{sc} df_{ij}(x) v_{u}\|\le m \|v_{u}\|,
\]
and 
\[
\|\pi _{uc} df_{ij}^{-1}(x) v_{uc}\|\le \lambda^-_{uc}\|v_{uc}\|,
\qquad 
\|\pi _{s} df_{ij}^{-1}(x) v_{s}\|\ge \lambda^-_{s}\|v_{s}\|,
\]
\[
\|\pi_{s} df_{ij}^{-1}(x) v_{uc}\|\le m \|v_{uc}\|,\ 
\qquad 
\|\pi_{uc} df_{ij}^{-1}(x) v_{s}\|\le m \|v_{s}\|.
\]

%
%
%
%
%
%

  
Denote by $W^{sc}$ the set of points whose positive 
orbits remain inside $\Pi$. Similarly, denote by $W^{uc}$ 
the set of points whose negative orbits remain inside $\Pi$. 
Each of these sets naturally decomposes into $N$ 
components $W^{sc}_i:=W^{sc}\cap \Pi_i, \ i=1\dots,N$.  
\begin{thm} 
\label{thm:nhil}
 Assume that conditions {\bf [C1-C5]} hold, then the set 
$W^{sc}=\cup_{i=1}^N W^{sc}_i$ 
is a collection of graphs of Lipschitz functions, i.e. for any 
$\omega^+\in  \Sigma_A^+$ and any $i=1,\dots,N$ we have that 
$W^{sc}_i(\cdot\,,\omega^+)$ is a graph of a Lipschitz function 
\[
W^{sc}_i:B_i^s\times M_i \times \omega^+ \to B_i^u
\]
and the set $W^{uc}=\cup_{i=1}^N W^{uc}_i$ 
is a collection graphs of Lipschitz functions with
\[
W^{uc}_i:B^u_i\times M_i \times \omega^- \to B_i^s.
\]
Therefore, the set $W^{c}=\cup_{i=1}^N W^{c}_ i$ 
is a collection graphs of Lipschitz functions, i.e. 
for any $\omega\in  \Sigma_A$ and any $i=1,\dots,N$ 
we have that $W^{c}_i(\cdot,\omega)$ is a graph of 
a Lipschitz function 
\[
W^c_i=(W^{sc}_i,W^{uc}_i):M_i \times \omega \to B_i^s
\times B_i^u.
\]
Moreover, {\bf C6} implies
\[
\rho_-=\max\{m,\lambda_{uc}^-\}\text{\;,\;}\rho_+=\max\{m,\lambda_{sc}^+\}
\]
and
\[
\nu_-=\lambda_s^-\cdot\lambda_{uc}^+\text{\;,\;}\nu_+=\lambda_u^+\cdot\lambda_{sc}^-.
\]
Once 
$$
\rho_{\pm}^k \nu_{\pm}^{-1}<1
$$
is satisfied for an integer $k\geq1$ and all parameters on an admissible 
$ij$, the $W_{ij}^c$ is $C^r$ smooth for $ij\in\Sigma_{A}$.

\end{thm}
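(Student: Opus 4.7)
The strategy is to treat the three conclusions in sequence: first the existence of the Lipschitz center–stable graph $W^{sc}$, then by symmetry $W^{uc}$, then the center lamination $W^c$ as the transverse intersection, and finally the $C^r$ upgrade from [C6]. My approach is a Conley-type graph transform built on the covariant cone field, closed by a standard fibered contraction in $r$-jets.

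For the first part, I fix a one-sided admissible word $\omega^+ = (i_0 i_1 i_2 \cdots) \in \Sigma_A^+$ and define
\[
W^{sc}_{i_0}(\omega^+) := \bigcap_{n \geq 0} \bigl(f_{i_{n-1}i_n} \circ \cdots \circ f_{i_0 i_1}\bigr)^{-1}\!\bigl(\Pi_{i_n}\bigr).
\]
The space of candidate objects is the complete metric space (in the $C^0$ distance) of sections $g: B^s_i \times M_i \to B^u_i$ whose graphs have tangent lines lying in the complementary cone to $C^u_\mu$, i.e.\ Lipschitz sections with slope $\leq 1/\mu$. Condition [C4] says that $df$ preserves the unstable cone, so by duality $(df)^{-1}$ preserves the complementary cone; this is what makes the graph-transform operator $\mathcal{G}_{\omega^+}$, defined by pulling back the graph of $g$ along the admissible branches, land back in the same space of Lipschitz sections. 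Condition [C5] and the contraction in the unstable direction make $\mathcal{G}_{\omega^+}$ a strict contraction on that space. The boundary conditions [C1], [C2], [C3] are what guarantee that the iterates have the right topological type: [C1] keeps the $(s,c)$-projection inside the correct block, [C2] (homotopy equivalence of the unstable boundary) is the covering-degree ingredient needed so that the graph transform hits every fiber over $B^s_{i_0} \times M_{i_0}$, and [C3] rules out escape through the boundary of $M_{i_0}$ — without it the invariant graph would only be defined on a shrinking subdomain. The unique fixed point is $W^{sc}_{i_0}(\cdot, \omega^+)$, Lipschitz with constant $\leq 1/\mu$. Symbolic dependence on $\omega^+$ is continuous because the fixed point depends continuously on parameters. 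Applying the symmetric argument with [C1$'$]–[C5$'$] to $f^{-1}$ yields $W^{uc}_i(\cdot, \omega^-)$.

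The center lamination $W^c$ is then obtained fiberwise: for a two-sided $\omega = \omega^- \cdot \omega^+$, the graphs $W^{sc}_i(\cdot, \omega^+)$ and $W^{uc}_i(\cdot, \omega^-)$ are both graphs over complementary factors of $B^s_i \times B^u_i \times M_i$; because the tangent planes of the former lie in the complementary cone to $C^u_\mu$ and those of the latter in the complementary cone to $C^s_\mu$, with $\mu>1$ ensuring the two cone families are disjoint, standard Lipschitz transversality yields a unique Lipschitz intersection $W^c_i(\cdot,\omega): M_i \to B^s_i \times B^u_i$.

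For the $C^r$ upgrade, the plan is the Hirsch–Pugh–Shub fibered contraction theorem applied to the bundle of $k$-jets of sections, for $1 \leq k \leq r$. Hypothesis [C6] gives a clean dominated splitting: the induced transformation on $k$-jets over $W^{sc}$ has fiberwise norm bounded by $\rho_+^k/\nu_+$ (and symmetrically $\rho_-^k/\nu_-$ for $W^{uc}$), where $\rho_\pm$ and $\nu_\pm$ are the constants stated in the theorem. Under the gap assumption $\rho_\pm^k \nu_\pm^{-1} < 1$ for $k \leq r$, each successive jet transform is a fiberwise contraction over the already-constructed base graph, so its unique invariant section is continuous; inductively climbing $k=1,\ldots,r$ gives a $C^r$ graph. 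The expected main obstacle is the first stage, specifically showing that the graph-transform image genuinely covers all of $B^s_{i_0} \times M_{i_0}$ rather than shrinking: this is where the homotopy-equivalence clause of [C2] must be used carefully together with [C3] (in lift to the universal cover of $M_i$ if necessary, to turn the non-simply-connected center factor into a single-sheet covering argument), mirroring the lift trick used in the verification of [C1]–[C5] for $\SM_\eps$ in Section 3.4. A secondary technicality is that $M_i$ is a general Riemannian manifold, so the cone condition must be carried through the exponential charts $\exp_z$, with the cones understood as subsets of $T_zM$ transported by parallel transport along short geodesics — routine once one fixes the local linear structure, but something that needs to be checked before invoking the standard graph-transform contraction estimates.
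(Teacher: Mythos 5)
Your proposal is correct, and for the smoothness half it coincides with the paper's argument: the paper also closes by invoking the HPS-type $C^r$-section/fiber-contraction theorem (Theorem \ref{cr}) under the gap condition $\rho_{\pm}^k\nu_{\pm}^{-1}<1$, exactly as you propose with the jet bundles. For the Lipschitz half, however, you take a genuinely different route. You build a graph transform on the complete space of Lipschitz sections $B_i^s\times M_i\to B_i^u$ with slope controlled by the complementary cone and obtain $W^{sc}_i(\cdot,\omega^+)$ as the unique fixed point of a contraction, whereas the paper argues pointwise in Conley--McGehee isolating-block style: for each base point $X$ it takes the unstable fiber $\Gamma_X=\pi_{sc}^{-1}(X)$, proves in Lemma \ref{lem:inv-unstable-disk} that admissible images of such ``unstable disks'' again contain unstable disks (this is precisely where the homotopy clause of [C2] is used, the same degree/covering step you single out as the main obstacle), deduces existence of a point of $W^{sc}_i$ on $\Gamma_X$ from nonemptiness of the nested compact intersections, and gets uniqueness together with the Lipschitz/cone property directly from the expansion [C5], since two points of $W^{sc}$ lying in a common unstable cone would have $\pi_u$-distance growing like $\nu^k$ while remaining bounded (Proposition \ref{pro}). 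What your route buys is that uniqueness, continuous dependence on $\omega^+$, and the subsequent jet-space upgrade all flow from one contraction principle; the cost is that along a fixed itinerary the transform is non-autonomous (it sends sections attached to $\sigma_+\omega^+$ to sections attached to $\omega^+$), so you must pass to the sequence space of sections along the itinerary (or take nested images) to speak of a fixed point, and the ontoness of each transform still requires the same [C2]/[C3] degree-and-lift argument that the paper isolates in Lemma \ref{lem:inv-unstable-disk}. The paper's route is more elementary, avoiding any function-space topology, but must extract the Lipschitz property as a separate consequence of the cone field; the inputs [C1]--[C5] play identical roles in both arguments, so your plan would go through.
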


Recall that $\rho_{\pm}$ and $\mu_{\pm}$ are dependent of all parameters on an admissible 
$ij$, then this condition can be formalized to 
$$
\max_{ij\in \text{Ad}}
\rho_{ij}^k \nu_{ij}^{-1}<1.
$$

In the case $\Sigma^\pm_A$ is a single point the result can be 
deduced from known results see \cite{Fe, HPS,McG,BKZ}. In large 
part we follows the proof from the book of Shub \cite{Shu}. 

\begin{proof} We start by proving that each $W^{sc}_i$ is
a Lipschitz manifold. We reply on Proposition D.1 \cite{KZ14}.  
Since it is short we reproduce it here. Fix $\omega\in \Sigma^+_A$.
 
Let $\mathcal V_{i}$ be the set  
$\Gamma_{i} \subset \Pi_{i}$ satisfying 
the following conditions: 
for each admissible $ij\in $ Ad we have
\begin{itemize}
\item (a)  $\pi_{u}\Gamma_{i} \supset B_{ij}^u$, 
\item (b) $Z_2 \in \exp_{Z_1}(C_{Z_1}^u)$ for all 
$Z_1, Z_2 \in \Gamma_{ij}:=\Gamma_{i}\cap f^{-1}(\Pi_{j})$, 
\end{itemize}
where $\pi_u$ is the projection to the unstable component. 
These conditions ensures $\pi_u: \Gamma_{ij} \to B_{ij}^u$ is 
one-to-one and onto, therefore, $\Gamma_{ij}$ is a graph 
over $B_{ij}^u$. Moreover, condition (b) further implies that 
the graph is Lipschitz. In particular, each $\Gamma_{ij} \in \mathcal V_{ij}$  
is a topological disk. 

\begin{lem}\label{lem:inv-unstable-disk}
Let $\Gamma_{i} \in \mathcal V_{i}$, then 
$f_{ij}(\Gamma_{ij})\cap D\in \mathcal V_{j}$. 
\end{lem}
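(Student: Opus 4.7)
The goal is to show that under one application of $f_{ij}$, an object in $\mathcal V_i$ (a ``fat unstable disk'' over $B^u_{ij}$ obeying the $u$-cone condition) maps to an object satisfying the same two defining properties in the target block $\Pi_j$. The plan is to verify the cone condition (b) and the covering condition (a) separately: (b) follows directly from the cone invariance assumption [C4], while (a) is the genuinely topological step that requires the homotopy equivalence in [C2] together with the boundary control provided by [C3].

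\textbf{Step 1 (cone condition).} Let $jk\in$ Ad be admissible and pick $W_1,W_2 \in f_{ij}(\Gamma_{ij})\cap f^{-1}(\Pi_k)$. Their preimages $Z_1,Z_2\in \Gamma_{ij}\cap f^{-1}(\Pi_k)$ lie in $\Gamma_{ij}$, so by property (b) of $\Gamma_i$ we have $Z_2 \in \exp_{Z_1}(C^u_{\mu,Z_1})$. Condition [C4] then gives $W_2 = f_{ij}(Z_2) \in \exp_{W_1}(C^u_{\mu,W_1})$, which is property (b) for $f_{ij}(\Gamma_{ij})\cap D$ with respect to the admissible pair $jk$.

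\textbf{Step 2 (covering).} I must show that for every admissible $jk$, the projection $\pi_u \bigl(f_{ij}(\Gamma_{ij})\cap D\bigr)$ contains $B^u_{jk}$. This is the main obstacle and uses a degree/homotopy argument. Since $\Gamma_i\in\mathcal V_i$, the restriction $\pi_u:\Gamma_{ij}\to B^u_{ij}$ is one-to-one and onto (by (a)+(b)), so $\Gamma_{ij}$ is homeomorphic to $B^s_i\times B^u_{ij}\times M_i$ via $(\pi_s,\pi_u,\pi_c)$ up to a Lipschitz graph. By [C3] the image of the central boundary $\partial M_i$ stays in $\partial M_j$, so nothing leaks out through the central direction; by [C1] the $sc$-projection stays inside $B^s_j\times M_j$; and by [C2] the restriction of $\pi_u\circ f_{ij}$ to $\partial B^u_{ij}$ is a homotopy equivalence onto its image in $\R^u\setminus B^u_j$. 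These three facts together force the map $\pi_u\circ f_{ij}:\Gamma_{ij}\to \R^u$ to have nonzero degree over every point of $B^u_j$, hence $\pi_u f_{ij}(\Gamma_{ij})\supset B^u_j\supset B^u_{jk}$ for every admissible $jk$.

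\textbf{Step 3 (conclusion).} Combining Steps 1 and 2 yields that $\Gamma'_j:=f_{ij}(\Gamma_{ij})\cap D$ satisfies both (a) and (b) relative to every admissible $jk$, so $\Gamma'_j\in\mathcal V_j$. The two easy parts are Steps 1 and 3; the genuine work is the topological Step 2, where the homotopy equivalence in [C2] is essential — the cone condition alone does not suffice, because without the boundary behavior one could not exclude that $f_{ij}(\Gamma_{ij})$ ``misses'' part of $B^u_j$ through a fold. I expect the proof to be presented in roughly this order, and to rely on passing to the universal cover (as in \cite{KZ14}) to turn the homotopy equivalence on $\partial B^u_{ij}$ into an honest degree computation.
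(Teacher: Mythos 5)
Your proposal is correct and matches the paper's proof in structure: property (b) comes from [C4] exactly as in your Step 1, and the covering property (a) is the same topological use of [C2], which the paper packages as a contradiction (a missed point $Z_*\in B^u_j$ would, via a commutative diagram of homotopy equivalences, force $\Gamma_{ij}$ to be homotopy equivalent to $\partial\Gamma_{ij}$) rather than as your explicit degree count over points of $B^u_j$ --- an equivalent formulation of the same argument. The only small inaccuracy is your remark that $\Gamma_{ij}$ is homeomorphic to $B^s_i\times B^u_{ij}\times M_i$: by (a)--(b) it is a Lipschitz graph over $B^u_{ij}$ alone (hence homeomorphic to $B^u_{ij}$), which is precisely what both your degree argument and the paper's diagram argument require.
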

\begin{proof}
By [C4] for any $Z_1$ and $Z_2$ we have that 
$f_{ij}(Z_2)$ belongs to the cone $C^u_{f_{ij}(Z_1)}$ of $f_{ij}(Z_1)$.
Thus, it suffices to show that $B_j^{u}\subset \pi_{u}
(f_{ij}(\Gamma)\cap D)$. The proof is by contradiction. 
Suppose there is $Z_* \in B_j^{u}$ such that 
$Z_*\not\in \pi_{u}(f_{ij}(\Gamma_i))$. 

We have the following commutative diagram
\begin{equation}
\begin{aligned}
\partial \Gamma_{ij} \qquad & \quad \longrightarrow{i_1} & \Gamma_{ij} \ \ \qquad \\ 
\downarrow \pi_{u}\circ f_{ij} & & \downarrow \pi_{u}\circ f_{ij} \\
\mathbb{R}^u\setminus B_{ij}^u& \quad \longrightarrow{i_2} & \mathbb{R}^u \setminus\{Z_*\}
\end{aligned}
\end{equation}
and by [C2] and standard topology, both 
$\pi_u \circ f_{ij} |_{ \partial \Gamma_{ij}}$ and $i_2$ are 
homotopy equivalences. Also $\pi_u \circ f_{ij} |_{ \Gamma_{ij}}$ 
is a homeomorphism onto its image. 
Since the diagram commutes, $\Gamma_{ij}$ is homotopic to 
$\partial \Gamma_{ij}$, which is a contradiction. 
\end{proof}


The first part of Theorem \ref{thm:nhil} 
follows from the next statement.
\begin{prop}\label{pro}
The mapping $\pi_{sc}: W^{sc}_{i}\to D_i^{sc}$ is one-to-one 
and onto, therefore, it is the graph of a function $W_i^{sc}$. 
Moreover, $W^{sc}_i$ is Lipschitz and 
\[
	T_Z W^{sc}_i \in (C_\mu^{u}(Z))^c = 
	C_{\mu^{-1}}^{sc}(Z), \quad Z \in W^{sc}_i. 
\] 
\end{prop}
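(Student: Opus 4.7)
The plan is to deduce all three claims — injectivity, surjectivity, and the tangent/Lipschitz property of $\pi_{sc}$ on $W^{sc}_i$ — directly from the cone conditions [C4,C5] and from Lemma \ref{lem:inv-unstable-disk} together with the boundary condition [C3]. The argument will follow the classical graph-transform / Conley isolating-block template (as in Shub's book), adapted to allow the central factor $M_i$.

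For injectivity, suppose $Z_1,Z_2\in W^{sc}_i$ satisfy $\pi_{sc}(Z_1)=\pi_{sc}(Z_2)$. Then the displacement $Z_2-Z_1$ lies in the purely unstable direction, hence automatically in $C^u_{\mu,Z_1}$. Since both forward orbits remain in $\Pi$, every pair $f^n(Z_1),f^n(Z_2)$ belongs to some admissible $\Pi_{i_n i_{n+1}}$, so [C4] propagates the cone condition and [C5] gives $\|\pi_u(f^{n+1}(Z_2)-f^{n+1}(Z_1))\|\ge \nu\,\|\pi_u(f^n(Z_2)-f^n(Z_1))\|$. Iterating yields exponential growth of the unstable component, which contradicts boundedness of $\Pi$ unless $Z_1=Z_2$.

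For surjectivity, fix $(x_0,z_0)\in D_i^{sc}$ and the vertical fiber
\[
V_{x_0,z_0}=\{x_0\}\times B_i^u\times\{z_0\}\subset \Pi_i.
\]
Conditions (a) and (b) defining $\mathcal{V}_i$ are trivially satisfied, so $V_{x_0,z_0}\in\mathcal{V}_i$. For any admissible forward sequence $\omega=i_0i_1i_2\ldots$ with $i_0=i$, apply Lemma \ref{lem:inv-unstable-disk} inductively: starting from $\Gamma^{(0)}=V_{x_0,z_0}$, the set $\Gamma^{(n+1)}:=f_{i_ni_{n+1}}(\Gamma^{(n)}\cap f^{-1}(\Pi_{i_{n+1}}))$ lies in $\mathcal{V}_{i_{n+1}}$ and is non-empty. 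Pulling back, the nested compact sets
\[
K_n(\omega):=\{Z\in V_{x_0,z_0}:f^k(Z)\in\Pi_{i_k},\ 0\le k\le n\}
\]
are non-empty at each stage (condition [C3] prevents orbits from escaping through $\partial M_{i_k}$, so no admissible image is lost through the central boundary). By compactness, $\bigcap_n K_n(\omega)$ is non-empty and is contained in $W^{sc}_i$, which supplies a point in $W^{sc}_i$ with $\pi_{sc}$-image exactly $(x_0,z_0)$.

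For the Lipschitz slope bound and the tangent-cone inclusion, I will run the injectivity argument on any pair $Z_1,Z_2\in W^{sc}_i$ (not just those with equal $sc$-projection): if the displacement $Z_2-Z_1$ were to fall inside the unstable cone $C^u_{\mu,Z_1}$, then [C4,C5] would again force exponential growth of the unstable component under forward iteration, contradicting the fact that both orbits remain in $\Pi$. Hence $Z_2-Z_1\in (C^u_{\mu,Z_1})^c=C^{sc}_{\mu^{-1},Z_1}$, which says precisely $\|\pi_u(Z_2-Z_1)\|\le \mu^{-1}\|\pi_{sc}(Z_2-Z_1)\|$. This simultaneously (i) shows that the graph function $W_i^{sc}\colon D_i^{sc}\to B_i^u$ is Lipschitz with constant $\mu^{-1}$ and (ii) yields the tangent-cone bound $T_Z W_i^{sc}\subset C^{sc}_{\mu^{-1},Z}$ at every point $Z$ where the tangent space exists (via Rademacher, or directly from the secant inclusion).

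The main obstacle is the non-emptiness step in surjectivity: ensuring that for each admissible $\omega$, the pullback sets $K_n(\omega)$ do not accidentally become empty. This requires using Lemma \ref{lem:inv-unstable-disk} in a \emph{topological} form — the forward image $\Gamma^{(n+1)}$ is not merely a graph but a graph whose $\pi_u$-projection covers $B^u_{i_{n+1}i_{n+2}}$ — combined with [C3] to exclude leakage through the central boundary. Once this topological persistence is secured, the rest of the proof is purely cone-arithmetic.
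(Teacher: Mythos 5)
Your proposal is correct and follows essentially the same route as the paper's proof: nonemptiness/surjectivity via the vertical fiber $\Gamma_X=(\pi_{sc})^{-1}X\in\mathcal V_i$ and Lemma \ref{lem:inv-unstable-disk}, injectivity and the Lipschitz/cone bound via propagation of the unstable cone by [C4] and the expansion [C5] against the boundedness of $\Pi$. The only cosmetic difference is that you package surjectivity as an intersection of nested compact sets $K_n(\omega)$, while the paper phrases the same point as a contradiction (a finite-time image of $\Gamma_X$ missing $D_{i_n}$ would contradict the lemma); both arguments share the same content, including the implicit assumption that the two forward orbits occupy the same admissible block at each step.
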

\begin{proof}
For each $X \in D_i^{sc}$, we define $\Gamma_X = (\pi_{sc})^{-1} X$, 
clearly $\Gamma_X \in \mathcal V_i$. We first show 
$\Gamma_X \cap W^{sc}_i$ is nonempty and consists of 
a single point. Assume first that $\Gamma_X \cap W^{sc}_i$ 
is empty. Then by definition of $W^{sc}_i$, there is $n \in \Z_+$ 
and a composition of $n$ admissible maps $f_{i_0i_1},
f_{i_1i_2}, \dots, f_{i_{n-1}i_n}$ such that  
$$
 f_{i_{n-1}i_n}f_{i_1i_2} \dots f_{i_0i_1}
(\Gamma_X) \cap D_{i_n} = \varnothing.
$$
However, by Lemma~\ref{lem:inv-unstable-disk}, 
$$\bigcap_{i=1}^n 
f_{i_{n-1}i_n}f_{i_1i_2} \dots f_{i_0i_1}(\Gamma_X) \cap D \in \mathcal V_{i_n}
$$ 
is always nonempty, a contradiction. We now consider 
two points $Z_1, Z_2 \in W^{sc}_i$ with $\pi_u Z_1 = \pi_u Z_2$. Since 
$Z_2 \in \exp_{Z_1}C_\mu^u(Z_1)$, by [C5] we have 
\[
2\ge \|\pi_u(f^k(Z_1)-f^k(Z_2))\|\ge \nu^k\|\pi_u(Z_1-Z_2)\| 
\]
for all $k$, which implies $Z_1=Z_2$.

The last argument actually shows $Z_2 \notin \exp_{Z_1}C_\mu^u(Z_1)$ for all $Z_1, Z_2 \in W^{sc}_i$. 
For any $\epsilon>0$, for $Z_1=(X_1, Y_1), Z_2=(X_2, Y_2) 
\in W^{sc}_i$ with dist$(X_1, X_2)$ small, we have  
$\|Y_1 - Y_2\| \le (\mu^{-\frac12} + \epsilon)$  
dist$(X_1, X_2)$. This implies both the Lipschitz and the cone properties in our proposition.
\end{proof}

The second part of Theorem \ref{thm:nhil} is due to the following $C^r$ section theorem. For the consistency of our paper we rewrite it under our symbol system, but the original version can be found in \cite{Shu} or \cite{HPS}. Now we finish the proof of Theorem \ref{thm:nhil}.
\end{proof}
\begin{thm}[$C^r$ Section]\label{cr}
Let $\Pi:E\rightarrow X$ be a vector bundle over the metric space $X$, where $E$ has a splitting by $E^u\bigoplus E^s\bigoplus E^c$. Let $X_0$ is an invariant subset of $X$ and $D$ be the disc bundle of radius $C$ in $E$, where $C>0$ is a finite constant. Let $D_0$ be the restriction of $D$ over $X_0$, i.e. $D_0=D\bigcap\Pi^{-1}(X_0)$. 

Suppose $F=(f,Df):D_0\rightarrow D$ be the covering function of $f$. $\forall x\in X_0$, there exists a Lipschitz  invariant graph in the bundle space $E_x$ which can be locally formed by
\[
id\times g_{inv}^{uc}(x,\cdot):X_0\times E^{uc}_x\rightarrow E^{uc}_x\times E_x^s,
\]
with the Lipschitz constant bounded by $k_s$. We can define a couple of functions 
\[
h^{uc}_x=\pi_{uc}\circ{Df}(x)\cdot(id,g_{inv}^{uc}):E_x^{uc}\rightarrow E_{f(x)}^{uc},\quad\forall x\in X_0
\]
and
\[
F^{uc}_x:E_x^{uc}\times\mathbb{L}(E_x^{uc},E_x^s)\rightarrow E_{f(x)}^{uc}\times\mathbb{L}(E_{f(x)}^{uc},E_{f(x)}^s)
\]
via 
\[
(\xi,\eta,z^u,\sigma_x^{uc}(\xi,\eta,z^u))\rightarrow(h_x^{uc}(\xi,\eta,z^u),\sigma_{f(x)}^{uc}(h^{uc}_x(\xi,\eta,z^u))),
\] 
i.e. the following diagram commutes:
\begin{equation} 
\begin{array}[c]{ccc} 
\quad\ E_x^{uc}&
\quad  \stackrel{h^{uc}} {\longrightarrow} & 
\quad E_{f(x)}^{uc} \\
\pi_{uc}\ \uparrow &  & \pi_{uc}\ \uparrow \\ 
\ \ \ \ \ \ | &  & \ \ \ \ \ \ | \\ 
 E_x^{uc} \times \mathbb{L}(E_x^{uc},E_x^s)
& \quad  \stackrel{F^{uc}} {\longrightarrow} & \ \ 
 E_{f(x)}^{uc} \times \mathbb{L}(E_{f(x)}^{uc},E_{f(x)}^s),
\end{array}
\end{equation}
where $\mathbb{L}(E_x^{uc},E_x^s)$ is the linear transformation space and $\sigma_x^{uc}:E_x^{uc}\rightarrow\mathbb{L}(E_x^{uc},E_x^s)$. $\forall x\in X_0$, $F_{uc}(x,\cdot)$ is Lipschitz with constant at most $\nu_{uc}^{-1}$. 
\begin{itemize}
\item There exits a unique section map $\sigma^{uc}_{inv}(x,\cdot):E_x^{uc}\rightarrow\mathbb{L}(E_x^{uc},E_x^s)$ such that 
\[
\sigma_{inv}^{uc}(f(x),h^{uc}_{x}(\xi,\eta,z^u))=\pi_2F^{uc}_{x}(\sigma^{uc}_{inv}(x,\xi,\eta,z^u)),\forall x\in X_0,\  (\xi,\eta,z^u)\in E_x^{uc};
\]
\item If $F^{uc}$ is continuous, so is $\sigma^{uc}_{inv}$;
\item If moreover, $h_{uc}^{-1}$ is Lipschitz with $Lip(h_{uc}^{-1})=\rho_{uc}$, $F_{uc}$ is $\alpha-$H\"older, and $\nu_{uc}^{-1}\rho_{uc}^{\alpha}<1$, then $\sigma^{uc}_{inv}$ is $\alpha-$H\"older; In particular, when $\alpha=1$, $\sigma^{uc}_{inv}$ is Lipschitz;
\item If moreover, $X$, $X_0$ and $E$ are $C^r$ manifolds ($r\geq1$), $h_{uc}$ and $F_{uc}$ are $C^r$, $j-$th order derivatives of $h_{uc}^{-1}$ and $F_{uc}$ are bounded for $1\leq j\leq r$ and Lipschitz for $1\leq j<r$, there exists a $r\geq1$, such that $\rho_{uc}=Lip(h^{-1}_{uc})$ and $\nu_{uc}^{-1}=Lip(F_{uc})$, and $\rho_{uc}^r\nu_{uc}^{-1}<1$, then the backward invariant graph $\sigma_{inv}^{uc}$ is $C^r$.
\end{itemize}
Similarly, $\forall x\in X_0$, there exists a Lipschitz  invariant graph in the bundle space $E_x$ which can be locally formed by
\[
id\times g^{sc}_{inv}(x,\cdot):E^{sc}_x\rightarrow E^{sc}_x\times E_x^u,
\]
with the Lipschitz constant bounded by $k_u$. We can define 
\[
h^{sc}_x=\pi_{sc}\circ{Df^{-1}(x)}\cdot(id,g_{inv}^{sc}):E_{x}^{sc}\rightarrow E_{f^{-1}(x)}^{sc},\quad \forall x\in X_0
\]
and
\[
F^{sc}_x=:E_x^{sc}\times\mathbb{L}(E_x^{sc},E_x^u)\rightarrow E_{f^{-1}(x)}^{sc}\times\mathbb{L}(E_{f^{-1}(x)}^{sc},E_{f^{-1}(x)}^u)
\]
via
\[
(\xi,\eta,z^s,\sigma_x^{sc}(\xi,\eta,z^s))\rightarrow(h_x^{sc}(\xi,\eta,z^s),\sigma_{f^{-1}(x)}^{sc}(h_x^{sc}(\xi,\eta,z^s)),
\]
i.e. the following diagram commutes:
\begin{equation} 
\begin{array}[c]{ccc} 
\quad\ E_x^{sc}&
\quad  \stackrel{h^{sc}} {\longrightarrow} & 
\quad E_{f^{-1}(x)}^{sc} \\
\pi_{sc}\ \uparrow &  & \pi_{sc}\ \uparrow \\ 
\ \ \ \ \ | &  & \ \ \ \ \ | \\ 
 E_x^{sc} \times \mathbb{L}(E_x^{sc},E_x^u)
& \quad  \stackrel{F^{sc}} {\longrightarrow} & \ \ 
 E_{f^{-1}(x)}^{sc} \times \mathbb{L}(E_{f^{-1}(x)}^{sc},E_{f^{-1}(x)}^u).
\end{array}
\end{equation}
$\forall x\in X_0$, $F_{sc}(x,\cdot)$ is Lipschitz with constant at most $\nu_{sc}^{-1}$. 

\begin{itemize}
\item There exits a unique section map $\sigma_{inv}^{sc}(x,\cdot):E_x^{sc}\rightarrow\mathbb{L}(E_x^{sc},E_x^u)$ such that
\[
\sigma_{inv}^{sc}(f^{-1}(x),h_x^{sc}(\xi,\eta,z^s))=\pi_2F_x^{sc}(\sigma^{sc}_{inv}(x,\xi,\eta,z^s))\quad\forall x\in X_0, (\xi,\eta,z^s)\in E_x^{sc};
\]
\item If $F_{sc}$ is continuous, so is $\sigma^{sc}_{inv}$;
\item If moreover, $h_{sc}^{-1}$ is Lipschitz with $Lip(h_{sc}^{-1})=\rho_{sc}$, $F^{sc}$ is $\alpha-$H\"older, and $\nu_{sc}^{-1}\rho_{sc}^{\alpha}<1$, then $\sigma^{sc}_{inv}$ is $\alpha-$H\"older; In particular, when $\alpha=1$, $\sigma^{sc}_{inv}$ is Lipschitz;
\item If moreover, $X$, $X_0$ and $E$ are $C^r$ manifolds ($r\geq1$), $h_{uc}$ and $F_{sc}$ are $C^r$, $j-$th order derivatives of $h_{sc}^{-1}$ and $F_{sc}$ are bounded for $1\leq j\leq r$ and Lipschitz for $1\leq j<r$, there exists a $r\geq1$, such that $\rho_{sc}=Lip(h^{-1}_{sc})$ and $\nu_{sc}^{-1}=Lip(F_{sc})$, and $\rho_{sc}^r\nu_{sc}^{-1}<1$, then the forward invariant graph $\sigma_{inv}^{sc}$ is $C^r$.
\end{itemize}

\end{thm}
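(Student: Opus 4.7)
The statement is the classical $C^{r}$--section theorem of Hirsch--Pugh--Shub, adapted to the splitting $E = E^{u}\oplus E^{s}\oplus E^{c}$ and to the two "center-included" cones $E^{uc}$, $E^{sc}$. The plan is to reduce the construction of each invariant section to a fixed-point problem for a graph-transform operator on a suitable Banach space of sections, and then to promote regularity by the fibre-contraction principle.

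First I would set up the functional framework for the unstable statement (the stable case is symmetric). Let
\[
\Gamma := \Bigl\{\sigma : X_{0}\to \mathbb{L}(E^{uc},E^{s})\ \Big|\ \sigma(x)\in \mathbb{L}(E_{x}^{uc},E_{x}^{s}),\ \|\sigma\|_{\infty}<\infty\Bigr\}
\]
equipped with the supremum of the operator norms in the fibres, so that $(\Gamma,\|\cdot\|_{\infty})$ is a Banach space. Using the commutative diagram in the statement, define the graph transform $\mathcal{T}:\Gamma\to\Gamma$ by
\[
(\mathcal{T}\sigma)\bigl(f(x)\bigr)\; :=\; \pi_{2}\,F_{x}^{uc}\Bigl(\sigma\bigl(x,(h_{x}^{uc})^{-1}(\cdot)\bigr)\Bigr),
\]
i.e.\ $\mathcal{T}\sigma$ is the unique section making the diagram commute when $\sigma$ is used on the right-hand vertical arrow. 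Because $F^{uc}(x,\cdot)$ is fibrewise Lipschitz with constant at most $\nu_{uc}^{-1}<1$, a direct computation shows $\operatorname{Lip}(\mathcal{T})\le \nu_{uc}^{-1}$ in $\|\cdot\|_{\infty}$. The Banach contraction principle then produces a unique fixed point $\sigma_{inv}^{uc}\in \Gamma$, which is precisely the section required in the first bullet.

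Next I would establish the regularity bullets in turn by specializing the ambient Banach space and invoking the \emph{fibre contraction theorem} (if $f$ is a contraction on $X$ with fixed point $x^{\ast}$ and, over each $x$, $g_{x}$ is a uniform contraction on $Y$, then the skew product has a unique fixed point and every orbit converges to it).
For continuity, restrict $\mathcal{T}$ to $C^{0}(X_{0};\mathbb{L}(E^{uc},E^{s}))$, which is $\mathcal{T}$-invariant whenever $F^{uc}$ and $(h^{uc})^{-1}$ are continuous, and apply the same contraction argument; uniqueness of the fixed point in $\Gamma$ then forces $\sigma_{inv}^{uc}\in C^{0}$.
For the H\"older statement, restrict to the closed subset $C^{0,\alpha}\subset \Gamma$ of sections with a prescribed H\"older seminorm $L_{0}$; pulling through the definition of $\mathcal{T}$, the new H\"older seminorm is bounded by
\[
\nu_{uc}^{-1}\,\rho_{uc}^{\alpha}\,L_{0}\;+\; C(F^{uc},h^{uc}),
\]
so the condition $\nu_{uc}^{-1}\rho_{uc}^{\alpha}<1$ makes $C^{0,\alpha}(L_{0})$ invariant under $\mathcal{T}$ for $L_{0}$ large enough, and the uniqueness of the fixed point places $\sigma_{inv}^{uc}$ inside that class.
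For the $C^{r}$ statement I would proceed inductively on $r$. Formally differentiating the functional equation yields an auxiliary fibred map $\mathcal{T}^{(j)}$ acting on the prospective derivatives $D^{j}\sigma$; its fibre Lipschitz constant is of order $\rho_{uc}^{j}\nu_{uc}^{-1}$, so the hypothesis $\rho_{uc}^{r}\nu_{uc}^{-1}<1$ keeps it a contraction at every order $j\le r$. The fibre contraction theorem, applied successively with base contraction $\mathcal{T}$ on $C^{j-1}$ and fibre contraction $\mathcal{T}^{(j)}$ on the space of bounded $D^{j}\sigma$, shows that the unique $\mathcal{T}$-fixed section is in fact $C^{r}$.

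Finally I would repeat the same four steps for the stable section $\sigma_{inv}^{sc}$, with $f^{-1}$, $F^{sc}$, $h^{sc}$, $\rho_{sc}$, $\nu_{sc}$ replacing their unstable counterparts; the diagram and the hypotheses are completely symmetric, so no new idea is needed. The main obstacle I expect is bookkeeping rather than substance: one has to verify carefully that the formal derivatives of $\mathcal{T}$ at order $j$ really give contractions in the operator norm with ratio controlled by $\rho_{uc}^{j}\nu_{uc}^{-1}$, and that the necessary bounds on $D^{j}h^{uc}$ and $D^{j}F^{uc}$ (bounded up to order $r$, Lipschitz below) propagate through the implicit definition of $\mathcal{T}$ via $(h_{x}^{uc})^{-1}$. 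This is a standard but delicate multilinear estimate --- the one genuine technical point where one must use that the base contraction is slower than $\nu^{-1}\rho^{r}$. Once this is in place, the fibre-contraction machinery assembles all the bullets of the theorem uniformly.
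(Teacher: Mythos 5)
The paper itself does not prove Theorem \ref{cr}: it is quoted, rewritten in the paper's notation, from Shub \cite{Shu} and Hirsch--Pugh--Shub \cite{HPS}, as stated explicitly just before the theorem. Your sketch reproduces exactly the standard argument of those references --- the graph transform as a contraction on the Banach space of bounded sections, continuity and H\"older regularity obtained by restricting to closed invariant subsets under the condition $\nu^{-1}\rho^{\alpha}<1$, and the $C^{r}$ statement by induction via the fibre contraction theorem applied to the formally differentiated functional equation under $\rho^{r}\nu^{-1}<1$ --- so it is correct in outline and coincides with the cited proof (with the implicit hypothesis $\nu_{uc}^{-1}<1$, resp.\ $\nu_{sc}^{-1}<1$, which is indeed how the theorem is applied in the paper).
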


\begin{rmk}
Theorem \ref{cr} allows us to prove the smoothness of unstable (stable) manifold by induction: Actually, the exponential map will send $g_{inv}^{uc}$ ($g_{inv}^{sc}$) into manifolds $W_{inv}^{uc}$ ($W_{inv}^{sc}$). We already know that restricted on each leaf, $g_{inv}^{uc}$ ($g_{inv}^{sc}$) is $C^1$ whenever $h,F$ are. This is because $\sigma_{inv}^{uc}$($\sigma_{inv}^{sc}$) is continuous and is actually the $1-$jet of $g_{inv}^{uc}$($g_{inv}^{sc}$) due to former two bullets of aforementioned Theorem. Then suppose $g_{inv}^{uc}$ ($g_{inv}^{sc}$) is already $C^{s-1}$, $s\geq2$, use the last bullet and we can get $\sigma_{inv}^{uc}$($\sigma_{inv}^{sc}$) is also $C^{s-1}$ hence $g_{inv}^{uc}$($g_{inv}^{sc}$) is $C^s$. So the induction can be repeated until $s=r$.

\end{rmk}

\begin{rmk}
For the setting of Theorem \ref{thm:nhil}, we just need to take $X_0$ by $W^c$, $X$ by $\mathbb{R}^n$ and the splitting $E^u\bigoplus E^s\bigoplus E^c$ by the invariant splitting of $W^c$. We already know that $W^c$ is invariant due to {\bf C1} to {\bf C5}, so such a splitting does exist.
\end{rmk}

\section{H\"older continuity of jet space for hyperbolic invariant set}
\label{holder for jet}
Notice that we need to get an available normal form (\ref{eq:skew-shift-normal-form}), of which \cite{CK} can be used to get our main conclusion (see Appendix \ref{sec:oriol-result}). So we still need to prove the regularity of $W^c$ in $\omega$, for which at least some $\varphi-$H\"older regularity should be ensured, $\varphi>0$. The crucial idea for this is the following Theorem, which is translated to adapt our setting from Theorem 6.1.3. of \cite{BrSt}.

\begin{thm}\label{holder}
Let $\Lambda\hookrightarrow M$ be a compact invariant embedding set of a $C^{\infty}$ diffeomorphism $f:M\rightarrow M$. Suppose there exists a splitting on the tangent bundle by:
\[
T_{\Lambda}M=E_1^c\oplus E_1^u\oplus E_1^s
\]
and $0<\lambda_+^{sc}<\lambda_+^u$ such that $\|df^n(x_1)v_1^{s,c}\|\leq C(\lambda_+^{s,c})^n\|v_1^{s,c}\|$, $\|df^n(x_1)v_1^{u}\|\geq C(\lambda_+^u)^n\|v_1^{u}\|$ hold for all $x_1\in\Lambda$, $v_1^{s,c}\in E_1^{s,c}(x_1)$ and $v_1^u\in E_1^u(x_1)$, where $C$ is a proper constant and $n\in\mathbb{N}$. Let 
\[
f_i(x_i,v_i)\doteq(f_{i-1}(x_{i}),Df_{i-1}(x_{i})v_i),\quad i\in\mathbb{N},
\]
be the $i$th-jet map with $x_i=(x_{i-1},v_{i-1})$, $(x_{i},v_i)\in {\underbrace{T\cdots T}_{i}}\;_\Lambda M=T_{T^{i-1}_{\Lambda}M}(T^{i-1}M)$ and $f_0(x_1)=f(x_1)$ for $x_1\in\Lambda$. Suppose Theorem \ref{cr} holds for $f$ and $\Lambda$, and $W^c(x)$ is the center manifold. There exists a $i$th-jet splitting by 
\[
{\underbrace{T\cdots T}_{i}}\;_{\Lambda}M=E^c_i\bigoplus E^u_i\bigoplus E^s_i
\]
with 
\[
E^{uc}_i(x_i)\Big{|}_{x_1\in\Lambda}=\underbrace{T\cdots T}_iW^{uc}(x_1)\Big{|}_{x_1\in\Lambda}
\]
\[
E^{sc}_i(x_i)\Big{|}_{x_1\in\Lambda}=\underbrace{T\cdots T}_iW^{sc}(x_1)\Big{|}_{x_1\in\Lambda}
\]
and
\[
E^{uc}_i(x_i)\pitchfork E^{sc}_i(x_i)\Big{|}_{x_1\in\Lambda}=E^c_i(x_i)\Big{|}_{x_1\in\Lambda}=\underbrace{T\cdots T}_i\Lambda(x_1)\Big{|}_{x_1\in\Lambda}.
\]
Besides, if we assume
\[
b_i^+=\max_{x\in\Lambda}3^{4i}(i+1)!2^{(i+2)(i+3)/2}\|f\|_{C^{i+3}}^{i+3},
\]
 then the stable/center distribution $E_i^{s,c}(x_{i})$ is H\"older continuous with exponent $\varphi_i^{s,c}=(\ln\lambda_+^u-\ln\lambda_+^{s,c})/(\ln b_i^+-\ln\lambda_+^{s,c})$, where $\|v_j\|\leq1$, $0\leq j\leq i-1$.

Similarly, if there exists $0<\lambda_-^{uc}<\lambda_-^s$ and a proper constant $C$ such that $\|df^{-n}(x_1)v_1^{u,c}\|\leq C(\lambda_-^{u,c})^n\|v_1^{u,c}\|$, $\|df^{-n}(x_1)v_1^{s}\|\geq C(\lambda_-^s)^n\|v_1^{s}\|$ hold for all $x_1\in\Lambda$, $v_1^{u,c}\in E_1^{u,c}(x_1)$ and $v_1^s\in E_1^s(x_1)$, $n\in\mathbb{N}$. Let 
\[
b_i^-=\max_{x\in\Lambda}3^{4i}(i+1)!2^{(i+2)(i+3)/2}\|f^{-1}\|_{C^{i+3}}^{i+3}.
\]
Then the unstable/center distribution $E_i^{u,c}(x_{i})$ is H\"older continuous with exponent $\varphi_i^{u,c}=(\ln\lambda_-^s-\ln\lambda_-^{u,c})/(\ln b_i^--\ln\lambda_-^{u,c})$, where $\|v_j\|\leq1$, $0\leq j\leq i-1$.
\end{thm}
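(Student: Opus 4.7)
The plan is to induct on the jet order $i$, leveraging the classical cone-field argument for H\"older continuity of invariant splittings. The base case $i=1$ is the Anosov-Hirsch-Pugh-Shub bunching theorem: since the splitting $T_\Lambda M = E^c_1 \oplus E^u_1 \oplus E^s_1$ is $Df$-invariant with $\lambda_+^{s,c} < \lambda_+^u$, the center-stable bundle $E^{sc}_1$ is H\"older with exponent $\varphi_1^{s,c} = \log(\lambda_+^u/\lambda_+^{s,c})/\log(b_1^+/\lambda_+^{s,c})$, where $b_1^+$ bounds the local Lipschitz constant of $Df$, and symmetrically for $E^{uc}_1$.

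For the inductive step, I would first establish the splitting at jet level $i$. Given that Theorem \ref{cr} supplies invariant $C^r$ manifolds $W^{sc}(x_1)$ and $W^{uc}(x_1)$ under the bunching hypothesis of the previous step, their $i$-fold tangent spaces are manifestly $f_i$-invariant and serve as $E^{sc}_i$ and $E^{uc}_i$. Transversality of $W^{sc}$ and $W^{uc}$ at the base along $\Lambda$ lifts to transversality of these jet extensions, yielding $E^c_i = E^{uc}_i \cap E^{sc}_i = {\underbrace{T\cdots T}_i}\;_\Lambda \Lambda$, as claimed.

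Next, I would estimate the operator norm of $Df_i$ restricted to the unit ball $\|v_j\| \leq 1$, $0 \leq j \leq i-1$. By definition $f_i(x_i, v_i) = (f_{i-1}(x_i), Df_{i-1}(x_i) v_i)$, so $Df_i$ is expressible in terms of $Df_{i-1}$ and its differential; unrolling this $i$ times reduces the estimate to repeated applications of Fa\`a di Bruno's formula for $f$ and its derivatives up to order $i+1$. The combinatorial prefactor $3^{4i}(i+1)!\, 2^{(i+2)(i+3)/2}$ arises from tracking, at each stage, the number of terms in the partition expansion of a higher-order derivative of a composition, while the power $\|f\|_{C^{i+3}}^{i+3}$ absorbs the product of norms together with two derivatives of slack needed to Lipschitz-bound the second differences of $Df_i$; taking the maximum over $\Lambda$ produces $b_i^+$. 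Critically, the rates $\lambda_+^u$ and $\lambda_+^{s,c}$ are inherited unchanged at level $i$, because $E^u_i$ and $E^{sc}_i$ are tangent to invariant manifolds of $f$ and the restricted derivative $Df_i |_{E^{*}_i}$ factors through $Df|_{E^{*}_1}$.

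Finally, the H\"older regularity follows from the angle-recursion argument applied to $f_i$. For two nearby points $x_i, y_i$, let $\theta_0$ denote the angle between $E^{sc}_i(x_i)$ and $E^{sc}_i(y_i)$; invariance combined with the gap $\lambda_+^{s,c} < \lambda_+^u$ forces $\theta_n \leq C(\lambda_+^{s,c}/\lambda_+^u)^n$ after $n$ iterates, while the distance $d(f_i^n x_i, f_i^n y_i)$ grows by at most $(b_i^+)^n d(x_i, y_i)$. Equalizing the two bounds by choosing $n \sim \log(1/d(x_i,y_i))/\log b_i^+$ yields $\theta_0 \leq C' d(x_i,y_i)^{\varphi_i^{s,c}}$ with exactly the claimed exponent; the unstable case is symmetric after replacing $f$ by $f^{-1}$ and $b_i^+$ by $b_i^-$. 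The main obstacle is the combinatorial bookkeeping for $b_i^+$: verifying the precise prefactor $3^{4i}(i+1)!\,2^{(i+2)(i+3)/2}$ requires careful accounting of partition counts across $i$ iterated Fa\`a di Bruno expansions, which is tedious but elementary. A secondary subtlety is that ${\underbrace{T\cdots T}_i}\;_\Lambda M$ is noncompact in its vertical directions, so the restriction $\|v_j\| \leq 1$ is essential to keep the derivative norms uniformly bounded and to justify the pointwise angle recursion.
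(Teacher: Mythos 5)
Your overall architecture is the same as the paper's: induction on the jet order; identification of $E^{sc}_i$, $E^{uc}_i$ with the $i$-fold tangents of the invariant manifolds supplied by Theorem \ref{cr}, intersecting to give $E^c_i$; a Lipschitz-in-base-point bound $\|Df_i^n(x_i)-Df_i^n(y_i)\|\le \breve C_i\, b_i^n\,\|x_i-y_i\|$ with the combinatorial constant $b_i^\pm$ (the paper's Lemma \ref{induction}, which you would obtain by Fa\`a di Bruno bookkeeping instead of the paper's cruder recursive norm estimates -- an equivalent, equally tedious route); and finally a comparison argument optimizing over the number of iterates to extract the H\"older exponent, exactly as in the paper's Lemma \ref{operator} (Brin--Stuck, Theorem 6.1.3).

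However, the crucial last step would fail as you state it. The quantity you call $\theta_n$ -- the angle between $E^{sc}_i$ at the two forward iterates $f_i^n x_i$ and $f_i^n y_i$ -- does not satisfy $\theta_n\le C(\lambda^{sc}_+/\lambda^u_+)^n$: these are merely values of the distribution at two increasingly distant points, and under a common forward cocycle a subspace near $E^{sc}_i$ is \emph{repelled} from it (the forward graph transform magnifies graphs over $E^{sc}$ by roughly $(\lambda^u_+/\lambda^{sc}_+)^n$), so no contraction of angles along forward orbits is available. The correct mechanism, encoded in Lemma \ref{operator}, is a two-cocycle comparison: since the $n$-step cocycles at $x_i$ and $y_i$ differ by at most $\delta\, b_i^n$ with $\delta=\|x_i-y_i\|$, every vector of $E^{sc}_i(y_i)$ is \emph{almost contracted} by the cocycle along the orbit of $x_i$ up to time $n\approx \ln(1/\delta)/\ln\bigl(b_i^+/\lambda^{sc}_+\bigr)$, and any vector contracted at rate $\lambda^{sc}_+$ while the complementary direction expands at rate $\lambda^u_+$ must have $E^u_i(x_i)$-component of size $(\lambda^{sc}_+/\lambda^u_+)^n$; this gives $\mathrm{dist}\bigl(E^{sc}_i(x_i),E^{sc}_i(y_i)\bigr)\lesssim \delta^{\varphi_i^{sc}}$. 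Note also that your calibration $n\sim \ln(1/\delta)/\ln b_i^+$ does not reproduce the stated exponent $(\ln\lambda^u_+-\ln\lambda^{sc}_+)/(\ln b_i^+-\ln\lambda^{sc}_+)$; the denominator $\ln(b_i^+/\lambda^{sc}_+)$ arises precisely from saturating the constraint $\delta\, b_i^n\approx(\lambda^{sc}_+)^n$, and in the relevant application $\lambda^{sc}_+>1$, so the distinction matters. With the comparison step rephrased in this form (and the symmetric backward version for $E^{uc}_i$), your proposal coincides with the paper's proof.
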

\begin{proof}
Without loss of generality, we can assume that $M$ is embedded in $\mathbb{R}^N$. As 
\begin{equation}\label{matrix}
Df_{i}(x_i,v_i)=\begin{pmatrix}
   Df_{i-1}(x_{i})   & 0   \\
  D^2f_{i-1}(x_i)v_i    & Df_{i-1}(x_{i}) 
\end{pmatrix},\quad\forall (x_i,v_i)\in \underbrace{T\cdots T}_i\;_{\Lambda}M,
\end{equation}
that means $Df_{i}(x_i,v_i)$ has the same eigenvalues with $Df_{i-1}(x_{i-1},v_{i-1})$. Besides, we have
\[
E^{uc}_i(x_i)\Big{|}_{x_1\in\Lambda}=\underbrace{T\cdots T}_iW^{uc}(x_1)\Big{|}_{x_1\in\Lambda}
\]
\[
E^{sc}_i(x_i)\Big{|}_{x_1\in\Lambda}=\underbrace{T\cdots T}_iW^{sc}(x_1)\Big{|}_{x_1\in\Lambda}
\]
and
\[
E^{uc}_i(x_i)\pitchfork E^{sc}_i(x_i)\Big{|}_{x_1\in\Lambda}=E^c_i(x_i)\Big{|}_{x_1\in\Lambda}=\underbrace{T\cdots T}_i\Lambda(x_1)\Big{|}_{x_1\in\Lambda}
\]
due to the backward invariance of $W^{uc}$ (forward invariance of $W^{sc}$). The $C^r-$section theorem ensures the smoothness of $W^{uc}(x)$ and $W^{sc}(x)$ for $x$ in a certain leaf. 

Now we use induction to prove the H\"older continuity. From (\ref{matrix}), we know that there exists a constant ${C}_i>1$ such that for any $(x_i,v_i)\in\underbrace{ T\cdots T}_iM$ with $x_1\in\Lambda$ and $\|v_j\|\leq1$ for $1\leq j\leq i$, it's tangent space $(x_{i+1},v_{i+1})\in\underbrace{ T\cdots T}_{i+1}M$ satisfies:
\[
\|Df_i^n(x_i,v_i)v_{i+1}\|\geq C_i^{-1}(\lambda_+^u)^n\|v_{i+1}\|,
\]
if $v_{i+1}\bot E_{i+1}^{sc}(x_{i+1})$.

We can extend $Df_i(x_{i+1})$ to a linear map $L_i(x_{i+1}):\underbrace{ T\cdots T}_{i+1}M\rightarrow\underbrace{ T\cdots T}_{i+1}M$ by setting $L_i(x_{i+1})\Big{|}_{E^{sc^{\bot}}_{i+1}(x_{i+1})}=0$, and 
\[
L_{i,n}(x_{i+1})=L_i(f_i^{n-1}(x_i,v_i))\circ\cdots\circ L_i(f_i(x_i,v_i))\circ L_i(x_i,v_i).
\]
Note that $L_{i,n}(x_i,v_i)\Big{|}_{T_{(x_i,v_i)}\underbrace{ T\cdots T}_i\Lambda}=Df_i^n(x_i,v_i)$.

Fix two points $x_{i+1,1}$ and $x_{i+1,2}$ of $\underbrace{ T\cdots T}_iM$ with $\|x_{i+1,1}-x_{i+1,2}\|\leq1$. The following Lemma \ref{operator} and Lemma \ref{induction} are satisfied with $L_{i,n}^k=L_{i,n}(x_{i+1,k})$ and $E^k_{i+1}=E^{sc}_{i+1}(x_{i+1,k})$, $k=1,2$. Then the first part of theorem follows. Similar way for $E^{uc}_{i+1}(x_{i+1,k})$ and $f^{-1}$ we get the second part.
\end{proof}
\begin{lem}\label{operator}
Let $L_{n,i}^k:\mathbb{R}^K\rightarrow\mathbb{R}^K$, $k=1,2$, $n\in\mathbb{N}$ be two sequences of linear maps. Assume that for some $b_i>0$ and $\delta_i\in(0,1)$
\[
\|L_{n,i}^1-L_{n,i}^2\|\leq\delta b_i^n,\quad i\in\mathbb{N}
\]
and there exist two subspaces $E_i^1$, $E_i^2$ and positive constants $C_i>1$ and $\lambda_i<\mu_i$ with $\lambda_i<b_i$ such that
\[
\|L_{n,i}^kv_{i+1}\|\leq C_i\lambda_i^n\|v_{i+1}\|,\quad\forall v_{i+1}\in E_{i+1}^k,
\]
\[
\|L_{n,i}^kw_{i+1}\|\geq C_i^{-1}\mu_i^n\|w_{i+1}\|,\quad\forall w_{i+1}\bot E_{i+1}^k.
\]
Then $dist(E_i^1,E_i^2)\leq 3C_i^2\frac{\mu_i}{\lambda_i}\delta^{\frac{\ln\mu_i/\lambda_i}{\ln b_i/\lambda_i}}$. Here the distance of two linear spaces is defined by $dist(A,B)=\max\{\max_{v\in A,\;\|v\|=1}dist(v,B),\;\max_{w\in B,\;\|w\|=1}dist(w,A)\}$.
\end{lem}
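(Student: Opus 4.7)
The plan is to use the standard two-scale balancing argument for H\"older regularity of invariant distributions under a linear perturbation. First, by symmetry (interchanging the roles of $1$ and $2$), I would reduce matters to bounding $\mathrm{dist}(v, E_i^2)$ for a single unit vector $v \in E_i^1$. Write $v = v^{\parallel} + v^{\perp}$ with $v^{\parallel} \in E_i^2$ and $v^{\perp} \in (E_i^2)^\perp$; then $\mathrm{dist}(v, E_i^2) = \|v^{\perp}\|$, and both $\|v^\parallel\|, \|v^\perp\| \le 1$.

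Next, I would apply both $L_{n,i}^1$ and $L_{n,i}^2$ to $v$ and exploit three structural bounds simultaneously: contraction of $L_{n,i}^1$ on $E_i^1$ (hence $\|L_{n,i}^1 v\| \le C_i \lambda_i^n$), contraction of $L_{n,i}^2$ on $v^{\parallel}$, and the lower bound $\|L_{n,i}^2 v^{\perp}\| \ge C_i^{-1} \mu_i^n \|v^{\perp}\|$ coming from expansion on the orthogonal complement of $E_i^2$. Combining these with the perturbation hypothesis $\|L_{n,i}^1 - L_{n,i}^2\| \le \delta b_i^n$ via a single triangle inequality yields an estimate of the form
$$\|v^{\perp}\| \;\lesssim\; C_i^2 (\lambda_i/\mu_i)^n + C_i \delta (b_i/\mu_i)^n$$
for \emph{every} positive integer $n$. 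Geometrically, the point is that any misalignment $v^\perp$ would generate expansion under $L^2_{n,i}$ that is invisible to $L^1_{n,i}$, so the perturbation $\delta b_i^n$ must absorb it.

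The last step is to optimize the free parameter $n$. The two summands above balance precisely when $\delta(b_i/\lambda_i)^n \approx 1$, so I would set $n_0 = \lfloor \ln(1/\delta)/\ln(b_i/\lambda_i) \rfloor$, which is a well-defined positive integer under the hypotheses $\lambda_i < b_i$ and $\delta \in (0,1)$. Substituting $n = n_0$ makes both summands comparable to $(\lambda_i/\mu_i)^{n_0}$, and evaluating this exponent recovers exactly $\delta^{\ln(\mu_i/\lambda_i)/\ln(b_i/\lambda_i)}$, the claimed H\"older exponent.

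The main obstacle is essentially bookkeeping: absorbing the integer-part rounding of $n_0$ into the stated prefactor $3 C_i^2 (\mu_i/\lambda_i)$. Rounding $\ln(1/\delta)/\ln(b_i/\lambda_i)$ down to $n_0$ costs at most one factor of $\mu_i/\lambda_i$ in the $(\lambda_i/\mu_i)^{n_0}$ term, which is exactly the multiplicative slack in the claim. The degenerate case where $\delta$ is not small enough for the stated bound to be nontrivial is handled immediately by the a priori bound $\|v^{\perp}\| \le 1$. Once $n_0$ is fixed, everything else is routine manipulation of the triangle inequality.
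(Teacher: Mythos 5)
Your argument is correct and is essentially the paper's own proof: the paper runs the identical triangle-inequality estimate (there phrased through the auxiliary set $K_{n,i}^1$ of vectors weakly contracted by $L_{n,i}^1$, and with the roles of $E^1$ and $E^2$ interchanged, which is immaterial by symmetry) and then makes the same choice of integer $n$ with $\delta(b_i/\lambda_i)^n\approx 1$, losing exactly one factor $\mu_i/\lambda_i$ to rounding. Your remark that the case of $\delta$ too large is absorbed by the trivial bound $\mathrm{dist}\le 1$ also matches the role played by the paper's prefactor $3C_i^2\,\mu_i/\lambda_i$.
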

\begin{proof}

Set $K_{n,i}^k=\{v_{i+1}\in\underbrace{T\cdots T}_{i+1}\;_{\Lambda}M\Big{|}\|L_{n,i}^kv_{i+1}\|\leq2C_i\lambda_i^n\|v_{i+1}\|\}$, $k=1,2$. Let $v_{i+1}\in K_{n,i}^1$. Write $v_{i+1}=v_{i+1}^1+v_{i+1,\bot}^1$, where $v_{i+1}^1\in E_{i+1}^1$ and $v_{i+1,\bot}^1\bot E_{i+1}^1$. Then
\begin{eqnarray*}
\|L_{n,i}^1v_{i+1}\|=\|L_{n,i}^1(v_{i+1}^1+v_{i+1,\bot}^1)\|&\geq&\|L_{n,i}^1v_{i+1,\bot}^1\|-\|L_{n,i}^1v_{i+1}^1\|\\
&\geq&C_i^{-1}\mu_i^n\|v_{i+1,\bot}^1\|-C_i\lambda_i^n\|v_{i+1}^1\|,
\end{eqnarray*}
and hence
\[
\|v_{i+1,\bot}^1\|\leq C_i\mu_i^{-n}(\|L_{n,i}^1v_{i+1}\|+C_i\lambda_i^n\|v_{i+1}^1\|)\leq3C_i(\frac{\lambda_i}{\mu_i})^n\|v_{i+1}\|.
\]
It follows that 
\begin{equation}\label{distance}
dist(v_{i+1},E_{i+1}^1)\leq 3C_i^2(\frac{\lambda_i}{\mu_i})^n\|v_{i+1}\|.
\end{equation}
Set $\gamma=\lambda_i/b_i<1$. There is a unique non-negative integer $k$ such that $\gamma^{k+1}\leq\delta\leq\gamma^{k}$. Let $v_{i+1}^2\in E_{i+1}^2$, then 
\begin{eqnarray*}
\|L_{k,i}^1v_{i+1}^2\|&\leq&\|L_{k,i}^2v_{i+1}^2\|+\|L_{k,i}^1-L_{k,i}^2\|\cdot\|v_{i+1}^2\|\\
&\leq& C_i\lambda_i^k\|v_{i+1}^2\|+b_i^k\delta\|v_{i+1}^2\|\\
&\leq& (C_i\lambda_i^k+b_i^k\gamma^k)\|v_{i+1}^2\|\leq 2C_i\lambda_i^k\|v_{i+1}^2\|.
\end{eqnarray*}
It follows that $v_{i+1}^2\in K_{n,i}^1$ and hence $E_{i+1}^2\subset K_{n,i}^1$. By symmetry we get $E_{i+1}^1\subset K_{n,i}^2$. By (\ref{distance}) and the choice of $k$,
\[
dist(E_{i+1}^1,E_{i+1}^2)\leq 3C_i^2(\frac{\lambda_i}{\mu_i})^k\leq 3C_i^2\frac{\mu_i}{\lambda_i}\delta^{\frac{\ln\mu_i/\lambda_i}{\ln b_i/\lambda_i}}.
\]

\end{proof}
\begin{lem}\label{induction}
Let $f:\Lambda\rightarrow\Lambda$ be a $C^{\infty}$ diffeomorphism with $\Lambda\hookrightarrow M$ be a compact embedded manifolds and 
\[
f_i(x_i,v_i)\doteq(f_{i-1}(x_{i}),Df_{i-1}(x_{i})v_i),\quad i\in\mathbb{N},
\]
be the $i$th-jet map with $x_i=(x_{i-1},v_{i-1})$, $(x_{i},v_i)\in {\underbrace{T\cdots T}_{i}}M$ and $f_0(x_1)=f(x_1)$ for $x_1\in\Lambda$. Then for each $n, k\in\mathbb{N}$ and all $(x_{k},v_{k})$, $(y_k,w_k)\in \underbrace{T\cdots T}_kM$, $\|v_k\|,\|w_k\|\leq1$, we have
\begin{equation}\label{b constant}
\|Df_k^n(x_k,v_k)-Df_k^n(y_k,w_k)\|\leq\breve{C}_k b_k^n\|(x_k,v_k)-(y_k,w_k)\|
\end{equation}
where 
\[
b_k=\max_{x\in\Lambda}3^{4k}(k+1)!2^{(k+2)(k+3)/2}\|f\|_{C^{k+3}}^{k+3}
\]
and $\breve{C}_k$ is a constant only depending on $v_j$, $1\leq j\leq k$ and $1\leq k\leq i$.
\end{lem}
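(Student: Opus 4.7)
The plan is to prove the lemma by strong induction on the jet order $k$, with an inner induction on $n$ at each level. The structural observation that drives the whole argument is the block triangular form of the derivative of the jet map,
\[
Df_k(x_k,v_k)=\begin{pmatrix}
Df_{k-1}(x_k) & 0 \\
D^2f_{k-1}(x_k)\,v_k & Df_{k-1}(x_k)
\end{pmatrix},
\]
already displayed in \eqref{matrix}. This block structure is preserved under iteration: $Df_k^n$ is block triangular with $Df_{k-1}^n$ on both diagonal blocks, and the off-diagonal block is a telescoping sum of $n$ terms, each of the form $Df_{k-1}^{n-1-j}\bigl(f_{k-1}^{\,j+1}(x_k)\bigr)\,\bigl[D^2f_{k-1}(f_{k-1}^j(x_k))\cdot(Df_{k-1}^j(x_k)\,v_k)\bigr]\,Df_{k-1}^j(x_k)$. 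Reading off the Lipschitz constant of $Df_k^n$ from this explicit expression is what the induction will reduce to.

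For the base case $k=0$, I would first establish the standard estimate $\|Df^n(x)-Df^n(y)\|\le C n\,\|f\|_{C^2}\,\|f\|_{C^1}^{2n-1}\,\|x-y\|$ by applying the product rule to $Df^n(x)=\prod_{j=0}^{n-1}Df(f^j(x))$ and using the orbit contraction $\|f^j(x)-f^j(y)\|\le\|f\|_{C^1}^j\|x-y\|$; the polynomial factor $n$ is absorbed into the exponential $b_0^n=(8\|f\|_{C^3}^3)^n$. For the inductive step at level $k$, the diagonal part of $Df_k^n(x_k,v_k)-Df_k^n(y_k,w_k)$ is immediately bounded by the inductive hypothesis at level $k-1$. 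For the off-diagonal block I would expand the difference of the two telescoping sums as a double telescope: first over the $n$ summands, then, within each summand, over its three factor types, namely iterates of $Df_{k-1}$ (Lipschitz by induction), a Hessian $D^2f_{k-1}$ (Lipschitz with constant $\lesssim\|f\|_{C^{k+2}}$ after restricting to $\Lambda$), and a linear factor $v_k$ with $\|v_k\|,\|w_k\|\le 1$. Summing these contributions yields $n$ terms each of size roughly $b_{k-1}^{n-1}$ times a loss of $\|f\|_{C^{k+3}}^{O(1)}$, hence a total that fits inside $b_k^n$ as soon as the ratio $b_k/b_{k-1}$ dominates the multiplicative losses incurred at this level.

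The main obstacle is purely combinatorial: verifying that the explicit constant $b_k=3^{4k}(k+1)!\,2^{(k+2)(k+3)/2}\,\|f\|_{C^{k+3}}^{k+3}$ is large enough to absorb all losses accumulated between levels. The power $\|f\|_{C^{k+3}}^{k+3}$ reflects that $f_k$ itself involves derivatives of $f$ up to order $k$, and that differentiating $f_k$ twice more and chaining through the orbit contributes three additional derivatives of $f$ at each level; the factorial $(k+1)!$ encodes the Fa\`a di Bruno multinomial weights arising from differentiating compositions of jet maps across the $k+1$ induction steps; the superexponential factor $2^{(k+2)(k+3)/2}$ accounts for the binary branching implicit in the block-triangular structure over $k+1$ levels; and the factor $3^{4k}$ provides the slack needed to absorb the telescoping polynomial prefactors at each step. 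Concretely, I would reduce the induction to verifying a single recursion of the form $b_k\ge \alpha_k\,b_{k-1}$ for an explicit $\alpha_k$ controlled by the Fa\`a di Bruno constants above, and check this recursion as a finite inequality in $k$. Once that bookkeeping is done the induction closes and \eqref{b constant} follows, with $\breve C_k$ absorbing the polynomial-in-$n$ and $k$-dependent but $n$-independent prefactors generated along the way.
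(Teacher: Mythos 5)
Your plan is sound and rests on the same structural fact the paper uses, namely the block-triangular form of $Df_k$ in \eqref{matrix} and its persistence under iteration, $Df_k^n(x_k,v_k)=\left(\begin{smallmatrix} Df_{k-1}^n(x_k) & 0\\ D^2f_{k-1}^n(x_k)v_k & Df_{k-1}^n(x_k)\end{smallmatrix}\right)$; the difference is organizational. You unroll the $n$-th iterate explicitly, writing the off-diagonal block as a telescoping sum of $n$ products and estimating Lipschitz constants factor by factor, with induction only on $k$. The paper instead runs a double induction (on $k$ and on $n$), using the one-step chain-rule identity $Df_k^{n+1}=Df_k(f_k^n)\,Df_k^n$ and two auxiliary sup-norm estimates, $\|Df_k^n\|_{C^l}\leq 3^k\|f\|_{C^{k+l}}^n$ and $\|Df_k(f_k^n(\cdot))\|_{C^l}\leq (l+k)!\prod_{i=1}^{k+1}3^i\|f\|_{C^{i+l}}^n$, proved by descending through the block structure; the recursion in $n$ then reduces the claim to the single inequality \eqref{crucial}, which the stated $b_k$ satisfies. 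What your route buys is an explicit formula in place of the recursion, at the cost of heavier bookkeeping inside each summand; what the paper's route buys is that all $n$-dependence is concentrated in the two auxiliary $C^l$ bounds, so the final verification is a one-line comparison of per-iterate growth rates.

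Two cautions if you carry your version out. First, your phrase ``each of size roughly $b_{k-1}^{n-1}$ times a loss of $\|f\|_{C^{k+3}}^{O(1)}$'' understates the losses: when you telescope a product, the factors you do not vary contribute sup norms of $Df_{k-1}^{m}$ along the orbit, which grow exponentially in $m$ (like $3^{k-1}\|f\|_{C^{k}}^{\,m}$), so the per-summand bound is of the form $b_{k-1}^{m}\,\|f\|_{C^{k+1}}^{\,n-m}\cdot(\text{const})$ rather than $b_{k-1}^{n-1}$ times a constant; the recursion you must check is therefore not merely $b_k\geq\alpha_k b_{k-1}$ but that $b_k$ dominates every per-iterate factor simultaneously (which it does, since $b_k\geq b_{k-1}\cdot 3^4(k+1)2^{k+3}\|f\|_{C^{k+3}}$, exactly the role of inequality \eqref{crucial}). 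Second, the inductive hypothesis at level $k-1$ is stated only for fiber vectors of norm at most one, while your factors $Df_{k-1}^{m}$ and $D^2f_{k-1}$ are evaluated at orbit points $f_{k-1}^{j}(x_k)$ whose fiber components have norm up to $\|Df_{k-2}^{\,j}\|$; you cannot invoke the hypothesis verbatim there and must instead exploit the linearity of the jet maps in the fiber variables, keeping factors such as $\|Df_{k-1}^{\,j}(x_k)\|$ and $\|v_k\|$ explicit, which is precisely how the paper's displayed estimate \eqref{jet estimate} handles it. Neither point breaks your argument, but both must appear in the bookkeeping for the stated $b_k$ to emerge.
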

\begin{proof}
Here we use a multiple induction of index $k$ and $n$. Recall that the following 
\[
f_i^n(x_i,v_i)=(f_{i-1}^n(x_i), D(f_i^n(x_i))v_i)
\]
and 
\[
D(f_i^n)(x_i,v_i)=\begin{pmatrix}
   D(f_{i-1}^n)(x_{i})   & 0   \\
  D^2(f_{i-1}^n)(x_i)v_i    & D(f_{i-1}^n)(x_{i}) 
\end{pmatrix},\quad\forall (x_i,v_i)\in \underbrace{T\cdots T}_i\;_{\Lambda}M,
\]
hold due to (\ref{matrix}). Without loss of generality, we can assume $\|f_i\|_{C^0}\geq1$, $i\in\mathbb{N}$. For $k=1$, we already have
\begin{eqnarray*}
\|f_1(x_1,v_1)-f_1(y_1,w_1)\|&\leq&\|f(x_1)-f(y_1)\|+\|Df(x_1)v_1-Df(y_1)w_1\|\\
&\leq&\|Df\|\|x_1-y_1\|+\|Df\|\|v_1-w_1\|+\|D^2f\|\|w_1\|\|x_1-y_1\|\\
&\leq&2\|f\Big{|}_{\Lambda}\|_{C^2}(\|x_1-y_1\|+\|v_1-w_1\|)
\end{eqnarray*}
and
\begin{eqnarray*}
\|Df_1(x_1,v_1)-Df_1(y_1,w_1)\|&\leq&2\|Df(x_1)-Df(y_1)\|+\|D^2f(x_1)v_1-D^2f(y_1)w_1\|\\
&\leq& 2\|D^2f\|\|x_1-y_1\|+\|D^2f\|\|v_1-w_1\|+\|D^3f\|\|w_1\|\|x_1-y_1\|\\
&\leq& 3\|f\Big{|}_{\Lambda}\|_{C^3}(\|x_1-y_1\|+\|v_1-w_1\|).
\end{eqnarray*}
For the inductive step, we have
\begin{eqnarray*}
\|Df_1^{n+1}(x_1,v_1)&-&Df_1^{n+1}(y_1,w_1)\|=\|Df_1(f_1^n(x_1,v_1)Df_1^n(x_1,v_1)-\\
& &\quad\quad\quad\quad\quad\quad\quad\quad\quad Df_1(f_1^n(y_1,w_1)Df_1^n(y_1,w_1)\|\\
&\leq&\|Df_1(f_1^n(x_1,v_1))\|\|Df_1^n(x_1,v_1)-Df_1^n(y_1,w_1)\|+\\
& & \|Df_1^n(y_1,w_1)\|\|Df_1(f_1^n(x_1,v_1))-Df_1(f_1^n(y_1,w_1))\|\\
 &\leq& \max_{\theta\in[0,1]}\|D^2f_1^n(\theta x_1+(1-\theta)y_1,\theta v_1+(1-\theta)w_1)\|\cdot\nonumber\\
 & & \|Df_1(f^n(x_1),Df^n(x_1)v_1)\|(\|x_1-y_1\|+\|v_1-w_1\|) \nonumber\\
 & &+ \|Df_1(f^n(x_1),Df^n(x_1)v_1)-Df_1(f^n(y_1),Df^n(y_1)w_1)\|\cdot \\
 & & \|Df_1^n(y_1,w_1)\|\nonumber\\
 &\leq& (2\|Df(f^n(x_1))\|+\|D^2f(f^n(x_1))\|\cdot\nonumber\\
& & \|Df^n(x_1)\|)\cdot2\max_{\theta\in[0,1]}\|f^n(\theta x_1+(1-\theta)y_1)\|_{C^2}\cdot\nonumber\\
& & (\|x_1-y_1\|+\|v_1-w_1\|) +\Big{\{}2\|Df(f^n(x_1))-\nonumber\\
& & Df(f^n(y_1))\| +\|v_1\|\|Df^n(x_1)\|\|D^2f(f^n(x_1))-D^2f(f^n(y_1))\|\nonumber\\
 & & +\|D^2f(f^n(y_1))\|\|Df^n(x_1)\|\|v_1-w_1\|+\nonumber\\
 & & \|D^2f(f^n(y_1))\|\|w_1\|\|Df^n(x_1)-Df^n(y_1)\|\Big{\}}\|Df^n(y_1)\|_{C^2},\nonumber\\
&\leq&2\|f\|^n_{C^1}\|f\|_{C^2}4\|f\|_{C^3}^n(\|x_1-y_1\|+\|v_1-w_1\|)+\\
& & (2\|f\|_{C^1}^n+\|f\|_{C^2}\|f\|_{C^1}^{n-1})(3\|f\|_{C^2}\|f\|_{C^1}^n+\|f\|_{C^2}\|f\|_{C^1}^{n-1}+\\
& & \|f\|_{C^3}\|f\|_{C^1}^n)(\|x_1-y_1\|+\|v_1-w_1\|)\\
&\leq&\breve{C}_1b_1^{n+1}(\|x_1-y_1\|+\|v_1-w_1\|),
\end{eqnarray*}
then (\ref{b constant}) holds for case $k=1$ if $b_1=\max_{x\in\Lambda}3^{2}2!\|f\|_{C^3}^4$. Recall that $\|v_1\|,\|w_1\|\leq 1$ holds through this estimate.

Before we take the inductive step for $k>1$, we need to introduce two useful conclusions:
\[
\|Df_k^n(x_k,v_k)\|_{C^l}\leq3^k\|f\|_{C^{k+l}}^n
\]
and
\[
\|Df_k(f_k^n(x_k,v_k))\|_{C^l}\leq (l+k)!\prod_{i=1}^{k+1}3^i\|f\|_{C^{i+l}}^{n}.
\]
This is because 
\begin{eqnarray*}
\|Df_k^n(x_k,v_k)\|_{C^l}&\leq&2\|Df_{k-1}^n(x_{k-1},v_{k-1})\|_{C^{l}}+\|D^2f_{k-1}^n(x_{k-1},v_{k-1})v_k\|_{C^l}\\
&\leq&3\|Df_{k-1}^n(x_{k-1},v_{k-1})\|_{C^{l+1}}\\
&\leq&\cdots\\
&\leq&3^{k-1}\|Df_1^n(x_1,v_1)\|_{C^{l+k-1}}\\
&\leq&3^k\|f\|_{C^{l+k}}^n
\end{eqnarray*}
and
\begin{eqnarray*}
\|Df_k(f_k^n(x_k,v_k))\|_{C^l}&\leq&2\|Df_{k-1}(f_{k-1}^n(x_{k-1},v_{k-1}))\|_{C^{l}}+\|D^2f_{k-1}(f_{k-1}^n(x_{k-1},v_{k-1}))\cdot\\
& & Df_{k-1}^n(x_k)v_k\|_{C^l}\\
&\leq&2\|Df_{k-1}(f_{k-1}^n(x_{k-1},v_{k-1}))\|_{C^{l}}+(l+1)\|D^2f_{k-1}(\\
& & f_{k-1}^n(x_{k-1},v_{k-1}))\|_{C^l}\|Df_{k-1}^n(x_k)\|_{C^l}\\
&\leq&2\|Df_{k-1}(f_{k-1}^n(x_{k-1},v_{k-1}))\|_{C^{l}}+(l+1)\|Df_{k-1}(\\
& & f_{k-1}^n(x_{k-1},v_{k-1}))\|_{C^{l+1}}3^{k-1}\|f\|^n_{C^{l+k-1}}\\
&\leq&3^k(l+1)\|Df_{k-1}(f_{k-1}^n(x_{k-1},v_{k-1}))\|_{C^{l+1}}\|f\|^n_{C^{l+k-1}}\\
&\leq&\cdots\\
&\leq&(\prod_{i=1}^{k}3^{k-i+1}(l+i)\|f\|^n_{C^{l+k-i}})\cdot\|Df(f^n(x_0))\|_{C^{l+k}}\\
&\leq&(l+k)!\prod_{j=1}^{k+1}3^{j}\|f\|^n_{C^{l+j}}.
\end{eqnarray*}

Now the induction becomes:
\begin{eqnarray}\label{jet estimate}
\|Df^{n+1}_k(x_k,v_k)&-&Df^{n+1}_k(y_k,w_k)\|\leq\|Df_k(f^n_k(x_k,v_k))\|\|Df^n_k(x_k,v_k)-Df^n_k(y_k,w_k)\| \nonumber\\
 & &+ \|Df_k(f^n_k(x_k,v_k))-Df_k(f^n_k(y_k,w_k))\|\|Df_k^n(y_k,w_k)\| \nonumber\\
 &\leq& \max_{\theta\in[0,1]}\|D^2f_k^n(\theta x_k+(1-\theta)y_k,\theta v_k+(1-\theta)w_k)\|\cdot\nonumber\\
 & & \|Df_k(f_{k-1}^n(x_k),Df_{k-1}^n(x_k)v_k)\|(\|x_k-y_k\|+\|v_k-w_k\|) \nonumber\\
 & &+ \|Df_k(f_{k-1}^n(x_k),Df_{k-1}^n(x_k)v_k)-Df_k(f_{k-1}^n(y_k),Df_{k-1}^n(y_k)w_k)\|\cdot \nonumber\\
 & & \|Df_k^n(y_k,w_k)\|\\
 &\leq& (2\|Df_{k-1}(f_{k-1}^n(x_{k-1},v_{k-1}))\|+\|D^2f_{k-1}(f_{k-1}^n(x_{k-1},v_{k-1}))\|\cdot\nonumber\\
& & \|Df_{k-1}^n(x_{k-1},v_{k-1})\|)\cdot2\max_{\theta\in[0,1]}\|f_{k-1}^n(\theta x_k+(1-\theta)y_k)\|_{C^3}\cdot\nonumber\\
& & (\|x_k-y_k\|+\|v_k-w_k\|) +\Big{\{}2\|Df_{k-1}(f_{k-1}^n(x_k))-\nonumber\\
& & Df_{k-1}(f_{k-1}^n(y_k))\| +\|v_k\|\|Df_{k-1}^n(x_k)\|\cdot\nonumber\\
& &\|D^2f_{k-1}(f_{k-1}^n(x_k))-D^2f_{k-1}(f_{k-1}^n(y_k))\|+ \nonumber\\
 & & \|D^2f_{k-1}(f_{k-1}^n(y_k))\|\|Df_{k-1}^n(x_k)\|\|v_k-w_k\|+\nonumber\\
 & & \|D^2f_{k-1}(f_{k-1}^n(y_k))\|\|w_k\|\|Df_{k-1}^n(x_k)-Df_{k-1}^n(y_k)\|\Big{\}}\|Df_k^n(y_k,w_k)\|,\nonumber
\end{eqnarray}
which leads to 
\begin{eqnarray}\label{crucial}
2^{k+1}\|f\|_{C^{k+2}}^n3^kk!\prod_{i=1}^{k+1}3^i\|f\|_{C^{i}}^n\Big{|}_{\Lambda}&+&3^k\|f\|_{C^{k+1}}^n\Big{\{}2b_{k-1}3^{k-1}\|f\|_{C^k}^n
+(3^{k-1}\|f\|_{C^k}^n)^2(k+1)!\cdot\nonumber\\
& &3^{k-1}\prod_{i=1}^{k+2}3^i\|f\|_{C^i}^n+3^kk!\Big{(}\prod_{i=1}^{k}3^i\|f\|_{C^i}^n\Big{)}3^{k-1}\|f\|_{C^k}^n+\nonumber\\
& &3^kk!\Big{(}\prod_{i=1}^{k}3^i\|f\|_{C^i}^n\Big{)}3^{k-1}\|f\|_{C^{k+2}}^n\Big{\}}\Big{|}_{\Lambda}\leq b_{k}^{n+1}
\end{eqnarray}
whereas $\|v_l\|\leq1$, $1\leq l\leq k$. Then (\ref{crucial}) holds if we take 
\[
b_k=\max_{x\in\Lambda}3^{4k}(k+1)!2^{(k+2)(k+3)/2}\|f\|_{C^{k+3}}^{k+3}.
\]
\end{proof}
\begin{rmk}
In Lemma \ref{induction} we just give a very loose $b_k$ estimate. Besides, during the induction we might omit the $\mathcal{O}(1)$ constant, which can be absorbed into $C_i$ of Lemma \ref{operator} when proving Theorem \ref{holder}. So that won't influence our result.
\end{rmk}

\section{Normally Hyperbolic Invariant 
Laminations and skew-products}
\label{sec:nhil-skew}

Recall a definition of a normally hyperbolic invariant laminations
(see \cite{DL} for use of normally hyperbolic laminations to 
construct diffusing orbits).
 
\begin{definition} \label{def:nhl}
Let $\Sigma=\{0,1\}^\Z$ be the shift space, $\sigma$ be the shift on it.
Let $F$ be a $C^1$ map on a manifold $M$. Let $N$ be a manifold.
Let $h : \Sigma \times N \to M$ and $r : \Sigma \times N \to N$ be 
such that we have
the following properties:

a) For every $\sigma \in \Sigma,\ h_\sigma \in C^1(N,M)$ is an embedding,
$r \in C^0(N,N)$ is a homeomorphism. Denote 
$h_\sigma(x):= h(\sigma,x),\ r_\sigma (x):= r(\sigma, x) $. 

b) The maps from $\sigma \in \Sigma$ to $h_\sigma,\ r_\sigma$ are
$C^\al,\ \al > 0$ with the $\sigma$ given the natural topology 
and the maps $h,\ r$ given in the topology of embeddings.

We say that $h,\ r$ is a normally hyperbolic embedding of the shift 
$\Sigma$ if for every  $x \in N$ we can find a splitting
\[
 T_{h^\sigma}(x) = E^s_{h_\sigma}(x) \oplus E^s_{h^\sigma}(x) \oplus E^c_{h_\sigma}(x)
\]
 and numbers $0<C,\ 0<\lb<\mu<1$ such that
\[
v \in E^s_{h_\sigma}(x) \Longleftrightarrow |DF^n(\sg,x) v|\le C \lb^n |v|\quad 
n\ge 0
\]
\[
v \in E^u_{h_\sigma}(x) \Longleftrightarrow |DF^n(\sg,x)v|\le C \lb^{|n|} |v|\quad 
n\le 0\]
\[
v \in E^c_{h_\sigma}(x) \Longleftrightarrow |DF^n(\sg,x)v|\le C \mu^{-|n|} |v|\quad 
n\ge \mathbb Z.
\]
\end{definition}

If we replace $\Sigma$ by one point we get the definition 
of a normally hyperbolic invariant manifold. 
In our case we have 
$N=\mathbb A=\mathbb T \times \mathbb R$ is 
the cylinder, $M=\mathbb A\times U$ is the product 
of a cylinder and an open set $U$ in $\R^2$.

Now we turn to skew products.

Fix an integer $N>1$ and a matrix $A=(a_{ij})^N_{i,j=1}$, 
where $a_{ij} \in \{0, 1\}$. Denote by $\Sigma_A$ the set of 
all bilateral sequences $\om = (\om_n )_{n\in \Z}$ composed of 
symbols $1, \dots , N$ such that $a_{\om_n \om_{n+1}} = 1$ 
for any $n \in \Z$  (see, for instance, \cite{BrSt}).
Call $A$ a transition matrix. 
Suppose $\sigma : \Sigma_A \to \Sigma_A$  is a transitive 
subshift of finite type (a topological Markov chain) with
a finite set of states $\{1,\dots,N\}$ and 
the transition matrix $A$.

The map $\sigma$ shifts any sequence $\om$ one step to the left: 
$(\sg \om)_n = \om_{n+1}$ for any $n\in\Z$. 
By definition (cf. \cite{BrSt}), subshift 
is transitive iff there exists $n\in \Z_+$ such that 
\[
\text{for any }\  i,j\qquad (A^n)_{ij} >0.
\]
Transitivity implies the indecomposability of the subshift. Indeed, 
for any $m > 0$ the subshift $\sigma^m$ with the same states 
allows one to go from any state to any other in finitely many steps. 
Thus, for any $m > 0$ the subshift $\sigma^m$ cannot be split into 
two nontrivial subshifts of finite type.

As usual we endow $\Sigma$ with a metric defined by the formula
$$
d(\om^1,\om^2)=\begin{cases}
2^{-\min\{|n|:\ \om^1_n\neq \om^2_n\}} & \om^1\neq \om^2 \\
0 & \om^1=\om^2 
\end{cases} \qquad 
\om^1,\om^2\in \Sigma.
$$
Let $M$ be a smooth manifold with boundary. Denote by Diff$^{\,r}(M)$ 
the space of $C^r$-smooth maps from $M$ to itself which are 
diffeomorphisms to their images.

\begin{definition}  A skew product over a subshift of finite type 
$(\Sigma_A,\sigma)$ is a dynamical system 
$F : \Sigma_A \times M \to \Sigma_A \times M$ of the form
\[
(\om, x) \ \longrightarrow \  (\sigma \om, f_\om(x)),
\]
where $\om\in\Sigma_A, x\in M$ and the map 
$f_\om (x) \in$  \text{ Diff}$\,^r(M)$ and is continuous in $\om$. 
The phase space of the subshift is called the base of 
the skew product, the manifold $M$ is called the fiber, and 
the maps $f_\om$ are called the fiber maps. The fiber over 
$\om$ is the set 
$M_\om := \{\om \}\times M \subset \Sigma_A \times M$.
\end{definition}

In any argument about the geometry of the skew products 
we always assume that the base factor of $\Sigma_A \times M$ is ``horizontal'' and the fiber factor is ``vertical''. A skew product over 
a subshift of finite type is a step skew product if the fiber maps 
$f_\om$ depend only on the position $\om_0$ in the sequence $\om$. 
In our case $M$ is either the circle or the annulus $\A$. 

Let $\Sigma_A$ be the space of unilateral (infinite to the right) sequences 
$\om = (\om_n )^{+\infty}_0$ satisfying $a_{\om_n\om_{n+1}} = 1$ 
for all n. The left shift 
$$\sigma_+ : \Sigma_A^+ \to \Sigma_A^+,\ 
(\sigma_+\om)_n = \om_{n+1}
$$ 
defines a non-invertible 
dynamical system on $\Sigma_A^+$ . The system $(\Sigma_A^+,\sigma_+)$ 
is a factor of the system $(\Sigma_A, \sigma)$ under the ``forgetting the past'' map 
$$
\pi : (\om)_{-\infty}^{+\infty}\to  (\om)_{-\infty}^{+\infty}
\ \text{ so that }\ \pi \sigma \equiv \sigma_+\pi.  
$$
Similarly, one can define $(\Sigma^-_A,\sigma_-)$
the right shift 
$$
\sigma_- : \Sigma_A^- \to \Sigma_A^-,\ 
(\sigma_-\om)_n = \om_{n-1}.
$$

Let $\Pi = (\pi_{ij})^N_{i,j=1},\ \pi_{ij} \in [0,1]$ 
be a right stochastic matrix (i.e., for any $i$ we have $\sum_j \pi_{ij} = 1$) 
such  that $\pi_{ij} = 0$ iff $a_{ij} = 0$.  Let $p$ be its eigenvector 
with non-negative components that corresponds to the eigenvalue $1$:
for any i $p_i \ge 0$, and $\sum_i \pi_{ij}p_i = p_j$. 
We can always assume $\sum_i p_i = 1$. Using the distribution 
$p_i$ one defined a {\it Markov measure} $\nu$. 
Let $\nu$ be any ergodic Markov measure on $\Sigma$. From now 
on, the measure $\nu$ is fixed.

The {\it standard measure} $\bf s$ on $\Sigma_A \times M$ is 
the product of $\mu$ and the Lebesgue measure on the fiber
(which is either the cirle or the cylinder).

A skew product over a subshift of finite type is 
a {\it step skew product } if the fiber maps $f_\om$ depend 
only on the position $\om_0$ in the sequence $\om$. 
In this paper, we study skew-products close 
to step skew products.  

Suppose $\Theta$ is the set of all such skew products and 
$S \subset \Theta$ is the subset of all step skew products. 
Note that $S$ is the Cartesian product of $N$ copies of Diff$^r(M)$. 
We endow $\Theta$ with the metric
\[
\text{dist}_{\Theta} (F,G) := 
\sup_\om \ \text{dist}_{C^r}(f^{\pm 1}_\om,g^{\pm 1}_\om).
\]
This induces a metric of a product on $S$.  
We consider two cases:  $M$ is either the circle $\T$ 
or the cylinder $\A=\R \times \T$ and and each fiber  
$M_k := \{k\} \times \T$ is either the circle or the cylinder.

\section{A theorem from \cite{CK} on weak convergence to 
a diffusion process}\label{sec:oriol-result}

Let $\eps>0$ be a small parameter and $l\ge 12, s\ge 0$
be an integer. Denote by $\mathcal O_l(\eps)$  
a $\mathcal C^l$ function whose $\mathcal C^l$ norm is 
bounded by $C\eps$ with $C$ independent of $\eps$. 
Similar definition applies for a power of $\eps$. As before 
$\Sigma$ denotes $\{0,1\}^\Z$ and 
$\om=(\dots,\om_0,\dots)\in \Sigma$. 

Consider two nearly integrable maps:
\begin{eqnarray} \label{mapthetar}
f_\om:\mathbb{T}
\times \mathbb{R}
& \longrightarrow & 
\mathbb{T} \times \mathbb{R} \qquad \qquad 
\qquad \qquad \qquad \qquad
\nonumber\\
f_\om:
\left(\begin{array}{c}\theta\\r\end{array}\right) & 
\longmapsto &  
\left(\begin{array}{c}\theta+r+\eps u_{\om_0}(\theta,r)+
\mathcal O_s(\eps^{1+a},\om)
\\
r+\eps v_{\om_0}(\theta,r)+
\eps^2 w_{\om_0}(\theta,r)+\mathcal O_s(\eps^{2+a},\om)
\end{array}\right).
\end{eqnarray} 
for $\om_0\in \{-1,1\}$, where $u_{\om_0},\ v_{\om_0},$
and $w_{\om_0}$ are bounded $\mathcal{C}^l$ functions, 
$1$-periodic in $\theta$, $\mathcal O_s(\eps^{1+a},\om)$ 
and $\mathcal O_s(\eps^{2+a},\om)$ denote  remainders 
depending on $\om$ and uniformly $C^s$ bounded in $\om$, 
and $0<a\le 1/6$. Assume 
$$
\max |v_i(\theta,r)|\le 1,
$$
where maximum is taken over $i=-1,1$ and all 
$(\theta,r)\in \A$, otherwise, renormalize $\eps$. 

We study random iterations of the maps $f_1$ and $f_{-1}$, 
such that at each step the probability of performing either 
map is $1/2$. Importance of understanding iterations of 
several maps for problems of diffusion is well known
(see e.g. \cite{K,Mo}). 

Denote the expected potential and 
the difference of potentials by 
$$\Eu(\theta,r):=
\frac 12 (u_1(\theta,r)+u_{-1}(\theta,r)),\ \ \  \Ev(\theta,r):=\frac 12 (v_1(\theta,r)+v_{-1}(\theta,r)),$$
$$u(\theta,r):=\frac 12 (u_1(\theta,r)-u_{-1}(\theta,r)),\ \ \ 
v(\theta,r):=\frac 12 (v_1(\theta,r)-v_{-1}(\theta,r)).$$

Suppose the following assumptions hold:
\begin{itemize}
\item[{\bf [H0]}] ({\it zero average})
Let for each $r\in \R$ and $i=\pm 1$ we have  
$\int v_i(\theta,r)\,d\theta=0$.

\item[{\bf [H1]}] ({\it no common zeroes}) 
For each integer $n\in \Z$ potentials $v_{1}(\theta,n)$ and 
$v_{-1}(\theta,n)$ have no common zeroes and, equivalently, 
$f_1$ and $f_{-1}$ have no fixed points;

\item[{\bf [H2]}] 
for each $r\in \R$ we have $\int_0^1\ v^2(\theta,r)d\theta=:\sigma(r) \neq0$;

\item[{\bf [H3]}] The functions $v_i(\theta,r)$ are trigonometric polynomials 
in $\theta$, i.e. for some positive integer $d$ we have 
$$
v_i(\theta,r)=\sum_{k\in \Z,\ |k|\le d} v^{(k)}(r)\exp 2\pi ik\theta. 
$$
\end{itemize}

For $\omega\in\{-1,1\}^\Z$ we 
can rewrite the maps $f_{\om}$ in the following form:
\begin{equation*}
f_{\omega}
\left(\begin{array}{c}\theta\\r\end{array}\right)\longmapsto
\left(\begin{array}{c}\theta+r+\eps \Eu(\theta,r)+ 
\eps\omega_0 u(\theta,r)+\mathcal O_s(\eps^{1+a},\om)
\\
r+\eps \Ev(\theta,r)
+\eps\omega_0 v(\theta,r)+\eps^2 w_{\om_0}(\theta,r)
+\mathcal O_s(\eps^{2+a},\om)
\end{array}\right).
\end{equation*}

Let $n$ be positive integer and $\omega_k\in\{-1,1\}$, $k=0,\dots,n-1$, 
be independent random variables with $\mathbb{P}\{\omega_k=\pm1\}=1/2$ 
and $\Omega_n=\{\omega_0,\dots,\omega_{n-1}\}$. 
Given an initial condition $(\theta_0,r_0)$ we denote:
\be \label{eq:random-seq} 
(\theta_n,r_n):=f^n_{\Omega_n}(\theta_0,r_0)=
f_{\omega_{n-1}}\circ f_{\omega_{n-2}}\circ \cdots
\circ f_{\omega_0}(\theta_0,r_0).
\ee

\begin{itemize}
\item[{\bf [H4]}]  ({\it no common periodic orbits}) 
Suppose for any rational $r=p/q\in\mathbb Q$ 
with $p,q$ relatively prime, $1\le |q|\le 2d$ and any $\theta\in \T$   
$$
\sum_{k=1}^q\left[v_{-1}(\theta+\frac kq,r)- v_1(\theta+\frac kq,r)\right]^2\ne 0.
$$
This prohibits $f_1$ and $f_{-1}$ to have common periodic 
orbits of period $|q|$. 

\item[{\bf [H5]}] ({\it no degenerate periodic points}) 
Suppose for any rational $r=p/q\in\mathbb Q$ 
with $p,q$ relatively prime, $1\le |q|\le d$, the function:
$$\Ev_{p,q}(\theta,r)=\sum_{\substack{k\in \mathbb{Z}\\0<|kq|<d}}\Ev^{kq}(r)e^{2\pi ikq\theta}$$
has distinct non-degenerate zeroes, where $\Ev^{j}(r)$ 
denotes the $j$--th Fourier coefficient of $\Ev(\theta,r)$.
\end{itemize}

A straightforward calculation shows that:
\begin{equation}\label{mapthetanrn}
\begin{array}{rcl}
\theta_n&=&\displaystyle\theta_0+nr_0+\eps \sum_{k=0}^{n-1}
\left(\Eu(\theta_k,r_k)+ \Ev(\theta_k,r_k)\right)
\\
&&\displaystyle+
\eps\sum_{k=0}^{n-1} \omega_k
\left(u(\theta_k,r_k)+v(\theta_k,r_k)\right) 
+\mathcal O_l(n\eps^{1+a})
\bigskip\\
r_n&=&\displaystyle r_0+\eps\sum_{k=0}^{n-1}
\Ev(\theta_k,r_k)+\eps
\sum_{k=0}^{n-1}\omega_kv(\theta_k,r_k)
+\mathcal O_l(n\eps^{2+a})
\end{array}
\end{equation}
Even though these maps might not be area-preserving, 
using normal forms 
we will simplify these maps significantly on a large domain of the cylinder.

\begin{thm}\label{maintheorem}
Assume that in the notations above conditions {\bf [H0-H5]} 
hold. Let $n_\eps \eps^2 \to s>0$ as $\eps\to 0$ for some 
$s>0$. Then as $\eps \to 0$ the distribution of $r_{n_\eps}-r_0$ 
converges weakly to $R_s$, where $R_\bullet$ is 
a diffusion process of the form \eqref{eq:diffusion}, 
with the drift and the variance 
$$
b(R)=\int_0^1E_2(\theta,R)\,d\theta,
\qquad \sigma^2(R)=\int_0^1v^2(\theta,R)\,d\theta. 
$$
for some function $E_2$, defined in (\ref{eq:drift}).
\end{thm}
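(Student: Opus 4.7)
The plan is to treat Theorem \ref{maintheorem} as a homogenization statement for a random walk whose slow variable $r$ changes by $\mathcal{O}(\epsilon)$ per step and whose fast variable $\theta$ rotates with angular velocity $\sim r$. Since the natural macroscopic timescale is $n \sim \epsilon^{-2}$, both the effective drift $b$ and diffusion $\sigma^2$ must be extracted by averaging the fast dynamics against the invariant (Lebesgue) measure on the $\theta$-circle.

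First I would apply a near-identity canonical normal form to eliminate the $\theta$-Fourier modes of $u_{\omega_0}$ and $v_{\omega_0}$ that are non-resonant at the current value of $r$. In the non-resonant region this averaging kills the mean-zero harmonics of $\Ev$ from the $r$-equation at order $\epsilon$ and isolates a deterministic second-order drift $E_2(\theta,r)$, whose $\theta$-average is exactly $b(R)$. In the resonant region $r = p/q$ with $|q| \le 2d$ one needs a resonant normal form adapted to the harmonic $e^{2\pi i q \theta}$. There assumption [H4] excludes common periodic orbits of $f_1$ and $f_{-1}$, [H5] gives finitely many non-degenerate zeroes of $\Ev_{p,q}$, and [H1] forbids common fixed points, so one can show that the walk cannot be trapped near a resonance and the fraction of the $\epsilon^{-2}$-time scale spent in a $\sqrt{\epsilon}$-neighbourhood of any resonance vanishes as $\epsilon \to 0$.

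Second, with the normal forms in place, I would decompose
\[
r_n - r_0 \;=\; M_n + A_n + \mathcal R_n,
\]
where $M_n = \epsilon \sum_{k=0}^{n-1} \omega_k v(\theta_k, r_k)$ is a martingale with respect to the filtration generated by $\omega_0, \dots, \omega_{k-1}$ (since $\omega_k$ is an independent fair coin flip and $(\theta_k,r_k)$ is measurable on the past), $A_n$ is a deterministic drift collecting the contributions of $\epsilon \Ev$ and $\epsilon^2 w_{\omega_0}$, and $\mathcal R_n$ is a remainder of size $n\epsilon^{2+a}$, which is $\mathcal{O}(\epsilon^a) \to 0$ when $n \sim \epsilon^{-2}$. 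Using the equidistribution of $\theta_k$ proved in the previous step one obtains $A_{[t\epsilon^{-2}]} \to \int_0^t b(R_s)\,ds$, and the predictable quadratic variation of $M_{[t\epsilon^{-2}]}$ converges to $\int_0^t \sigma^2(R_s)\,ds$ with $\sigma^2(r) = \int_0^1 v^2(\theta,r)\,d\theta$, which is strictly positive by [H2].

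An application of the Ethier--Kurtz martingale functional central limit theorem to the rescaled process $R^\epsilon_t := r_{[t\epsilon^{-2}]}$ then yields weak convergence to the diffusion $dR_t = b(R_t)\,dt + \sigma(R_t)\,dB_t$. The principal obstacle will be proving sufficiently uniform quantitative equidistribution of the orbit $\{\theta_k\}$ as $r_k$ slowly varies, with error terms summable on the $\epsilon^{-2}$ timescale; this is where the trigonometric-polynomial hypothesis [H3] is crucial, since it reduces the analysis to finitely many harmonics and hence finitely many resonant values of $r$, each of which can be handled by a single non-degenerate normal form via [H4] and [H5].
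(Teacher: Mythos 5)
Your outline is essentially the intended argument: the paper itself gives no proof of this theorem but imports it from \cite{CK}, and the surrounding material in the appendix (the homological equation for $S_1$, the definition of $E_2=\Ev\,\partial_\theta S_1+\E w$, and the role of the excluded neighbourhoods of rationals $p/q$ with $|q|\le d$) shows that the cited proof proceeds exactly as you propose — a near-identity normal form averaging out the mean-zero part of $\Ev$ away from resonances, a separate analysis of the resonant zones using {\bf [H1]}, {\bf [H4]}, {\bf [H5]} (with {\bf [H3]} guaranteeing finitely many resonances), and a martingale functional CLT with quadratic variation governed by $\int_0^1 v^2(\theta,r)\,d\theta$ from {\bf [H2]}. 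So your proposal matches the approach of the reference on which the paper relies; the remaining work would be the quantitative equidistribution and resonance-passage estimates you correctly identify as the main technical burden.
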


Define 
\be \label{eq:drift}
E_2(\theta,r)=\Ev(\theta,  r)\,
\partial_\theta S_1(\theta, r)+\E w(\theta,r),\quad 
b(r)=\int E_2(\theta,r)d\theta,
\ee
where $S_1$ solves an equation right below and 
is a certain generating function 
defined in (\ref{eq:Genfunction}--\ref{eq:HomEq}).
$$
\partial_{\theta}S_1(\tilde\theta,\tilde r)+
\Ev(\tilde\theta,\tilde r)-
\partial_{\theta}S_1(\tilde\theta+\tilde r,\tilde r)=0.
$$
One can easily find a solution of this equation by solving the corresponding equation for the Fourier coefficients. To that aim, we write $S_1$ and $\Ev$ 
in their Fourier series:
\begin{equation}
\label{eq:Genfunction}
S_1(\theta,\tilde r)=
\sum_{k\in\mathbb{Z}}S_1^k(\tilde r)e^{2\pi ik\theta},
\end{equation}
$$\Ev(\theta,r)=\sum_{\substack{k\in\mathbb{Z}\\0<|k|\leq d}}\Ev^k(r)e^{2\pi ik\theta}.$$

It is obvious that for $k>d$ and $k=0$ we can take $S_1^k(\tilde r)=0$. For $0<k\leq d$ we obtain the following homological equation for $S_1^k(\tilde r)$:
\begin{equation} \label{eq:HomEq}
2\pi ikS_1^k(\tilde r)\left(1-e^{2\pi ik\tilde r}\right)+\Ev^k(r)=0.
\end{equation}
Clearly, this equation cannot be solved if $e^{2\pi ik\tilde r}=1$, 
i.e. if $k\tilde r\in\mathbb{Z}$. We note that there exists a constant 
$L$, independent of $\eps$, $L<d^{-1}$, such that for all 
$0<k\leq d$, if $\tilde r\neq p/q$ satisfies:
$$
0<|\tilde r-p/q|\leq L,
$$
then $k\tilde r\not \in\mathbb{Z}$. Thus, restricting ourselves to 
the domain $|\tilde r-p/q|\leq L$, we have that if 
$kp/q\not\in\mathbb{Z}$ equation \eqref{eq:HomEq} always 
has a solution, and if $kp/q\in\mathbb{Z}$ this equation has 
a solution except at $\tilde r=p/q$. Moreover, in the case that 
the solution exists, it is equal to:
$$
S_1^k(\tilde r)=\frac{i\Ev^k(r)}{2\pi k\left(1-e^{2\pi ik\tilde r}\right)}.
$$

\section{Nearly integrable exact area-preserving maps}
\label{sec:eapt}
Let $\A=\T \times \R$ be the annulus, $(\theta,r)\in \A$. 
Consider a $C^r$ smooth exact area-preserving twist map 
$$
f:\A \to \A, \qquad f(\theta,r)=(\theta',r'),
$$
namely, 
\begin{itemize}

\item $f$ is exact if the area under any noncontractible 
curve $\gamma$ equals the area under $f(\gamma)$ or, 
equivalently, the flux is zero; 

\item $f$ is area-preserving; 

\item $f$ twists, i.e. for  any $\theta^*$ the image 
of $l_{\theta^*}=\{\theta=\theta^*,\ r\in \R\}$ is 
monotonically twisted, i.e. $f(\theta^*,r)$
has the first component strictly monotone in $r$. 
\end{itemize}

Let 
$F:\R^2 \to \R^2,\ F(x,r)=(x',r'), \quad 
F(x+1,r)=(x'+1,r')$ for all $(x,r)\in \R^2.$ 
be the lift of $f:\A \to \A$. Recall that $h:\R^2\to \R$ is 
called {\it a generating function} of $f$ if we have 
\be \label{eq:generating}
\beal 
\partial_1 h(x,x')=-y\\
\partial_2 h(x,x')=\,y'. 
\enal 
\ee

\begin{thm} (see e.g. \cite{Le})
Any $C^r$ smooth exact area-preserving twist map $f$ 
possesses a generating function $h$ such that the map 
$f$ is given by (\ref{eq:generating}) implicitly and $h$ satisfies 
\begin{itemize}
\item (periodicity) $h(x+1,x'+1) = h(x, x')$;
 
\item $\partial_{12}\, h(x,x') < 0$ for all $(x,x')\in \R^2$.
\end{itemize}
\end{thm}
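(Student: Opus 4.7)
The plan is classical and reduces to three ingredients: the twist condition produces a change of coordinates from $(x,y)$ to $(x,x')$, the area-preserving property makes a natural $1$-form closed, and the exactness plus periodicity of $F$ fix the integration constant. I would not attempt to reconstruct any sharp regularity bookkeeping — Poincar\'e's lemma plus implicit differentiation does the whole job.

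First I would use the twist condition to introduce $(x,x')$ as global coordinates on $\R^2$. For each fixed $x$, the map $r\mapsto F_1(x,r)$ is strictly monotone by the twist hypothesis, and the standard completeness for twist lifts of cylinder diffeomorphisms forces this map to be a bijection $\R\to\R$. Consequently $\Phi:(x,y)\mapsto(x,x')$ is a global $C^{r-1}$ diffeomorphism of $\R^2$, and both $y=y(x,x')$ and $y'=y'(x,x')$ become smooth functions of $(x,x')$.

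Second, I would produce $h$ by integrating a closed $1$-form. Consider the $1$-form $\alpha = -y\,dx + y'\,dx'$ on $\R^2_{(x,x')}$. Its exterior derivative is $d\alpha = -dy\wedge dx + dy'\wedge dx'$, which vanishes precisely because $F$ preserves the standard area form $dy\wedge dx$. Since $\R^2$ is simply connected, Poincar\'e's lemma yields $\alpha = dh$ for a smooth function $h$, defined up to an additive constant; the two components of $\alpha = dh$ are exactly the relations in (\ref{eq:generating}).

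Third, I would verify the two properties. The $\Z$-equivariance $F(x+1,y)=(x'+1,y')$ of the lift says that in $(x,x')$-coordinates both $y$ and $y'$ are invariant under the diagonal shift $T:(x,x')\mapsto(x+1,x'+1)$, hence $d(h\circ T - h)=0$ and $h(x+1,x'+1)-h(x,x')\equiv c$. This constant $c$ is precisely the flux of $F$ across a fundamental noncontractible loop in $\A$, so it vanishes by exactness; absorbing the remaining ambiguity into the integration constant of $h$ gives the stated periodicity. For the twist sign, implicit differentiation of $x'=F_1(x,y)$ in the variable $y$ yields $\partial_{x'}y = (\partial_y F_1)^{-1}$, so $\partial_{12}h = -\partial_{x'}y = -(\partial_y F_1)^{-1}$, and the sign convention of ``monotonically twisted'' in the hypothesis fixes this to be negative.

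The only point I expect to require care is the third step: identifying the constant $c$ as the flux and showing that exactness (rather than mere area preservation) is exactly what makes it vanish — the naive integration of $\alpha$ on the universal cover does not by itself descend to the cylinder unless the flux is zero.
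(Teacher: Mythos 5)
The paper does not actually prove this statement---it is quoted with the reference \cite{Le}---so the only comparison available is with the standard argument, and that is precisely the route you take: use the twist to pass to $(x,x')$ as coordinates, note that $\alpha=-y\,dx+y'\,dx'$ is closed because $dx\wedge dy=dx'\wedge dy'$, integrate it by Poincar\'e's lemma to get $h$ with (\ref{eq:generating}), identify the constant $h(x+1,x'+1)-h(x,x')$ with the flux and kill it by exactness, and read off $\partial_{12}h=-\partial_{x'}y=-(\partial_y F_1)^{-1}$ from the twist. Your second, third and fourth steps are correct, and your closing remark---that exactness, not mere area preservation, is what makes the periodicity constant vanish---is exactly the right point of care.

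The one genuine gap is in your first step. The claim that ``the standard completeness for twist lifts of cylinder diffeomorphisms forces $r\mapsto F_1(x,r)$ to be a bijection $\R\to\R$'' is false under the hypotheses as listed: the shear $f(\theta,r)=(\theta+\arctan r,\,r)$ is a $C^\infty$ exact area-preserving twist map of $\A$ in the sense of the three bullets preceding the theorem (Jacobian $1$, zero flux, $\partial_r(\theta+\arctan r)>0$), yet for each $x$ the image of $F_1(x,\cdot)$ is the bounded interval $(x-\pi/2,\,x+\pi/2)$, so $(x,x')$ are not global coordinates and no generating function with the stated properties exists on all of $\R^2$. Monotonicity gives injectivity of $(x,y)\mapsto(x,x')$, but surjectivity of the vertical images is an extra ``complete'' (e.g.\ uniform) twist hypothesis; it is implicit in the statement and in \cite{Le}, and it holds in the regime where the paper uses the result, but it does not follow from the listed assumptions. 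So either add that hypothesis explicitly, or observe that your construction produces $h$ on the open, diagonally shift-invariant set $\{(x,x'):x'\in F_1(x,\R)\}$, on which the rest of your argument goes through verbatim. With that caveat, your proof is the standard one and is correct.
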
 

Notice that in the case $f_0$ is a $C^r$ smooth integrable 
twist map, given by 
$$
f_0:(x,y)\mapsto (x+\rho(y),y)
$$ 
for some $C^r$ smooth strictly monotone function $\rho(y)$ 
the generating function has the form 
$$
h(x,x')=U(x'-x)
$$
for some $C^{r+1}$ smooth function $U$. Indeed, 
$\partial_1 h(x,x')=-U'(x-x')=-y=-y'$. Thus, 
$U'(\rho(y))\equiv y$. 

\begin{lem} Let $f_\eps:\A\to \A$ be a $C^r$ smooth 
nearly integrable exact area-preserving twist map 
we have 
\be \label{eq:map-expansion}
\beal 
\theta' &=& \theta + \nu(r)  + \eps u_1(\theta,r)
+ \eps^2 u_2(\theta,r) + 
\mathcal O(\eps^3)\ (\text{mod }1 )\ \\ 
r' &=& r+ \eps v(\theta,r)+\eps^2 w(\theta,r)+
\mathcal O(\eps^3)\qquad \quad \qquad \qquad 
\enal 
\ee
for some $C^{r-1}$ smooth functions $u_1,v$ and 
$C^{r-2}$ smooth functions $u_2,w$. In the case $f_\eps$ 
is given by a generating function $h(x,x',\eps)$ 
and $h(x,x')=h(x,x',0)$ we have  
\be \nonumber 
\beal 
x' &=& x+\rho(r) + 
\eps \rho'(r)\,\partial_1 h(x,x+\rho(r)) +
\eps^2 (\rho'(r))^2\, \partial_{12} h(x,x+\rho(r)) 
+\mathcal O(\eps^3)  \qquad  \\ 
r' &=& r+ \eps (\partial_1 h(x,x+\rho(r))- \partial_2 h(x,x+\rho(r)))
\qquad \qquad \qquad \qquad \qquad \quad 
\\
&+&\eps^2 (\rho'(r))^2\,\left(\partial_{12} h(x,x+\rho(r))- 
\partial_{22} h(x,x+\rho(r))\right)
\partial_1 h(x,x+\rho(r))
+\mathcal O(\eps^3).
\enal 
\ee
In the case $\rho(r)$ depends on $\eps$ analogs of 
the above 
formulas are still valid:
{\small
\be \nonumber 
\beal 
x' &=& x+\rho_\eps(r) + 
\eps \rho'_\eps(r)\,\partial_1 h(x,x+\rho_\eps(r)) 
\qquad \qquad \qquad 
\qquad \qquad \qquad \\ 
& + & \eps^2 (\rho'_\eps(r))^2\, 
\left( \partial_{12} h(x,x+\rho_\eps(r)) + \frac 12 
(\partial_{1} h(x,x+\rho_\eps(r)) )^2 \right)
+\mathcal O\left(\frac{\eps\|h_\eps\|_{C^3}}{\|\rho_\eps\|_{C^3}}\right)^3  
\qquad  \\ 
r' &=& r+ \eps (\partial_1 h(x,x+\rho_\eps(r))+
\partial_2 h(x,x+\rho_\eps(r)))
\qquad \qquad \qquad \qquad \qquad \quad 
\\
&+&\eps^2 \rho'_\eps(r)\,\left(\partial_{12} h(x,x+\rho_\eps(r))+
\partial_{22} h(x,x+\rho_\eps(r))\right) \partial_1 h(x,x+\rho_\eps(r))
+\mathcal O\left(\frac{\eps\|h_\eps\|_{C^3}}{\|\rho_\eps\|_{C^3}} \right)^3.
\enal 
\ee
}
\end{lem}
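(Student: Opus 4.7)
The plan is to derive the expansion \eqref{eq:map-expansion} by combining the generating-function characterization of exact area-preserving twist maps with the implicit function theorem. Recall that for $f_\eps$ generated by $h(x,x',\eps)$, the map is defined implicitly by
\[
-r = \partial_1 h(x,x',\eps), \qquad r' = \partial_2 h(x,x',\eps),
\]
and for $\eps = 0$ the integrable map has generating function $h_0(x,x') = U(x'-x)$ where $U'(\rho(r)) \equiv r$. I would substitute the ansatz $x' = x + \rho(r) + \eps A(x,r) + \eps^2 B(x,r) + \mathcal O(\eps^3)$ into the first implicit relation and Taylor-expand both $\partial_1 h$ in $\eps$ and the resulting expression in $x'-x$ around $\rho(r)$; this will yield, order by order in $\eps$, linear equations for $A$ and $B$.

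The key identity to exploit is $U''(\rho(r)) = 1/\rho'(r)$, obtained by differentiating $U'(\rho(r)) = r$; this converts the $U''$ and $U'''$ coefficients appearing in the expansion into expressions involving $\rho'$ and $\rho''$. At first order I expect to recover $A = \rho'(r)\,\partial_1 h(x, x+\rho(r))$, matching the stated formula (with the understanding that $h$ in the lemma's displayed equations denotes the $\eps$-perturbation part of the generating function). At second order one must collect three contributions: (i) the pure $\eps^2$-part of the generating function, (ii) the cross term between the $\eps$-part of $h$ and the $\eps A$-shift in the second argument, and (iii) the Taylor coefficient $\tfrac12 U'''(\rho) A^2$; the identity $U'''/U'' = -\rho''/(\rho')^2$ is what allows these to combine into the clean coefficient $(\rho'(r))^2 \partial_{12} h(x, x+\rho(r))$ displayed in the statement.

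Next I would substitute the now-known $x'$ into the second implicit relation $r' = \partial_2 h(x,x',\eps)$ and proceed analogously. The leading contribution $\partial_2 h_0 = U'(x'-x)$ expands around $\rho(r)$ and cancels with $r$ up to order $\eps$, yielding $r' = r + \eps(\partial_1 h + \partial_2 h)(x, x+\rho(r)) + \eps^2(\cdots)$; at second order one again collects a direct contribution together with a cross term governed by $\partial_{12}h + \partial_{22} h$ multiplied by $A$. The regularity claims then follow by inspection: $u_1, v$ involve only first derivatives of $h$, hence lie in $C^{r-1}$, while $u_2, w$ pick up one more derivative from the quadratic Taylor step, giving $C^{r-2}$.

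The extension to an $\eps$-dependent $\rho_\eps(r)$ is a direct modification: one takes $x' = x + \rho_\eps(r) + \eps A + \eps^2 B + \mathcal O(\eps^3)$ and reruns the expansion, the only new feature being that the remainder is now controlled by $(\eps\|h_\eps\|_{C^3}/\|\rho_\eps\|_{C^3})^3$ since the radius on which the implicit function theorem applies is dictated by the ratio of the perturbation size to the twist. I expect the main obstacle to be purely algebraic bookkeeping at second order: ensuring that the three distinct contributions to $B$ and the four contributions to the $\eps^2$-term in $r'$ assemble into the compact symmetric expressions stated in the lemma, and checking that no additional terms of order $\eps^2$ are dropped when truncating the implicit solution. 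No new analytic idea is required beyond careful expansion.
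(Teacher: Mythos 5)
Your proposal follows essentially the same route as the paper: write the generating function of the nearly integrable map as $U(x'-x)+\eps h_1(x,x',\eps)$, use the implicit relations $r=-\partial_1 h_\eps$, $r'=\partial_2 h_\eps$, invert $U'$ via $\rho$ (the identity $U''(\rho(r))=1/\rho'(r)$ you highlight is exactly what the paper's ``rearranging and expanding in $\eps$'' uses), and read off the coefficients order by order, including your correct reading that $h$ in the displayed formulas means the perturbation part of the generating function. The paper's own proof is just this expansion stated tersely, so your plan is an adequate (indeed more explicit) version of the same argument.
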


\begin{proof} Let $F_\eps:\R^2\to \R^2$ be 
the lift of $f_\eps$. Since $f_\eps$ is 
a $C^r$ smooth nearly integrable, $F_\eps$ has 
a generating function $h_\eps(x,x')$ of the form 
\[
h_\eps(x,x')=U(x'-x)+\eps h_1(x,x',\eps).
\]
Apply the equations of the generating function 
\be 
\beal 
r &=& - \partial_1 h_\eps (x,x',\eps) &= &  U'(x'-x) 
- \eps \partial_1 h_1(x,x',\eps) \\ 
r' &=& \partial_2 h_\eps (x,x',\eps) &= & U'(x'-x) + 
\eps \partial_2 h_1(x,x',\eps).
\enal 
\ee
Notice that $\rho(U'(\Delta x))=\Delta x$.  
To simplify notations denote $h(x,x')=h(x,x',0)$. 
Rearranging and expanding in $\eps$ we have 
\be \nonumber 
\beal 
x' &=& x+\rho(r) + 
\eps \rho'(r)\,\partial_1 h(x,x+\rho(r)) +
\eps^2 (\rho'(r))^2 \partial_{12} h(x,x+\rho(r)) +\mathcal O(\eps^3)   \\ 
r' &=& r+ \eps (\partial_1 h(x,x+\rho(r))+ \partial_2 h(x,x+\rho(r)))
\qquad \qquad \qquad \qquad \qquad \quad 
\\
&+&\eps^2\,\rho'(r)\,\left(\partial_{12} h(x,x+\rho(r))+ 
\partial_{22} h(x,x+\rho(r))\right)
\partial_1 h(x,x+\rho(r))
+\mathcal O(\eps^3).
\enal 
\ee
This gives a definition of functions $u,v$ and $w$
in terms of the generating function $h$. 
\end{proof}

\begin{cor} \label{cor:map-sing-expansion}
Let 
$h(x,x',\eps)=a(\eps) U(x'-x,\eps)+\eps h_1(x,x',\eps),$
$h_1(x,x')=h_1(x,x',0)$, $\rho(U'(x,\eps),\eps)\equiv x,$
and $x^+=x+a(\eps)\rho(r,\eps)$. 
Then 
\be \nonumber 
\beal 
x' &=& x+a(\eps) \rho (r,\eps) +
\eps a(\eps) \rho'(r,\eps)\,\partial_1 h(x,x^+) &
\qquad \qquad \qquad 
\qquad \qquad \qquad 
\\ 
&& +  \eps^2 a^2(\eps)(\rho'(r,\eps))^2\,  
( \partial_{12} h(x,x^+) +  \frac 12 
(\partial_{1} h(x,x^+) )^2 ) & \\
&& +  
\mathcal O\left(\eps\,a(\eps)\,
(\|U\|_{C^3}+\|h_1\|_{C^3}) \|\rho\|_{C^3}\right)^3  &
\\ 
r' &=& r+ \eps (\partial_1 h_1(x,x^+) +
\partial_2 h_1(x,x^+)) 
\qquad \qquad \qquad & \qquad \qquad \quad 
\\
&&+\eps^2  \rho'(r,\eps)\,
(\partial_{12} h_1(x,x^+)+ 
\partial_{22} h_1(x,x^+)) 
\partial_1 h(x,x^+) &\\ 
& &+ 
\mathcal O\left(\eps\,a(\eps)\,(\|U\|_{C^3}+\|h_1\|_{C^3})\ \|\rho\|_{C^3}\right)^3.&
\enal 
\ee
In the case $a(\eps)=\log \eps$ and $U,h_1,\rho \in C^3$ 
the remainder term is $\mathcal O(\eps \log \eps)^3$. 
\end{cor}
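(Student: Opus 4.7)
The plan is to repeat the generating-function / implicit-function argument of the preceding Lemma verbatim, but with the scalar $a(\eps)$ kept explicit in every intermediate expression, so that the dependence of the remainder on $\|U\|_{C^3}$, $\|h_1\|_{C^3}$ and $\|\rho\|_{C^3}$ comes out in the form claimed. I would start from the generating function relations $r=-\partial_1 h(x,x',\eps)$, $r'=\partial_2 h(x,x',\eps)$ and substitute $h=a(\eps)U(x'-x,\eps)+\eps h_1(x,x',\eps)$ to get
\[
r=a(\eps)\,U'(x'-x,\eps)-\eps\,\partial_1 h_1(x,x',\eps),\qquad
r'=a(\eps)\,U'(x'-x,\eps)+\eps\,\partial_2 h_1(x,x',\eps).
\]
Using the identity $\rho(U'(\cdot,\eps),\eps)\equiv\mathrm{id}$, the zeroth-order equation determines $x'-x=x^+-x=a(\eps)\rho(r,\eps)$ (after absorbing the scaling into the definition of $x^+$ as the statement does), which will serve as the base point around which all subsequent Taylor expansions are performed.

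Next I would write $x'-x=a(\eps)\rho(r,\eps)+\eps\,\xi_1+\eps^2\,\xi_2+\dots$ and solve the first equation order by order in $\eps$. The first-order match gives $\xi_1=a(\eps)\rho'(r,\eps)\,\partial_1 h(x,x^+)$, where $h$ in the coefficient means $h_1$ evaluated at $\eps=0$; this reproduces the first correction to $x'$ in the stated formula. At second order there are two contributions: one from the quadratic term in the Taylor expansion of $U'(x'-x,\eps)$ around $x^+-x$, and a cross term from $\eps\,\partial_1 h_1$ at $x^+$ combined with the first-order correction $\xi_1$. Collecting them yields exactly the coefficient $a(\eps)^2(\rho'(r,\eps))^2\bigl(\partial_{12}h+\tfrac12(\partial_1 h)^2\bigr)$ appearing in the Corollary. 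Substituting the resulting expansion of $x'$ back into the $r'$-equation, together with the two-variable Taylor expansion of $\partial_2 h_1(x,x')$ around $x'=x^+$, gives the stated formula for $r'$.

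For the remainder, the key observation is that at every order one picks up one factor of $\eps$ together with one factor of $a(\eps)\rho'$ (from differentiating through $U'$) and one derivative of $h_1$ or $U$, so the third-order term in the implicit expansion is controlled by $(\eps\,a(\eps)\,(\|U\|_{C^3}+\|h_1\|_{C^3})\|\rho\|_{C^3})^3$, which is exactly the asserted bound. For the specialization $a(\eps)=\log\eps$ and $C^3$ data, $\eps\log\eps\to 0$, so the remainder is $\mathcal O(\eps\log\eps)^3$ as claimed.

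The only genuine obstacle is bookkeeping rather than a substantive difficulty: one must track carefully how the factor $a(\eps)$ compounds through the $k$-th order Taylor coefficient so as not to lose (or gain) a power in the final remainder estimate. The delicate point for the application is that when $a(\eps)\to\infty$ (as in the $\log\eps$ case), the terms $a(\eps)^k(\rho')^k$ are not small, so one has to be sure that the $\eps^k$ factors from the implicit-function iteration absorb them at every order; this is what restricts the usable domain of $(x,r)$ and ultimately matches the range of $w$ used in Theorem \ref{thm:SM-higher-order:SR} when this Corollary is applied to the separatrix map.
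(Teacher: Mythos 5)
Your proposal is correct and is essentially the paper's own argument: the paper disposes of this Corollary as a ``straightforward substitution'' of $h=a(\eps)U+\eps h_1$ into the expansion of the preceding Lemma, and your order-by-order generating-function expansion from $r=-\partial_1 h_\eps$, $r'=\partial_2 h_\eps$ with $a(\eps)$ tracked explicitly is precisely the computation underlying that Lemma and its specialization, including the remainder bookkeeping and the $a(\eps)=\log\eps$ case. Your treatment of the zeroth-order step (absorbing the $a(\eps)$-scaling into the definition of $x^+$ so that $x'-x=a(\eps)\rho(r,\eps)+\mathcal O(\eps)$) follows the convention implicit in the statement itself, so it introduces no gap beyond what the paper already leaves implicit.
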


The proof is the straightforward substitution. 

\def\cprime{$'$}

\end{document}